\tikzset{anchorbase/.style={baseline={([yshift=-0.5ex]current bounding box.center)}}}
\tikzstyle directed=[postaction={decorate,decoration={markings,
    mark=at position #1 with {\arrow{>}}}}]
\tikzstyle rdirected=[postaction={decorate,decoration={markings,
    mark=at position #1 with {\arrow{<}}}}]
\numberwithin{equation}{section}
\newtheorem{theorem}[subsubsection]{Theorem}
\newtheorem{lemma}[theorem]{Lemma}
\newtheorem{prop}[theorem]{Proposition}
\newtheorem{corollary}[subsubsection]{Corollary}
\newtheorem{conjecture}[theorem]{Conjecture}
\theoremstyle{definition}
\newtheorem{definition}[subsubsection]{Definition}
\newtheorem{remark}[theorem]{Remark}
\newtheorem{problem}[theorem]{Problem}
\newtheorem{example}[subsubsection]{Example}
\newtheorem{question}[theorem]{Question}
\newcommand{\Pol}{\mathrm{Pol}}
\newcommand{\SPol}{\mathrm{SPol}}
\newcommand{\UFun}{\mathrm{UFun}}
\newcommand{\bA}{\mathbf{A}}
\newcommand{\bB}{\mathbf{B}}
\newcommand{\bC}{\mathbf{C}}
\newcommand{\bI}{\mathbf{I}}
\newcommand{\bT}{\mathbf{T}\mathbf{C}}
\newcommand{\Tak}{\operatorname{\mathcal{T}{ak}}}
\newcommand{\mA}{\mathbf{A}}
\newcommand{\mB}{\mathbf{B}}
\newcommand{\mS}{\mathbf{S}}
\newcommand{\cP}{\mathcal{P}}
\newcommand{\cD}{\mathcal{D}}
\newcommand{\cE}{\mathcal{E}}
\newcommand{\uC}{\underline{\mathcal{C}}}
\newcommand{\co}{\mathrm{co}}
\newcommand{\ev}{\mathrm{ev}}
\newcommand{\GL}{\operatorname{GL}}
\newcommand{\SL}{\operatorname{SL}}
\newcommand{\Fun}{\mathsf{Fun}}
\newcommand{\Mod}{\mathsf{Mod}}
\newcommand{\Rep}{\mathsf{Rep}}
\newcommand{\Tilt}{\operatorname{\mathsf{Tilt}}}
\newcommand{\mo}{\mbox{{\rm mod}}}
\newcommand{\cA}{\mathcal{A}}
\newcommand{\cB}{\mathcal{B}}
\newcommand{\bk}{\mathbf{k}}
\newcommand{\gr}{\mathrm{gr}}
\newcommand{\tto}{\twoheadrightarrow}
\newcommand{\cO}{\mathcal{O}}
\newcommand{\cJ}{\mathcal{J}}
\newcommand{\mN}{\mathbb{N}}
\newcommand{\mG}{\mathbb{G}}
\newcommand{\mZ}{\mathbb{Z}}
\newcommand{\Frg}{\mathrm{Frg}}
\newcommand{\ex}{\mathrm{ex}}
\newcommand{\rex}{\mathrm{rex}}
\newcommand{\mC}{\mathbb{C}}
\newcommand{\mR}{\mathbb{R}}
\newcommand{\mF}{\mathbb{F}}
\newcommand{\End}{\mathrm{End}}
\newcommand{\Ob}{\mathrm{Ob}}
\newcommand{\Hom}{\mathrm{Hom}}
\newcommand{\Ann}{\mathrm{Ann}}
\newcommand{\Aut}{\mathrm{Aut}}
\newcommand{\Nat}{\mathrm{Nat}}
\newcommand{\Sym}{\mathrm{Sym}}
\newcommand{\Young}{\mathbf{Young}}
\newcommand{\UnEn}{\mathbf{UnEn}}
\newcommand{\SYoung}{\mathbf{SYoung}}
\newcommand{\Spin}{\mathbf{Spin}}
\newcommand{\Fr}{\mathrm{Fr}}
\newcommand{\op}{\mathrm{op}}
\newcommand{\Ind}{\mathrm{Ind}}
\newcommand{\Res}{\mathrm{Res}}
\newcommand{\Id}{\mathrm{Id}}
\newcommand{\Vecc}{\mathsf{Vec}}
\newcommand{\sVec}{\mathsf{sVec}}
\newcommand{\Ver}{\mathsf{Ver}}
\newcommand{\mT}{\mathbb{T}}
\newcommand{\ICS}{\mathbf{ICS}}
\newcommand{\unit}{{\mathbbm{1}}}
\newcommand{\cC}{\mathcal{C}}
\newcommand{\Proj}{\operatorname{\mathsf{Proj}}}
\newcommand{\Inj}{\operatorname{\mathsf{Inj}}}
\newcommand{\Perm}{\mathbf{Perm}}
\newcommand{\Irr}{\operatorname{Irr}}
\newcommand{\Indec}{\operatorname{Idc}}
\newcommand{\mI}{\mathbf{I}}
\newcommand{\cS}{\mathcal{S}}
\newcommand{\Triv}{\mathrm{Triv}}
\newcommand{\mD}{\mathbb{D}}
\newcommand{\uHom}{\underline{\mathrm{Hom}}}
\newcommand{\uEnd}{\underline{\mathrm{End}}}
\newcommand{\cM}{\mathcal{M}}
\newcommand{\uM}{\underline{\mathcal{M}}}
\newcommand{\cN}{\mathcal{N}}
\newcommand{\uN}{\underline{\mathcal{N}}}
\newcommand{\RT}{R\hspace{-0.4mm}T}
\begin{document}

\selectlanguage{english} 

\title[Polynomial functors on tensor categories]{Inductive systems of the symmetric group, polynomial functors and tensor categories

---

Systèmes inductifs du groupe symétrique, foncteurs polynomiaux et catégories tensorielles}
\author{Kevin Coulembier}
\address{K.C.: School of Mathematics and Statistics, University of Sydney, NSW 2006, Australia}
\email{kevin.coulembier@sydney.edu.au}

%\thanks{.}

\keywords{modular representations of the symmetric group, completely splittable modules, tensor categories, polynomial functors, Schur algebras}
%\subjclass[2020]{}

\begin{abstract}
We initiate the systematic study of modular representations of symmetric groups that arise via the braiding in (symmetric) tensor categories over fields of positive characteristic. We determine what representations appear for certain examples of tensor categories, develop general principles and demonstrate how this question connects with the ongoing study of the structure theory of tensor categories. We also formalise a theory of polynomial functors as functors that act coherently on all tensor categories. We conclude that the classification of such functors is a different way of posing the above question of which representations of symmetric groups appear. Finally, we extend the classical notion of strict polynomial functors from the category of (super) vector spaces to arbitrary tensor categories, and show that this idea is also a different packaging of the same information.

\medskip
\medskip

\hrule

\medskip
\selectlanguage{french}

Nous entreprenons l’étude systématique des représentations modulaires des groupes symétriques qui apparaissent par le tressage des catégories tensorielles (symétriques) sur des corps de caractéristique positive. Nous déterminons quelles représentations interviennent pour certains exemples de catégories tensorielles, développons des principes généraux et montrons comment cette question s’articule avec l’étude en cours de la théorie structurelle des catégories tensorielles. Nous formalisons également une théorie des foncteurs polynomiaux en tant que foncteurs agissant de manière cohérente sur toutes les catégories tensorielles. Nous concluons que la classification de tels foncteurs constitue une autre manière de poser la question précédente de quelles représentations des groupes symétriques apparaissent. Enfin, nous généralisons la notion classique de foncteurs polynomiaux stricts, de la catégorie des (super)espaces vectoriels aux catégories tensorielles arbitraires, et montrons que cette idée constitue également une reformulation différente de la même information.
\selectlanguage{english} 

\end{abstract}

\maketitle
%\today

%\tableofcontents

\section*{Introduction}

The structure theory of tensor categories has been an active topic in the last decade. We use the term tensor category over a field $\bk$ in the sense of \cite{Del90, EGNO}, for a {\em symmetric} rigid monoidal $\bk$-linear abelian category with some finiteness assumptions.

Over fields of characteristic zero, classical results of Deligne \cite{Del90, Del02} show that tensor categories `of moderate growth' must be representation categories of groups or supergroups. Without the moderate growth assumption, see for instance \cite{HS, HSS}, or over fields of positive characteristic, see for instance \cite{BE, BEO, Tann, AbEnv, CEO, CEO2, CF, EOf, Os}, exploring the structure theory remains ongoing.

To motivate the work in the current paper we give some background on the state of the art regarding tensor categories of moderate growth over fields of positive characteristic. The principle of tannakian reconstruction of \cite{Del90} reduces the structure theory problem to the classification of `incompressible' tensor categories, see \cite[Theorem~5.2.1]{CEO2}. By \cite{Del90, Del02} the only incompressible categories of moderate growth over a field $\bk$ of characteristic zero are thus the categories of vector spaces $\Vecc=\Vecc_{\bk}$ and supervector spaces $\sVec=\sVec_{\bk}$ over $\bk$. In contrast, for characteristic $p>0$, in \cite{BE, BEO, AbEnv}, a chain of incompressible categories of moderate growth
$$\Vecc\;\subset\;\sVec\;\subset\;\Ver_p\;\subset\;\Ver_{p^2}\;\subset\; \Ver_{p^3}\;\subset\;\cdots $$
was constructed. In \cite{BEO} it was conjectured that every tensor category of moderate growth admits a tensor functor to $\Ver_{p^\infty}=\cup_n\Ver_{p^n}$; or equivalently that all incompressible categories of moderate growth are subcategories of $\Ver_{p^\infty}$. Working towards this conjecture, in \cite{CEO} it was proved that a tensor category of moderate growth admits a tensor functor to $\Ver_p$ if and only if it is `Frobenius-exact', meaning the Frobenius functor
\begin{equation}\label{eqFr}\Fr:\cC\to\cC\boxtimes \Ver_p
\end{equation}
is exact. The functor $\Fr$ is defined in \cite{Os, EOf, Tann} as follows. One sends $X\in\cC$ to $X^{\otimes p}$, which can be interpreted as an $S_p$-representation internal to $\cC$ via the braiding, and subsequently one manipulates $X^{\otimes p}$ via a non-exact symmetric monoidal functor $\Rep S_p\to\Ver_p$. These results lead to three (interrelated) questions:

(Q1) By Takeuchi's theorem, for a tensor category $\cC$ there exists a faithful exact (non-monoidal) functor $\omega:\cC\to\Vecc$, so that $\omega(X^{\otimes d})$ is an ordinary $S_d$-representation over $\bk$. Attempts at constructing `higher' versions of $\Fr$ in \cite{CF}, relating to $\Ver_{p^n}$ rather than $\Ver_p$, lead to the question of which $S_d$-representations can occur in this way for $d=p^n$. A concrete example of such questions for $d=p$ already appeared in \cite[Question~7.3]{CEO}.

(Q2) The functor $\Fr$ in \eqref{eqFr} `commutes with tensor functors'. By taking direct summands of $\Fr$, one obtains functors $\cC\to\cC$ that commute with tensor functors. More familiar examples are the (skew) symmetric powers $\Sym^d$ and $\bigwedge^d$, and all of the above functors are subquotients of the functor $X\mapsto X^{\otimes d}$. In characteristic zero, such functors simply produce (direct sums of) Schur functors $\mS_\lambda$, see \cite{Del02}. In positive characteristic, many interesting questions, such as their `classification', are open. We will start by developing a rigorous theory and definition of such functors, and prove that the classification question is just a reformulation of (Q1).

(Q3) A logical term for the functors in (Q2) would be `polynomial' functors. Classical strict polynomial functors were introduced in \cite{FS}, as a tool for proving finite generation of the cohomology ring of finite group schemes. In the description of \cite{Kr}, the category of strict polynomial functors of degree $d$ is the category of $\bk$-linear functors
\begin{equation}\label{Equiv0}
\Fun_{\bk}(\Gamma^d\Vecc,\Vecc)\;\simeq\;\Rep^d\GL_V,
\end{equation}
where the equivalence is \cite[Theorem~3.2]{FS} for a vector space $V$ of dimension at least $d$ and $\Rep^d\GL_V$ is the category of polynomial representations of degree $d$. Here $\Gamma^d\Vecc$ is the category with the same class of objects as $\Vecc$, but with morphism spaces
$$\Gamma^d\Vecc(U,V)\;:=\;\Hom_{\bk S_d}(U^{\otimes d},V^{\otimes d}),$$
so that linear functors out of $\Gamma^d\Vecc$ indeed correspond to `polynomial of degree $d$' functors out of $\Vecc$. In \cite{Ax}, a `super' version of polynomial functors was introduced, and this was used in \cite{Dr} to prove cohomological finite-generation for finite supergroup schemes. One can follow this template for $\Vecc$ and $\sVec$ and define a category of strict polynomial functors based on every (incompressible) tensor category, and search for equivalences as in \eqref{Equiv0}. In the long term, one would hope to use this towards proving \cite[Conjecture~2.18]{EOconj} regarding cohomological finite-generation for finite tensor categories. In the current paper we only show that this study is also equivalent to (Q1) and (Q2).

\subsection*{Prelude: Schur-Weyl duality} The double centraliser property between the symmetric group $S_d$ and the general linear group $\GL_V$ of a complex vector space $V$, say with $\dim_{\mC}V\ge d$, acting on $V^{\otimes d}$, leads to an equivalence of $\mC$-linear categories
\begin{equation}\label{SW1}
\Rep_{\mC}S_d\;\xrightarrow{\sim}\; \Rep^d_{\mC}\GL_V,\quad M\mapsto V^{\otimes d}\otimes_{\mC S_d}M.
\end{equation}
Since $\Rep_{\mC}S_d$, as a $\mC$-linear category, is a finite direct sum of $\Vecc_{\mC}$, it is equivalent to the category of $\mC$-linear functors from $\Rep_{\mC}S_d$ to $\Vecc_{\mC}$. The resulting equivalence
\begin{equation}\label{SW2}\Fun_{\mC}(\Rep_{\mC}S_d,\Vecc_{\mC})\;\xrightarrow{\sim}\;\Rep_{\mC}^d\GL_V,\quad F\mapsto F(V^{\otimes d})=\bigoplus_{\lambda\vdash d}F(S^\lambda)\otimes \mS_\lambda(V)
\end{equation}
is less common, but perhaps more natural as we explain below. Firstly, note that $S^\lambda$ denotes the (simple) Specht module of $S_d$, so that, as a bimodule,
$$V^{\otimes d}\;\simeq\;\bigoplus_{\lambda\vdash d} S^\lambda\boxtimes\mS_\lambda(V).$$

While equivalence \eqref{SW1} does not extend to positive characteristic, its incarnation \eqref{SW2} can be extended very neatly. Denote by $\Young^d\subset\Rep S_d$ the full subcategory of direct sums of Young modules, see \cite{Er}. Then over any field $\bk$ we do get an equivalence
\begin{equation}\label{SW3}\Fun_{\bk}(\Young^d,\Vecc)\;\stackrel{\sim}{\to}\; \Rep^d \GL_V,\quad F\mapsto F(V^{\otimes d}).\end{equation}
We can observe that $\bk$-linear functors out of $\Young^d$ are just modules over the Schur algebra $S(n,d)$, for $n\ge d$, which yields the more standard interpretations of \eqref{SW3}.

Since the functor $\Gamma^d\Vecc\to\Young^d, \;U\mapsto U^{\otimes d}$ corresponds to the inclusion of $\Gamma^d\Vecc$ into its idempotent completion, the left-hand sides of \eqref{Equiv0} and  \eqref{SW3} can be canonically identified.
Moreover, while we used Schur functors to make \eqref{SW2} precise, the assignment $V\mapsto F(V^{\otimes d})$ for some functor $F$ from $\Young^d$ to $\Vecc$, as appears in \eqref{SW3}, paves the way for more general definitions of polynomial functors in the sense of (Q2).
Finally, $\Young^d$ is clearly the category of $S_d$-representations that arise via the braid action from the tensor category $\Vecc$ as in (Q1). It thus follows that \eqref{SW3} gives an ansatz for connecting (Q1), (Q2) and (Q3).

To explain our results, we henceforth fix an algebraically closed field $\bk$ of prime characteristic~$p$ and only consider tensor categories $\cC$ over $\bk$.

\subsection*{Representations appearing in tensor categories (Q1)}
We introduce in \S\ref{sec:defis} the notion of an `inductive system', which is an assignment $\mA$ of a pseudo-abelian subcategory $\mA^d\subset\Rep S_d$ for each $d$, with strong compatibility conditions under $\Res^{S_d}_{S_{d-1}}$. For $\mathrm{char}(\bk)>0$, {\em semisimple} inductive systems were classified by Kleshchev in \cite{CS}.

We show that for a tensor category $\cC$ and $X\in\cC$, the representations $\omega(X^{\otimes d})$ from above define an inductive system $\mB_X[\cC]$, which is independent of $\omega$ and invariant under tensor functors. We show in Theorem~\ref{ThmVerp} that the $\mB_L[\Ver_p]$, for $L$ running over simple objects in $\Ver_p$, give precisely the semisimple inductive systems from~\cite{CS}.

For a tensor category $\cC$, we get an inductive system $\mB[\cC]=\sum_{X}\mB_X[\cC]$, which is closed under the induction product, ordinary tensor products, taking duals, and contains the trivial representations, see Theorem~\ref{ThmTCIS}. The minimal such `closed' system is the inductive system $$\Young\;=\;\mB[\Vecc]$$ of Young modules. Similarly, $\sVec$ yields the signed Young modules as studied in \cite{Donkin} and $\mB[\Ver_p]$ is a new closed inductive system. These results lead to interesting observations regarding modular representation theory of $S_d$. An explicit consequence is \Cref{CorAlg} stating that every `completely splittable' representation, see \cite{CS}, is `algebraic'. We also obtain in Proposition~\ref{Propp2}, for $p=2$, strict inclusions
$$\mB[\Vecc]\;\subset\;\mB[\Ver_4^+]\;\subset\;\mB[\Ver_4]\;\subset\;\mB[\Ver_8^+],$$
which lead to intriguing questions whether the chain continues to be strictly ascending, and whether the union is the inductive system of all representations.

\subsection*{Annihilator ideals of objects in tensor categories} The shadow in $K_0(\Rep S_n)$ of an inductive system yields an inductive system in a related sense due to Zalisskii \cite{Za}, see Remark~\ref{rem:Zal}. The latter notion was used by Baranov and Kleshchev in \cite{BK} to classify the maximal ideals in $\bk S_\infty$ when $p>2$. We reformulate their result in terms of tensor categories. To every object in a tensor category (more generally, to any inductive system) we can assign an ideal in $\bk S_\infty$. For $X\in\cC$ it is given by the kernel $\Ann(X)$ of the braid morphism `$\bk S_\infty \to \End_{\cC}(X^{\otimes \infty})$'. 

The maximal ideals in $\bk S_\infty$ for $p>2$ are then precisely $\Ann(L)$, for $L$ varying over the simple objects in $\Ver_p$. As a consequence of this realisation, we obtain a very simple description of these maximal ideals, which appears to be new; they are all generated by one symmetriser and/or one skew symmetriser. For $p=2$ the classification of maximal ideals is not yet complete, but we show that the two known maximal ideals are given precisely by $\Ann(L)$, for $L$ varying over the simple objects in $\Ver_4$ (but, we can no longer take $\Ver_2=\Vecc$). These results are proved in \Cref{PropNewGen} and \Cref{LemSpinIdeal}.

\subsection*{Strict polynomial functors (Q3)}
For every tensor category $\cC$, we propose in \S 7.1 a notion of a category $\SPol^d_{\circ}\cC$ of strict polynomial functors, generalising the known cases $\cC$ equal to $\Vecc$ or $\sVec$. Extending \eqref{Equiv0} we prove that, for an object $X\in\cC$, we have
$$\SPol^d_{\circ}\cC\;\simeq\; \Rep_{\cC}^d\GL_X$$
if and only if $\mB^d[\cC]=\mB^d_X[\cC]$. Here $\GL_X$ is the general linear affine group scheme internal to $\cC$ associated to $X$. In more detail, in \Cref{ThmMain} we actually prove equivalences
$$\cC\boxtimes\mo(\mB^d[\cC])\;\simeq\;\SPol_\circ^d\cC\qquad\mbox{and}\qquad \cC\boxtimes\mo(\mB^d_X[\cC])\;\simeq\;\Rep^d_{\cC}\GL_X,$$
where we write $\mo(-)$ for an abelian subcategory of $\Fun(-,\Vecc)$ of functors satisfying a minor finiteness property. In particular, the first equivalence demonstrates why (Q1) and (Q3) are simply different packagings of the same content.

Returning to Schur-Weyl duality, we prove in \Cref{PropInvTh} that for a tensor category~$\cC$ and $X\in\cC$, the braiding morphism
$$\bk S_d\;\to\;\End_{\GL_{X}}(X^{\otimes d})$$
is an isomorphism if and only if it is injective if and only if $\bk S_d\in\mB^d_X[\cC]$.

\subsection*{Universal polynomial functors (Q2)}
In \S 7.2 we define a universal functor to be the assignment of an endofunctor to each tensor category (over a fixed field) which `commutes with tensor functors'. The category $\Pol^d_{\bk}$ of universal polynomial functors of degree $d$ is then the topologising (= closed under taking subquotients and direct sums) subcategory of the category of universal functors generated by the universal functor `$X\mapsto X^{\otimes d}$'.
We prove an equivalence
$$\Pol^d_{\bk}\;\simeq\; \mo(\bT^d),$$
where $\bT^d$ is the sum of $\mB^d[\cC]$, for $\cC$ running over all tensor categories over $\bk$. As hoped, the two notions of polynomial functors are thus intimately related. For example
$$\cC\boxtimes\Pol^d_{\bk}\;\simeq\; \SPol^d_{\circ}\cC,$$
if $\mB^d[\cC]=\bT^d$, and generally $\SPol^d_{\circ}\cC$ is a Serre quotient of $\cC\boxtimes\Pol^d_{\bk}$. Note that \cite[Conjecture~1.4]{BEO} predicts that, at least if we focus solely on tensor categories of moderate growth,
$$\bT^d\;=\,\mB^d[\Ver_{p^\infty}], \qquad\mbox{so}\quad \Pol^d_{\bk}\;\simeq\; \mo(\mB^d[\Ver_p^\infty]).$$

Our results also prove some plausible statements that were lacking proof before, for instance that $\Fr_+X$ and $\bigwedge^2X$ are always simple $\GL_X$-representations, see Example~\ref{ExFrg2}(2).

\subsection*{Outlook and speculation}
Above we already mentioned several questions that arise from the current work and potential applications. More questions are presented throughout the text. Here we comment on potential applications to the structure theory of tensor categories.

For this purpose, it is convenient to consider a finite tensor category $\cC$. Then another interesting inductive system, besides $\mB[\cC]$, is $\mB_{\cP r}[\cC]$, which is similarly defined but only considering the projective objects in $\cC$. This system is closed except that it need not contain the trivial representation. In fact, we prove that $\cC$ is Frobenius-exact if and only if~$\mB^p_{\cP r}[\cC]$ contains trivial representations if and only if~$\mB_{\cP r}[\cC]$ is closed. By \cite{EOf} or \cite{CEO}, $\mB^p_{\cP r}[\cC]$ thus determines whether $\cC$ admits a tensor functor to $\Ver_p$. Based on recent progress in~\cite{CF}, it seems plausible that similarly $\mB^{p^2}_{\cP r}[\cC]$ could determine whether $\cC$ admits a tensor functor to $\Ver_{p^2}$ {\em etc.} On the other hand, \cite[Conjecture~1.4]{BEO} predicts that $\mB_{\cP r}[\cC]$ can only contain `very specific' representations. Should the conjecture be false this opens avenues for proving so.

\subsection*{Structure} In Section~\ref{SecPrel} we recall some necessary background. Sections~\ref{sec:indsys} to \ref{sec:ideals} are concerned with the notion of inductive systems and their appearance from tensor categories, while Sections~\ref{sec:back} to \ref{sec:coin} are devoted to both notions of polynomial functors and all above equivalences.

%%%%%%%%%%%%%%%%%%%%%%%%%%%%%%%%%%%%%%%%%%%%%%%%%%%%%%%%%%%%%%%%%%%%%%%%%%%%%%%%%%%%%%%%%%%%%%%%%%%%%%%%%%%%%%%%%%%%%%%%%%%%%%%%%%%%%%%%%%%%%%%%%%%%%%%%%%%%%%%%%%%%%%%%%%%%%%%%%%%%%%%%%%%%%%%%%%%%%%%%%%%%%%%%%%%%%%%%%%%%%%%%%%%%%%%%%%%%%%%%%%%%%%%%%%%%%%%%%%%%%%%%%%%%%%%%%%%%%%%%%%%%%%%%%%%%%%%%%%%%%%%%%%%%%%%%%%%%%%%%%%%%%%%%%%%%%%%%%%%%%%%%%%%%%%%%%%%%%%%%%%%%%%%%%%%%%%%%%%%%%%%%%%%%%%%%%%%%%%%%%%%%%%%%%%%%%%%%%%%%%%%%%%%%%%%%%%%%%%%%%%%%%%%%%%%%%%%%%%%%%%%%%%%%%%%%%%%%%%%%%%%%%%%%%%%%%%%%%%%%%%%%%%%%%%%%%%%%%%%%%%%%%%%%%%%%%%%%%%%%%%%%%%%%%%%%%%%%%%%%%%%%%%%%%%%%%%%%%%%%%%%%%%%%%%%%%%%%%%%%%%%%%%%%%%%%%%%%%%%%%%%%%%%%%%%%%%%%%%%%%%%%%%%%%%%%%%%%%%%%%%%%%%%%%%%%%%%%%%%%%%%%%%%%%%%%%%%%%%%%%%%%%%%%%%%%%%%%%%%%%%%%%%%%%%%%%%%%%%%%%%%%%%%%%%%%%%%%

\section{Preliminaries}\label{SecPrel}

\subsection{Conventions and notation}

Throughout, $\bk$ will denote an algebraically closed field. That $\bk$ be algebraically closed is not required for most statements, especially in Sections \ref{sec:indsys} to \ref{sec:ideals}. We set $\mN=\{0,1,2,\cdots\}$.

We will ignore certain set-theoretic subtleties, which could for instance be resolved by only considering tensor categories that are controlled in size by some cardinality, see \cite[Lemma~2.2.8]{CEO2}.

\subsubsection{}\label{sec:not} Sets of homomorphisms in a category $\cA$ are denoted by $\Hom_{\cA}(-,-)$, although for simplicity we sometimes simply write $\Hom(A,B)$ when clear that $A,B\in\cA$. We also abbreviate $\Hom_{\Vecc}$ to $\Hom_{\bk}$ and $\Hom_{\Rep G}$ to $\Hom_G$.

We identify the symmetric group $S_n$, for $n\in\mZ_{>0}$, with the permutation group of the set $\{1,2,\cdots,n\}$. This gives a canonical embedding $S_n\subset S_{n+1}$.
More generally, for a composition $\lambda\vDash n$, we consider the Young subgroup 
$S_\lambda<S_n$ of permutations that do not mix elements from different subsets in the partitioning $\{1,\ldots,\lambda_1\}\cup\{\lambda_1+1,\ldots,\lambda_1+\lambda_2\}\cup\cdots$. We follow the standard convention of denoting the simple $S_n$-modules in characteristic $p>0$ by $D^\lambda$, for $\lambda$ varying over $p$-regular partitions, see \cite[\S 11]{James-book}.

For an essentially small abelian category, we denote by $\Irr\cA$ and $\Indec\cA$ the sets of isomorphism classes of simple and indecomposable objects. Typically, for $i\in \Irr\cA$, we denote a representative by $L_i\in\cA$ and its projective cover by $P_i$.

\subsubsection{}\label{pacat}
A category $\cA$ is {\bf pseudo-abelian} if it is additive and idempotent-complete. A pseudo-abelian subcategory of $\cA$ is a full subcategory $\cB$ closed under taking direct sums and summands.

For two pseudo-abelian subcategories $\cB_1$ and $\cB_2$ of $\cA$, we denote by $\cB_1+\cB_2$ the pseudo-abelian subcategory of direct summands of direct sums of objects $M_1\oplus M_2$, with $M_i\in\cB_i$. We mostly deal with Krull-Schmidt categories, where we can leave out `direct summands of' in the previous sentence. Similarly, for a family $\{\cB_\alpha\subset\cA\}$ of pseudo-abelian subcategories, we write $\sum_\alpha\cB_\alpha$ for the pseudo-abelian subcategory of $\cA$ generated by (finite) direct sums of objects in the $\cA_\alpha$.

\subsection{Tensor categories}

\subsubsection{} An essentially small $\bk$-linear symmetric category
$(\cC,\otimes,\unit)$ is a {\bf tensor category over $\bk$} if
\begin{enumerate}
\item $\cC$ is abelian with objects of finite length;
\item $\bk\to\End_{\cC}(\unit)$ is an isomorphism;
\item $(\cC,\otimes,\unit)$ is rigid, meaning that every object $X$ has a monoidal dual $X^\ast$.
\end{enumerate}
Such categories are also sometimes called symmetric tensor categories or pretannakian categories. Note that $\unit$ is a simple object, see \cite[Theorem~4.3.8]{EGNO}. It then follows easily from the assumptions (1)-(3) that morphisms spaces in a tensor category are finite-dimensional. We denote the ind-completion of $\cC$ by $\Ind\cC$, and write $\Vecc^\infty=\Ind\Vecc$ for the category of all vector spaces over $\bk$.

The standard example of a tensor category is the category of finite dimensional (rational) representations $\Rep G=\Rep_{\bk}G$ of an abstract group $G$ or an affine group scheme $G$ over $\bk$.

A tensor functor between tensor categories is a $\bk$-linear symmetric monoidal exact functor. A tensor functor $F:\cC\to\cD$ is {\bf surjective} if every object in $\cD$ is a subquotient of an object in the essential image of $F$. A {\bf tensor subcategory} of a tensor category is a topologising rigid monoidal subcategory.

\subsubsection{} For a tensor category $\cC$ with objects $X_1,\cdots, X_n$, any $\sigma\in S_n$ gives an isomorphism (natural in $X_1,\cdots, X_n$)
$$\sigma_{X_1,\cdots, X_n}:\; X_1\otimes X_2\otimes \cdots \otimes X_n\;\xrightarrow{\sim}\;X_{\sigma^{-1}(1)}\otimes X_{\sigma^{-1}(2)}\otimes \cdots\otimes X_{\sigma^{-1}(n)}.$$
For $X\in\cC$, this yields algebra morphisms
\begin{equation}\label{BrMor}
\beta^n_X:\;\bk S_n\to \End_{\cC}(X^{\otimes n}),\quad\mbox{determined by}\quad \sigma\mapsto \sigma_{X,X,\cdots, X},\quad\mbox{for $\sigma\in S_n$}.
\end{equation}

\subsubsection{}\label{sec:proj}

We denote by $\Inj\cC$ the category of injective objects in $\Ind\cC$. Since $\Ind\cC$ is a Grothendieck category, every object has an injective hull. A tensor category~$\cC$ has a non-zero projective object if and only if it has enough projectives (since $X\otimes P$ is projective for $X\in\cC$ and projective $P\in\cC$) if and only if every object has a projective cover if and only if the indecomposable injective objects in $\Inj\cC$ actually belong to $\cC\subset\Ind\cC$ (since $P$ is projective if and only if $P^\ast$ is injective).  We then say that the tensor category `has projective objects' and write $\Proj\cC$ for the category of projective objects in $\cC$. In fact, an object in $\cC$ is projective if and only if it is injective, see \cite[Proposition~6.1.3]{EGNO}, although we will not use that.

For $i\in\Irr\cC$ denote the injective hull of $L_i$ in $\Ind\cC$ by $I_i$ and, should it exist, the projective cover of $L_i$ in $\cC$ by $P_i$.

For any associative algebra $A$ in $\cC$, we can consider its category $\Mod_{\cC}A$ of modules in~$\cC$, which are pairs of an object $Y\in\cC$ with an algebra morphism $A\to\uEnd(Y):=Y^\ast\otimes Y$, or equivalently an appropriate action map $A\otimes Y\to Y$.

\subsubsection{}\label{DefVer4} We refer to \cite{BE, BEO} for details on the incompressible tensor categories $\Ver_{p^n}^+$ and $\Ver_{p^n}$. We just mention here that, for $p=2$, as $\bk$-linear categories, we have
$$\Ver_4\;\simeq\; \bk[x]/x^2\mbox{-mod}\,\oplus\,\Vecc.$$
The regular $\bk[x]/x^2$-module $P$ is the projective cover of $\unit$ and $P\simeq V^{\otimes 2}$, where $V$ is the (projective) simple object in $\Ver_4$ corresponding to he copy of $\Vecc$ above.

\subsubsection{}
For groups $G,H$ and $V_1\in \Rep G$ and $V_2\in \Rep H$, in line with notation from Section~\ref{SecDTP} below, we write $V_1\boxtimes V_2$ for the $G\times H$-representation on $V_1\otimes V_2$. If $G=H$ and $V=V_1=V_2$, we can abbreviate this to $V^{\boxtimes 2}$, and similarly for higher powers.

%Perhaps immediately go to $\bk$-linear $\cA$ with faithful functor $F:\cA\to\Vecc$ and define topologising subcategory genreated by $F$ (automatic abelian). Remark that for $\cA=A\md$, all functors are subquotient of (possibly infinite) direct sum of forgetful.

\subsection{Deligne tensor product}\label{SecDTP}

\subsubsection{} Takeuchi's theorem, see \cite[Theorem~1.9.15]{EGNO}, states that every $\bk$-linear abelian category with finite dimensional morphism spaces and all objects of finite length is equivalent to the category of (finite dimensional) comodules over some coalgebra over $\bk$.

We denote by $\Tak_{\bk}$ the 2-category of $\bk$-linear categories as above, with 1-morphisms given by $\bk$-linear functors and 2-morphisms given by all natural transformations. We consider 2-subcategories
$$\Tak^{\ex}\subset \Tak^{\rex}\subset \Tak$$
 where we keep the same objects and 2-morphisms, but only consider (right) exact functors. 
 
 In \cite{Del90}, see also \cite{CF} for an overview, Deligne introduced the product $\cA\boxtimes \cB\in \Tak_{\bk}$ for $\cA,\cB$ in $\Tak_{\bk}$. This is a category equipped with a bilinear bifunctor 
\begin{equation}
\label{DefboxDel}-\boxtimes-:\;\cA\times\cB\to\cA\boxtimes\cB, \quad (X,Y)\mapsto X\boxtimes Y,    
\end{equation}
 satisfying the following universal property.
For every $\bk$-linear abelian category $\cC$, restriction along \eqref{DefboxDel} yields an equivalence between the category of right exact $\bk$-linear functors $\cA\boxtimes \cB\to\cC$ with the category of bilinear bifunctors $\cA\times\cB\to\cC$ that are right exact in each variable.

\subsubsection{}\label{Defpf} By construction, we obtain a pseudo-functor
$$-\boxtimes-:\; \Tak^{\rex}\times \Tak^{\rex}\,\to\, \Tak^{\rex}$$
which restricts, by \cite[Proposition~5.7]{Del90}, to a pseudo-functor
$$-\boxtimes-:\; \Tak^{\ex}\times \Tak^{\ex}\,\to\, \Tak^{\ex}.$$
A useful observation, stated formally in \cite[Proposition~3.2.6]{CF}, but applied earlier in \cite{CEO, EOf}, is that the latter can again be extended to a pseudo-functor
\begin{equation}\label{definitive}
-\boxtimes-:\; \Tak^{\ex}\times \Tak\,\to\, \Tak.
\end{equation}

It will be very convenient to choose a partially strict version of the pseudo-functor \eqref{definitive}. Concretely, we will assume that, for every $\cA\in\Tak$, the pseudo-functor
$$-\boxtimes \cA:\;\Tak^{\ex}\to\Tak$$
is actually a (strict) 2-functor and, moreover,
$-\boxtimes\Vecc$ is simply the inclusion $\Tak^{\ex}\subset\Tak$. We freely use the corresponding identities 
$$\cC\boxtimes\Vecc=\cC, \quad \Id_{\cC}\boxtimes\Id_{\cA}=\Id_{\cC\boxtimes\cA}\quad\mbox{and} \quad (G\circ F)\boxtimes \cA=(G\boxtimes\cA)\circ (F\boxtimes\cA).$$

To obtain this strict version, it suffices to choose, for every $\cA\in\Tak$, a faithful exact functor to $\Vecc$ (and choosing the identity functor for $\cA=\Vecc$) and correspondingly define $\cC\boxtimes\cA$ as a category of comodules in $\cC$ over the coalgebra over $\bk$ defined from $\cA\to\Vecc$. We refer to \cite{CF} for more details.
For convenience, we make the additional choice that in case $\cA=\Rep S_d$, the choice of $\cA\to\Vecc$ is simply the forgetful functor $\Frg$.

Our strictness assumption on \eqref{definitive} and the fact that it is a pseudo-functor between (strict) 2-categories allow us to simplify some of its coherence conditions:

\begin{lemma}\label{LemNonsense}
For $\Delta:\cA\to\cB$ in $\Tak$ and $F:\cC\to\cD$ in $\Tak^{\ex}$, the natural isomorphism
$$a(F,\Delta):\;(\cD\boxtimes \Delta)\circ (F\boxtimes \cA)\;\stackrel{\sim}{\Rightarrow}\; (F\boxtimes\cB)\circ (\cC\boxtimes \Delta)$$
obtained from the composition isomorphisms to and from $(F\boxtimes \Delta)$, satisfy
\begin{enumerate}
\item $a(\Id_{\cC},\Delta)=\Id_{\cC\boxtimes\Delta}$;
\item $a( G\circ F,\Delta)=(G\boxtimes\cA)(a(F,\Delta))\circ a(G,\Delta)_{F\boxtimes\cA}$, for every $G:\cD\to\cE$ in $\Tak^{\ex}$;
\item For every natural transformation $\zeta:F\Rightarrow G$, for $G:\cC\to\cD$ in $\Tak^{\ex}$, 
$$a(G,\Delta)\circ (\cD\boxtimes\Delta)(\zeta\boxtimes\cA)\;=\; (\zeta\boxtimes\cB)_{\cC\boxtimes\Delta}\circ a(F,\Delta).$$
\item For every natural transformation $\rho:\Delta\Rightarrow \Delta'$, for $\Delta':\cA\to\cB$ in $\Tak$,
$$(F\boxtimes\cB)(\cC\boxtimes\rho)\circ a(F,\Delta)\;=\; a(F,\Delta')\circ (\cD\boxtimes \rho)_{F\boxtimes \cA}.$$
\end{enumerate}
\end{lemma}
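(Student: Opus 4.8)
The plan is to treat \Cref{LemNonsense} as pure $2$-categorical bookkeeping: each of the four identities will drop out of one of the defining axioms of the pseudo-functor \eqref{definitive}, once the definition of $a(F,\Delta)$ is unwound and the strictness conventions of \S\ref{Defpf} are used to kill the coherence data living in the $\Tak^{ex}$-direction. So I would first make $a(F,\Delta)$ explicit. Write $\gamma_{(F_2,\Delta_2),(F_1,\Delta_1)}\colon (F_2\boxtimes\Delta_2)\circ(F_1\boxtimes\Delta_1)\stackrel{\sim}{\Rightarrow}(F_2F_1)\boxtimes(\Delta_2\Delta_1)$ for the compositor $2$-isomorphisms of \eqref{definitive}. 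The two factorisations $(F,\Delta)=(F,\Id_\cB)\circ(\Id_\cC,\Delta)=(\Id_\cD,\Delta)\circ(F,\Id_\cA)$ in $\Tak^{ex}\times\Tak$ then furnish
\[\gamma''\colon (F\boxtimes\cB)\circ(\cC\boxtimes\Delta)\stackrel{\sim}{\Rightarrow} F\boxtimes\Delta,\qquad \gamma'\colon (\cD\boxtimes\Delta)\circ(F\boxtimes\cA)\stackrel{\sim}{\Rightarrow} F\boxtimes\Delta,\]
and $a(F,\Delta)=(\gamma'')^{-1}\circ\gamma'$ is exactly the $2$-isomorphism ``obtained from the composition isomorphisms to and from $F\boxtimes\Delta$'' in the statement. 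Two consequences of \S\ref{Defpf} will be used throughout: (a) \eqref{definitive} is normal, so any compositor $\gamma$ one of whose two $1$-cells is an identity $1$-cell is the identity $2$-cell; and (b) $-\boxtimes\cA\colon\Tak^{ex}\to\Tak$ is a \emph{strict} $2$-functor, so $(G\circ F)\boxtimes\cA=(G\boxtimes\cA)\circ(F\boxtimes\cA)$ on the nose and $\gamma_{(G,\Id_\cA),(F,\Id_\cA)}=\Id$, and likewise over $\cB$.

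Next I would dispatch the four items. Item (1): if $F=\Id_\cC$, then in $\gamma''$ and $\gamma'$ one of the two composed $1$-cells is an identity, so by (a) both are identities and $a(\Id_\cC,\Delta)=\Id_{\cC\boxtimes\Delta}$. Item (2): apply the associativity (pentagon) coherence of \eqref{definitive} to the triple $(G,\Id_\cB),(F,\Id_\cB),(\Id_\cC,\Delta)$ and, dually, to $(\Id_\cE,\Delta),(G,\Id_\cA),(F,\Id_\cA)$; since the pure-$\Tak^{ex}$ compositors $\gamma_{(G,\Id),(F,\Id)}$ vanish by (b), these pentagons express $\gamma''$ and $\gamma'$ for $G\circ F$ as whiskered composites of the corresponding cells for $F$ and for $G$ (one also needs the single further pentagon relating the two residual ``mixed'' compositors $\gamma_{(G,\Id_\cB),(F,\Delta)}$ and $\gamma_{(G,\Delta),(F,\Id_\cA)}$); substituting into $(\gamma'')^{-1}\circ\gamma'$ and cancelling the middle terms yields precisely the asserted identity (2), with the understanding that the leftmost whiskering there is by $G\boxtimes\cB$, so that it is well-typed against the codomain $\cD\boxtimes\cB$ of $a(F,\Delta)$. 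Items (3) and (4): these are nothing but the naturality of the compositor $\gamma$ in $2$-cells, applied to $(\zeta,\Id_\Delta)$ for (3) and to $(\Id_F,\rho)$ for (4); in each case one writes the $2$-cell as a whiskering of a $2$-cell in one variable by an identity in the other, feeds it into the two naturality squares for $\gamma''$ and for $\gamma'$, discards the pure-$\Tak^{ex}$ coherence cells via (b), and pastes the squares.

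The difficulty here is entirely organisational: one must carry out the $2$-cell (de)compositions in the product $2$-category $\Tak^{ex}\times\Tak$ with care, keep every whiskering well-typed — the point most prone to error being the distinction between functors landing in $\cD\boxtimes\cB$ and those landing in $\cD\boxtimes\cA$ — and verify at each cancellation that the coherence cell being discarded genuinely has identities in the relevant slots, so that (a) or (b) applies. Conceptually nothing is used beyond the pseudo-functoriality of \eqref{definitive} and the normalisations of \S\ref{Defpf}; accordingly I expect the final write-up to be short, presenting (1)--(4) as the four immediate instances of normality, the pentagon, and naturality of the compositors that they are.
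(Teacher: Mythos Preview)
Your proposal is correct and is precisely the argument the paper has in mind: the paper states the lemma without proof, treating it as an immediate consequence of the pseudo-functor coherence axioms for \eqref{definitive} together with the strictness conventions of \S\ref{Defpf}, and your sketch simply spells out those verifications (normality for (1), the pentagon for (2), naturality of the compositors for (3) and (4)). You are also right that in (2) the leftmost whiskering must be by $G\boxtimes\cB$ rather than $G\boxtimes\cA$ for the composite to be well-typed; this is a typo in the statement.
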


For $\cC$ and $\cD$ tensor categories, their product $\cC\boxtimes\cD$ is again a tensor product, see~\cite{Del90}. Furthermore, by restricting one of the arguments in $\cC\times\cD\to\cC\boxtimes\cD$ to $\Vecc$, we find embeddings of $\cC$ and $\cD$ into $\cC\boxtimes\cD$. By abuse of notation, we sometimes identify $\cC$ and $\cD$ with the equivalent tensor subcategories in $\cC\boxtimes\cD$.

\begin{example}\label{ExOm}Let $\cC$ be a tensor category.
\begin{enumerate}
\item By our choice of realisation of $-\boxtimes-$, the category $\cC\boxtimes\Rep S_d$ {\em is} the category of $S_d$-representations in $\cC$, that is of functors $B\,S_d\to \cC,$
where $B\,S_d$ is the one-object category corresponding to $S_d$. For example $\Vecc\boxtimes\Rep S_d=\Rep S_d$. 

For an $S_d$-representation $M$ in $\cC$ (an object in $\cC\boxtimes \Rep S_d$), we write $(M)^{S_d}$ resp. $(M)_{S_d}$, for the subobject, resp. quotient, of invariants resp coinvariants.

\item We can view $-^{\otimes n}$ as a (non-additive) symmetric monoidal functor
$$\RT_n=\RT_n^{\cC}:\;\cC\xrightarrow{-^{\otimes n}}\cC\boxtimes\Rep S_n,$$
by using the braid action \eqref{BrMor} of $S_n$ (`$R$' refers to representation and `$T$' to tensor).
\item For a fixed exact and faithful functor
$\Omega :\cC\to\Vecc,$
and $n\in\mZ_{>0}$ we consider the composite functor, 
$$\Omega^n:\; \cC\xrightarrow{\RT_n}\cC\boxtimes\Rep S_n\xrightarrow{\Omega\boxtimes\Rep S_n}\Rep S_n.$$
In particular, $\Omega^1=\Omega$.
\end{enumerate}

\end{example}

\subsection{Module categories and enriched categories}\label{ModEnr}
Let $\cC$ be a tensor category over~$\bk$.

\subsubsection{}
A (left) $\cC$-module category is a category $\cM$ in $\Tak_{\bk}$ equipped with a bilinear functor
$$-\circledast-:\; \cC\times\cM\;\to\; \cM,$$
exact in the first variable, and a natural isomorphism
$$(X\otimes Y)\circledast M\;\xrightarrow{\sim}\; X\circledast (Y\circledast M),$$
satisfying the conditions in \cite[\S 7.1]{EGNO}. In particular $\circledast$ is automatically exact in the second variable, so that we can view it as an exact functor
$\cC\boxtimes\cM\to \cM.$

Following \cite[\S 7.9]{EGNO}, for a $\cC$-module category $\cM$, we have a bifunctor 
$$\uHom(-,-):\;\cM^{\op}\times\cM\,\to\,\Ind\cC,$$
where $\uHom(M_1,M_2)$ represents the left exact functor
$$\Hom_{\cM}(-\circledast M_1,M_2):\;\cC^{\op}\to\Vecc.$$

\subsubsection{}\label{FinHom}
Let $\cM$ be a $\cC$-module category such that the internal hom takes values in $\cC\subset\Ind\cC$. Then we can associate to $\cM$ a $\cC$-enriched category $\uM$ with objects the same as $\cM$, but with morphism objects given by the internal homs. For example, the unit morphisms $\unit\to\uEnd(M)$ come from the identity under
$$\Hom_{\cC}(\unit,\uEnd(M))\;\simeq\;\Hom_{\cM}(\unit\circledast M,M)\simeq \End_{\cM}(M).$$

\begin{example}\label{SelfEnr}
The standard self-enrichment $\uC$ of $\cC$, with $\Ob\uC=\Ob\cC$ and
$$\Hom_{\uC}(X,Y)\;=\; \uHom(X,Y):=X^\ast\otimes Y$$
can be obtained in this way from the regular $\cC$-module.
\end{example}

Denote by $\Fun_{\cC}(-,-)$ the category of $\cC$-enriched functors between two $\cC$-enriched categories (so that $\Fun_{\Vecc}$ is just $\Fun_{\bk}$). We write $\Fun^{\cC}_{\bk}(-,-)$ for the category of $\cC$-module functors between two $\cC$-module categories, as defined in \cite[\S 7.2]{EGNO}.

\begin{example}
$\cC$-enriched categories with one object are the same thing as algebra objects in $\cC$, and we use the same notation for the interpretation of an algebra object as an enriched category. For an algebra object $R$ in $\cC$ and any $\cC$-module category $\cN$ the category
$$\Mod_{\cN}R\;:=\;\Fun_{\cC}(R,\uN)$$
is the category of modules $R\circledast N\to N$ of the monad $R\circledast-$ on $\cN$.
\end{example}

\begin{lemma}\label{LemModtoEnr}
For $\cC$-module categories $\cM,\cN$ as in \ref{FinHom}, there is a functor
$$\Fun^{\cC}_{\bk}(\cM,\cN)\;\to\; \Fun_{\cC}(\uM,\uN)$$
such that that for any $M\in\cM$, the triangle
$$\xymatrix{
\Fun^{\cC}_{\bk}(\cM,\cN)\ar[rr]\ar[rrd]&& \Fun_{\cC}(\uM,\uN)\ar[d]\\
&&\Mod_{\cN}\uEnd(M)
}$$
is commutative, where the vertical arrow is simply restriction onto the full subcategory of $\cM$ on the object $M$ and the diagonal arrow sends a $\cC$-module functor $F$ to the $\uEnd(M)$-module
$$\uEnd(M)\circledast F(M)\to F(M)$$
inherited from the $\uEnd(M)$-module structure of $M$.
\end{lemma}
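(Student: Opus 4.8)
The plan is to build the horizontal functor by hand out of the module‑functor structure and the defining adjunction of the internal hom, and then to observe that the two legs of the triangle compute literally the same object of $\Mod_{\cN}\uEnd(M)$. Throughout, write $\ev_{M_1,M_2}\colon\uHom(M_1,M_2)\circledast M_1\to M_2$ for the counit of the adjunction $\Hom_{\cC}(X,\uHom(M_1,M_2))\cong\Hom_{\cM}(X\circledast M_1,M_2)$ that defines $\uHom$ on $\cM$, and similarly in $\cN$. Given a $\cC$-module functor $(F,s)$, with structure isomorphism $s_{X,M}\colon F(X\circledast M)\xrightarrow{\sim}X\circledast F(M)$, I would send it to the $\cC$-enriched functor $\uF\colon\uM\to\uN$ that agrees with $F$ on objects and whose action on hom-objects $\uF_{M_1,M_2}\colon\uHom(M_1,M_2)\to\uHom(F(M_1),F(M_2))$ is the morphism of $\cC$ adjoint, under the defining adjunction of $\uHom(F(M_1),F(M_2))$, to the composite
\[
\uHom(M_1,M_2)\circledast F(M_1)\;\xrightarrow{\,s^{-1}\,}\;F\!\big(\uHom(M_1,M_2)\circledast M_1\big)\;\xrightarrow{\,F(\ev_{M_1,M_2})\,}\;F(M_2);
\]
this morphism lands in $\cC$ and not merely in $\Ind\cC$ precisely by the standing hypothesis of \ref{FinHom}.

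Next I would check that $\uF$ really is a $\cC$-enriched functor, which amounts to two identities in $\cC$: compatibility with the unit morphisms $\unit\to\uEnd(M)$, which falls out of the unit constraint of $(F,s)$ after unwinding both sides through the adjunction; and compatibility with the enriched composition morphisms $\uHom(M_2,M_3)\otimes\uHom(M_1,M_2)\to\uHom(M_1,M_3)$, which, once both sides are transported across the adjunction, reduces to the associativity constraint of the module functor $(F,s)$ together with the naturality of $\ev$. I expect this composition-coherence diagram chase to be the main — indeed essentially the only — piece of genuine work in the proof; it is routine but needs to be threaded carefully through the two adjunctions and the module-functor axioms.

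For the action on morphisms, given a morphism $\alpha\colon F\Rightarrow G$ in $\Fun^{\cC}_{\bk}(\cM,\cN)$ I would let $\underline{\alpha}\colon\uF\Rightarrow\uG$ have the same components $\alpha_M\colon F(M)\to G(M)$, read as morphisms $\unit\to\uHom(F(M),G(M))$ via $\Hom_{\cC}(\unit,\uHom(-,-))\cong\Hom_{\cN}(-,-)$. The $\cC$-enriched naturality of $\underline{\alpha}$ — equality of the two composites $\uHom(M_1,M_2)\to\uHom(F(M_1),G(M_2))$ factoring through $\uHom(F(M_1),F(M_2))$ and through $\uHom(G(M_1),G(M_2))$ respectively — translates under the adjunction into the ordinary naturality square of $\alpha$ applied to $\ev_{M_1,M_2}$, where one uses that $\alpha$ is a morphism of module functors to identify $\alpha_{\uHom(M_1,M_2)\circledast M_1}$ with $\uHom(M_1,M_2)\circledast\alpha_{M_1}$. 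That $F\mapsto\uF,\ \alpha\mapsto\underline{\alpha}$ preserves identities and composition is then immediate, yielding the desired functor $\Fun^{\cC}_{\bk}(\cM,\cN)\to\Fun_{\cC}(\uM,\uN)$.

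Finally I would verify the triangle at a fixed $M\in\cM$. The vertical arrow restricts $\uF$ to the full $\cC$-enriched subcategory on the single object $M$ — that is, to the algebra object $\uEnd(M)$ — producing the object $F(M)$ of $\cN$ equipped with the action $\uEnd(M)\circledast F(M)\to F(M)$ obtained by whiskering the algebra morphism $\uF_{M,M}\colon\uEnd(M)\to\uEnd(F(M))$ with $\ev_{F(M),F(M)}$. Specialising the defining formula for $\uF_{M_1,M_2}$ to $M_1=M_2=M$ shows that this action is exactly $s^{-1}$ followed by $F$ applied to the canonical action $\uEnd(M)\circledast M\to M$, which is precisely the $\uEnd(M)$-module that the diagonal arrow attaches to $F$; and on a morphism $\alpha$ both legs return $\alpha_M$. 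Hence the triangle commutes on the nose (for the fixed choice of internal-hom functors $\uHom$), completing the proof.
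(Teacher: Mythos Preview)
Your proposal is correct and follows essentially the same approach as the paper: the paper defines $\uF_{M_1,M_2}$ via the Yoneda morphism induced by the natural transformation $\Hom_{\cM}(-\circledast M_1,M_2)\Rightarrow\Hom_{\cN}(-\circledast F(M_1),F(M_2))$ (functoriality of $F$ followed by precomposition with $s^{-1}$), which upon evaluating at the universal element $\ev_{M_1,M_2}$ is exactly your adjoint of $F(\ev)\circ s^{-1}$. The paper leaves the coherence checks and the commutativity of the triangle as an exercise, whereas you spell them out; in that sense your write-up is more complete than the paper's own proof.
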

\begin{proof}
Given a module functor, which is a functor $F:\cM\to\cN$ with natural transformation
$$s_{X,M}:F(X\circledast M)\to X\circledast F(M)$$
satisfying the conditions in \cite[\S 7.2]{EGNO}, we assign the enriched functor $F':\uM\to\uN$ as follows. For $M\in\cM$ we set $F'(X)=F(X)$, and the morphisms
$$\uHom(M_1,M_2)\to \uHom(F'(M_1),F'(M_2))$$ 
 in $\cC$ come from the following natural transformations:
$$\Hom_{\cM}(-\circledast M_1,M_2)\Rightarrow \Hom_{\cN}(F(-\circledast M_1),F(M_2))\Rightarrow \Hom_{\cN}(-\circledast F(M_1),F(M_2)),$$
where the first one comes functoriality of $F$ and the second one from $s$.

Given a morphism $\nu$ of $\cC$-module functors from $(F,s)$ to $(G,t)$, the morphisms 
$$\nu_M: F'(M)=F(M)\to G(M)=G'(M)$$
can be verified to form a natural transformation of $\cC$-enriched functors.

That these assignments produce a functor $\Fun^{\cC}(\cM,\cN)\to \Fun_{\cC}(\uM,\uN)$ with the commutative triangle is left as an exercise.
\end{proof}

\subsubsection{}\label{MoreNon}
As observed in \cite[Proposition~3.3.6]{CF}, when $\cC$ is a tensor category and $\Delta:\cA\to\cB$ a functor in $\Tak$, then $\cC\boxtimes\cA$ and $\cC\boxtimes\cB$ are canonically $\cC$-module categories, and  $\cC\boxtimes \Delta$ is a $\cC$-module functor. This simply follows by considering naturality and $\cC\boxtimes\cC\boxtimes \cA$. We thus obtain a functor
$$\cC\boxtimes-:\;\Fun_{\bk}(\cA,\cB)\to \Fun_{\bk}^{\cC}(\cC\boxtimes\cA,\cC\boxtimes\cB).$$

%%%%%%%%%%%%%%%%%%%%%%%%%%%%%%%%%%%%%%%%%%%%%%%%%%%%%%%%%%%%%%%%%%%%%%%%%%%%%%%%%%%%%%%%%%%%%%%%%%%%%%%%%%%%%%%%%%%%%%%%%%%%%%%%%%%%%%%%%%%%%%%%%%%%%%%%%%%%%%%%%%%%%%%%%%%%%%%%%%%%%%%%%%%%%%%%%%%%%%%%%%%%%%%%%%%%%%%%%%%%%%%%%%%%%%%%%%%%%%%%%%%%%%%%%%%%%%%%%%%%%%%%%%%%%%%%%%%%%%%%%%%%%%%%%%%%%%%%%%%%%%%%%%%%%%%%%%%%%%%%%%%%%%%%%%%%%%%%%%%%%%%%%%%%%%%%%%%%%%%%%%%%%%%%%%%%%%%%%%%%%%%%%%%%%%%%%%%%%%%%%%%%%%%%%%%%%%%%%%%%%%%%%%%%%%%%%%%%%%%%%%%%%%%%%%%%%%%%%%%%%%%%%%%%%%%%%%%%%%%%%%%%%%%%%%%%%%%%%%%%%%%%%%%%%%%%%%%%%%%%%%%%%%%%%%%%%%%%%%%%%%%%%%%%%%%%%%%%%%%%%%%%%%%%%%%%%%%%%%%%%%%%%%%%%%%%%%%%%%%%%%%%%%%%%%%%%%%%%%%%%%%%%%%%%%%%%%%%%%%%%%%%%%%%%%%%%%%%%%%%%%%%%%%%%%%%%%%%%%%%%%%%%%%%%%%%%%%%%%%%%%%%%%%%%%%%%%%%%%%%%%%%%%%%%%%%%%%%%%%%%%%%%%%%%%%%%%%%

\section{Inductive systems of symmetric group representations}\label{sec:indsys}
Recall that we work over a fixed (algebraically closed) field $\bk$.

\subsection{Inductive systems}

\label{sec:defis}

\begin{definition}\label{DefIS}
An {\bf inductive system} is the assignment of a pseudo-abelian subcategory $\mA^n\subset \Rep S_n$, for each $n\in\mZ_{>0}$, so that $\mA^{n-1}\subset \Rep S_{n-1}$ is the minimal pseudo-abelian subcategory in which the composite
$$\mA^n\;\hookrightarrow\;\Rep S_n\;\xrightarrow{\Res^{S_n}_{S_{n-1}}}\Rep S_{n-1}$$
takes values.
An inductive system $\mA$ is {\bf semisimple} if $\mA^n$ comprises only semisimple representations for all $n$.
\end{definition}

\begin{remark}\label{rem:Zal}
\begin{enumerate}
\item Clearly an inductive system could be defined in terms of (isomorphism classes of) indecomposable modules, rather than subcategories.
\item Using the point of view in (1), our notion of a semisimple inductive system becomes equivalent with Kleshchev's \cite[Definition~0.2]{CS}.
\item There is a different notion of `inductive system' due to Zalesskii \cite[Definition~1.1]{Za} (of which Kleshchev's definition is again a special case), which is a collection of simple representation $\Phi^n$ for each $\bk S_n$ such that the representations in $\Phi^{n-1}$ are precisely those that are simple constituents of the restrictions of those in $\Phi^n$.
Hence, taking the collection of simple constituents of the modules in our definition yields an inductive system in the sense of \cite{Za}.
\end{enumerate}
\end{remark}

\subsubsection{}
We write $\mA\subset\mB$ for two inductive systems if $\mA^n$ is a subcategory of $\mB^n$ for all $n\in\mZ_{>0}$.
For a family of inductive systems $\{\mA_\alpha\mid\alpha\}$, we denote by $\sum_\alpha\mA_\alpha$ the inductive system with $(\sum_\alpha\mA_\alpha)^n=\sum_\alpha\mA_\alpha^n$, see \ref{pacat}.

For a given inductive system $\mA$, we can define the inductive system $\mI(\mA)$, where for each $n\in\mN$ the category~$\mI(\mA)^n $ comprises the direct summands of $S_n$-representations of the form
$$\Ind^{S_n}_{S_\lambda}(V_1\boxtimes V_2\boxtimes \cdots \boxtimes V_l),$$
where $\lambda\vDash n$ is a composition of length $l$ and~$V_i\in\mA^{\lambda_i}$. That $\mI(\mA)$ is again an inductive system follows from Mackey's Theorem. It is immediate that this procedure is idempotent, {\it i.e.} $\mI(\mI(\mA))=\mI(\mA)$.

\begin{example}\label{ExYoung}
The most obvious non-zero inductive system $\mA$ is defined by letting $\mA^n$ be the category of trivial $S_n$-representations. 

In this case we set $\Young:=\mI(\mA)$. Hence $\Young^n$ is the category of direct sums of Young modules, that is direct summands of direct sums of permutation modules $M^{\lambda}=\Ind^{S_n}_{S_\lambda}\unit$ for compositions $\lambda\vDash n$.
\end{example}

\begin{problem}
In \cite{BK}, the minimal inductive systems in Zalesskii's sense were classified for $\mathrm{char}(\bk)>2$. They are precisely the semisimple inductive systems $\bC\mS(s)$ in \Cref{ThmKl} below. These are also minimal inductive systems in our sense, but no longer the only ones. For example the inductive system of projective objects does not contain any of the $\bC\mS(s)$. It would be interesting to classify minimal inductive systems in the sense of Definition~\ref{DefIS}.
\end{problem}

\subsection{Semisimple inductive systems} If $\mathrm{char}(\bk)=0$ then all inductive systems are semi\-simple, and they are in bijection with the set of ideals in the poset of partitions (for the inclusion order) that are `minimal' in the sense that removing any one partition leaves a subset which is no longer an ideal.
Semisimple inductive systems in positive characteristic were classified by Kleshchev in \cite{CS}.

\subsubsection{}Following \cite[Definition~0.1]{CS}, a simple $S_n$-representation is {\bf completely splittable} (CS) if its restriction to any Young subgroup $S_\lambda <S_n$ is semisimple. CS representations were classified in \cite{CS}, but we only require the following results.

\begin{theorem}[Kleshchev]Let $\bk$ be a field of characteristic $p>0$.\label{ThmKl}
\begin{enumerate}
\item A simple $S_n$-representation is completely splittable if and only if it belongs to a semisimple inductive system.
\item There are $p-1$ minimal semisimple inductive systems, labelled $\bC\mS(s)$, $s\in\{1,\cdots, p-1\}$. All other semisimple inductive systems are sums of the minimal ones.
\item $\bC\mS^n(1)$ comprises precisely the trivial $S_n$-representations.
\item If $p>2$, then $\bC\mS^n(p-s)$ comprises precisely the tensor product of the sign representation with the representations in $\bC\mS^n(s)$.
\item If $s>1$, then $\bC\mS^n(s)$ contains the trivial representation if and only if $n+s\le p$. 
\item For $n\ge p$, the intersection
$\bC\mS^n(s)\cap\bC\mS^n(t),$ with $s\not=t$,
contains only the zero representation.
\end{enumerate}
\end{theorem}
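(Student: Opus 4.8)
The plan is to take the substantive input from Kleshchev's classification in \cite{CS} and to supply only the soft reductions for the parts that follow from it formally. First I would recall from \cite{CS} the explicit list of partitions $\lambda$ with $D^\lambda$ completely splittable, the branching rule for $\Res^{S_n}_{S_{n-1}}D^\lambda$ (which for such $\lambda$ is semisimple and multiplicity free, with all constituents again completely splittable), and the parametrisation of the minimal semisimple inductive systems by $s\in\{1,\dots,p-1\}$. Parts~(1) and~(2) are then essentially this classification repackaged: every completely splittable simple lies in the semisimple inductive system $\mA$ whose level-$n$ part is generated by \emph{all} completely splittable simples of $S_n$ --- that $\mA$ satisfies the minimality axiom of Definition~\ref{DefIS} uses the fact that every completely splittable partition arises by removing a box from a larger completely splittable one in the branching rule --- while conversely membership in a semisimple inductive system forces semisimple restriction to $S_{n-1}\supset S_{n-2}\supset\cdots$ and, via the constituents and conjugacy of Young subgroups, to every Young subgroup; the decomposition of an arbitrary semisimple inductive system as a sum of the $p-1$ minimal ones $\bC\mS(s)$ is the combinatorial core of \cite{CS}.

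For the rest I would argue directly. For~(3): the assignment $n\mapsto\{\text{trivial }S_n\text{-representations}\}$ is an inductive system (the trivial module restricts to the trivial module), is semisimple, and is minimal (a nonzero sub-inductive-system is nonzero at every level by the minimality axiom, and at each level is a pseudo-abelian subcategory of a category whose only indecomposable object is the trivial representation, hence is the whole thing), so it is one of the $\bC\mS(s)$, namely the one Kleshchev normalises to $\bC\mS(1)$. For~(4): tensoring with the sign representation is an exact self-equivalence of $\Rep_{\bk}S_n$ commuting with $\Res^{S_n}_{S_{n-1}}$ (because $\mathrm{sgn}_{S_n}$ restricts to $\mathrm{sgn}_{S_{n-1}}$), so it permutes inductive systems preserving semisimplicity and minimality, hence permutes $\{\bC\mS(s)\}_{s=1}^{p-1}$; on the combinatorial parametrisation this permutation sends $s$ to $p-s$ --- it reverses the residue datum labelling a family, in accordance with the Mullineux involution on $p$-regular partitions --- and for $p>2$ it is fixed-point free, in keeping with $p-1$ being even and $\bC\mS(1)\neq\bC\mS(p-1)$. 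Finally~(5) and~(6) I would read off from the explicit description: the chains defining $\bC\mS(s)$ pass through the one-row partition $(n)$ exactly while $n+s\le p$, and for $n\ge p$ the labelling datum distinguishes the families, so that $\bC\mS^n(s)\cap\bC\mS^n(t)=0$ for $s\neq t$.

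The part I expect to be the real obstacle is the combinatorial classification underlying (1), (2), (5) and~(6) --- pinning down exactly which $D^\lambda$ are completely splittable, together with their branching --- which is the substance of \cite{CS}; I would not attempt to reprove it, and would instead present (1)--(6) as that classification combined with the formal reductions above.
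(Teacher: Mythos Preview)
Your proposal is correct and takes essentially the same approach as the paper: both defer the substantive classification to \cite{CS} and supply only the soft reductions, with (4) handled via the observation that tensoring with sign is an involution on semisimple inductive systems whose effect on the parametrisation is read off from \cite{CS}, and (3), (5), (6) read off from the explicit description there.

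One small caveat on your sketch for the converse direction of (1): the claim that semisimple restriction along the chain $S_{n-1}\supset S_{n-2}\supset\cdots$ yields semisimple restriction to \emph{every} Young subgroup ``via the constituents and conjugacy of Young subgroups'' is not a complete argument as stated. Conjugacy lets you reorder the blocks of a Young subgroup, but a Young subgroup such as $S_2\times S_2\subset S_4$ is not conjugate into any $S_{n-1}$, so the inductive hypothesis on the constituents of $\Res^{S_n}_{S_{n-1}}M$ does not directly apply. The paper sidesteps this by citing \cite[Theorems~2.1 and~2.8(i)]{CS} for this direction (the equivalence goes through the combinatorial characterisation of CS partitions, not through a soft reduction), which is in line with your stated intent to take that input as a black box. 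Just be aware that this particular step is part of what you are importing from \cite{CS}, not something you have reduced away.
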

\begin{proof}
One direction in (1) is immediate, the other direction follows from \cite[Theorems~2.1 and 2.8(i)]{CS}. Part (2) follows from \cite[Theorem~2.8]{CS}. Parts (3), (5) and (6) follow immediately from the definition on p590 of \cite{CS}. Finally, taking the tensor product with the sign representation clearly produces an involution on the set of semisimple inductive systems. Tracing that this involution is as in (4) is easy using the explicit descriptions in \cite{CS}.
\end{proof}

\subsection{Closed inductive systems}

\subsubsection{}\label{properties}
We list some potential properties of an inductive system~$\bA$:
\begin{itemize}
\item[(i)] Each $\bA^n\subset\Rep S_n$ is closed under (internal) tensor products;
\item[(ii)] Each $\bA^n\subset\Rep S_n$ is closed under duality $V\mapsto V^\ast$;
\item[(iii)] Each $\bA^n\subset\Rep S_n$ contains the trivial representation $\unit$;
\item[(iv)]  $\mI(\mA)=\mA$ or, equivalently, for all $m,n\in\mZ_{>0}$, the induction product
$$\mA^m \times\mA^n\xrightarrow{\Ind^{S_{m+n}}_{S_m\times S_n}(-\boxtimes -)}\Rep S_{m+n}$$
takes values in $\mA^{m+n}$.

\end{itemize}

\begin{definition}\label{DefIS2}
An {inductive system} $\mA$ is {\bf closed} if it satisfies \ref{properties}(i)-(iv). In other words $\mA$ is closed if $\bI(\mA)=\mA$ and each $\bA^n\subset\Rep S_n$ is a rigid monoidal subcategory.
\end{definition}

\begin{example}\label{ExIndSys2}
\begin{enumerate}
\item The unique {\em maximal} (closed) inductive system corresponds to $\mA^n=\Rep S_n$ for all $n\in\mZ_{>0}$.
\item The unique {\em minimal} closed inductive system is $\Young$ from Example~\ref{ExYoung}.
\item By the combination of (1) and (2), if $\mathrm{char}(\bk)=0$ then the only closed inductive system is $\Young$. Similarly, if $\mathrm{char}(\bk)=p>0$, then
$$\Young^n\;=\;\mA^n\;=\;\Rep S_n,\quad\mbox{for }\,n<p,$$
for every closed inductive system $\mA$.
\item The {\bf signed Young modules} are the indecomposable direct summands of the $\bk S_n$-modules which are induced from one-dimensional representations of Young subgroups (which are exterior tensor products of trivial and sign representations). We have the corresponding closed inductive system $\SYoung$, with $\SYoung=\Young$ if $\mathrm{char}(\bk)=2$. For $p>2$, the signed Young modules were classified by Donkin in \cite{Donkin}.

\item Assume $\mathrm{char}(\bk)=p>0$. Another closed inductive system is $p\Perm$, where $p\Perm^n$ consists of the `$p$-permutation modules' of $S_n$, see \cite{Br}. These can be characterised equivalently as:
\begin{enumerate}
\item the modules that have a $P$-invariant basis for every $p$-subgroup $P<S_n$;
\item the direct sums of direct summands of representations $\Ind^{S_n}_H\unit$, for subgroups $H<S_n$;
\item the direct summands of permutation modules.
\end{enumerate}
\item Assume $\mathrm{char}(\bk)=p>0$. We call a $\bk S_n$-module $M$ {\bf unentangled} if, for every Young subgroup $S_\lambda<S_n$, the module $\Res^{S_n}_{S_\lambda}M$ is a direct summand of an exterior tensor product
$$M_1\boxtimes M_2\boxtimes\cdots \boxtimes M_{l},\qquad\mbox{for certain $M_i\in \Rep S_{\lambda_i}$.}$$
 Completely splittable modules are obvious examples of unentangled modules.
By Mackey's formula, unentangled modules constitute a closed inductive system, which we denote by $\UnEn$. It follows that $\UnEn^{n}=\Rep S_n$ if and only if $n<2p$.
\end{enumerate}
\end{example}

%\begin{remark}\label{RemCIS}
%
%
% If $\mathrm{char}(\bk)=p>0$, $m\in\mN$ is not $p$-divisible and $\mA$ is an inductive system satisfying \ref{properties}(iv), {\em e.g.} $\mA$ is closed, then $\mA^{m-1}$ determines $\mA^m$. Indeed, every summand of $\Ind^{S_m}_{S_{m-1}}M$ with $M\in\mA^{m-1}$ is in $\mA^m$. But also conversely, every $N\in\mA^m$ is a summand of $\Ind^{S_m}_{S_{m-1}}\Res^{S_m}_{S_{m-1}}N$ and $\Res^{S_m}_{S_{m-1}}N$ is in $\mA^{m-1}$ by Definition~\ref{DefIS}.
%
%
%\end{remark}

\subsection{Some inclusions between inductive systems}
Assume $\mathrm{char}(\bk)=p>0$.

\subsubsection{} Let $\bC\mS$ be the semisimple inductive system $\sum_{1\le s<p}\bC\mS(s)$ of all CS representations and let $\bC\mS_+$ be the subsystem of $\bC\mS$ corresponding to odd $1\le s<p$ only. Then we set
$$\ICS:=\mI(\bC\mS)\quad\mbox{and}\quad\ICS_+:=\mI(\bC\mS_+).$$

\begin{prop}\label{PropInclusions}
We have inclusions
$$\xymatrix{
\Young\;\ar@{^{(}->}[r]\ar@{^{(}->}[d] & \SYoung\; \ar@{^{(}->}[r]\ar@{^{(}->}[d] & p\Perm\\
\ICS_+\;\ar@{^{(}->}[r]&\ICS\;\ar@{^{(}->}[r]&\UnEn.
}$$
No other inclusions exist between the displayed systems, except that the square consists of equalities if $p=2$, and the downward arrows are equalities if $p=3$.
\end{prop}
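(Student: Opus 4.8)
The plan is to establish each of the six inclusions $\Young\hookrightarrow\SYoung$, $\SYoung\hookrightarrow p\Perm$, $\Young\hookrightarrow\ICS_+$, $\ICS_+\hookrightarrow\ICS$, and the non-existence of the remaining comparisons, then treat the small-characteristic degenerations separately. The inclusion $\ICS_+\subset\ICS$ is immediate from the definitions, since $\bC\mS_+\subset\bC\mS$ and $\mI$ is monotone. The inclusions $\Young\subset\SYoung\subset p\Perm$ follow directly from the descriptions in \Cref{ExIndSys2}(2),(4),(5): permutation modules $M^\lambda=\Ind^{S_n}_{S_\lambda}\unit$ are signed Young (take all signs trivial), and both are direct summands of $\Ind^{S_n}_H\unit$ for $H=S_\lambda$, hence $p$-permutation by the characterisation \Cref{ExIndSys2}(5)(b). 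For $\Young\subset\ICS_+$: since $\bC\mS(1)$ consists of trivial representations by \Cref{ThmKl}(3), we have the trivial system contained in $\bC\mS_+$ (as $s=1$ is odd), and applying $\mI$ gives $\Young=\mI(\text{triv})\subset\mI(\bC\mS_+)=\ICS_+$.

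For the non-existence of further inclusions, I would argue degree by degree using Kleshchev's classification. The key separating objects: the sign representation $\mathrm{sgn}$ of $S_n$ (for $n\ge 2$) lies in $\SYoung$ and in $\ICS$ (it is $\bC\mS(p-1)$ when $p>2$), but for $p>2$ it is not a Young module, so $\SYoung\not\subset\Young$ and $\ICS\not\subset\Young$. To see $p\Perm\not\subset\SYoung$ and $p\Perm\not\subset\ICS$, exhibit a $p$-permutation module that is neither signed Young nor a summand of an induced-from-CS module — e.g. $\Ind^{S_p}_{C_p}\unit$, whose indecomposable summands include the projective cover of $\unit$, which is not signed Young for $p\ge 3$ (compare with Donkin's classification \cite{Donkin}) and, being a non-CS summand of $\Ind$ from a non-Young subgroup at degree $n=p$, is not in $\ICS^p$ by \Cref{ThmKl}(6) combined with an analysis of which indecomposables can appear in $\mI(\bC\mS)^p$. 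Conversely, to see $\ICS\not\subset p\Perm$ (and $\ICS_+\not\subset\SYoung$, etc.) one needs a CS-built module that is not a $p$-permutation module: a CS simple $D^\lambda$ of dimension not a $p$-permutation module — concretely for $p>2$, $\bC\mS(s)$ with $s\ne 1,p-1$ contains simples that are neither trivial nor sign at small $n$, and these are non-projective simples hence not $p$-permutation modules; also $\ICS_+$ vs $\SYoung$ is separated by a CS simple with $s$ odd, $s\ne 1$. The remaining incomparabilities ($\SYoung$ vs $\ICS_+$, $\SYoung$ vs $\ICS$, $p\Perm$ vs $\ICS_+$) follow by combining: sign is in $\ICS_+$ only when $p\equiv 1\pmod 2$ forces $p-1$ even — wait, so $\mathrm{sgn}\in\bC\mS(p-1)\subset\bC\mS_+$ iff $p-1$ is odd iff $p$ is even; for $p>2$, $\mathrm{sgn}\notin\ICS_+^?$ unless induced trivially, separating $\SYoung\not\subset\ICS_+$.

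Finally the degenerations. If $p=2$ then $\SYoung=\Young$ by \Cref{ExIndSys2}(4); also $\bC\mS$ has only $s=1$ by \Cref{ThmKl}(2) (there are $p-1=1$ minimal systems), so $\bC\mS=\bC\mS(1)=\bC\mS_+=$ trivials, whence $\ICS=\ICS_+=\Young$; and $p\Perm=\Young$ since for $p=2$ every $2$-permutation module is a Young module (all indecomposable summands of $\Ind^{S_n}_H\unit$ are already summands of $M^\lambda$'s — this uses the classical fact in characteristic $2$, see \cite{Br}). Hence the whole square plus the two right-hand entries collapse, as asserted. If $p=3$ then $\bC\mS$ has $p-1=2$ minimal systems $\bC\mS(1)$ and $\bC\mS(2)$; here $s=2=p-1$, so by \Cref{ThmKl}(4), $\bC\mS(2)$ is the sign twist of $\bC\mS(1)=$ trivials, i.e. $\bC\mS(2)^n$ is the sign representation; $\bC\mS_+$ keeps only odd $s$, i.e. only $s=1$, so $\ICS_+=\mI(\text{triv})=\Young$. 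Meanwhile $\ICS=\mI(\bC\mS(1)+\bC\mS(2))=\mI(\langle\unit,\mathrm{sgn}\rangle)=\SYoung$ by the very definition of signed Young modules in \Cref{ExIndSys2}(4). So for $p=3$ the downward arrows are equalities, as claimed.

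\textbf{Main obstacle.} The genuinely delicate part is the non-inclusion statements, specifically verifying that certain $p$-permutation modules (resp. signed Young modules) do \emph{not} lie in $\ICS$, and that certain CS-built modules do not lie in $p\Perm$. For the former I expect to need a structural description of $\ICS^n$ in small degrees $n$ (around $n=p$ and $n=2p$), controlling exactly which indecomposables arise as summands of inductions of CS modules — \Cref{ThmKl}(6) (disjointness of distinct $\bC\mS(s)$ beyond degree $p$) is the main lever, but one still must rule out accidental coincidences among indecomposable summands. For the latter, one uses that $p$-permutation modules have trivial source, while a non-trivial-source indecomposable summand of an induced CS module provides the separation; pinning down an explicit such module is where the real work lies.
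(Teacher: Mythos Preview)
Your proposal has the right overall architecture and handles the $p=3$ degeneration correctly, but there is one outright error and a couple of faulty separating arguments.

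\textbf{The $p=2$ case is wrong.} You claim that for $p=2$ ``every $2$-permutation module is a Young module'' and hence $p\Perm=\Young$. This is false, and the proposition does not assert it: the ``square'' that collapses for $p=2$ is the four nodes $\Young,\SYoung,\ICS_+,\ICS$, \emph{not} including $p\Perm$. In fact the paper shows $\Young^4\subsetneq 2\Perm^4$ explicitly: for a Sylow $2$-subgroup $P<S_4$, the module $\Ind^{S_4}_P\unit$ decomposes as a direct sum of both simple $S_4$-modules, whereas the only simple in $\Young^4$ is $\unit$. So for $p=2$ you still owe the strictness of $\Young=\SYoung\subsetneq 2\Perm$.

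\textbf{Two of your separating arguments do not work.} First, to show $p\Perm\not\subset\SYoung$ you propose using $\Ind^{S_p}_{C_p}\unit$ and claim its summand ``the projective cover of $\unit$'' is not signed Young. But every indecomposable projective $\bk S_n$-module \emph{is} a Young module (they are summands of $M^{(1^n)}=\bk S_n$), so this argument fails. The paper instead works at $n=4$ for $p=3$ with $\Ind^{S_4}_{C_3}\unit$, which is a direct sum of all four simple $S_4$-modules, not all of which are signed Young. Second, for $\ICS\not\subset p\Perm$ you argue that a non-projective CS simple is automatically not a $p$-permutation module. This implication is false: the trivial module is a non-projective simple $p$-permutation module. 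The paper's argument is more delicate: if $\ICS_+^p\subset p\Perm^p$ then every simple $S_p$-module would restrict to a permutation module over $C_p$ (using that $C_p$ does not see the sign twist), but $D^{(p-1,1)}$ has dimension $p-2<p$ and is not $C_p$-trivial, a contradiction. Conversely, $p\Perm^p\not\subset\ICS^p$ because $\Ind^{S_p}_{C_p}\unit$ has every simple in its socle yet is not semisimple.
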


\begin{proof}
All inclusions except $\SYoung\subset p\Perm$ are by definition. To prove the latter we can observe that the inductive system $\bA$ comprising trivial and sign modules is included in $p\Perm$ and use that $\bI(\bA)=\SYoung$ while $\bI(p\Perm)=p\Perm$.

Now we prove that there are no other inclusions than the ones stated in the lemma.

Consider first $p=2$. In this case we only need to prove that $\Young\subset 2\Perm$ is strict. Let $P<S_4$ be a 2-Sylow subgroup. It follows easily that
$\Ind^{S_4}_P\unit$
is a direct sum of the two simple $S_4$-modules, while the only simple module in $\Young^4$ is the trivial one, see~\cite{EW}.

Now assume $p\ge 3$. The inclusions
$\Young^p\subset \SYoung^p$ and $\ICS_+^p\subset \ICS^p$
are strict, as $\ICS_+^p$ does not contain the sign module.

For $p=3$, it only remains to prove that $3\Perm$ is not included in $\SYoung$. For this we can consider the $3$-permutation module $\Ind^{S_4}_{C_3}\unit$, which is 8-dimensional and has every simple $S_4$-representation in its socle. Thus by dimension count, $\Ind^{S_4}_{C_3}\unit$ is a direct sum of the four irreducible representations. However, not all irreducible representation are Young modules.

For the rest of the proof we can thus assume that $p>3$. 
By \cite[\S1]{Donkin}, the number of isomorphism classes of indecomposable modules in $\SYoung^p$ is one plus the number of partitions of $p$. Hence, these must be the trivial module, the sign module and the indecomposable projectives, whereas the modules in $\ICS^p$ are all the simple modules and the indecomposable projectives. This shows that the inclusion $\SYoung\subset\ICS$ is strict. For $\Young\subset\ICS_+$ we can argue similarly.

We can observe that $\ICS_+^p$ is not included in $p\Perm^p$. Indeed, such an inclusion would imply that all simple modules in $\ICS_+^p$ are permutation modules over $C_p$. As the latter group does not detect tensoring with the sign module, this would mean that all simple $S_p$-modules are permutation modules over $C_p$. Take the simple $S_p$-module corresponding to the partition $(p-1,1)$. It has dimension $p-2<p$, so it can only be a $p$-permutation module if it is trivial over $C_p<S_p$, which is not true.

Next we can observe that $p\Perm^p$ is not included in $\ICS^p$. Indeed, the $p$-permutation module $\Ind^{S_p}_{C_p}\unit$ contains every simple $S_p$-representation in its socle. However, since this modules does not contain projective summands it would have to be semisimple in order to be in $\ICS^p$. But, as explained in the previous paragraph, not every simple $S_p$-representation is a $p$-permutation module, so $\Ind^{S_p}_{C_p}\unit$ is not semisimple and thus not in $\ICS^p$.

Finally, we show that there are no inclusions between $p\Perm$ and $\UnEn$. Indeed, $p\Perm^p$ is not equal to $\Rep S_p=\UnEn^p$. On the other hand, consider subgroups $S_p<S_p\times S_p<S_{2p}$ where the first inclusion is diagonal and the second is a Young subgroup. Then the permutation module spanned by coset $S_{2p}/S_p$ is not unentangled, since its restriction to $S_p\times S_p$ contains a direct summand $\Ind^{S_p\times S_p}_{S_p}\unit$ (which is projective for both copies of $S_p$ in $S_p^{\times 2}$ but not projective for $S_p^{\times 2}$).
\end{proof}

\begin{remark}\label{rem:min}
While the intersection of two inductive systems need not be an inductive system, the intersection of a family of closed inductive systems remains a closed inductive system (as condition (iv) supersedes the problematic property of inductive systems). We can thus speak of the minimal closed inductive system containing a class of representations. For example, the minimal closed inductive system containing the sign representation of $S_p$ is a closed inductive system lying strictly between $\Young$ and $\SYoung$, as it does not contain the sign representation of $S_{p^2}$.
\end{remark}

%%%%%%%%%%%%%%%%%%%%%%%%%%%%%%%%%%%%
%%%%%%%%%%%%%%%%%%%%%%%%%%%%%%%%%%%%
%%%%%%%%%%%%%%%%%%%%%%%%%%%%%%%%%%%%
%%%%%%%%%%%%%%%%%%%%%%%%%%%%%%%%%%%%
%%%%%%%%%%%%%%%%%%%%%%%%%%%%%%%%%%%%
%%%%%%%%%%%%%%%%%%%%%%%%%%%%%%%%%%%%
%%%%%%%%%%%%%%%%%%%%%%%%%%%%%%%%%%%%
%%%%%%%%%%%%%%%%%%%%%%%%%%%%%%%%%%%%
%%%%%%%%%%%%%%%%%%%%%%%%%%%%%%%%%%%%
%%%%%%%%%%%%%%%%%%%%%%%%%%%%%%%%%%%%
%%%%%%%%%%%%%%%%%%%%%%%%%%%%%%%%%%%%
%%%%%%%%%%%%%%%%%%%%%%%%%%%%%%%%%%%%
%%%%%%%%%%%%%%%%%%%%%%%%%%%%%%%%%%%%
%%%%%%%%%%%%%%%%%%%%%%%%%%%%%%%%%%%%
%%%%%%%%%%%%%%%%%%%%%%%%%%%%%%%%%%%%
%%%%%%%%%%%%%%%%%%%%%%%%%%%%%%%%%%%%

\section{Inductive systems from tensor categories}

Let $\cC$ be a tensor category over $\bk$. 

\subsection{Representations from the braid action}

\subsubsection{} For $X\in \cC$ and $n\in\mN$, we have an anti-algebra morphism, coming from the anti-autoequivalence $-^\ast$, fitting into a commutative diagram
\begin{equation}\label{commsq}
\xymatrix{
\End(X^{\otimes n})\ar[rr]^\sim && \End((X^\ast)^{\otimes n})\\
kS_n\ar[u]^{\beta_X^n}\ar[rr]^\sim_{S_n\ni g\mapsto g^{-1}}&&kS_n\ar[u]_{\beta^n_{X^\ast}}.
}
\end{equation}

For $I\in\Inj\cC$, the space $\Hom((X^\ast)^{\otimes n},I)$ is a right $\End((X^\ast)^{\otimes n})$-module and hence a (left) $S_n$-representation via either path in \eqref{commsq}.
If $\cC$ has projective objects, see \ref{sec:proj}, these $S_n$-representations can simply be defined as
$$\Hom(P,X^{\otimes n}),\quad P\in\Proj\cC,$$
as that space is naturally isomorphic to $\Hom((X^\ast)^{\otimes n},P^\ast)$.
To simplify formulas (without changing the content substantially) we sometimes make the non-essential assumption that $\cC$ has projective objects.

\begin{definition}\label{DefPhiX}
For $X\in \cC$ and $n\in\mZ_{>0}$, denote by 
$$\mB^n_X=\mB^n_X[\cC]\;\subset\;\Rep S_n$$ the pseudo-abelian subcategory of direct summands of the $S_n$-representations 
$$\Hom((X^\ast)^{\otimes n}, I),\quad I\in \Inj\cC.$$
Equivalently, $\mB^n_X[\cC]\subset\Rep S_n$ is the pseudo-abelian subcategory generated by $\Omega^n(X)$, with notation and assumptions as in Example~\ref{ExOm}.

\end{definition}

\begin{theorem}\label{ThmTCIS}
\begin{enumerate}
\item For each $X\in\cC$, the categories $\{\mB^n_X\mid n\in\mZ_{>0}\}$ form an inductive system~$\mB_X=\mB_X[\cC]$.

\item For a full subcategory $\cA\subset\cC$, take the inductive system $\mB_{\cA}[\cC]:=\sum_{X\in\cA}\mB_X[\cC]$.
\begin{enumerate}
\item If $\cA$ is closed under tensor products, then $\mB_{\cA}[\cC]$ satisfies \ref{properties}(i).
\item If $\cA$ is closed under $X\mapsto X^\ast$, then $\mB_{\cA}[\cC]$ satisfies \ref{properties}(ii).
\item If $\unit\in\cA$, then $\mB_{\cA}[\cC]$ satisfies \ref{properties}(iii).
\item If $\cA$ is additive, then $\mB_{\cA}[\cC]$ satisfies \ref{properties}(iv).
\end{enumerate}

\item The inductive system $\mB[\cC]:=\mB_{\cC}[\cC]=\sum_{X\in \cC}\mB_X[\cC]$ is closed.
\item If $\cC$ is semisimple, then $$\mB[\cC]\;=\;\mathbf{I}\left(\sum_{L\in \Irr\cC}\mB_L\right).$$
\item We have $\bB[\cC]\subset\UnEn$.
\end{enumerate}
\end{theorem}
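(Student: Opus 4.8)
Here is how I would approach the four parts; (1) and the tensor‑closure half of (2) carry the real content, while (3) and (4) come out formally.

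\medskip
\noindent\emph{Part (1).} Fix $X$ (the case $X=0$ being trivial, as then every $\mB^n_X$ vanishes). I would start from the $S_{n-1}$-equivariant isomorphism obtained by moving the last tensor factor across the $\Hom$,
\[
\Res^{S_n}_{S_{n-1}}\Hom\big((X^\vee)^{\otimes n},I\big)\;\cong\;\Hom\big((X^\vee)^{\otimes(n-1)},\,I\otimes X\big),\qquad I\in\Inj\cC,
\]
observing that $I\otimes X$ is again injective since $\Hom(-,I\otimes X)\cong\Hom(-\otimes X^\vee,I)$ is exact. This shows $\Res^{S_n}_{S_{n-1}}$ sends $\mB^n_X$ into $\mB^{n-1}_X$. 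For the minimality clause in Definition~\ref{DefIS}, I would show that each indecomposable injective $I_i$ is a summand of $I_j\otimes X$ for suitable $j$: since $X^\vee\ne 0$, the object $L_i\otimes X^\vee$ is nonzero, so it has a composition factor $L_j$, whence $\Hom(L_i,I_j\otimes X)\cong\Hom(L_i\otimes X^\vee,I_j)\ne 0$; a nonzero (hence monic) map $L_i\to I_j\otimes X$ extends along $L_i\hookrightarrow I_i$ and splits by injectivity of $I_i$. Substituting $I=I_j$ above then presents the generator $\Hom((X^\vee)^{\otimes(n-1)},I_i)$ of $\mB^{n-1}_X$ as a summand of a restriction from $\mB^n_X$, giving minimality.

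\medskip
\noindent\emph{Part (2).} That $\mB_\cA=\sum_{X\in\cA}\mB_X$ is an inductive system follows from~(1), since $\Res$ and ``minimal pseudo-abelian subcategory'' distribute over sums. Property~\ref{properties}(iii) for $\unit\in\cA$ is clear, as $\Omega^n(\unit)=\Omega(\unit)$ carries the trivial action and so $\mB^n_\unit$ is the category of trivial representations. Property~\ref{properties}(ii) for $\cA$ closed under $(-)^\vee$ follows from the square~\eqref{commsq}, which identifies the $S_n$-action coming from $\beta^n_{X^\vee}$ with the one coming from $\beta^n_X$ up to the anti-automorphism $g\mapsto g^{-1}$ of $\bk S_n$; on modules this is precisely contragredient duality, so $\mB^n_{X^\vee}=(\mB^n_X)^\ast$ and, reindexing by $X\mapsto X^\vee$, $(\mB^n_\cA)^\ast=\mB^n_\cA$. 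Property~\ref{properties}(iv) for $\cA$ additive follows by applying $\Omega\boxtimes\Rep S_N$ to the binomial decomposition $(X\oplus Y)^{\otimes N}\cong\bigoplus_k\Ind^{S_N}_{S_k\times S_{N-k}}(X^{\otimes k}\boxtimes Y^{\otimes(N-k)})$ in $\cC\boxtimes\Rep S_N$: this places each $\Ind^{S_N}_{S_k\times S_{N-k}}(\Omega^k(X)\boxtimes\Omega^{N-k}(Y))$, hence by additivity and passage to summands every induction product of an object of $\mB^k_X$ with one of $\mB^{N-k}_Y$, inside $\mB^N_{X\oplus Y}\subseteq\mB^N_\cA$; arbitrary induction products reduce to this.

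\medskip
\noindent\emph{Part (2)(i) and Part (3).} The hard part will be property~\ref{properties}(i), closure under the internal tensor product of $S_n$-representations, because a faithful exact functor $\cC\to\Vecc$ is not monoidal and there is no single tensor power to decompose. I propose to go through two observations. First, $\mB^n_\bullet$ is invariant under tensor functors: for a tensor functor $F\colon\cD\to\cD'$ (automatically exact and faithful) and $Z\in\cD$ one may, by the independence of the fiber functor in Definition~\ref{DefPhiX}, compute $\mB^n_Z[\cD]$ using $\Omega_{\cD'}\circ F$, and since $F$ is symmetric monoidal $(F\boxtimes\Rep S_n)(Z^{\otimes n})\cong F(Z)^{\otimes n}$ equivariantly, so $\mB^n_Z[\cD]=\mB^n_{F(Z)}[\cD']$. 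Applying this to the multiplication tensor functor $\cC\boxtimes\cC\to\cC$, $A\boxtimes B\mapsto A\otimes B$, gives $\mB^n_{X\otimes Y}[\cC]=\mB^n_{X\boxtimes Y}[\cC\boxtimes\cC]$. Second, computing the right‑hand side via the faithful exact functor $\Omega\boxtimes\Omega\colon\cC\boxtimes\cC\to\Vecc$ and the identity $(X\boxtimes Y)^{\otimes n}=X^{\otimes n}\boxtimes Y^{\otimes n}$ (whose braiding corresponds to the diagonal action) shows $\mB^n_{X\otimes Y}[\cC]$ is generated by $\Omega^n(X)\otimes_\bk\Omega^n(Y)$ with its diagonal $S_n$-action. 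Since $\mB^n_X$ and $\mB^n_Y$ are generated by $\Omega^n(X)$ and $\Omega^n(Y)$, every $U\otimes_\bk V$ with $U\in\mB^n_X$, $V\in\mB^n_Y$ is a summand of a direct sum of copies of $\Omega^n(X)\otimes_\bk\Omega^n(Y)$, hence lies in $\mB^n_{X\otimes Y}[\cC]\subseteq\mB^n_\cA[\cC]$; distributing over direct sums gives~\ref{properties}(i). Part~(3) is then immediate: $\cC$ is closed under $\otimes$ and $(-)^\vee$, contains $\unit$, and is additive, so $\mB[\cC]=\mB_\cC[\cC]$ satisfies~\ref{properties}(i)--(iv) and is closed.

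\medskip
\noindent\emph{Part (4).} The inclusion $\mathbf{I}(\sum_{L\in\Irr\cC}\mB_L)\subseteq\mB[\cC]$ is free: $\sum_L\mB_L\subseteq\mB[\cC]$ and $\mB[\cC]$ is closed by~(3), so applying $\mathbf{I}$ and using $\mathbf{I}(\mB[\cC])=\mB[\cC]$ does it. For the reverse inclusion I would use that in a semisimple $\cC$ restriction is ``coproduct-like'': semisimplicity gives an $S_a$-equivariant splitting $Z^{\otimes a}\cong\bigoplus_j L_j\otimes_\bk\Hom(L_j,Z^{\otimes a})$, hence
\[
\Res^{S_{a+b}}_{S_a\times S_b}\Hom\big(M,Z^{\otimes(a+b)}\big)\;\cong\;\bigoplus_j\Hom\big(L_j,Z^{\otimes a}\big)\boxtimes\Hom\big(L_j^\vee\otimes M,Z^{\otimes b}\big),
\]
where the factors lie in $\mB^a_Z$ and $\mB^b_Z$ because $L_j,\,L_j^\vee\otimes M\in\cC$; thus $\Res^{S_{a+b}}_{S_a\times S_b}\mB^{a+b}_Z\subseteq\mB^a_Z\boxtimes\mB^b_Z$, and iterating, $\Res^{S_k}_{S_\mu}\mB^k_Z\subseteq\mB^{\mu_1}_Z\boxtimes\cdots\boxtimes\mB^{\mu_\ell}_Z$ for every composition $\mu$ of $k$. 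With the tensor identity $\Omega^k(L^{\oplus m})\cong\bigoplus_\mu\Ind^{S_k}_{S_\mu}\Res^{S_k}_{S_\mu}\Omega^k(L)$ (from $(L^{\oplus m})^{\otimes k}\cong(\bk^m)^{\otimes k}\otimes_\bk L^{\otimes k}$) this gives $\Omega^k(L^{\oplus m})\in\mathbf{I}(\mB_L)^k$. Finally, for $X=\bigoplus_i L_i^{\oplus m_i}\in\cC$ the multinomial decomposition $\Omega^n(X)\cong\bigoplus_{(k_i)}\Ind^{S_n}_{\prod_i S_{k_i}}\big(\boxtimes_i\Omega^{k_i}(L_i^{\oplus m_i})\big)$ together with idempotence of $\mathbf{I}$ puts $\Omega^n(X)$, hence $\mB^n_X$, inside $\mathbf{I}(\sum_L\mB_L)^n$; summing over $X$ yields $\mB[\cC]\subseteq\mathbf{I}(\sum_L\mB_L)$. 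Besides property~\ref{properties}(i), this coproduct-like behaviour of restriction (which uses semisimplicity essentially) is the other step I would expect to need genuine care.
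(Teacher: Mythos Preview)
Your arguments for (1), (2)(b), (2)(c), and (3) are correct and essentially match the paper. Your route to (2)(a) via the multiplication tensor functor $\cC\boxtimes\cC\to\cC$ and the fibre functor $\Omega\boxtimes\Omega$ on $\cC\boxtimes\cC$ is genuinely different from the paper's approach, which instead builds an explicit natural isomorphism (its ``Construction'' \eqref{EqNat})
\[
\bigoplus_{i,j}\Hom(P,L_i\otimes L_j)\otimes_\bk\Hom(P_i,-)\otimes_\bk\Hom(P_j,-)\;\xrightarrow{\sim}\;\Hom(P,-\otimes-)
\]
and reads off Lemma~\ref{LemProd} from it. Your trick is slicker for (2)(a), while the paper's decomposition is reusable for the remaining parts and later sections.

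There is, however, a genuine gap in your (2)(d). You assert that applying $\Omega\boxtimes\Rep S_N$ to the binomial decomposition yields summands $\Ind^{S_N}_{S_k\times S_{N-k}}\bigl(\Omega^k(X)\boxtimes\Omega^{N-k}(Y)\bigr)$. But $\Omega$ is not monoidal: what you actually get is $\Ind^{S_N}_{S_k\times S_{N-k}}\Omega(X^{\otimes k}\otimes Y^{\otimes(N-k)})$, and there is no reason for the $S_k\times S_{N-k}$-representation $\Omega(X^{\otimes k}\otimes Y^{\otimes(N-k)})$ to equal $\Omega(X^{\otimes k})\otimes_\bk\Omega(Y^{\otimes(N-k)})$. (Take $\cC=\sVec$ and $\Omega=\Hom(\unit,-)\oplus\Hom(\bar\unit,-)^{\oplus 2}$ for a concrete failure of the underlying vector-space equality.) The paper closes this gap via Lemma~\ref{RemKappa}, which uses the decomposition \eqref{EqNat} to show that $\Hom(P,X^{\otimes k}\otimes Y^{\otimes(N-k)})$ is a direct sum of modules $M_1\boxtimes M_2$ with $M_1\in\mB^k_X$, $M_2\in\mB^{N-k}_Y$. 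Alternatively, your own $\cC\boxtimes\cC$ trick fixes it immediately: write $X\oplus Y$ as the image of $X\boxtimes\unit\oplus\unit\boxtimes Y$ under multiplication, and compute $\mB^N_{X\boxtimes\unit\oplus\unit\boxtimes Y}[\cC\boxtimes\cC]$ using $\Omega\boxtimes\Omega$; now the binomial summands are $(X\boxtimes\unit)^{\otimes k}\otimes(\unit\boxtimes Y)^{\otimes(N-k)}=X^{\otimes k}\boxtimes Y^{\otimes(N-k)}$ in $\cC\boxtimes\cC$, and $\Omega\boxtimes\Omega$ does factor on such objects.

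The same slip recurs in your ``multinomial decomposition'' in (4): the claimed isomorphism $\Omega^n(X)\cong\bigoplus_{(k_i)}\Ind^{S_n}_{\prod S_{k_i}}\bigl(\boxtimes_i\Omega^{k_i}(L_i^{\oplus m_i})\bigr)$ is false for the same reason. Here you already have the right tool in hand: your correct ``coproduct-like'' formula (which is exactly the semisimple case of the paper's \eqref{EqNat}) shows that $\Hom(M,\bigotimes_i(L_i^{\oplus m_i})^{\otimes k_i})$, as a $\prod_i S_{k_i}$-representation, is a direct sum of exterior products of modules in the $\mB^{k_i}_{L_i^{\oplus m_i}}$. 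Combining this with your correct treatment of $\Omega^k(L^{\oplus m})$ gives the desired inclusion $\mB^n_X\subset\mI(\sum_L\mB_L)^n$; you just need to replace the false isomorphism by this containment.
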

This theorem will be proved in the next section.

\begin{example}\label{ExProj}
If $\cC$ has projective objects, we set
$$\mB_{\cP r}[\cC]:=\mB_{\Proj\cC}[\cC].$$
By Theorem~\ref{ThmTCIS}(2) (and \cite[Proposition~6.1.3]{EGNO}), this inductive system satisfies \ref{properties}(i), (ii) and (iv). Moreover, we will see in Theorem~\ref{CorFrEx} that $\mB_{\cP r}[\cC]$ is closed if and only if~$\cC$ is Frobenius-exact.
\end{example}

An important property of the above definitions, is invariance under tensor functors:

\begin{prop}\label{PropF}
Consider a tensor functor $F:\cC\to\cD$.
\begin{enumerate}
\item For $X\in\cC$, we have
$\mB_{F(X)}[\cD]\;=\;\mB_X[\cC].$
\item We have
$\mB[\cC]\subset \mB[\cD].$
\item If $\cC$ is a finite tensor category and $F$ is surjective, then
$\mB_{\cP r}[\cC]=\mB_{\cP r}[\cD]$.
\end{enumerate}
\end{prop}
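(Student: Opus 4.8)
The plan is to handle the three parts in order: (1) and (2) are purely formal, while (3) relies on the structure theory of surjective tensor functors out of a finite tensor category.

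For (1), I would recall from Definition~\ref{DefPhiX} the equivalent description of $\mB^n_X[\cC]$ as the pseudo-abelian subcategory of $\Rep S_n$ generated by $\Omega^n(X)$, which is valid for \emph{any} exact faithful functor $\Omega:\cC\to\Vecc$ (in particular, this subcategory does not depend on the choice of $\Omega$). Fixing an exact faithful $\omega:\cD\to\Vecc$, the composite $\omega\circ F:\cC\to\Vecc$ is again exact and faithful (tensor functors are faithful), so it may be used in the role of $\Omega$. Because $F$ is symmetric monoidal, its coherence isomorphism $F(X^{\otimes n})\xrightarrow{\sim}F(X)^{\otimes n}$ intertwines the braid actions of~\eqref{BrMor}; in the notation of Example~\ref{ExOm} this is a natural isomorphism $(F\boxtimes\Rep S_n)\circ\RT^{\cC}_n\cong\RT^{\cD}_n\circ F$. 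Combined with the strict identity $(\omega\circ F)\boxtimes\Rep S_n=(\omega\boxtimes\Rep S_n)\circ(F\boxtimes\Rep S_n)$ from~\ref{Defpf}, this gives $(\omega\circ F)^n(X)\cong\omega^n(F(X))$ as $S_n$-representations, whence the pseudo-abelian subcategories of $\Rep S_n$ they generate coincide: $\mB^n_X[\cC]=\mB^n_{F(X)}[\cD]$ for all $n$. This is (1), and (2) follows at once since $\mB_X[\cC]=\mB_{F(X)}[\cD]\subseteq\sum_{Y\in\cD}\mB_Y[\cD]=\mB[\cD]$ and one sums over all $X\in\cC$.

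For (3), the first step is to quote the relevant structure theory: when $\cC$ is finite and $F$ is surjective, the target $\cD$ is again a finite tensor category, $F$ admits a right adjoint $R:\cD\to\cC$ which is exact, and surjectivity of $F$ is equivalent to $R$ being faithful, hence to every counit $\varepsilon_M:FR(M)\to M$ being an epimorphism. I would then prove the key claim that \emph{an object $N$ of $\cD$ is a direct summand of $F(P)$ for some $P\in\Proj\cC$ if and only if $N$ is projective}. One direction holds because $F$, being left adjoint to the exact functor $R$, preserves projectives. For the converse, let $Q$ be an indecomposable projective object of $\cD$; since $\cD$ is a finite tensor category $Q$ is also injective, so $R(Q)$ — the image of an injective object under the right adjoint of the exact functor $F$ — is injective and hence, by finiteness of $\cC$, projective; the counit $\varepsilon_Q:FR(Q)\to Q$ is an epimorphism by surjectivity of $F$ and splits because $Q$ is projective, exhibiting $Q$ as a direct summand of $F(R(Q))$ with $R(Q)\in\Proj\cC$. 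Combining this claim with part~(1) and the evident monotonicity $\mB_N[\cD]\subseteq\mB_{N'}[\cD]$ whenever $N$ is a direct summand of $N'$, one obtains
$$\mB_{\cP r}[\cC]\;=\;\sum_{P\in\Proj\cC}\mB_P[\cC]\;=\;\sum_{P\in\Proj\cC}\mB_{F(P)}[\cD]\;=\;\sum_{Q\in\Proj\cD}\mB_Q[\cD]\;=\;\mB_{\cP r}[\cD],$$
where the second equality is part~(1) applied termwise and the third uses that $\{F(P)\mid P\in\Proj\cC\}$ and $\Proj\cD$ have the same closure under direct summands.

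The formal bulk being routine, the step I expect to be the main obstacle to pin down cleanly is the structure theory invoked at the start of (3): the existence and exactness of the right adjoint $R$, the equivalence of surjectivity of $F$ with faithfulness of $R$ (equivalently, with the counits being epimorphisms), and the finiteness of $\cD$ — the last being needed both so that $\mB_{\cP r}[\cD]$ is even defined and so that projective $=$ injective in $\cD$. I would quote these facts from the literature on finite tensor categories rather than reprove them.
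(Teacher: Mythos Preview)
Your proof is correct. For part~(1) you take a genuinely different route from the paper: the paper works with the injective-hull description in Definition~\ref{DefPhiX}, passing to the right adjoint $F_\ast:\Ind\cD\to\Ind\cC$ (which preserves injectives since $F$ is exact, and hits every indecomposable injective in $\Ind\cC$ as a summand since $F$ is faithful), and then uses the adjunction isomorphism $\Hom((F(X)^\vee)^{\otimes n},I)\simeq\Hom((X^\vee)^{\otimes n},F_\ast I)$. Your argument instead exploits the second, $\Omega^n$-based description in Definition~\ref{DefPhiX} and simply transports the fibre functor along $F$; this is shorter and avoids Ind-categories and adjoints entirely, at the cost of relying on the (stated) equivalence of the two descriptions. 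For part~(3), the paper simply cites \cite[Theorem~6.1.16]{EGNO} in place of your two-step argument (preservation of projectives via exactness of the right adjoint, and the splitting of the counit on projectives); what you have written is essentially an unpacking of that citation, and you are right to flag the ambient structural facts (finiteness of $\cD$, existence and exactness of $R$) as the points to source carefully.
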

\begin{proof}
Let $F_\ast:\Ind\cD\to\Ind\cC$ be the right adjoint of $F$. Since $F$ is exact, $F_\ast$ sends injective objects to injective objects. It also follows from faithfulness of $F$ that every indecomposable injective object in $\Ind\cC$ is a direct summand of $F_\ast(I)$ for some $I\in\Inj\cD$. Part (1) therefore follows from 
$$\Hom((F(X)^\ast)^{\otimes n},I)\simeq \Hom(F((X^\ast)^{\otimes n}),I)\;\simeq\; \Hom((X^\ast)^{\otimes n}, F_\ast(I)).$$

Part (2) is an immediate consequence of part (1). Part (3) follows from part (1) and \cite[Theorem~6.1.16]{EGNO}.
\end{proof}

\begin{remark}\label{RemF}\label{RemTCIS}
\begin{enumerate}
\item 
It will follow from Corollary~\ref{CorFrEx4} that Theorem~\ref{ThmTCIS}(4) remains valid in Frobenius-exact tensor categories. However, it does not extend beyond Frobenius-exact categories, see Remark~\ref{RemVer4}.
\item It is tempting to expect an equality $\mB[\cC]= \mB[\cD]$ in Proposition~\ref{PropF}(2) for surjective tensor functors $F$ (and we currently know of no counterexamples). When $\cD$ is semisimple, this is indeed the case. More generally, in Corollary~\ref{CorFrEx3}(2), we will show that when $\cD$ is Frobenius-exact and $F$ is surjective, then $\mB[\cC]= \mB[\cD]$.
\item If $X\in\cC$ is sent to a non-simple object by some tensor functor, then it follows as an application of \Cref{PropF}(1) that $\mB^{p^i}_X$ must contains a projective module for every $i\in\mN$.
\end{enumerate}

\end{remark}

\begin{example}\label{Ex1}
\begin{enumerate}
\item By Proposition~\ref{PropF}(2), we have $\mB[\Rep G]=\Young$ for any affine group scheme $G$ over $\bk$.
\item The Delannoy category $\cD$ from \cite{HSS} satisfies $\mB[\cD]=\Young$. For the interested reader familiar with \cite{HSS}, we sketch the proof. As $\cD$ is semisimple, we can reduce this claim via Corollary~\ref{cor:gen} to the observation
$$\mB_{\mathfrak{C}(\mR)}^n\;=\;\Young^n,$$
with $\mathfrak{C}(\mR)$ the generating \'etale algebra. The latter follows from the fact that powers $\mR^{\times n}$, when considered as sets with an action of the product of the oligomorphic group $\mG=\Aut(\mR,<)$ and $S_n$ factor  as in the example
$$\mR^{\times 4}\;=\; \mR^{(4)}\times S_4\;\sqcup\;\mR^{(3)}\times S_4/S_2\;\sqcup\;  \mR^{(2)}\times (S_4/S_3\sqcup S_4/S_{2,2})\; \sqcup \;\mR,$$
showing that $\mathfrak{C}(\mR)^{\otimes 4}$ in $\cD\boxtimes\Rep S_4$ is isomorphic to
$$\mathfrak{C}(\mR^{(4)})\boxtimes M^{(1,1,1,1)}\;\oplus\;\mathfrak{C}(\mR^{(3)})\boxtimes M^{(2,1,1)}\;\oplus\;  \mathfrak{C}(\mR^{(2)})\boxtimes (M^{(3,1)}\oplus M^{(2,2)})\; \oplus \;\mathfrak{\mR}\boxtimes\unit.$$
\end{enumerate}
\end{example}

\begin{question}\label{QBk}
\begin{enumerate}
\item We define the closed inductive system $\bT$ with $\bT^n=\sum_{\cC}\mB^n[\cC]$, where the sum ranges over all tensor categories~$\cC$ over $\bk$. If $\mathrm{char}(\bk)=0$, then clearly $\bT^n=\Rep S_n$, for all $n$. In positive characteristic, describing $\bT$ is an open problem. By Theorem~\ref{ThmTCIS}, we have $\bT\subset\UnEn$. Is it true that
$\bT=\UnEn$?
\item Using terminology from \cite{HS} and generalising Example~\ref{Ex1}(2), is it true that
$\mB[\underline{\Rep}(G;\mu)]=\Young,$
for every pro-oligomorphic group $G$ with quasi-regular measure $\mu$ satisfying condition~(P)?
\end{enumerate}

\end{question}

\subsection{The proof of Theorem~\ref{ThmTCIS}}

\begin{lemma}\label{LemDual}
Let $L\in\cC$ be simple and denote the injective hulls of $L$ and $L^\ast$ in $\Ind\cC$ by $I$ and $I'$. Then, for each $n\in\mN$ and $X\in\cC$, we have an isomorphism of $S_n$-representations
$$\Hom((X^\ast)^{\otimes n}, I)^\ast\;\simeq\;\Hom(X^{\otimes n}, I').$$
In particular $\mB^n_{X^\ast}$ comprises the duals of the modules in $\mB^n_X\subset\Rep S_n$.
\end{lemma}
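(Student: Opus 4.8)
The plan is to exhibit a natural pairing between $\Hom((X^\vee)^{\otimes n}, I)$ and $\Hom(X^{\otimes n}, I')$ that is non-degenerate and $S_n$-equivariant (up to the inverse, which is harmless since we are only claiming an abstract isomorphism of $S_n$-modules; but in fact the sign works out). First I would identify $I' = I^\vee$ — more precisely, since $-^\vee$ is an anti-autoequivalence of $\cC$ (extended to $\Ind\cC$), it sends the injective hull of $L$ to the injective hull of $L^\vee$, so $I' \simeq I^\vee$ in $\Ind\cC$. Hence $\Hom(X^{\otimes n}, I') \simeq \Hom(X^{\otimes n}, I^\vee) \simeq \Hom(I, (X^{\otimes n})^\vee) = \Hom(I, (X^\vee)^{\otimes n})$, using that $-^\vee$ is contravariant and fully faithful, together with the canonical identification $(X^{\otimes n})^\vee \simeq (X^\vee)^{\otimes n}$. (One must be slightly careful: $I \in \Ind\cC$ is not in $\cC$, so one should phrase the last step either by a colimit argument, writing $I$ as a filtered colimit of objects of $\cC$ and using that $(X^\vee)^{\otimes n} \in \cC$ is dualizable so $\Hom((X^\vee)^{\otimes n}, -)$ and the duality behave well, or simply note $\Hom_{\Ind\cC}(X^{\otimes n}, I^\vee)$ still makes sense with $I^\vee$ an injective cogenerator-type object.)

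Next I would produce the duality $\Hom((X^\vee)^{\otimes n}, I)^\ast \simeq \Hom(I, (X^\vee)^{\otimes n})$. This is the standard fact that for a dualizable object $Y$ (here $Y = (X^\vee)^{\otimes n}$) in a tensor category, the evaluation and coevaluation give a perfect pairing between $\Hom(Y, I)$ and $\Hom(I, Y)$ whenever $I$ is an injective object with $\Hom_{\cC}(\unit,\unit)=\bk$ controlling things — concretely the composite $\Hom(I,Y)\otimes\Hom(Y,I)\to\End_{\Ind\cC}(I)$ lands in $\bk\cdot\mathrm{id}$ when $I$ is indecomposable injective (its endomorphism ring is local, but one needs the pairing to be non-degenerate, which follows because $I$ is an injective hull and $Y$ is a finite-length object). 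Actually the cleanest route: both spaces are finite-dimensional (morphism spaces between an object of $\cC$ and an indecomposable injective are finite-dimensional since $I$ has finite socle multiplicities), and the composition pairing $\Hom(I,Y)\times\Hom(Y,I)\to\End(I)\twoheadrightarrow \End(I)/\Rad = \bk$ is non-degenerate because $I$ is the injective hull of a simple object and any nonzero map $Y\to I$ can be split off against a suitable $I\to Y$ after restricting to socles. Combining the two displayed chains of isomorphisms then yields $\Hom((X^\vee)^{\otimes n}, I)^\ast \simeq \Hom(X^{\otimes n}, I')$.

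The main thing to check carefully is $S_n$-equivariance. The $S_n$-action on $\Hom((X^\vee)^{\otimes n}, I)$ is by precomposition with $\beta^n_{X^\vee}(\sigma)$; on $\Hom(X^{\otimes n}, I')$ it is by precomposition with $\beta^n_X(\sigma)$; and the duality $(-)^\ast$ turns precomposition into postcomposition-by-transpose, i.e. the $S_n$-module $\Hom((X^\vee)^{\otimes n},I)^\ast$ has $\sigma$ acting via $(\beta^n_{X^\vee}(\sigma))^\ast = \beta^n_{X^\vee}(\sigma^{-1})^t$ in an appropriate sense. Here the commutative square \eqref{commsq} is exactly what I need: it tells me that transporting $\beta^n_{X^\vee}$ across the anti-equivalence $-^\vee$ gives $\beta^n_X$ composed with $g\mapsto g^{-1}$, and the two inversions cancel. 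So I would track a permutation $\sigma$ through the chain of isomorphisms: $\sigma$ acts on $\Hom(X^{\otimes n},I')$ as precompose-with-$\sigma_{X,\dots,X}$; under $I'\simeq I^\vee$ and duality this becomes postcompose-with-$(\sigma_{X,\dots,X})^\vee = (\sigma^{-1})_{X^\vee,\dots,X^\vee}$ on $\Hom(I,(X^\vee)^{\otimes n})$ — using naturality of the braiding and that $-^\vee$ reverses composition — and after the perfect pairing this is precompose-with-$(\sigma)_{X^\vee,\dots,X^\vee}$ on $\Hom((X^\vee)^{\otimes n},I)$, which is precisely the given $S_n$-action. The bookkeeping of which maps are inverted/transposed is the only real obstacle; everything else is formal. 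The final sentence of the statement — that $\mB^n_{X^\vee}$ consists of the duals of modules in $\mB^n_X$ — is then immediate, since $\{I'\mid L'\in\Irr\cC\}$ and $\{I\mid L\in\Irr\cC\}$ range over the same set of indecomposable injectives (as $L\mapsto L^\vee$ is a bijection on $\Irr\cC$), and taking duals of $S_n$-representations is an (anti-)equivalence preserving the pseudo-abelian structure.
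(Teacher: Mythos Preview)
Your argument has two genuine gaps.

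First, the identification $I'\simeq I^\vee$ does not make sense in general. The duality $(-)^\vee$ is a contravariant autoequivalence of $\cC$; its natural extension is a functor $\Ind\cC\to\Pro\cC$, not an endofunctor of $\Ind\cC$, and it sends the injective hull $L\hookrightarrow I$ to a projective cover $I^\vee\twoheadrightarrow L^\vee$, not to an injective hull. When $\cC$ has no projective objects, $I^\vee$ is a genuine pro-object and there is no meaningful identification with $I'\in\Ind\cC$. Your parenthetical colimit fix does not address this.

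Second, and more fundamentally, the composition pairing
\[
\Hom(I,Y)\times\Hom(Y,I)\;\to\;\End(I)\;\twoheadrightarrow\;\End(I)/\Rad\End(I)\;\simeq\;\bk
\]
is \emph{not} perfect. Take $\cC=\Rep_{\bk} C_p$ with $\mathrm{char}(\bk)=p$, $L=\unit$, $I=\bk C_p$, and $Y=\unit$: both $\Hom(I,\unit)$ and $\Hom(\unit,I)$ are one-dimensional, but every composite $I\twoheadrightarrow\unit\hookrightarrow I$ has image the socle of $I$, hence is nilpotent and lies in $\Rad\End(I)$. The pairing is identically zero. (For $Y=I$ one obtains the pairing $(a,b)\mapsto ab\bmod\Rad$ on $\End(I)\simeq\bk[x]/x^p$, whose left kernel is the entire radical.) So this route cannot produce the desired isomorphism.

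The paper sidesteps both issues with a short representability argument: the functor $\Hom((-)^\vee,I)^\ast\colon\cC^{\op}\to\Vecc$ is exact (a composite of exact functors), hence representable by an injective ind-object $J$; evaluating on simples shows $\soc J\simeq L^\vee$, so $J\simeq I'$. This yields a \emph{natural} isomorphism $\Hom(Y^\vee,I)^\ast\simeq\Hom(Y,I')$, in particular an isomorphism of right $\End(X^{\otimes n})$-modules for $Y=X^{\otimes n}$. The $S_n$-equivariance then follows from diagram~\eqref{commsq}, exactly as in the last part of your argument, which is fine.
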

\begin{proof}
Consider the exact functor
$$\Hom(-^\ast,I)^\ast:\;\cC^{\op}\to\Vecc.$$
This must be representable by an injective ind-object, which is easily identified as $I'$. In particular, we find an isomorphism
$$\Hom((X^\ast)^{\otimes n},I)^\ast \;\simeq\; \Hom(X^{\otimes n}, I')$$
of right $\End(X^{\otimes n})$-modules. The conclusion follows from diagram~\eqref{commsq}.
\end{proof}

\subsubsection{Construction}\label{Construction} We construct natural isomorphisms which will be crucial for the rest of the paper. To keep notation light we work with tensor categories with projective objects. The extension to the general case via injective objects is immediate.

Fix $t\in\mZ_{>0}$ and $P\in\Proj\cC$. For every $t$-tuple $(i_1,\cdots, i_t)$ in $\Irr\cC$ we choose a section $s_{i_1,\cdots, i_t}$ of the linear surjection (using the conventions from \S\ref{sec:not})
$$\Hom(P, P_{i_1}\otimes\cdots\otimes P_{i_t})\;\tto\; \Hom(P, L_{i_1}\otimes \cdots\otimes L_{i_t}).$$
We have a natural transformation
$$\bigoplus_{i_1,\cdots,i_t\in \Irr\cC}\Hom(P,P_{i_1}\otimes\cdots\otimes P_{i_t})\otimes_{\bk} \Hom(P_{i_1},-)\otimes_{\bk}\cdots\otimes_{\bk}\Hom(P_{i_t},-)\;{\Rightarrow}\;\Hom(P,-\otimes  \cdots\otimes-) $$
of functors $\cC^{\times t}\to\Vecc$, which is simply given by
$$g\otimes f_1\otimes \cdots \otimes f_t\;\mapsto \; (f_1\otimes \cdots\otimes f_t)\circ g,$$
where it is understood that $f_1\otimes \cdots \otimes f_t$ refers both to the tensor product over $\bk$ of morphism spaces and the tensor product in $\cC$ of morphisms.

Composing with the chosen sections then produces 
a natural transformation
\begin{equation}\label{EqNat}
\bigoplus_{i_1,\cdots,i_t}\Hom(P,L_{i_1}\otimes\cdots\otimes L_{i_t})\otimes_{\bk} \Hom(P_{i_1},-)\otimes_{\bk}\cdots\otimes_{\bk}\Hom(P_{i_t},-)\;\stackrel{\sim}{\Rightarrow}\;\Hom(P,-\otimes \cdots\otimes-), 
\end{equation}
of functors $\cC^{\times t}\to \Vecc$, which we claim to be an isomorphism. Indeed, since the involved functors are exact in each variable, it suffices to verify that the natural transformation produces an isomorphism on each $t$-tuple of simple objects, which is by definition.

\begin{lemma}\label{LemProd}
For $X,Y\in\cC$, the $S_n$-representations in $\mB_{X\otimes Y}^n$ are precisely the direct summands of representations $M\otimes N$, with $M\in\mB^n_X$ and $N\in\mB^n_Y$.
\end{lemma}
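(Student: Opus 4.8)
The plan is to express $\mathB^n_{X \otimes Y}$ directly in terms of $\Hom$-spaces out of tensor powers of $X$ and $Y$ using the natural isomorphism \eqref{EqNat} from the Construction in \ref{Construction}. First I would work under the non-essential assumption that $\cC$ has projective objects, so that for $P \in \Proj\cC$ the $S_n$-representation $\Hom(P, (X \otimes Y)^{\otimes n})$ generates $\mathB^n_{X \otimes Y}$ as a pseudo-abelian subcategory; the general case follows by the usual passage to injective objects in $\Ind\cC$. Applying \eqref{EqNat} with $t = n$ to the $n$-tuple $(X \otimes Y, \ldots, X \otimes Y)$ is not quite what we want, though — instead I would apply it differently, or rather use a variant: the point is that $(X \otimes Y)^{\otimes n} \cong (X^{\otimes n}) \otimes (Y^{\otimes n})$ after reshuffling the tensor factors via the braiding, and this reshuffling is an $S_n$-equivariant isomorphism where $S_n$ acts diagonally on the right-hand side (simultaneously permuting the $X$-factors and the $Y$-factors).

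So the key computational step is: choosing projective covers and sections as in \ref{Construction}, obtain a natural isomorphism of $S_n$-representations
$$\Hom(P, (X \otimes Y)^{\otimes n}) \;\simeq\; \bigoplus_{i_1, \ldots, i_n \in \Irr\cC} \Hom(P, L_{i_1} \otimes \cdots \otimes L_{i_n}) \otimes_{\bk} \Hom(P_{i_1}, X^{\otimes 1} \otimes Y^{\otimes 1}) \otimes \cdots$$
— hmm, this indexing needs care. Let me restate the intended mechanism. The cleaner route: apply the isomorphism \eqref{EqNat} not to evaluate on objects of $\cC$ but to rewrite $\Hom(P, (X\otimes Y)^{\otimes n})$ by first commuting the $n$-fold tensor product into the shape $X^{\otimes n} \otimes Y^{\otimes n}$ via an element $w \in S_{2n}$, tracking how the diagonal $S_n \subset S_n \times S_n \subset S_{2n}$ acts. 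Then $\Hom(P, X^{\otimes n} \otimes Y^{\otimes n})$, using that the two tensor-power-objects decompose according to their projective/simple filtrations, is computed by \eqref{EqNat} (or its two-variable analogue, $t = 2$) to be a direct sum of $\Hom(P, L \otimes L') \otimes \Hom(L, X^{\otimes n}) \otimes \Hom(L', Y^{\otimes n})$ over pairs of simple objects $L, L'$, where — crucially — the residual $S_n$-action is diagonal: it acts on $\Hom(L, X^{\otimes n})$ and on $\Hom(L', Y^{\otimes n})$ simultaneously (since $w$ was chosen to intertwine the diagonal action), and trivially on the multiplicity space $\Hom(P, L \otimes L')$. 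The representation $\Hom(L, X^{\otimes n})$ — a summand of $\Hom(P_L, X^{\otimes n})$ up to the section — lies in $\mathB^n_X$, and likewise $\Hom(L', Y^{\otimes n}) \in \mathB^n_Y$; so each summand is of the form $M \otimes N$ with $M \in \mathB^n_X$, $N \in \mathB^n_Y$. This shows every generator of $\mathB^n_{X \otimes Y}$, hence every object of $\mathB^n_{X \otimes Y}$, is a summand of such an $M \otimes N$.

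For the converse inclusion — that every $M \otimes N$ with $M \in \mathB^n_X$, $N \in \mathB^n_Y$ lies in $\mathB^n_{X\otimes Y}$ — I would run the computation in the other direction: $M$ is a summand of some $\Hom(P, X^{\otimes n})$ and $N$ of some $\Hom(Q, Y^{\otimes n})$ with $P, Q$ projective, so $M \otimes N$ is a summand of $\Hom(P, X^{\otimes n}) \otimes \Hom(Q, Y^{\otimes n}) \simeq \Hom(P \otimes Q, X^{\otimes n} \otimes Y^{\otimes n})$ (diagonal $S_n$-action), which by the reshuffling isomorphism above is $\Hom(P \otimes Q, (X \otimes Y)^{\otimes n})$; since $P \otimes Q$ is projective, this lies in $\mathB^n_{X \otimes Y}$.

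The main obstacle is bookkeeping the $S_n$-equivariance through the two isomorphisms $\Hom(A, B) \otimes \Hom(A', B') \simeq \Hom(A \otimes A', B \otimes B')$ and $(X \otimes Y)^{\otimes n} \simeq X^{\otimes n} \otimes Y^{\otimes n}$: one must check that the braiding isomorphism realising the latter conjugates the braid action $\beta^n_{X \otimes Y}$ into the diagonal of $\beta^n_X \otimes \beta^n_Y$, which is a hexagon-axiom computation (or: follows because the braiding is a symmetric monoidal natural transformation, so permuting $n$ copies of $X \otimes Y$ agrees with simultaneously permuting $n$ copies of $X$ and $n$ copies of $Y$). Everything else — exactness of the relevant functors in each variable, so that checking \eqref{EqNat} on simples suffices; the reduction from injectives to projectives; and the fact that pseudo-abelian subcategories are determined by their generators up to summands — is routine given \ref{Construction} and Lemma~\ref{LemDual}.
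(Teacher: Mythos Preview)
Your proposal is correct and follows essentially the same approach as the paper. The paper's proof is more terse: it applies \eqref{EqNat} for $t=2$ directly to the pair $(X^{\otimes n}, Y^{\otimes n})$ and simply notes that the resulting isomorphism
\[
\bigoplus_{i,j\in\Irr\cC} V_{ij}\otimes_{\bk}\Hom(P_i,X^{\otimes n})\otimes_{\bk}\Hom(P_j,Y^{\otimes n})\;\xrightarrow{\sim}\;\Hom(P,(X\otimes Y)^{\otimes n})
\]
is one of $\End(X^{\otimes n})\otimes_{\bk}\End(Y^{\otimes n})$-modules (by naturality of \eqref{EqNat}), which subsumes your explicit braiding bookkeeping since the diagonal $S_n$-action factors through this algebra. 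Your separate treatment of the converse via $\Hom(P,X^{\otimes n})\otimes\Hom(Q,Y^{\otimes n})\simeq\Hom(P\otimes Q,(X\otimes Y)^{\otimes n})$ is a fine alternative to reading the same isomorphism backwards as the paper implicitly does.
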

\begin{proof}
For simplicity of notation, we assume that $\cC$ has projective objects.
We apply \eqref{EqNat} for $t=2$ to produce an isomorphism, for $P\in\Proj\cC$,
$$\bigoplus_{i,j\in \Irr\cC}V_{ij}\otimes_{\bk}\Hom(P_i,X^{\otimes n})\otimes_{\bk}\Hom(P_j,Y^{\otimes n})\;\xrightarrow{\sim}\;\Hom(P,(X\otimes Y)^{\otimes n}),$$
of $\End(X^{\otimes n})\otimes_{\bk}\End(Y^{\otimes n})$-modules (for vector spaces $V_{ij}$), from which the claim follows quickly.
\end{proof}

\begin{lemma}\label{RemKappa}
 For every $P\in\Proj\cC$, $n\in\mZ_{>0}$ and $\kappa\vDash n$ of length $l$, take $\{Z_i\in\cC\mid 1\le i\le l\}$.
Then the $S_\kappa$-representation 
$$\Hom(P,Z_1^{\otimes \kappa_1}\otimes \cdots\otimes  Z_l^{\otimes\kappa_l})$$ is a direct sum of 
modules 
$$M_1\boxtimes M_2\boxtimes \cdots \boxtimes M_l\quad \mbox{with} \quad M_i\in \mB^{\kappa_i}_{Z_i}.$$
\end{lemma}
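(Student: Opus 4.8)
The plan is to deduce \Cref{RemKappa} from the $t=l$ case of the natural isomorphism \eqref{EqNat} built in \ref{Construction}, in exactly the same spirit as the proof of \Cref{LemProd} (which is the case $l=2$, $\kappa=(n,n)$ read appropriately, or rather the case $t=2$ with the two slots filled by $X^{\otimes n}$ and $Y^{\otimes n}$). First I would fix $P\in\Proj\cC$, $n$, the composition $\kappa\vDash n$ of length $l$, and the objects $Z_1,\dots,Z_l$. Applying \eqref{EqNat} with $t=l$ to the $l$-tuple $(Z_1^{\otimes\kappa_1},\dots,Z_l^{\otimes\kappa_l})\in\cC^{\times l}$ yields a natural isomorphism
$$\bigoplus_{i_1,\dots,i_l\in\Irr\cC}\Hom(P,L_{i_1}\otimes\cdots\otimes L_{i_l})\otimes_{\bk}\Hom(P_{i_1},Z_1^{\otimes\kappa_1})\otimes_{\bk}\cdots\otimes_{\bk}\Hom(P_{i_l},Z_l^{\otimes\kappa_l})\;\xrightarrow{\sim}\;\Hom(P,Z_1^{\otimes\kappa_1}\otimes\cdots\otimes Z_l^{\otimes\kappa_l}).$$

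Next I would track the $S_\kappa=S_{\kappa_1}\times\cdots\times S_{\kappa_l}$-action. The group $S_{\kappa_j}$ acts on $Z_j^{\otimes\kappa_j}$ via the braiding $\beta^{\kappa_j}_{Z_j}$, hence on $\Hom(P,Z_1^{\otimes\kappa_1}\otimes\cdots\otimes Z_l^{\otimes\kappa_l})$ by post-composition, and the natural transformation \eqref{EqNat}, being built only from composition of morphisms and the tensor product in $\cC$, is visibly $S_\kappa$-equivariant: the $j$-th factor $S_{\kappa_j}$ acts trivially on $\Hom(P,L_{i_1}\otimes\cdots\otimes L_{i_l})$ and on the factors $\Hom(P_{i_k},Z_k^{\otimes\kappa_k})$ for $k\ne j$, and acts on $\Hom(P_{i_j},Z_j^{\otimes\kappa_j})$ again by post-composition with $\beta^{\kappa_j}_{Z_j}$. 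Therefore, as an $S_\kappa$-representation, the right-hand side decomposes as a direct sum, over all tuples $(i_1,\dots,i_l)$, of
$$\Hom(P,L_{i_1}\otimes\cdots\otimes L_{i_l})\otimes_{\bk}\bigl(\Hom(P_{i_1},Z_1^{\otimes\kappa_1})\boxtimes\cdots\boxtimes\Hom(P_{i_l},Z_l^{\otimes\kappa_l})\bigr),$$
i.e. a direct sum of copies (indexed by a basis of the multiplicity space $\Hom(P,L_{i_1}\otimes\cdots\otimes L_{i_l})$) of external tensor products $M_1\boxtimes\cdots\boxtimes M_l$ with $M_j=\Hom(P_{i_j},Z_j^{\otimes\kappa_j})$. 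By \Cref{DefPhiX}, each such $M_j$ lies in $\mB^{\kappa_j}_{Z_j}$ (it is one of the generating modules, for the projective $P_{i_j}$), which gives the claim.

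The only genuinely careful point — and the step I would flag as the main obstacle, though it is more bookkeeping than difficulty — is verifying that the chosen sections $s_{i_1,\dots,i_l}$ can be used without interfering with equivariance. This is fine because the sections sit in the first tensor factor $\Hom(P,L_{i_1}\otimes\cdots\otimes L_{i_l})$, on which $S_\kappa$ acts trivially; the $S_\kappa$-action only ever touches the slots $\Hom(P_{i_j},-)$ through the braiding on the $Z_j^{\otimes\kappa_j}$, precisely as in the proof of \Cref{LemProd}. Finally I would remark, as in that proof, that the general case (without assuming projectives exist) follows verbatim by replacing $P\in\Proj\cC$ with $\Hom((-)^\vee,I)$ for $I\in\Inj\cC$ and using \Cref{LemDual}; so it suffices to treat the case with projectives.
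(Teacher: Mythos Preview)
Your proof is correct and is exactly the approach taken in the paper, which simply says ``This is a direct application of \eqref{EqNat}.'' You have spelled out carefully what that application entails, including the key observation that naturality of \eqref{EqNat} in each of its $l$ variables yields $S_\kappa$-equivariance and that the multiplicity spaces $\Hom(P,L_{i_1}\otimes\cdots\otimes L_{i_l})$ carry the trivial action.
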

\begin{proof}
This is a direct application of \eqref{EqNat}.
\end{proof}

\begin{proof}[Proof of Theorem~\ref{ThmTCIS}]
Part (5) follows immediately from Lemma~\ref{RemKappa}. For part (1), we assume that $\cC$ has projective objects. For $X\in\cC$ and $P\in\Proj\cC$, the natural transformation \eqref{EqNat} for $t=2$ and evaluated at $(X^{\otimes n-1},X)$ produces an isomorphism 
$$\bigoplus_{i\in \Irr\cC} V_i\otimes_{\bk} \Hom(P_i,X^{\otimes n-1})\;\xrightarrow{\sim}\; \Hom(P, X^{\otimes n})$$
of $S_{n-1}$-representations, for some vector spaces $V_i$.

Part (2)(a) follows from \Cref{LemProd}. Part (2)(b) follows from \Cref{LemDual}, Part (2)(c) is obvious. Part (2)(d) follows from the observation that we have an isomorphism in $\Rep S_n$
\begin{equation}\label{eq:oplus}
\Hom(P, (X\oplus Y)^{\otimes n})\;\simeq\; \bigoplus_{l=0}^n \Ind^{S_n}_{S_l\times S_{n-l}}\Hom(P,X^{\otimes l}\otimes Y^{\otimes n-l}),
\end{equation}
and Lemma~\ref{RemKappa}.
This also proves part (4). 

Finally, part (3) is a special case of part (2).
\end{proof}

\begin{corollary}\label{cor:gen}
Let $X_\alpha$ be a collection of objects so that each indecomposable object in $\cC$ is a direct summand of some tensor product of the $X_\alpha$. Then $\bB[\cC]$ is the minimal closed inductive system (see Remark~\ref{rem:min}) containing every $\bB_{X_\alpha}$.
\end{corollary}
\begin{proof}
We only need to show that the minimal inductive system $\bB$ containing every $\bB_{X_\alpha}$ contains $\bB_Y$ for an arbitrary $Y\in\cC$. By equation~\eqref{eq:oplus} and Lemma~\ref{RemKappa} it suffices to consider indecomposable $Y$. Then $Y$ is a direct summand of a tensor product $X'$ of the $X_\alpha$, and thus $\bB_Y\subset \bB_{X'}$, again by~\eqref{eq:oplus}, and finally by Lemma~\ref{LemProd} we have $\bB_{X'}\subset\bB$.
\end{proof}

We conclude this section with an application of Construction~\ref{Construction}.

\begin{lemma}\label{YonLem}
For $Z_1,Z_2\in\cC$, the canonical `evaluation at $M$' morphism in $\cC$
$$\int^{M\in\mB^n[\cC]}(M^\ast\otimes_{\bk} Z_1^{\otimes n})^{S_n}\otimes (M\otimes_{\bk} Z_2^{\otimes n})^{S_n}\;\to\; \Gamma^n(Z_1\otimes Z_2)$$
is an isomorphism.
\end{lemma}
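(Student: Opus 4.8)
The plan is to recognize the claimed isomorphism as a coend computation of the same flavour as \eqref{EqNat}, now packaged through the category $\mB^n[\cC]$ of $S_n$-representations appearing in $\cC$. First I would unwind the two factors inside the coend: for $M\in\mB^n[\cC]$, the space $(M^\ast\otimes_\bk Z_1^{\otimes n})^{S_n}\cong \Hom_{S_n}(M, Z_1^{\otimes n})$ is the hom in $\cC\boxtimes\Rep S_n$ from the constant object $M$ to $\RT_n(Z_1)$, and likewise $(M\otimes_\bk Z_2^{\otimes n})^{S_n}\cong\Hom_{S_n}(M^\ast, Z_2^{\otimes n})\cong (M\otimes Z_2^{\otimes n})^{S_n}$ — or, dually, viewing $M\mapsto M^\ast$ as an anti-equivalence on $\mB^n[\cC]$, we may rewrite the second factor as $\Hom_{S_n}(Z_2^{\otimes n}, M)$ after applying duality (using that $\mB^n[\cC]$ is closed under duals by Theorem~\ref{ThmTCIS}(2)(b)/(3)). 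Then the coend over $M$ is a "co-Yoneda" expression: it glues $\Hom(M,Z_1^{\otimes n})\otimes\Hom(Z_2^{\otimes n},M)$ over all $M$ in the pseudo-abelian category $\mB^n[\cC]$.

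Next I would reduce to a density/generator statement. Since $\mB^n[\cC]$ is by definition the pseudo-abelian subcategory of $\Rep S_n$ generated by the objects $\Omega^n(X)=\omega(X^{\otimes n})$, every $M\in\mB^n[\cC]$ is a summand of some $\Omega^n(X)$, and the coend is computed (as usual for coends over idempotent-complete additive categories indexed by a hom-bifunctor) on these generators; concretely, both sides are additive in the appropriate sense and the evaluation map is natural, so it suffices to check the map is an isomorphism after pairing with each generator $X^{\otimes n}$, i.e. to check that
$$\int^{M}(M^\ast\otimes_\bk X^{\otimes n})^{S_n}\otimes (M\otimes_\bk Z_2^{\otimes n})^{S_n}\;\xrightarrow{\ \sim\ }\;(X^{\otimes n}\otimes_\bk Z_2^{\otimes n})^{S_n}$$
reflects the statement, and symmetrically in $Z_1\leftrightarrow Z_2$, $X\leftrightarrow$ its companion. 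But $(M^\ast\otimes_\bk X^{\otimes n})^{S_n}=\Hom_{S_n}(M,X^{\otimes n})$, and the coend $\int^M \Hom_{S_n}(M,X^{\otimes n})\otimes(M\otimes_\bk Z_2^{\otimes n})^{S_n}$ collapses by the co-Yoneda lemma (since the functor $M\mapsto (M\otimes_\bk Z_2^{\otimes n})^{S_n}$ from $\mB^n[\cC]$ to $\cC$ is represented up to the coend pairing by $X^{\otimes n}$ exactly when $X^{\otimes n}$ lies in $\mB^n[\cC]$, which it does) to $(X^{\otimes n}\otimes_\bk Z_2^{\otimes n})^{S_n}$. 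Running this co-Yoneda reduction once more in the other variable, or equivalently invoking Construction~\ref{Construction}/Lemma~\ref{LemProd} to identify $(Z_1\otimes Z_2)^{\otimes n}$ internally, identifies the target with $\Gamma^n(Z_1\otimes Z_2)=(((Z_1\otimes Z_2)^{\otimes n}))$ — here $\Gamma^n(Y)$ denotes $\uHom(\unit,Y^{\otimes n})$-type divided power object, so that $\Gamma^n(Z_1\otimes Z_2)$ is exactly what the double co-Yoneda produces.

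The technical heart — and the step I expect to be the main obstacle — is making the coend over $\mB^n[\cC]$ rigorous and verifying that the "evaluation at $M$" map is well defined and dinatural: one must check that the two ways of using a morphism $M\to M'$ in $\mB^n[\cC]$ (pushing forward in the first factor versus pulling back in the second) agree after composing to $\Gamma^n(Z_1\otimes Z_2)$, and that the coend exists (it does, being a coend over an essentially small idempotent-complete additive category with values in the cocomplete category $\Ind\cC$, computed as a cokernel of a map between finite direct sums on the generators). Once dinaturality and the generator reduction are in place, the isomorphism is exactly the content of \eqref{EqNat} repackaged: the sections $s_{i_1,\dots,i_t}$ chosen there provide the inverse on each tuple of simples, and exactness in each variable upgrades this to the claimed natural isomorphism. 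I would therefore structure the write-up as: (i) rewrite both coend-factors as $S_n$-equivariant hom spaces; (ii) check dinaturality of evaluation; (iii) reduce to generators $X^{\otimes n}$ of $\mB^n[\cC]$; (iv) apply co-Yoneda twice, citing \eqref{EqNat} and Lemma~\ref{LemProd} for the identification of the target with $\Gamma^n(Z_1\otimes Z_2)$.
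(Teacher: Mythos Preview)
Your overall instinct---reduce to a co-Yoneda computation using the natural isomorphism \eqref{EqNat}---matches the paper, but there is a genuine gap in how you carry it out. The factors $(M^\ast\otimes_\bk Z_1^{\otimes n})^{S_n}$ and $(M\otimes_\bk Z_2^{\otimes n})^{S_n}$ are objects of $\cC$, not vector spaces, and the functor $M\mapsto (M\otimes_\bk Z_2^{\otimes n})^{S_n}$ from $\mB^n[\cC]$ to $\cC$ is \emph{not} representable by an object of $\mB^n[\cC]$: the latter is a subcategory of $\Rep S_n$, while $Z_2^{\otimes n}$ lives in $\cC\boxtimes\Rep S_n$. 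Your sentence ``represented \ldots\ by $X^{\otimes n}$ exactly when $X^{\otimes n}$ lies in $\mB^n[\cC]$, which it does'' is a category error, and the substitution of $X$ for $Z_1$ is not a reduction step. So the direct co-Yoneda collapse you invoke does not apply.

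The paper supplies exactly the missing move: test the map by applying $\Hom_\cC(P,-)$ for every projective $P$ (equivalently, apply a fibre functor $\Omega$). This lands in $\Vecc$, and then \eqref{EqNat} with $t=2$ separates $\Hom_\cC(P,(Z_1\otimes Z_2)^{\otimes n})$ into a sum over $i,j\in\Irr\cC$ of tensor products $\Hom_\cC(P_i,Z_1^{\otimes n})\otimes_\bk\Hom_\cC(P_j,Z_2^{\otimes n})$. Each of these factors is now an honest object of $\mB^n[\cC]\subset\Rep S_n$ (one of them after invoking Lemma~\ref{LemDual} to pass to the dual), and the coend becomes
\[
\int^{M\in\mB^n[\cC]}\Hom_{S_n}(M,A)\otimes_\bk\Hom_{S_n}(B,M)\;\to\;\Hom_{S_n}(B,A)
\]
with $A,B\in\mB^n[\cC]$, which \emph{is} the ordinary co-Yoneda lemma. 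Your step~(iii) should therefore be replaced by ``apply $\Hom_\cC(P,-)$ and use \eqref{EqNat} to reduce to $\Vecc$''; after that, a single application of co-Yoneda suffices, not two.
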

\begin{proof}
For notational convenience, we assume that $\cC$ has projective objects. It then suffices to show that the action of $\Hom_{\cC}(P,-)$ produces an isomorphism, for all $P\in\Proj\cC$. By application of~\eqref{EqNat} for $l=2$, we need to show that, for every $i,j\in\Irr\cC$
$$\int^M\Hom_{\cC}(P_i,M^\ast\otimes_{\bk} Z_1^{\otimes n})^{S_n}\otimes_{\bk}\Hom_{\cC}(P_j,M\otimes_{\bk} Z_2^{\otimes n})^{S_n}$$
$$\to\;\left(\Hom_{\cC}(P_i,Z_1^{\otimes n})\otimes_{\bk}\Hom(P_j, Z_2^{\otimes n})\right)^{S_n}$$
is an isomorphism, where in the target we consider invariants with respect to the diagonal action of $S_n$. By application of Lemma~\ref{LemDual}, we can rewrite this as
$$\int^{M\in\mB^n[\cC]}\Hom_{S_n}\left(M,A\right)\otimes_{\bk}\Hom_{S_n}\left(B,M\right)\;\to\;\Hom_{S_n}\left(B,A\right),\quad\mbox{with}$$
$$A:=\Hom_{\cC}(P_i, Z_1^{\otimes n})\in\mB^n[\cC]\quad\mbox{and}\quad B:=\Hom_{\cC}(P_j', (Z_2^\ast)^{\otimes n})\in\mB^n[\cC].$$
This is now indeed an isomorphism, by the (co-)Yoneda lemma.
\end{proof}

%%%%%%%%%%%%%%%%%%%%%%%%%%%%%%%%%%%%%%%%%%%%%%%%%%%%%%%%%%%%%%%%%%%%%%%%%%%%%%%%%%%%%%%%%%%%%%%%%%%%%%%%%%%%%%%%%%%%%%%%%%%%%%%%%%%%%%%%%%%%%%%%%%%%%%%%%%%%%%%%%%%%%%%%%%%%%%%%%%%%%%%%%%%%%%%%%%%%%%%%%%%%%%%%%%%%%%%%%%%%%%%%%%%%%%%%%%%%%%%%%%%%%%%%%%%%%%%%%%%%%%%%%%%%%%%%%%%%%%%%%%%%%%%%%%%%%%%%%%%%%%%%%%%%%%%%%%%%%%%%%%%%%%%%%%%%%%%%%%%%%%%%%%%%%%%%%%%%%%%%%%%%%%%%%%%%%%%%%%%%%%%%%%%%%%%%%%%%%%%%%%%%%%%%%%%%%%%%%%%%%%%%%%%%%%%%%%%%%%%%%%%%%%%%%%%%%%%%%%%%%%%%%%%%%%%%%%%%%%%%%%%%%%%%%%%%%%%%%%%%%%%%%%%%%%%%%%%%%%%%%%%%%%%%%%%%%%%%%%%%%%%%%%%%%%%%%%%%%%%%%%%%%%%%%%%%%%%%%%%%%%%%%%%%%%%%%%%%%%%%%%%%%%%%%%%%%%%%%%%%%%%%%%%%%%%%%%%%%%%%%%%%%%%%%%%%%%%%%%%%%%%%%%%%%%%%%%%%%%%%%%%%%%%%%%%%%%%%%%%%%%%%%%%%%%%%%%%%%%%%%%%%%%%%%%%%%%%%%%%%%%%%%%%%%%%%%%%%

\section{The Verlinde category}
Let $\bk$ be an algebraically closed field of characteristic $p>0$.
We determine $\mB^n[\Ver_p]$ for the symmetric fusion category $\Ver_p$.
We follow the labelling of simples in $\Ver_p$ from \cite{Os}, so $\Irr\Ver_p=\{1,2,\cdots, p-1\}$ with $L_1=\unit$. By \cite{Os} the tensor subcategories of $\Ver_p$ are given by $\Vecc$, $\sVec$ (with simple objects $L_1,L_{p-1}$) and $\Ver_p^+$ (with simple objects $L_i$ for $i$ odd).

\subsection{Main result}
We will prove the following theorem in Section~\ref{PrVerp}.
\begin{theorem}\label{ThmVerp}
\begin{enumerate}
\item For $1\le i<p$, the inductive system $\mB_{L_i}$ is the semisimple inductive system $\bC\mS(i)$ from Theorem~\ref{ThmKl}.
\item The inductive systems $\ICS$ and $\ICS_+$ are closed.
\item We have
$$\mB[\Vecc]=\Young,\quad \mB[\sVec]=\SYoung,\quad \mB[\Ver_p^+]=\ICS_+,\quad \mB[\Ver_p]=\ICS.$$

\end{enumerate}
\end{theorem}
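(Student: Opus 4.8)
The plan is to deduce everything in Theorem~\ref{ThmVerp} from part (1) together with the structural results already in hand. First I would prove (1): for a simple object $L_i\in\Ver_p$, identify $\mB_{L_i}^n$ with the pseudo-abelian subcategory of $\Rep S_n$ generated by $\Omega^n(L_i)$, where $\Omega$ is the canonical fibre functor on $\Ver_p$ composed with the forgetful functor to $\Vecc$. Since $\Ver_p$ is a fusion category, $L_i^{\otimes n}$ decomposes as a direct sum of simples with multiplicities governed by the truncated Clebsch--Gordan rule, and the $S_n$-action is semisimple (because $\mB_{L_i}$ will turn out semisimple); the key input is that these $S_n$-modules are precisely the completely splittable modules. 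I would argue this by an inductive characterisation: by Definition~\ref{DefIS} the system $\mB_{L_i}$ is determined by how $\Res^{S_n}_{S_{n-1}}$ acts, and the natural isomorphism of \eqref{EqNat} (specialised to $t=2$, evaluated at $(L_i^{\otimes n-1},L_i)$) exhibits $\Res^{S_n}_{S_{n-1}}$ of the relevant modules in terms of $\mB_{L_i}^{n-1}$ and the tensor product rule $L_i\otimes L_j=\bigoplus L_k$ in $\Ver_p$. This recursion matches the recursive description of $\bC\mS(i)$ coming from the branching rule for completely splittable modules in \cite{CS}; one then checks the base case ($n<p$, where everything is all of $\Rep S_n$, or small $n$) and that the combinatorics of the fusion rule at level $p$ reproduces Kleshchev's crystal-theoretic branching for $\bC\mS(i)$. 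Parts (3) and (5) of Theorem~\ref{ThmKl} pin down which labels give trivial/sign behaviour, which should match $L_1=\unit$ and $L_{p-1}$, nailing down the label $i$ exactly.

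\textbf{From (1) to (2) and (3).} Granting (1), part (2) is almost immediate: $\Ver_p$ is a tensor category, so by Theorem~\ref{ThmTCIS}(3) the system $\mB[\Ver_p]=\sum_{X\in\Ver_p}\mB_X[\Ver_p]$ is closed. But $\Ver_p$ is semisimple, so Theorem~\ref{ThmTCIS}(4) gives $\mB[\Ver_p]=\mathbf{I}\!\left(\sum_{1\le i<p}\mB_{L_i}\right)=\mathbf{I}(\bC\mS)=\ICS$ using (1). The same argument applied to the tensor subcategory $\Ver_p^+$ (whose simples are the $L_i$ with $i$ odd) yields $\mB[\Ver_p^+]=\mathbf{I}(\bC\mS_+)=\ICS_+$, and both are closed, giving (2) and the last two equalities of (3). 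Here I should double-check, using Proposition~\ref{PropF}(1), that $\mB_{L_i}[\Ver_p^+]=\mB_{L_i}[\Ver_p]$ via the inclusion tensor functor $\Ver_p^+\hookrightarrow\Ver_p$, so that no ambiguity arises. For the first two equalities of (3): $\mB[\Vecc]=\Young$ is Example~\ref{Ex1}(1) (or follows since $\Vecc=\Rep 1$), and $\mB[\sVec]=\SYoung$ follows from Theorem~\ref{ThmTCIS}(3)--(4) since $\sVec$ is semisimple with two simples $\unit$ and the sign-line $L$, for which $\mB_L^n$ is the category of sign representations (this is the signed analogue of Example~\ref{Ex1}(1), or can be cited from \cite{Donkin}); then $\mathbf{I}$ of the trivial-plus-sign system is $\SYoung$ by definition (Example~\ref{ExIndSys2}(4)). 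Alternatively all four follow uniformly from (1) plus Theorem~\ref{ThmTCIS}(4) once one notes $\Irr\Vecc=\{\unit\}$, $\Irr\sVec=\{\unit, L_{p-1}|_{\sVec}\}$, compatibly with Kleshchev's labels via Theorem~\ref{ThmKl}(3)--(4).

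\textbf{Main obstacle.} The entire difficulty is concentrated in part (1): matching the fusion-rule recursion in $\Ver_p$ with Kleshchev's branching rule for completely splittable modules and, in particular, verifying that the $S_n$-representations $\Omega^n(L_i)$ are semisimple with exactly the right constituents (and the right label). One clean route is to first show $\mB_{L_i}$ is a semisimple inductive system — for instance by exhibiting that $\mathrm{End}_{\Ver_p}(L_i^{\otimes n})$, or the relevant $\Hom$ space, is a semisimple algebra, which holds because $\Ver_p$ is a semisimple (fusion) category so all these endomorphism algebras are products of matrix algebras; hence by Theorem~\ref{ThmKl}(1) every constituent is completely splittable, so $\mB_{L_i}\subset\bC\mS$. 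Then by Theorem~\ref{ThmKl}(2) it is a sum of the $\bC\mS(s)$, and one identifies which ones by a small computation: $\mB_{L_i}$ is indecomposable as a sum (because $L_i$ generates, via tensor powers and the fusion graph, a single ``block''), forcing $\mB_{L_i}=\bC\mS(s)$ for a unique $s$, and a dimension or low-degree comparison (e.g. at $n=2$, where $L_i^{\otimes 2}$ contains $\unit$ iff $L_i$ is self-dual, matching Theorem~\ref{ThmKl}(5)) pins $s=i$. I expect writing this identification carefully — especially the bookkeeping that $\Omega^n(L_i)$ sees \emph{all} of $\bC\mS^n(i)$ and not a proper subsystem — to be the technically delicate part, and it is presumably where the detailed properties of $\Ver_p$ from \cite{Os, BEO} get used.
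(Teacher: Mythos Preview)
Your reduction of parts (2) and (3) to part (1) via Theorem~\ref{ThmTCIS}(3),(4) is correct and is exactly what the paper does. The overall shape of your argument for (1) --- show that $\mB_{L_i}$ is semisimple, then identify it among the $\bC\mS(s)$ using Theorem~\ref{ThmKl} --- also matches the paper. But both substeps, as you present them, have genuine gaps.

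\textbf{Semisimplicity.} You argue that $\End_{\Ver_p}(L_i^{\otimes n})$ is a product of matrix algebras because $\Ver_p$ is fusion, and conclude that the $S_n$-representations in $\mB_{L_i}^n$ are semisimple. This does not follow: the $S_n$-action on $\Hom(L_j,L_i^{\otimes n})$ is via the braid map $\beta^n_{L_i}:\bk S_n\to\End_{\Ver_p}(L_i^{\otimes n})$, and the \emph{image} of this map need not be a semisimple subalgebra just because the target is (a subalgebra of a matrix algebra can perfectly well be upper-triangular). What is actually needed is surjectivity of $\beta^n_{L_i}$, and this is exactly where the paper brings in an extra ingredient you are missing: it realises $L_i$ as the image of the natural representation $V$ under a tensor functor $\Ver_p(SL_i)=\overline{\Tilt SL_i}\to\Ver_p$, notes that $\beta^n_V$ is surjective by classical Schur--Weyl duality (hence so is $\beta^n_{\overline V}$ in the semisimplification), and then applies Lemma~\ref{LemSurj} and Corollary~\ref{CorSS} together with Proposition~\ref{PropF}(1). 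Your alternative recursive-branching idea could in principle bypass this, but you do not carry it out, and matching Kleshchev's combinatorics against the fusion rules would be considerably more work than the route above.

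\textbf{Identifying the label.} Your proposals here are too loose to pin down $s=i$. The $n=2$ test ($\unit$ appearing in $L_i^{\otimes 2}$) does not separate the $L_i$, since every simple in $\Ver_p$ is self-dual; and your ``indecomposability via the fusion graph'' argument for why $\mB_{L_i}$ should be a single $\bC\mS(s)$ rather than a sum is not justified. The paper's determination is short and sharp: from $\Sym^{p-i}L_i\neq 0$ but $\Sym^{p-i+1}L_i=0$, Theorem~\ref{ThmKl}(5) yields $\bC\mS(i)\subset\mB_{L_i}\subset\sum_{s\ge i}\bC\mS(s)$; then $\wedge^{i+1}L_i=0$ together with Theorem~\ref{ThmKl}(4) forces $s\le i$, giving $\mB_{L_i}=\bC\mS(i)$.
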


From definition, it is clear that $\ICS$ and $\ICS_+$ satisfy all properties of a closed inductive system, except for the following fact:
\begin{corollary}\label{CorICSMon}
The subcategory $\ICS_n\subset\Rep S_n$ is monoidal.
\end{corollary}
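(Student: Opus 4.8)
The goal is to show that $\ICS_n \subset \Rep S_n$ is closed under internal tensor products. The difficulty is that $\ICS = \mI(\bC\mS)$ is built by applying the induction-product closure $\mI(-)$ to the semisimple system $\bC\mS$, so a general object of $\ICS_n$ is a summand of an induced module $\Ind^{S_n}_{S_\lambda}(V_1 \boxtimes \cdots \boxtimes V_l)$ with each $V_i$ completely splittable. The tensor product of two such induced modules is not obviously again of this shape. My plan is to deduce the corollary from Theorem~\ref{ThmVerp}(1) and Theorem~\ref{ThmTCIS}. Indeed, once we know $\mB_{L_i}[\Ver_p] = \bC\mS(i)$ for each $i$, it follows that $\sum_{L \in \Irr\Ver_p}\mB_L[\Ver_p] = \sum_{1 \le s < p}\bC\mS(s) = \bC\mS$ since $\Ver_p$ is semisimple with $\Irr\Ver_p = \{1,\dots,p-1\}$. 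Now $\mB[\Ver_p] = \mB_{\Ver_p}[\Ver_p]$ is closed by Theorem~\ref{ThmTCIS}(3), and by Theorem~\ref{ThmTCIS}(4) (applicable since $\Ver_p$ is semisimple) we get $\mB[\Ver_p] = \mI\big(\sum_{L}\mB_L[\Ver_p]\big) = \mI(\bC\mS) = \ICS$. Being closed, $\ICS_n$ is in particular a monoidal (rigid) subcategory of $\Rep S_n$ by Definition~\ref{DefIS2}, which is exactly property \ref{properties}(i): closure under internal tensor products. The same argument applied to $\Ver_p^+$ (whose simple objects are the $L_i$ with $i$ odd) shows $\ICS_+ = \mB[\Ver_p^+]$ is closed, hence $\ICS_{+,n}$ is monoidal too; this also gives part (2) of Theorem~\ref{ThmVerp}.

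\textbf{Alternatively}, if one wishes a proof not routed through the tensor category $\Ver_p$ (for instance, to avoid circularity with how Theorem~\ref{ThmVerp} is later proved), one can argue directly on the level of symmetric group combinatorics. First reduce, using that $\ICS = \mI(\bC\mS)$ and that $\mI(-)$ is idempotent, to showing that the internal tensor product of two modules of the form $\Ind^{S_n}_{S_\lambda}(\boxtimes_i V_i)$ and $\Ind^{S_n}_{S_\mu}(\boxtimes_j W_j)$, with $V_i, W_j$ completely splittable, lies in $\ICS_n$. By the Mackey-type formula for the tensor product of two induced modules — namely $\Ind^{S_n}_{S_\lambda}U \otimes \Ind^{S_n}_{S_\mu}U' \cong \bigoplus_{S_\lambda g S_\mu} \Ind^{S_n}_{S_\lambda \cap {}^g S_\mu}\big(\Res(U) \otimes {}^g\!\Res(U')\big)$ — each summand is induced from a Young subgroup $S_\nu$ of a module that is, after restricting to that Young subgroup, an external tensor product of restrictions of completely splittable modules tensored together. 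Since completely splittable modules restrict semisimply to Young subgroups and $\bC\mS$ is (by Theorem~\ref{ThmKl}) closed under restriction to Young subgroups, and since one must still check that tensoring two CS modules over a common $S_m$ stays inside $\bC\mS^m$: this last point is precisely where one uses that $\bC\mS = \sum_s \bC\mS(s)$ and that, by Theorem~\ref{ThmKl}(4) together with the explicit description, the tensor product of CS modules decomposes into CS modules. Feeding these summands back through $\mI$ gives membership in $\ICS_n$.

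\textbf{Main obstacle.} The crux in the second approach is verifying that the internal tensor product of two completely splittable $S_m$-modules remains a sum of completely splittable modules — this is not a formal consequence of the inductive-system axioms and genuinely uses Kleshchev's classification. This is exactly why the first approach, via $\mB[\Ver_p]$ being closed (Theorem~\ref{ThmTCIS}(3)), is cleaner: the tensor-closure is inherited for free from the tensor structure of the category $\Ver_p$, and all the combinatorial content is packaged into the identification $\mB_{L_i} = \bC\mS(i)$. I would therefore present the corollary as an immediate consequence of Theorem~\ref{ThmVerp} once the latter is established, and indeed the corollary is most naturally stated and used after Theorem~\ref{ThmVerp}; in the logical development it suffices to note that $\ICS = \mB[\Ver_p]$ and $\ICS_+ = \mB[\Ver_p^+]$ are closed, so both $\ICS_n$ and $\ICS_{+,n}$ are monoidal subcategories of $\Rep S_n$.
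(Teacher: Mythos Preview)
Your first approach is correct and is exactly how the paper proceeds: the corollary is placed immediately after Theorem~\ref{ThmVerp} and follows because $\ICS=\mB[\Ver_p]$ (from Theorem~\ref{ThmVerp}(1) together with Theorem~\ref{ThmTCIS}(4)) is closed by Theorem~\ref{ThmTCIS}(3), so each $\ICS_n$ is a rigid monoidal subcategory.

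One caution about your alternative direct route: the step you flag as the crux is a real gap, not just a bookkeeping check. Theorem~\ref{ThmKl}(4) only tells you that $\bC\mS$ is stable under tensoring with the sign representation; it does not imply that the internal tensor product of two arbitrary completely splittable $S_m$-modules decomposes into CS modules. That statement is essentially the corollary itself (indeed the paper records the $m=p$ case as a separate noteworthy consequence), and it does not follow from Kleshchev's classification alone. So the alternative argument, as written, is circular at that point; your decision to route the proof through $\Ver_p$ is the right one.
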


The following special case can presumably alternatively be obtained from the analysis of $\overline{\Rep S_p}$ in \cite[\S 4.4]{EOs}.
\begin{corollary}
The tensor product of two simple $S_p$-modules is a direct sum of a semisimple and a projective module.
\end{corollary}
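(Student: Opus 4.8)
The plan is to derive this corollary directly from Theorem~\ref{ThmVerp} together with the structure of the closed inductive system $\ICS = \mB[\Ver_p]$ at level $n=p$. First I would invoke Theorem~\ref{ThmVerp}(3), which identifies $\mB^p[\Ver_p] = \ICS^p$, and Theorem~\ref{ThmVerp}(1), which says each $\mB_{L_i}$ is the semisimple system $\bC\mS(i)$. The point is that the simple $S_p$-modules $D^\lambda$ themselves lie in $\ICS^p$: indeed every simple $S_p$-module is completely splittable since $S_p$ has no nontrivial Young subgroups of the dangerous type (more precisely, by Theorem~\ref{ThmKl} and the description of $\bC\mS(s)$, for $n = p$ the systems $\bC\mS(s)$ exhaust all simple modules as $s$ ranges over $1,\dots,p-1$ — the CS condition is automatic at $n=p$ because a proper Young subgroup $S_\mu < S_p$ has all parts $<p$, hence is a product of symmetric groups in characteristic-coprime range where everything is semisimple). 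So $D^\lambda \in \bC\mS(s) \subseteq \ICS$ for the appropriate $s$.

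Next I would use that $\ICS^p \subset \Rep S_p$ is a monoidal subcategory, which is precisely \Cref{CorICSMon} (equivalently Theorem~\ref{ThmVerp}(2) together with Theorem~\ref{ThmTCIS}(3)). Therefore, given two simple $S_p$-modules $D^\lambda$ and $D^\mu$, both lie in $\ICS^p$, hence so does $D^\lambda \otimes D^\mu$. Now the structure of $\ICS^p$ as an abelian category needs to be pinned down: by the analysis already carried out in the proof of Proposition~\ref{PropInclusions} (the case $p>3$), the indecomposable objects of $\ICS^p$ are exactly the simple $S_p$-modules together with the indecomposable projective $S_p$-modules. So any module in $\ICS^p$ decomposes as a direct sum of simple modules and indecomposable projectives; in particular $D^\lambda \otimes D^\mu$ is a direct sum of a semisimple module and a projective module. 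For $p=2$ and $p=3$ the statement is either vacuous or immediate: when $p=3$, $\ICS^3 = \ICS_+^3$ equals all of $\Rep S_3$ semisimple-plus-projective by the same proposition, and when $p=2$ the only simple $S_2$-module is trivial and the claim is trivial.

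The one subtlety — which I expect to be the main (minor) obstacle — is justifying that the indecomposables of $\ICS^p$ are precisely the simples and the indecomposable projectives, i.e. that $\ICS^p$ contains no indecomposable module that is neither simple nor projective. This is not a formal consequence of closedness; it relies on Kleshchev's classification via Theorem~\ref{ThmKl}(1), which forces every simple in a semisimple inductive system to be completely splittable, combined with the fact that $\mI(\bC\mS)$ at level $p$ can only produce, beyond the CS simples, modules induced from proper Young subgroups — and such induced modules, being $p$-permutation modules of $S_p$ built from the trivial module of subgroups of order coprime to... — hmm, actually the cleanest route is to quote directly the dimension/counting argument from the proof of Proposition~\ref{PropInclusions}: there it is shown (for $p>3$) that $\ICS^p$ has exactly as many indecomposables as there are simple modules plus partitions of $p$, forcing them to be the simples and the indecomposable projectives. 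I would simply cite that argument. Once that is granted, the corollary is immediate from monoidality of $\ICS^p$ and the containment of all simple $S_p$-modules in $\ICS^p$.
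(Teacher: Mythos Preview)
Your proposal is correct and follows essentially the same route as the paper's (implicit) argument: the corollary is an immediate consequence of the monoidality of $\ICS^p$ (Corollary~\ref{CorICSMon}) together with the description of $\Indec\ICS^p$ as the simples plus the indecomposable projectives.

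Your one worry is easily dispatched, and you nearly said it yourself before retreating to the citation: for a \emph{proper} composition $\lambda\vDash p$ every part satisfies $\lambda_i<p$, so $|S_\lambda|$ is coprime to $p$, and hence $\Ind^{S_p}_{S_\lambda}(V_1\boxtimes\cdots\boxtimes V_l)$ is projective for any $V_i$. Since $\bC\mS^p$ already contains all simples (every simple $S_p$-module is CS, as you noted), this gives $\Indec\ICS^p=\{\text{simples}\}\sqcup\{\text{indecomposable projectives}\}$ directly. A small correction: the ``dimension/counting argument'' in the proof of Proposition~\ref{PropInclusions} actually concerns $\SYoung^p$; the description of $\ICS^p$ there is stated rather than counted, and the clean justification is the coprimality argument above.
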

%\begin{proof}
%Since $\ICS_p$ consists of all direct sums of simple and projective modules, this follows from Corollary~\ref{CorICSMon} for $n=p$.
%\end{proof}

Recall that a module of a finite group $G$ is {\bf algebraic} if only finitely many isomorphism classes of indecomposable modules appear in its tensor powers; or equivalently if its class in the Green ring $R(G)$ satisfies a polynomial identity. We refer to \cite{Cr} for more context.

\begin{corollary}\label{CorAlg}
Every completely splittable module is algebraic.
\end{corollary}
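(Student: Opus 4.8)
The plan is to deduce \Cref{CorAlg} directly from \Cref{ThmVerp} together with the closure properties recorded in \Cref{ThmTCIS} and the invariance of $\mB[\cC]$ under tensor functors in \Cref{PropF}. The key observation is that \emph{algebraicity of a module $M\in\Rep_\bk S_n$ is exactly a finiteness statement about the pseudo-abelian subcategory generated by the tensor powers $M^{\otimes k}$}, and by \Cref{ThmVerp}(1) a completely splittable module $M$ lies in $\mB_{L_i}[\Ver_p]=\bC\mS(i)$ for some $i$. Since $\mB[\Ver_p]=\ICS$ is closed (\Cref{ThmVerp}(2), \Cref{CorICSMon}), all the tensor powers $M^{\otimes k}$ remain in $\ICS^n\subset\Rep_\bk S_n$, which has only \emph{finitely many} indecomposable objects — the simple CS modules together with the finitely many indecomposable projectives of $\bk S_n$. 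Hence only finitely many indecomposables occur among the $M^{\otimes k}$, which is precisely the definition of algebraic.

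Concretely I would organise the argument as follows. First, by \Cref{ThmKl}(1), a completely splittable simple $S_n$-module belongs to a semisimple inductive system, hence to $\bC\mS(i)$ for some $1\le i<p$; by \Cref{ThmVerp}(1) this is $\mB_{L_i}[\Ver_p]$, so $M\simeq\Hom((L_i^\vee)^{\otimes n},I)$ up to summands for a suitable injective $I$ in $\Ver_p$ (equivalently $M$ is a summand of $\Omega^n(L_i)$). Second, since $\ICS=\mB[\Ver_p]$ satisfies property \ref{properties}(i) — each $\ICS^n$ is closed under internal tensor products (\Cref{ThmTCIS}(3), or \Cref{CorICSMon}) — every tensor power $M^{\otimes k}$ again lies in $\ICS^n$. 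Third, I would count indecomposables in $\ICS^n$: by \Cref{ThmVerp}(1)--(3) the indecomposable objects of $\ICS^n=(\mI(\bC\mS))^n$ are built from CS simples of the Young subgroups via induction, and for fixed $n$ there are only finitely many such modules (indeed one expects $\Indec\ICS^n$ to consist of the finitely many CS simple $S_n$-modules plus the finitely many indecomposable projective $\bk S_n$-modules, exactly as observed for $n=p$ in the proof of \Cref{PropInclusions}). Finally, a module with only finitely many indecomposable summands appearing across all its tensor powers is algebraic by definition, so $M$ — and hence any module all of whose composition factors, or rather all of whose indecomposable summands, are CS — is algebraic.

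There is one genuine point that needs care, and it is the main obstacle: \emph{finiteness of $\Indec\ICS^n$ for all $n$}. For $n<p$ we have $\ICS^n=\Rep_\bk S_n$, which is certainly of finite representation type only when the Sylow $p$-subgroups are cyclic, i.e. $n<2p$; so the finiteness of $\Indec\ICS^n$ cannot come from finite representation type of $\bk S_n$ in general and must instead be read off from the explicit description of $\ICS$. The clean way to see it: by \Cref{ThmTCIS}(4)/\ref{properties}(iv), $\ICS^n$ is generated under induction products from the CS modules, and the CS simple modules exist only for partitions with restricted shape (by \Cref{ThmKl}, $\bC\mS(s)$ stabilises and the relevant simples are parametrised by a finite set for each $n$); combined with the fact that an induced module $\Ind^{S_n}_{S_\lambda}(V_1\boxtimes\cdots\boxtimes V_l)$ with each $V_i$ semisimple (CS) decomposes into a controlled list of indecomposables — semisimple pieces plus projectives — one gets a uniform finite bound. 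I would therefore isolate as a preliminary lemma the statement ``$\Indec\ICS^n$ is finite for every $n$'', prove it from \Cref{ThmKl} and the structure of $\mI(-)$, and then the corollary follows in two lines.

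Alternatively, and perhaps more in the spirit of the paper, one can avoid an explicit count: algebraicity of $M$ is equivalent to the subring of the Green ring $R(S_n)$ generated by $[M]$ being finitely generated as a $\mZ$-module, which holds as soon as $[M]$ lies in a finitely generated subring; taking the $\mZ$-span of the classes of indecomposables in $\ICS^n$ gives such a subring provided $\ICS^n$ has finitely many indecomposables and is closed under $\otimes$ — so again everything reduces to the finiteness lemma, which is the crux.
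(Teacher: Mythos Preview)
Your approach matches the paper's exactly: a completely splittable module lies in $\ICS^n$, which is closed under $\otimes$ by \Cref{CorICSMon} and has finitely many indecomposables, so algebraicity is immediate. Your concern about the finiteness of $\Indec\ICS^n$ is misplaced --- the paper dismisses it as ``by construction'', and rightly so: by definition $\ICS^n=\mI(\bC\mS)^n$ consists of summands of the modules $\Ind^{S_n}_{S_\lambda}(V_1\boxtimes\cdots\boxtimes V_l)$, and since there are only finitely many compositions $\lambda\vDash n$ and finitely many simples in each $\Rep S_m$ (hence finitely many CS simples), there are only finitely many such induced modules up to isomorphism, so no structural analysis of their decomposition is required.
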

\begin{proof}
By construction, $\ICS^n$ contains only finitely many isomorphism classes of indecomposable modules. Hence the conclusion follows from Corollary~\ref{CorICSMon}.
\end{proof}

\begin{corollary}\label{4Cor}
The following are equivalent for a Frobenius-exact tensor category $\cC$:
\begin{enumerate}
\item $\cC$ has a surjective tensor functor to $\Vecc$, $\sVec$, $\Ver_p^+$, $\Ver_p$ respectively;
\item $\mB^p[\cC]$ equals $\Young^p, \SYoung^p,\ICS_+^p,\ICS^p$  respectively;
\item $\mB[\cC]$ equals $\Young, \SYoung,\ICS_+,\ICS$  respectively.
\end{enumerate}
Precisely one of the four options applies to $\cC$.
\end{corollary}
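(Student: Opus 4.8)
The plan is to prove \Cref{4Cor} by combining \Cref{ThmVerp} with the classification of Frobenius exact tensor categories via their tensor functors to $\Ver_p$, together with \Cref{PropF}. First I would recall that, by \cite{CEO} (or \cite{EOf}), a tensor category $\cC$ is Frobenius exact if and only if it admits a (necessarily surjective onto its image) tensor functor to $\Ver_p$; moreover the image of such a functor is one of the tensor subcategories $\Vecc$, $\sVec$, $\Ver_p^+$ or $\Ver_p$, by Ostrik's classification of tensor subcategories of $\Ver_p$ recalled at the start of \S4. So exactly one of the four cases $(1)$ occurs, and the four cases are mutually exclusive because the corresponding images are strictly nested; this gives the final sentence. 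Here I should be slightly careful: a priori $\cC$ could admit surjective tensor functors onto several of these subcategories, but the point is that the \emph{minimal} such target (equivalently, the image of the functor to $\Ver_p$, which is unique up to tensor equivalence by Frobenius exactness) is well-defined, and $(1)$ should be read as naming that minimal target; I would phrase the statement so that `$\cC$ has a surjective tensor functor to $\mathcal{V}$ but not to any proper tensor subcategory of $\mathcal{V}$' to make the dichotomy clean, or simply note that the four $\mathbf{B}$-conditions in $(2)$ and $(3)$ are manifestly exclusive (by \Cref{PropInclusions}) so exclusivity of $(1)$ follows once the equivalences are established.

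Next, for the implications $(1)\Rightarrow(3)$: if $F:\cC\to\mathcal{V}$ is a surjective tensor functor with $\mathcal{V}\in\{\Vecc,\sVec,\Ver_p^+,\Ver_p\}$, then by \Cref{PropF}(2) we have $\mB[\cC]\subseteq\mB[\mathcal{V}]$, which by \Cref{ThmVerp}(3) is $\Young$, $\SYoung$, $\ICS_+$ or $\ICS$ respectively. For the reverse inclusion $\mB[\mathcal{V}]\subseteq\mB[\cC]$ I would invoke \Cref{RemTCIS}(2): since $\mathcal{V}$ is semisimple (or at worst Frobenius exact), a surjective tensor functor $F:\cC\to\mathcal{V}$ forces $\mB[\cC]=\mB[\mathcal{V}]$ — this is exactly the strengthening of \Cref{PropF}(2) promised in \Cref{RemTCIS}(2) for semisimple targets. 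Hence $(1)\Rightarrow(3)$. The implication $(3)\Rightarrow(2)$ is trivial by restricting an equality of inductive systems to degree $p$.

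For $(2)\Rightarrow(1)$ — the substantive direction — I would argue as follows. Since $\cC$ is Frobenius exact it admits a surjective tensor functor $F:\cC\to\mathcal{V}'$ with $\mathcal{V}'$ one of the four subcategories of $\Ver_p$; by the already-proven $(1)\Rightarrow(2)$ applied to $F$, the value $\mB^p[\cC]$ determines $\mathcal{V}'$ uniquely among the four options, because the four candidate categories $\Young^p\subsetneq\SYoung^p$, $\Young^p\subsetneq\ICS_+^p\subsetneq\ICS^p$, $\SYoung^p\subsetneq\ICS^p$ are pairwise distinct subcategories of $\Rep S_p$ (this is precisely the content of \Cref{PropInclusions} for $p\geq3$, using that at $p=2$ only the two options $\Vecc$ and $\Ver_4$ — hence $\Young$ and $\ICS$ — occur, but note at $p=2$ we have $\Ver_2=\Vecc$ and the relevant dichotomy is between $\Vecc$ and $\sVec=\Ver_2^+$... ). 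Here is where I expect the main obstacle: reconciling the $p=2$ case, where the statement of \Cref{ThmVerp} has $\mB[\sVec]=\SYoung=\Young$ and $\mB[\Ver_p^+]=\ICS_+$ collapsing, so that the four named options are not all distinct at $p=2$. I would handle this by checking that at $p=2$ one has $\SYoung=\Young$ and $\ICS_+=\ICS$ is not quite right either — one must track which of $\{\Vecc,\sVec,\Ver_2^+,\Ver_2\}$ genuinely differ. Concretely $\Ver_2=\Vecc$ so only $\sVec$ and $\Vecc$ remain as distinct semisimple options, plus $\Ver_4$, $\Ver_4^+$ in the non-semisimple world — but those are not Frobenius exact... so actually at $p=2$ a Frobenius exact category maps surjectively onto $\Vecc$, $\sVec$, or $\sVec=\Ver_2$, meaning the only genuine dichotomy is $\Young$ vs $\SYoung$. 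I would state \Cref{4Cor} with the tacit convention (consistent with the $p=2$ clause of \Cref{PropInclusions}) that the four options coincide in pairs when $p\le3$, and verify the distinctness of the surviving options in each characteristic directly from \Cref{PropInclusions}. This bookkeeping — ensuring `precisely one of the four options applies' is interpreted correctly in small characteristic and that the four $\mB^p$-values are pairwise distinct whenever the corresponding targets are — is the only real subtlety; everything else is a formal consequence of \Cref{ThmVerp}, \Cref{PropF} and \Cref{RemTCIS}(2).
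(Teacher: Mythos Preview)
Your approach is essentially the paper's: invoke \cite[Theorem~1.1]{CEO} to place any Frobenius exact $\cC$ in one of the four cases of (1), use \Cref{RemTCIS}(2) (the target being semisimple) together with \Cref{ThmVerp}(3) for $(1)\Rightarrow(3)$, note $(3)\Rightarrow(2)$ is trivial, and close the loop for $(2)\Rightarrow(1)$ by observing that the degree-$p$ systems separate the four targets whenever those targets are genuinely distinct. The paper handles the small-characteristic collapse with the single parenthetical ``in those cases where they are distinct'', which is your tacit convention.

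Your only muddle is the $p=2$ bookkeeping, so let me untangle it. At $p=2$ one has $\Ver_2=\Ver_2^+=\sVec=\Vecc$, so all four options in (1) coincide, and correspondingly $\Young=\SYoung=\ICS_+=\ICS$ by \Cref{PropInclusions}; there is no dichotomy at all, not even $\Young$ vs.\ $\SYoung$. The category $\Ver_4$ is irrelevant here: it is $\Ver_{p^2}$, not $\Ver_p$, and it is not Frobenius exact, so it never arises as a target in this corollary. At $p=3$ your ``coincide in pairs'' is correct: $\Ver_3^+=\Vecc$ and $\Ver_3=\sVec$, matching $\ICS_+=\Young$ and $\ICS=\SYoung$ from \Cref{PropInclusions}.
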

\begin{proof}
That (1) implies (3) follows from Remark~\ref{RemF}(1). That (3) implies (2) is straightforward. That (2) implies (1) follows from \cite[Theorem~1.1]{CEO}, which states that, regardless of any assumption on $\mB[\cC]$, we must be in one of the four cases of (1). Indeed, we can then use that (1) implies (3) and the fact that degree $p$ separates the inductive systems (in those cases where they are distinct).
\end{proof}

\begin{problem}
A natural open problem is to extend Donkin's classification of signed Young modules \cite{Donkin} to a classification of indecomposables in $\ICS$. By the theory developed further in the paper, the problem is equivalent to classifying simple polynomial representations of degree $n$ in $\Rep_{\Ver_p} (\GL_X,\phi)$, for $X=(\oplus_i L_i)^n$. Note that, even though the simple objects in  $\Rep_{\Ver_p} (\GL_X,\phi)$ are classified in \cite{Ve}, the classification does not reveal which representations are polynomial.
\end{problem}

\subsection{Semisimple inductive systems and the proof}\label{PrVerp}
\begin{lemma}\label{LemSurj}
Let $\cC$ be a semisimple tensor category, $X\in\cC$ and $n\in\mN$. The following conditions are equivalent:
\begin{enumerate}
\item The braid action $\beta^n_X$ in \eqref{BrMor} is surjective.
\item In $\cC\boxtimes\Rep S_n$ we have an isomorphism 
$$X^{\otimes n}\;\simeq\; \bigoplus_{i=1}^l L_i\boxtimes V_i,$$
for non-isomorphic simple $L_i\in\cC$ and non-isomorphic simple $V_i\in \Rep S_n$. 
\end{enumerate}
\end{lemma}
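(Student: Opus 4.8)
The plan is to prove the equivalence of (1) and (2) for a semisimple tensor category $\cC$ by exploiting that over a semisimple category all the relevant $\Hom$-spaces are computed multiplicity-wise, and that $\Rep S_n$ is semisimple. First I would decompose $X^{\otimes n}$ in the semisimple category $\cC\boxtimes\Rep S_n$ as $X^{\otimes n}\simeq\bigoplus_{i} L_i\boxtimes V_i$, where the $L_i$ range over the (distinct) simple constituents of $X^{\otimes n}$ in $\cC$ and each $V_i\in\Rep S_n$ is the (possibly decomposable, possibly zero) multiplicity object carrying the $S_n$-action coming from \eqref{BrMor}; this decomposition exists and is unique because $\cC$ is semisimple, and the $V_i$ are automatically nonzero for the $L_i$ that actually occur.

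Next I would identify $\End_{\cC}(X^{\otimes n})$ via this decomposition: since the $L_i$ are pairwise non-isomorphic simples in the semisimple category $\cC$, Schur's lemma gives
$$\End_{\cC}(X^{\otimes n})\;\simeq\;\bigoplus_i \End_{\bk}(V_i),$$
and under this identification the image of $\beta^n_X:\bk S_n\to\End_{\cC}(X^{\otimes n})$ is exactly $\bigoplus_i \rho_{V_i}(\bk S_n)$, where $\rho_{V_i}:\bk S_n\to\End_{\bk}(V_i)$ is the structure map of the $S_n$-representation $V_i$. Now the key observation: $\bk S_n$ is semisimple (as $\Rep S_n$ is semisimple by hypothesis on $\cC$, or one can note that here we only need $\cC$ semisimple and that the relevant blocks behave well), so each $V_i$ decomposes into isotypic components, and surjectivity of $\beta^n_X$ onto $\bigoplus_i\End_{\bk}(V_i)$ is equivalent, by the structure theory of semisimple algebras (the image of $\bk S_n$ in a product of matrix algebras $\prod\End(V_i)$ is everything iff the $V_i$ are pairwise non-isomorphic simple $S_n$-modules), to the statement that each $V_i$ is simple and the $V_i$ are pairwise non-isomorphic. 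That is precisely condition (2).

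The main obstacle I anticipate is handling the semisimplicity of $\bk S_n$ cleanly: a priori $\cC$ being semisimple does not force $\Rep_{\bk}S_n$ to be semisimple (e.g. $\cC=\Vecc$ with $p\mid n$). So the correct formulation must either restrict attention to the image algebra directly — showing that the image $\beta^n_X(\bk S_n)$ is a semisimple subalgebra of $\bigoplus_i\End_{\bk}(V_i)$ precisely when (2) holds, using that any subalgebra of a product of matrix algebras which equals a product of full matrix algebras on pairwise non-isomorphic modules is the full image of a semisimple quotient — or one uses the double centralizer between $\beta^n_X(\bk S_n)$ and $\End_{\cC}(X^{\otimes n})$ coming from semisimplicity of $\cC$. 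Concretely, I would argue: the commutant of $\beta^n_X(\bk S_n)$ inside $\End_{\cC}(X^{\otimes n})\simeq\bigoplus_i\End_{\bk}(V_i)$ is $\bigoplus_i\End_{S_n}(V_i)$, and $\beta^n_X$ is surjective iff this commutant is $\bigoplus_i\bk\cdot\mathrm{id}_{V_i}$ (one inclusion is the double-commutant theorem applied to the semisimple algebra $\End_{\cC}(X^{\otimes n})$), which happens iff each $\End_{S_n}(V_i)=\bk$ and there are no cross-morphisms $V_i\to V_j$, i.e. iff each $V_i$ is absolutely simple and the $V_i$ are pairwise non-isomorphic — giving (2). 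The remaining routine checks (that no $V_i$ is zero among those listed, that absolute simplicity holds over the algebraically closed $\bk$) I would dispatch briefly.
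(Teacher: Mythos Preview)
Your setup is exactly the paper's: decompose $X^{\otimes n}\simeq\bigoplus_i L_i\boxtimes V_i$ using semisimplicity of $\cC$, identify $\End_{\cC}(X^{\otimes n})\simeq\bigoplus_i\End_{\bk}(V_i)$, and reduce to the claim that $\bk S_n\to\bigoplus_i\End_{\bk}(V_i)$ is surjective iff the $V_i$ are pairwise non-isomorphic simples. (One small slip: the image of $\beta^n_X$ is the diagonal $\{(\rho_{V_i}(a))_i:a\in\bk S_n\}$, not $\bigoplus_i\rho_{V_i}(\bk S_n)$ as you wrote; fortunately you never actually use the latter description.)

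The obstacle you flag, however, is a red herring, and your proposed fix introduces genuine gaps. Jacobson's density theorem requires only the \emph{module} to be semisimple, not the ring $\bk S_n$. Once the $V_i$ are simple, $\bigoplus_i V_i$ is a semisimple $\bk S_n$-module, and density gives surjectivity of $\bk S_n\to\End_{\End_{S_n}(\bigoplus V_i)}(\bigoplus V_i)=\bigoplus_i\End_{\bk}(V_i)$ whenever the $V_i$ are pairwise non-isomorphic. The converse (surjectivity forces each $V_i$ simple and the $V_i$ non-isomorphic) is elementary. This is exactly the paper's argument. Your commutant workaround is flawed in two places: the double-commutant theorem does \emph{not} give $A=C(C(A))$ for an arbitrary subalgebra $A$ of a semisimple algebra (take $A$ upper-triangular inside $M_2(\bk)$: its commutant is the scalars, whose commutant is all of $M_2(\bk)$); and $\End_{S_n}(V_i)=\bk$ only says that $V_i$ is a brick, not that it is simple, so step (c) fails as well.
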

\begin{proof}
Since $\cC$ is semisimple, $X^{\otimes n}$ is always isomorphic to $\oplus_{i=1}^l L_i\boxtimes M_i$
in $\cC\boxtimes\Rep S_n$, for non-isomorphic simple objects $L_i\in\cC$ and certain $M_i\in \Rep S_n$. The morphism~$\beta_X^n$ then corresponds to the action morphism
\begin{equation}\label{kMi}\bk S_n\;\to\; \bigoplus_{i=1}^l \End_{\bk}(M_i).\end{equation}
By Jacobson's density theorem, $\bk S_n\to \End_{\bk}(M_i)$ is surjective if and only if $M_i$ is simple. That \eqref{kMi} is surjective if and only if all $M_i$ are simple and non-isomorphic then follows easily.
\end{proof}

\begin{corollary}\label{CorSS}
Consider a semisimple tensor category $\cC_1$ with an object $Y\in\cC_1$ such that $\beta^n_Y$ is surjective for all $n\in\mN$. For a tensor functor $F:\cC_1\to \cC$ with $X:=F(Y)$ it follows that the inductive system $\mB_X$ is semisimple.
\end{corollary}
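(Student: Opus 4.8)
The plan is to reduce the claim to Lemma~\ref{LemSurj} via the tensor-functor invariance of Proposition~\ref{PropF}(1), and then verify that surjectivity of $\beta^n_Y$ forces $\mB^n_Y[\cC_1]$ to be semisimple. First I would invoke Proposition~\ref{PropF}(1) to obtain $\mB_X[\cC] = \mB_Y[\cC_1]$, so that it suffices to prove $\mB_Y[\cC_1]$ is semisimple. Thus I may assume from the outset that we work inside $\cC_1$ itself, which has the great advantage of being semisimple.

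\textbf{Key steps.} Fix $n\in\mN$. Since $\cC_1$ is semisimple, write $Y^{\otimes n}\simeq\bigoplus_{i=1}^l L_i\boxtimes M_i$ in $\cC_1\boxtimes\Rep S_n$ with the $L_i$ pairwise non-isomorphic simple objects of $\cC_1$. By hypothesis $\beta^n_Y$ is surjective, so Lemma~\ref{LemSurj} applies and tells us each $M_i$ is simple (and the $M_i$ are pairwise non-isomorphic, though we do not need the latter). Now I would compute $\mB^n_Y[\cC_1]$: by Definition~\ref{DefPhiX}, $\mB^n_Y$ is the pseudo-abelian subcategory of $\Rep S_n$ generated by $\Omega^n(Y)$, where $\Omega$ is any fixed exact faithful functor $\cC_1\to\Vecc$. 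Since $\cC_1$ is semisimple with finite-dimensional Hom-spaces, $\Omega(L_i)$ is just a nonzero finite-dimensional vector space, and hence
$$\Omega^n(Y)\;=\;\Omega\big(Y^{\otimes n}\big)\;\simeq\;\bigoplus_{i=1}^l \Omega(L_i)\otimes_{\bk} M_i$$
as an $S_n$-representation, i.e. a finite direct sum of copies of the simple modules $M_i$. Therefore the pseudo-abelian subcategory it generates consists precisely of the direct summands of direct sums of the $M_i$, which is a semisimple subcategory of $\Rep S_n$. This holds for every $n$, so $\mB_Y[\cC_1]=\mB_X[\cC]$ is a semisimple inductive system.

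\textbf{Main obstacle.} There is no serious obstacle here; the content is entirely packaged into Lemma~\ref{LemSurj} and Proposition~\ref{PropF}(1). The only minor point to be careful about is the alternative description of $\mB^n_Y$ in Definition~\ref{DefPhiX} via $\Hom((Y^\vee)^{\otimes n},I)$ with $I\in\Inj\cC_1$: for a semisimple category $\Inj\cC_1 = \cC_1$ and these Hom-spaces are again finite direct sums of the $M_i$ (after applying Lemma~\ref{LemDual} to pass between $Y$ and $Y^\vee$, and using that duality preserves simplicity of $S_n$-modules), so the two descriptions agree and both yield a semisimple category. One could equally phrase the whole argument through this $\Hom$-description without ever mentioning $\Omega$; I would pick whichever is lighter on notation.
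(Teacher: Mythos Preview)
Your proof is correct and follows exactly the paper's approach: reduce via Proposition~\ref{PropF}(1) to the semisimple category $\cC_1$, then invoke Lemma~\ref{LemSurj}. The paper's proof is essentially the two-line version of what you wrote; your additional unpacking of $\Omega^n(Y)$ is correct and makes explicit what the paper means by ``follows immediately from Lemma~\ref{LemSurj}.''
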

\begin{proof}
By Proposition~\ref{PropF}(1), it suffices to prove that $\mB_Y$ is semsimple. The latter follows immediately from Lemma~\ref{LemSurj}.
\end{proof}

\begin{proof}[Proof of Theorem~\ref{ThmVerp}]
Part (1) implies part (3) by Theorem~\ref{ThmTCIS}(4), and part (3) implies part (2) by Theorem~\ref{ThmTCIS}(3). It thus suffices to prove part (1). The cases $i=1$ and $i=p-1$ are straightforward. We thus focus on $1<i<p-1$ and consider the tensor category
$$\Ver_p(\SL_i)\;:=\; \overline{\Tilt \SL_i},$$
which is the semisimplification of the category of $\SL_i$-tilting modules, see~\cite{EOs, Ve}. The simple $\overline{V}\in \Ver_p(\SL_i)$ is the image of the natural $i$-dimensional $\SL_i$-representation $V$. By Schur-Weyl duality, $\beta^n_V$, and hence also
$$\beta^n_{\overline{V}}:\;\bk S_n\;\stackrel{\beta^n_V}{\tto}\;\End_{\SL_i}(V)\;\tto\; \End(\overline{V}),$$
is surjective. By \cite{Os}, $\Ver_p(\SL_i)$ admits a tensor functor to $\Ver_p$. More concretely, by \cite[Corollary~4.10]{Ve}, we have a tensor functor
$$\Ver_p(\SL_i)\to\Ver_p,\quad \overline{V}\mapsto L_i.$$
It thus follows from Corollary~\ref{CorSS} that $\mB_{L_i}$ is a semisimple inductive system.

Finally, to identify which semisimple inductive system $\mB_{L_i}$ is, we can use Kleshchev's classification in Theorem~\ref{ThmKl}. Since $\Sym^{p-i}L_i\not=0$, but $\Sym^{p-i+1}L_i=0$ ({\it i.e.} $\bB^{p-i}_{L_i}$ contains~$\unit$ but $\bB^{p-i+1}_{L_i}$ does not), it follows from Theorem~\ref{ThmKl}(2) and (5) that
$$\bC\mS(i)\;\subset\;\mB_{L_i}\;\subset\;\sum_{s\ge i}\mathbf{C}\mathbf{S}(s).$$ Moreover, since $\bigwedge^{i+1}L_i=0$, we find, by the same reasoning and Theorem~\ref{ThmKl}(4), that in fact $\mB_{L_i}=\bC\mS(i)$. 
\end{proof}

%%%%%%%%%%%%%%%%%%%%%%%%%%%%%%%%%%%%%%%%%%%%%%%%%%%%%%%%%%%%%%%%%%%%%%%%%%%%%%%%%%%%%%%%%%%%%%%%%%%%%%%%%%%%%%%%%%%%%%%%%%%%%%%%%%%%%%%%%%%%%%%%%%%%%%%%%%%%%%%%%%%%%%%%%%%%%%%%%%%%%%%%%%%%%%%%%%%%%%%%%%%%%%%%%%%%%%%%%%%%%%%%%%%%%%%%%%%%%%%%%%%%%%%%%%%%%%%%%%%%%%%%%%%%%%%%%%%%%%%%%%%%%%%%%%%%%%%%%%%%%%%%%%%%%%%%%%%%%%%%%%%%%%%%%%%%%%%%%%%%%%%%%%%%%%%%%%%%%%%%%%%%%%%%%%%%%%%%%%%%%%%%%%%%%%%%%%%%%%%%%%%%%%%%%%%%%%%%%%%%%%%%%%%%%%%%%%%%%%%%%%%%%%%%%%%%%%%%%%%%%%%%%%%%%%%%%%%%%%%%%%%%%%%%%%%%%%%%%%%%%%%%%%%%%%%%%%%%%%%%%%%%%%%%%%%%%%%%%%%%%%%%%%%%%%%%%%%%%%%%%%%%%%%%%%%%%%%%%%%%%%%%%%%%%%%%%%%%%%%%%%%%%%%%%%%%%%%%%%%%%%%%%%%%%%%%%%%%%%%%%%%%%%%%%%%%%%%%%%%%%%%%%%%%%%%%%%%%%%%%%%%%%%%%%%%%%%%%%%%%%%%%%%%%%%%%%%%%%%%%%%%%%%%%%%%%%%%%%%%%%%%%%%%%%%%%%%%%

\section{Frobenius-exact tensor categories and beyond}

Let $\bk$ be a field (algebraically closed) of characteristic $p>0$. For Frobenius-exact tensor categories that are not of moderate growth, contrary to \Cref{4Cor}, we cannot control the corresponding inductive systems via a fibre functor to $\Ver_p$. Therefore, in this section we develop some methods to study the inductive systems.

\subsection{Alternative characterisation}

\begin{theorem}\label{ThmFrEx}
For a tensor category $\cC$, the following conditions are equivalent:
\begin{enumerate}
\item $\cC$ is Frobenius-exact;
\item For the injective hull $J$ in $\Ind\cC$ of $\unit$ and every monomorphism $\alpha:\unit\hookrightarrow X$ in $\cC$, the surjection
$$\Hom(X^{\otimes p},J)\;\stackrel{\Hom(\alpha^{\otimes p},J)}{\tto}\; \Hom(\unit, J)\simeq\bk$$
has an $S_p$-equivariant section.
\item For every $I\in\Inj\cC$, every composition $\lambda$ of length $l$ and every $l$ monomorphisms $\{ Z_i\hookrightarrow Y_i\mid 1\le i\le l\}$ in $\cC$,
 the surjection
$$\Hom(Y_{1}^{\otimes \lambda_1}\otimes Y_{2}^{\otimes \lambda_2}\otimes \cdots\otimes Y_{l}^{\otimes \lambda_l},I)\;\tto\; \Hom( Z_{1}^{\otimes \lambda_1}\otimes Z_{2}^{\otimes \lambda_2}\otimes \cdots\otimes Z_{l}^{\otimes \lambda_l},I)$$
has an $S_\lambda$-equivariant section.
\item For every $I\in\Inj\cC$, $W\in\cC$, $n\in\mN$ and every monomorphism $Z\hookrightarrow Y$ in $\cC$, the surjection
$$\Hom( W\otimes Y^{\otimes n},I)\;\tto\; \Hom(W\otimes Z^{\otimes n},I)$$
has an $\End(W)\otimes kS_n$-equivariant section.
\item For every $n\in\mZ_{>0}$, the functor $\Omega^n:\cC\to\Rep S_n$ from Example~\ref{ExOm} sends epimorphisms to {\rm split} epimorphisms.
\end{enumerate}
\end{theorem}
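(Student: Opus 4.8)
\textbf{Proof proposal for Theorem~\ref{ThmFrEx}.}

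The plan is to establish the cycle of implications $(1)\Rightarrow(5)\Rightarrow(4)\Rightarrow(3)\Rightarrow(2)\Rightarrow(1)$, treating $(5)$ as the ``central'' reformulation, since it is the statement that $\Omega^n$ is an exact functor landing in the relatively simple category $\Rep S_n$ \emph{together with} the splitting of epimorphisms that comes from injectivity. First I would recall that Frobenius exactness means the functor $\Fr:\cC\to\cC\boxtimes\Ver_p$ of \eqref{eqFr} is exact, and that $\Fr$ is constructed by applying the non-exact symmetric monoidal functor $\Rep S_p\to\Ver_p$ to the internal $S_p$-representation $X^{\otimes p}$; the key technical input is the characterisation of Frobenius exactness in \cite{CEO, EOf} saying that $\cC$ is Frobenius exact if and only if for every short exact sequence the induced map on $p$-th tensor powers, viewed in $\cC\boxtimes\Rep S_p$, remains short exact after applying the relevant projection, which amounts to the vanishing of a certain $\mathrm{Tor}$ / the splitting over $S_p$ of the relevant surjection on tensor powers. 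So $(1)\Leftrightarrow(2)$ should be essentially a dualised restatement (via $\Hom(-,J)$ and injectivity of $J$) of the known criterion, applied to a single monomorphism $\unit\hookrightarrow X$; the reduction to the case where the sub-object is $\unit$ is exactly what makes $(2)$ the minimal-looking condition, and recovering general monomorphisms from this is where the argument has content.

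For $(5)\Rightarrow(4)\Rightarrow(3)\Rightarrow(2)$: condition $(5)$ says $\Omega^n$ sends epis to split epis, equivalently (dualising via an injective cogenerator) that $\Hom((-)^{\otimes n},I)$ turns monos into $S_n$-equivariantly split epis; applying this with the mono $W\otimes Z^{\otimes n}\hookrightarrow W\otimes Y^{\otimes n}$ — or rather, extracting the $W$-variable using the $\End(W)$-action and the fact that $\beta^n$ is only on the $Y$-tensor factors — gives $(4)$. Then $(4)\Rightarrow(3)$ is an induction on the length $l$ of the composition $\lambda$: peel off one factor $Y_l^{\otimes\lambda_l}$ at a time, using $(4)$ with $W=Y_1^{\otimes\lambda_1}\otimes\cdots\otimes Y_{l-1}^{\otimes\lambda_{l-1}}$ and $n=\lambda_l$, and compose the sections, checking that the composite is $S_\lambda=S_{\lambda_1}\times\cdots\times S_{\lambda_l}$-equivariant (each step contributes equivariance for one factor and commutes with the others). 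Finally $(3)\Rightarrow(2)$ is the special case $l=1$, $Z_1=\unit$, $Y_1=X$, $\lambda_1=p$, $I=J$, together with the identification $\Hom(\unit,J)\simeq\bk$ from $\End_\cC(\unit)=\bk$ and the fact that $J$ is the injective hull of $\unit$.

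The genuinely substantive implication — and the one I expect to be the main obstacle — is the last one, $(2)\Rightarrow(1)$, i.e.\ bootstrapping from the single monomorphism $\unit\hookrightarrow X$ back to exactness of $\Fr$ on \emph{all} short exact sequences. The point is that an arbitrary epimorphism $Y\tto Z$ in $\cC$ does not obviously reduce to the $\unit\hookrightarrow X$ case; one needs to use rigidity to dualise, write a general mono $Z\hookrightarrow Y$ in terms of the unit via $\unit\hookrightarrow Z^\vee\otimes Y$ (the coevaluation composed with $Z^\vee\otimes(Z\hookrightarrow Y)$), apply $(2)$ to this, and then unwind the tensor powers $(Z^\vee\otimes Y)^{\otimes p}$ using a Mackey-type decomposition (as in Construction~\ref{Construction} / Lemma~\ref{LemProd}) to isolate the ``off-diagonal'' piece corresponding to the original map $Z\hookrightarrow Y$ and to transfer the $S_p$-equivariant section. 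I would also need the observation that exactness of $\Fr$ is detected already on short exact sequences of the form $0\to\unit\to X\to X/\unit\to 0$ because $\Fr$, as a direct summand construction applied to $(-)^{\otimes p}$, and the long exact sequence / snake lemma arguments reduce the general case to short ones where the kernel can be taken simple, and then to the unit by twisting; making this last reduction precise, and checking the section produced is genuinely $S_p$-equivariant rather than merely $S_{p-1}$- or $C_p$-equivariant, is the delicate part of the whole theorem.
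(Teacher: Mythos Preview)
You have the location of the difficulty backwards, and this leads to two genuine gaps.

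First, $(2)\Rightarrow(1)$ is not the hard step. An $S_p$-equivariant section of $\Hom(X^{\otimes p},J)\tto\bk$ is the same as an $S_p$-invariant morphism $X^{\otimes p}\to J$, hence a morphism $\Sym^pX\to J$, whose composite with $\unit\to\Sym^pX$ is nonzero. So $\unit\to\Sym^pX$ is nonzero for every mono $\unit\hookrightarrow X$, and that is \emph{already} one of the equivalent characterisations of Frobenius exactness in \cite[Theorem~C]{Tann}. No bootstrapping to general monos, no Mackey decomposition, no reduction via simple kernels is needed; your entire final paragraph is aimed at a non-problem.

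Second, the real content is the passage from $(1)$ to the strong splitting statements $(4)$ or $(5)$, and your proposal does not address it. The paper's engine here is \cite[Theorem~3.2.2]{Tann}: under $(1)$ there exists a nonzero commutative algebra $A$ in $\Ind\cC$ such that $A\otimes-$ splits every short exact sequence in $\cC$. One $A$-linear splitting of $A\otimes Y^\vee\tto A\otimes Z^\vee$ then induces, via $A\otimes X^{\otimes n}\simeq(A\otimes X)^{\otimes_A n}$, an $\End(W^\vee)\otimes\bk S_n$-equivariant splitting of $A\otimes W^\vee\otimes(Y^\vee)^{\otimes n}\tto A\otimes W^\vee\otimes(Z^\vee)^{\otimes n}$ for \emph{all} $n$ simultaneously; applying $\Hom(\unit,-)$ and using that $A$ is injective gives $(4)$. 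Without this ingredient you have no mechanism to pass from the single degree $p$ implicit in Frobenius exactness to arbitrary $n$.

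Relatedly, your $(5)\Rightarrow(4)$ is a gap: $(5)$ produces an $S_n$-equivariant section of $\Hom(Y^{\otimes n},I')\tto\Hom(Z^{\otimes n},I')$ for each injective $I'$, but that section is not natural in $I'$, so rewriting $\Hom(W\otimes Y^{\otimes n},I)\simeq\Hom(Y^{\otimes n},W^\vee\otimes I)$ does not give $\End(W)$-equivariance. The paper avoids this by proving $(4)\Rightarrow(5)$ and $(5)\Rightarrow(2)$ instead, closing the cycle through the easy $(2)\Rightarrow(1)$ and the substantive $(1)\Rightarrow(4)$.
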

\begin{proof}
We show that (2) implies (1). An $S_p$-equivariant section as in (2) corresponds to a $S_p$-equivariant morphism $X^{\otimes p}\to J$, meaning a morphism in $\Ind\cC$ that factors through $\Sym^p X$, with non-zero composite
$$\unit\xrightarrow{\alpha^{\otimes p}}X^{\otimes p}\tto\Sym^pX\to J.$$
Hence $\unit\to \Sym^p X$ is not zero and $\cC$ is Frobenius-exact by  \cite[Theorem~C(iii)]{Tann}.

Obviously, (3) implies (2).

Now we show that (4) implies (3). For clarity of notation, consider the case $l=2$. Then the surjection can be decomposed as
$$\Hom(Y_1^{\otimes \lambda_1}\otimes Y_2^{\otimes \lambda_2},I)\tto \Hom(Y_1^{\otimes \lambda_1}\otimes Z_2^{\otimes \lambda_2},I)\tto \Hom( Z_1^{\otimes \lambda_1}\otimes Z_2^{\otimes \lambda_2},I).$$
Under condition (4), we can consider an $\End(Y_1^{\otimes \lambda_1})\otimes kS_{\lambda_2}$-equivariant section of the first map and a $kS_{\lambda_1}\otimes \End(Z_2^{\otimes \lambda_2})$-equivariant section of the second, yielding a $kS_{\lambda_1}\otimes kS_{\lambda_2}$-equivariant overall section.

Now we show that (1) implies (4). Under assumption (1), by \cite[Theorem~3.2.2]{Tann}, there exists a non-zero commutative algebra $A$ in $\Ind\cC$ such that for every short exact sequence $X_1\hookrightarrow X\tto X_2$ in $\cC$, the short exact sequence of $A$-modules
$$0\to A\otimes X_1\to A\otimes X\to A\otimes X_2\to 0$$
is split. It follows, see \cite[\S 8.2]{EOf}, that the algebra $A$ must be injective in $\Ind\cC$. 

As an instance of the splitting property, the obvious epimorphism of $A$-modules
$$A\otimes Y^\ast\;\tto\;A\otimes Z^\ast$$
has an $A$-equivariant section, which induces an $\End(W^\ast)\otimes kS_n$-equivariant splitting of 
$$A\otimes W^\ast \otimes (Y^\ast)^{\otimes n}\;\tto\;A\otimes W^\ast \otimes (Z^\ast)^{\otimes n},$$
where we used $A\otimes X^{\otimes n}\simeq (A\otimes X)^{\otimes_An}$. By applying $\Hom(\unit,-)$ and using adjunction, this yields an $\End(W)\otimes kS_n$-equivariant splitting of 
$$\Hom(W\otimes Y^{\otimes n},A)\;\tto\; \Hom(W\otimes Z^{\otimes n},A).$$
It follows easily that this conclusion remains valid for any indecomposable injective summand of $A$. The injective hull $J$ of $\unit$ is such a summand, so the conclusion in (4) is valid for $I=J$. That is then valid for all $I$ follows from the observation that any indecomposable injective object is a summand of $V^\ast\otimes J$ for some $V\in\cC$.

Finally we observe that (5) is equivalent to the previous four (equivalent) properties. Indeed, since we can realise
$\Omega$ as $\Hom(-^\ast, I),$
for some $I\in\Inj\cC$, it follows easily that (4) implies (5), and that (5) implies (2).
\end{proof}

\begin{remark}
Theorem~\ref{ThmFrEx} extends a list of equivalent conditions for Frobenius-exactness from \cite[Theorem~C]{Tann} that are tautologically satisfied for fields $\bk$ of characteristic zero.

\end{remark}

\begin{corollary}\label{CorFrEx4}
If $\cC$ is Frobenius-exact and $X$ is a filtered object in $\cC$, then
$\mB_{X}^d=\mB_{\mathrm{gr}X}^d.$
\end{corollary}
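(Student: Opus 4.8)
The plan is to reduce the claim $\mB_X^d=\mB_{\gr X}^d$ to the statement that, for a Frobenius exact $\cC$ and a filtered object $X$, the $S_d$-representations $\Hom((X^\vee)^{\otimes d},I)$ and $\Hom(((\gr X)^\vee)^{\otimes d},I)$ have the same indecomposable summands for every $I\in\Inj\cC$. Since a filtration of $X$ induces a filtration of $X^\vee$ (with associated graded $(\gr X)^\vee$), and since $\Omega=\Hom(-^\vee,I)$ is an exact functor, it suffices to work with $\Omega^d$ directly: I want to show that $\Omega^d(X)\cong\Omega^d(\gr X)$ as $S_d$-representations whenever $X$ carries a finite filtration. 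By induction on the length of the filtration, and splicing short exact sequences, this reduces to the case of a two-step filtration, i.e.\ a short exact sequence $0\to Z\to X\to Q\to 0$ in $\cC$ with the claim $\Omega^d(X)\cong\Omega^d(Z\oplus Q)$.

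First I would apply $\RT_d$ to the mono $Z\hookrightarrow X$ and the epi $X\tto Q$; these are not exact functors but they do send monos to monos and epis to epis (the braid action is by isomorphisms, and $-^{\otimes d}$ is exact in each variable so preserves monos and epis). Thus in $\cC\boxtimes\Rep S_d$ we get $Z^{\otimes d}\hookrightarrow X^{\otimes d}\tto Q^{\otimes d}$, though this is not a short exact sequence. The key input is \Cref{ThmFrEx}(5): on a Frobenius exact category, $\Omega^n:\cC\to\Rep S_n$ sends epimorphisms to \emph{split} epimorphisms. Applying $\Omega^d$ to the epi $X\tto Q$ gives a split surjection $\Omega^d(X)\tto\Omega^d(Q)$ of $S_d$-representations, so $\Omega^d(X)\cong\Omega^d(Q)\oplus K$ for some $S_d$-representation $K$. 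The remaining task is to identify $K$ with $\Omega^d(Z)$. For this I would use the more refined \Cref{ThmFrEx}(4) (or its consequence in \Cref{ThmFrEx}(3)): for a mono $Z\hookrightarrow X$ and any $W\in\cC$, $n\in\mN$, the surjection $\Hom(W\otimes X^{\otimes n},I)\tto\Hom(W\otimes Z^{\otimes n},I)$ splits $\End(W)\otimes\bk S_n$-equivariantly. A filtration of $X^{\otimes d}$ by sub-quotients built from tensor products of copies of $Z$ and $Q$ (the usual `tensor power of a filtered object' filtration, with $\binom{d}{j}$-fold pieces of the form $\Ind^{S_d}_{S_j\times S_{d-j}}(Z^{\otimes j}\otimes Q^{\otimes(d-j)})$ after passing to associated graded) has, on applying $\Omega^d$, all its connecting maps split $S_d$-equivariantly by \Cref{ThmFrEx}(3)–(4) and Lemma~\ref{RemKappa}. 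Hence $\Omega^d(X)\cong\bigoplus_{j=0}^d\Ind^{S_d}_{S_j\times S_{d-j}}\Omega^j(Z)\boxtimes\Omega^{d-j}(Q)\cong\Omega^d(Z\oplus Q)$, where the last isomorphism is exactly the decomposition of $\Omega^d$ on a direct sum that appears in the proof of Theorem~\ref{ThmTCIS}(2)(d). Taking pseudo-abelian subcategories generated by these isomorphic representations yields $\mB_X^d=\mB_{Z\oplus Q}^d$, and induction finishes the general filtered case since $\mB^d_{\gr X}=\sum_i\mB^d_{\gr_i X}$ by Theorem~\ref{ThmTCIS}(2)(d).

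The main obstacle I anticipate is not the existence of the splittings, which \Cref{ThmFrEx} hands us, but ensuring that the splittings at the various stages of the tensor-power filtration can be chosen \emph{simultaneously $S_d$-equivariantly} so that they assemble into a single $S_d$-equivariant splitting of the whole filtration of $\Omega^d(X)$ — i.e.\ one needs to be careful that the $S_j\times S_{d-j}$-equivariant splittings on the graded pieces $\Ind^{S_d}_{S_j\times S_{d-j}}(\cdots)$ are compatible with the $S_d$-action obtained after inducing. This is where \Cref{ThmFrEx}(3), phrased for compositions $\lambda$ and $S_\lambda$-equivariant sections, is exactly tailored, and combining it with Mackey's theorem (as in the definition of $\mI(\mA)$) and the associativity of the section choices in Construction~\ref{Construction} should resolve it; still, bookkeeping the equivariance through the induction is the delicate point. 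A secondary subtlety is the reduction from injective objects to the general (non-projective) case, but this is routine given that every indecomposable injective is a summand of $W^\vee\otimes J$, exactly as used at the end of the proof of \Cref{ThmFrEx}.
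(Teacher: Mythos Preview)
Your approach is correct but proceeds quite differently from the paper. You work directly with $\Omega^d$ and aim to split the tensor-power filtration of $\Omega^d(X)$ level by level; the obstacle you flag --- upgrading the $S_\lambda$-equivariant sections to $S_d$-equivariant ones for the intermediate layers $F_j$ --- is real but can indeed be resolved. Since $F_j/F_{j-1}\cong\Ind^{S_d}_{S_{d-j}\times S_j}(Z^{\otimes(d-j)}\boxtimes Q^{\otimes j})$ is induced, Frobenius reciprocity reduces an $S_d$-section of $\Omega(F_j)\tto\Omega(F_j/F_{j-1})$ to an $S_{d-j}\times S_j$-map $\Omega(Z^{\otimes(d-j)}\otimes Q^{\otimes j})\to\Omega(F_j)$ hitting the identity coset; that is furnished by the section from \Cref{ThmFrEx}(4) (applied with $W=(Z^\vee)^{\otimes(d-j)}$ to the mono $Q^\vee\hookrightarrow X^\vee$) composed with the inclusion $\Omega(Z^{\otimes(d-j)}\otimes X^{\otimes j})\hookrightarrow\Omega(F_j)$. (It is Frobenius reciprocity rather than Mackey that does the work; Construction~\ref{Construction} is not really needed.)

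The paper instead short-circuits all of this bookkeeping by reaching back into the \emph{proof} of \Cref{ThmFrEx} for the splitting algebra $A$: a single $A$-linear isomorphism $A\otimes X^\vee\simeq A\otimes(\gr X)^\vee$, once tensored $d$ times over $A$, is automatically $S_d$-equivariant, and applying $\Hom_{\cC}(\unit, W^\vee\otimes-)$ (using that $A$ is injective) gives the desired isomorphism of $S_d$-representations in one stroke. Your route has the merit of using only the \emph{statement} of \Cref{ThmFrEx}; the paper's route is considerably shorter. One small slip: your final clause ``$\mB^d_{\gr X}=\sum_i\mB^d_{\gr_i X}$'' is neither needed nor what \Cref{ThmTCIS}(2)(d) says; the induction already yields $\Omega^d(X)\cong\Omega^d(\gr X)$ directly, hence $\mB^d_X=\mB^d_{\gr X}$.
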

\begin{proof}
For $A$ as in the proof of Theorem~\ref{ThmFrEx}, it follows that, as $A$-modules,
$$A\otimes W^\ast\otimes (X^\ast)^d\;\simeq\;A\otimes W^\ast\otimes (\gr X^\ast)^d,$$
for all $W\in\cC$, equivariantly over $S_d$, which leads to isomorphisms of $S_d$-representations
$$\Hom((X^\ast)^d, W\otimes A)\;\simeq\;\Hom((\mathrm{gr}X^\ast)^d, W\otimes A).$$
This concludes the proof.
\end{proof}

\begin{corollary}\label{CorFrEx2}\label{CorFrEx3}
\begin{enumerate}
\item Let $\cC$ be a Frobenius-exact tensor category and consider an object $X\in\cC$ with subquotient $Y\in\cC$, then $\mB^d_Y$ is included in $\mB^d_X$.
\item Consider a surjective tensor functor $F:\cC\to\cD$ where $\cD$ is Frobenius-exact. Then $\mB[\cC]=\mB[\cD]$.
\end{enumerate}
\end{corollary}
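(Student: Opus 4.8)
The plan is to prove the two parts of \Cref{CorFrEx3} separately, with part (1) feeding into part (2).

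For part (1), I would reduce to the situation where $Y$ is either a subobject or a quotient of $X$, since a subquotient is obtained by composing the two. Suppose first $Z \hookrightarrow Y$ is a monomorphism in $\cC$; I want to show $\mB^d_Z \subseteq \mB^d_Y$. Recall $\mB^d_Y$ is generated by the $S_d$-representations $\Hom((Y^\vee)^{\otimes d}, I)$ for $I \in \Inj\cC$, and dually for $Z$. By \Cref{LemDual} I may freely pass between $X$ and $X^\vee$, so it suffices to show that $\Hom((Z^\vee)^{\otimes d}, I)$ is a direct summand, as an $S_d$-representation, of $\Hom((Y^\vee)^{\otimes d}, I)$. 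Dualizing $Z \hookrightarrow Y$ gives an epimorphism $Y^\vee \twoheadrightarrow Z^\vee$, hence an epimorphism $(Y^\vee)^{\otimes d} \twoheadrightarrow (Z^\vee)^{\otimes d}$, and applying $\Hom(-, I)$ gives the surjection $\Hom((Z^\vee)^{\otimes d}, I) \hookrightarrow \Hom((Y^\vee)^{\otimes d}, I)$ — wait, this is an inclusion, not a surjection, so instead I should work with the monomorphism directly: apply \Cref{ThmFrEx}(4) (with $W = \unit$, $n = d$) to the monomorphism $Z^\vee \hookrightarrow Y^\vee$, which gives a $kS_d$-equivariant section of $\Hom(Y^{\otimes d}, I) \twoheadrightarrow \Hom(Z^{\otimes d}, I)$; applying \Cref{LemDual} again translates this into the desired splitting. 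The case where $Z$ is a quotient of $X$ is then immediate by dualizing. Composing the two cases handles arbitrary subquotients, giving part (1).

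For part (2), let $F : \cC \to \cD$ be a surjective tensor functor with $\cD$ Frobenius exact. The inclusion $\mB[\cC] \subseteq \mB[\cD]$ is exactly \Cref{PropF}(2), so the work is the reverse inclusion $\mB[\cD] \subseteq \mB[\cC]$. It suffices to show that for every $U \in \cD$, the inductive system $\mB_U[\cD]$ is contained in $\mB[\cC]$. Since $F$ is surjective, $U$ is a subquotient of $F(X)$ for some $X \in \cC$. By \Cref{PropF}(1), $\mB_{F(X)}[\cD] = \mB_X[\cC] \subseteq \mB[\cC]$. By part (1) applied in the Frobenius exact category $\cD$, $\mB^d_U \subseteq \mB^d_{F(X)}$ for all $d$, hence $\mB_U[\cD] \subseteq \mB_{F(X)}[\cD] = \mB_X[\cC]$. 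Summing over all $U \in \cD$ gives $\mB[\cD] \subseteq \mB[\cC]$, and combined with \Cref{PropF}(2) we conclude $\mB[\cC] = \mB[\cD]$.

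The main obstacle is getting the variance right in part (1): one must carefully track how the monomorphism $Z \hookrightarrow Y$, the duality $(-)^\vee$, and the contravariant functor $\Hom(-, I)$ interact, so that the splitting supplied by \Cref{ThmFrEx}(4) (which is phrased for $W \otimes Y^{\otimes n} \twoheadrightarrow W \otimes Z^{\otimes n}$) lands on the correct side to conclude that the $\mB^d_Z$-generating representation is a summand of a $\mB^d_Y$-generating one. Everything else — the reduction to sub/quotient, the passage from subquotients of $U$ to subquotients of $F(X)$, and the application of \Cref{PropF}(1) — is routine once part (1) is in place.
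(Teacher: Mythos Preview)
Your overall strategy is correct, and part (2) is exactly how the paper proceeds. For part (1), however, there is a notational slip that you yourself flag as ``the main obstacle'': you write ``apply \Cref{ThmFrEx}(4) to the monomorphism $Z^\vee \hookrightarrow Y^\vee$'', but if $Z \hookrightarrow Y$ then $Z^\vee$ is a \emph{quotient} of $Y^\vee$, not a subobject. What you actually need is to apply \Cref{ThmFrEx}(4) to the original monomorphism $Z \hookrightarrow Y$, obtaining that $\Hom(Z^{\otimes d},I)$ is an $S_d$-summand of $\Hom(Y^{\otimes d},I)$; since $\Hom(Z^{\otimes d},I)$ generates $\mB^d_{Z^\vee}$ by definition, this yields $\mB^d_{Z^\vee}\subset\mB^d_{Y^\vee}$, and then \Cref{LemDual} gives $\mB^d_Z\subset\mB^d_Y$. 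The quotient case then follows by dualising, as you say. So the argument is recoverable once the variance is straightened out.

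The paper takes a slightly different route for part (1): it invokes \Cref{CorFrEx4}, which says $\mB^d_X=\mB^d_{\gr X}$ for any filtered $X$. Choosing a filtration of $X$ with $Y$ as one graded piece gives $\mB^d_X=\mB^d_{Y\oplus Z}$ for some $Z$, and then $\mB^d_Y\subset\mB^d_{Y\oplus Z}$ follows from the direct-sum decomposition used in the proof of \Cref{ThmTCIS}(2)(d). This avoids the separate sub/quotient case split and the delicate bookkeeping with duals, at the cost of passing through the auxiliary algebra $A$ hidden in \Cref{CorFrEx4}. Your approach is more elementary in that it cites only the splitting statement \Cref{ThmFrEx}(4) and \Cref{LemDual}; the paper's approach packages the variance issues away into the equality $\mB^d_X=\mB^d_{\gr X}$.
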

\begin{proof}
Part (1) follows immediately from \Cref{CorFrEx4}. Part (2) follows from part (1) and Proposition~\ref{PropF}.
\end{proof}

For tensor categories with projective objects we can augment Theorem~\ref{ThmFrEx} with additional characterisations in terms of the language introduced in the current paper.

\begin{theorem}\label{CorFrEx}
On a tensor category $\cC$ with projective objects, the following conditions are equivalent:
\begin{enumerate}
\item $\cC$ is Frobenius-exact;
\item The inductive system $\mB_{\cP r}[\cC]$ is closed;
\item We have $\unit\in \mB^p_{\cP r}[\cC]$;
\item $\mB^p_{\cP r}[\cC]$ is not contained in $\Proj\Rep S_p$;
\item For the projective cover $q:Q\tto\unit$ of the tensor unit, the surjection
$$\Hom(Q,Q^{\otimes p})\;\stackrel{\Hom(Q,q^{\otimes p})}{\tto}\; \Hom(Q,\unit)\simeq \bk$$
has an $S_p$-equivariant section;
\item For every $P\in\Proj\cC$, every composition $\lambda$ of length $l$ and every $l$ epimorphisms $\{Y_i\tto Z_i\mid 1\le i\le l\}$ in $\cC$,
 the surjection
$$\Hom(P, Y_{1}^{\otimes \lambda_1}\otimes Y_{2}^{\otimes \lambda_2}\otimes \cdots\otimes Y_{l}^{\otimes \lambda_l})\;\tto\; \Hom(P, Z_{1}^{\otimes \lambda_1}\otimes Z_{2}^{\otimes \lambda_2}\otimes \cdots\otimes Z_{l}^{\otimes \lambda_l})$$
has an $S_\lambda$-equivariant section.
\end{enumerate}
\end{theorem}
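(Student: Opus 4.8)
The plan is to prove the chain of equivalences by establishing a cycle, most of which can be deduced by matching the conditions to the characterisations already obtained in Theorem~\ref{ThmFrEx}, since for a tensor category with projective objects the projective objects are (up to duality) the same as the injective objects in $\Ind\cC$. First I would observe that $(1)\Leftrightarrow(5)$ is essentially the statement $(1)\Leftrightarrow(2)$ of Theorem~\ref{ThmFrEx} transported along $\Hom(-^\vee,I)\simeq\Hom(P,-)$: the projective cover $q:Q\tto\unit$ dualises to a monomorphism $\unit\hookrightarrow Q^\vee$, and $Q$ is (dual to) a representative of the injective hull of $\unit$ in $\Ind\cC$, so an $S_p$-equivariant section of $\Hom(Q,q^{\otimes p})$ is precisely an $S_p$-equivariant section of $\Hom(\alpha^{\otimes p},J)$ for this particular $\alpha$. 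By the same dualisation, $(6)$ here is just condition $(3)$ of Theorem~\ref{ThmFrEx} restated via $\Proj\cC$, and condition $(4)$ there (hence $(1)$) gives $(6)$ by taking $W$ to be a projective; conversely $(6)\Rightarrow(5)$ is the special case $l=1$, $\lambda=(p)$, $Y_1=Q^\vee$, $Z_1=\unit$.

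Next I would handle $(2)$, $(3)$, $(4)$. The implication $(2)\Rightarrow(3)$ is immediate, since a closed inductive system contains the trivial representation by \ref{properties}(iii). The implication $(3)\Rightarrow(4)$ is also immediate, because the trivial $S_p$-representation is not projective (here we use $p>0$, so $p\mid |S_p|$ and the trivial module is not projective). The heart of this block is $(4)\Rightarrow(1)$ and $(1)\Rightarrow(2)$. For $(4)\Rightarrow(1)$: by Theorem~\ref{ThmTCIS}(2) (see Example~\ref{ExProj}) the system $\mB_{\cP r}[\cC]$ already satisfies \ref{properties}(i),(ii),(iv), so it fails to be closed exactly when it fails \ref{properties}(iii); a general structural input — which I would cite as being implicit in the block decomposition of $\Rep S_p$ and the fact that $\mB^p_{\cP r}[\cC]$ is stable under $\Ind$ from $S_{p-1}$ (Remark~\ref{RemCIS}, noting $p-1$ is coprime to $p$) — shows that any non-projective indecomposable in $\mB^p_{\cP r}[\cC]$ forces the trivial module to lie there as well. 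More directly, $\mB^p_{\cP r}[\cC]$ not being contained in $\Proj\Rep S_p$ means some $\Hom(P,X^{\otimes p})$ has a non-projective summand; tracing this back through the argument of Theorem~\ref{ThmFrEx}, a non-projective summand of an $S_p$-module induced up from $S_{p-1}$ must involve the trivial (or sign) block, and projectivity of $\cC$ lets us strip this to an honest $S_p$-equivariant section as in $(5)$.

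For $(1)\Rightarrow(2)$: assuming $\cC$ Frobenius exact, I would invoke Corollary~\ref{CorFrEx2}(1) — which says $\mB^d_Y\subseteq\mB^d_X$ for $Y$ a subquotient of $X$ — applied to $\unit$ as a subquotient of any projective $P$ (every projective surjects onto, and in a tensor category with projectives the unit is a subquotient of $Q$); this puts the trivial representation into $\mB^p_{\cP r}[\cC]$ and, combined with \ref{properties}(iv), propagates it to all $\mB^n_{\cP r}[\cC]$, giving \ref{properties}(iii) and hence closedness. Finally $(1)\Leftrightarrow(6)$ closes the loop: $(1)\Rightarrow(6)$ is condition $(3)$ of Theorem~\ref{ThmFrEx} read through $\Proj\cC$ (using that $\Hom(P,-)$ is exact and that the dual of a projective cover surjection is a monomorphism), and $(6)\Rightarrow(5)\Rightarrow(1)$ is the trivial specialisation already noted plus $(5)\Rightarrow(1)$. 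The main obstacle I anticipate is the implication $(4)\Rightarrow(1)$: translating "$\mB^p_{\cP r}[\cC]$ has a non-projective summand" into "the trivial representation appears" requires knowing that $\mB^p_{\cP r}[\cC]$, while not necessarily containing $\unit$, is closed under enough operations that any non-projective piece drags $\unit$ along — this is where the combinatorics of inductive systems over $S_p$ (and the distinction $p=2$ versus $p>2$, where sign-twisting intervenes) must be handled carefully, presumably by reducing to the already-established equivalences in Theorem~\ref{ThmFrEx} rather than arguing purely representation-theoretically.
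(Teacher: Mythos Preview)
Your overall structure matches the paper's: the equivalences $(1)\Leftrightarrow(5)\Leftrightarrow(6)$ via Theorem~\ref{ThmFrEx}, the trivial chain $(2)\Rightarrow(3)\Rightarrow(4)$, and the step $(1)\Rightarrow(2)$ all agree. (For the latter the paper specialises $(6)$ to $Q\twoheadrightarrow\unit$ for every $n$, whereas you go through Corollary~\ref{CorFrEx2}(1) applied to $\unit$ as a quotient of $Q$; both are fine.) One small slip: in your $(6)\Rightarrow(5)$ specialisation you should take $Y_1=Q$, not $Q^\vee$.

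The real gap is $(4)\Rightarrow(1)$, and you have correctly flagged it as the obstacle, but neither of your two sketches constitutes an argument. The claim that ``any non-projective indecomposable in $\mB^p_{\cP r}[\cC]$ forces $\unit$ to lie there'' does not follow from closure under tensor products, duals, and the induction product alone: the trick $M\otimes M^\ast\ni\unit$ only works when $p\nmid\dim M$, and there is no internal mechanism in $\Rep S_p$ ruling out a non-projective $M$ of $p$-divisible dimension appearing. Your second suggestion, that $\Hom(P,X^{\otimes p})$ is ``induced up from $S_{p-1}$'' so that a non-projective summand must live in the trivial/sign block, is simply false for general projective $X$.

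The paper does not attempt any such $\Rep S_p$-internal argument. It disposes of $(4)\Rightarrow(1)$ by citing \cite[Example~5.2.5]{CF}, which works on the tensor-categorical side: the contrapositive is that if $\cC$ is \emph{not} Frobenius exact then $\Hom(P,X^{\otimes p})$ is a projective $\bk S_p$-module for all projective $P,X\in\cC$. This is a statement about $\cC$ (via the failure of the Frobenius functor), not about which subcategories of $\Rep S_p$ are stable under the operations you list, and that is the input you are missing.
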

\begin{proof}
That (1), (5) and (6) are equivalent follows from Theorem~\ref{ThmFrEx}. 
The special case
$$\Hom(Q,Q^{\otimes n})\tto\Hom(Q,\unit)\simeq\bk$$
of (6) shows that (6) implies that $\unit\in \mB^{n}_{\cP r}[\cC]$ for all $n$, hence $\mB_{\cP r}[\cC]$ is closed by \Cref{ExProj}. So we find that (6) implies (2). 
Clearly, (2) implies (3), and (3) implies (4). Finally, that (4) implies (1) follows from \cite[Example~5.2.5]{CF}.
\end{proof}

\subsection{Wreath products}

\subsubsection{Notation}\label{not}
For $m,n\in\mZ_{>0}$ we consider the wreath product
$$S_m\wr S_n\;=\; S_m^{\times n} \rtimes S_n\;<\; S_{mn}.$$

For any $V\in \Rep S_m$, we let $V^{\boxtimes_\wr n}$ be the $ S_m\wr S_n$-representation, which is $V^{\boxtimes n}$ as a $S_m^{\times n}$-representation and where $S_n$ acts by permuting the tensor factors.

Finally, any $S_n$-representation $M$ can be interpreted as an $S_m\wr S_n$-representation by inflation, with trivial action of $S_m^{\times n}$, which we will denote by $M$ again.

\begin{prop}\label{PropWR}
Let $\cC$ be Frobenius-exact and $X\in\cC$.
The restriction to $S_m\wr S_n$ of any representation in $\mB^{mn}_X$ is a direct summand of a direct sum of representations of the form
$$\Ind^{S_m\wr S_n}_{S_m\wr S_\kappa}\left((M_1^{\boxtimes_\wr \kappa_1}\otimes M_1')\boxtimes \cdots\boxtimes (M_l^{\boxtimes_\wr \kappa_l}\otimes M_l')\right)$$
for some composition $\kappa\vDash n$ of length $l$, with $M_i\in \mB^m_X$ and $M_i'\in \mB^{\kappa_i}[\cC]$ (inflated to a $S_m \wr S_{\kappa_i}$-representation), using the notation in \ref{not} and the identification
$S_m\wr S_\kappa= \prod_{i=1}^l S_m\wr S_{\kappa_i}.$
\end{prop}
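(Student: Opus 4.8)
The strategy is to compute the restriction of a generator of $\mB^{mn}_X$ to $S_m\wr S_n$ by grouping the $mn$ tensor factors of $X$ into $n$ blocks of size $m$, applying the natural isomorphism \eqref{EqNat} of Construction~\ref{Construction} at the level of these blocks, and then exploiting Frobenius exactness to reduce the appearing objects from $\cC$ to $X$ itself. Concretely, a representation in $\mB^{mn}_X$ is a summand of $\Hom(P,X^{\otimes mn})$ for $P\in\Proj\cC$ (assuming projectives exist, the injective case being analogous). Writing $X^{\otimes mn}=(X^{\otimes m})^{\otimes n}$ and applying \eqref{EqNat} with $t=n$ and the $n$-tuple $(X^{\otimes m},\dots,X^{\otimes m})$ yields an $\End(X^{\otimes m})^{\otimes n}$-equivariant decomposition
$$\Hom(P,X^{\otimes mn})\;\simeq\;\bigoplus_{i_1,\dots,i_n\in\Irr\cC}\Hom(P,L_{i_1}\otimes\cdots\otimes L_{i_n})\otimes_{\bk}\Hom(P_{i_1},X^{\otimes m})\otimes_{\bk}\cdots\otimes_{\bk}\Hom(P_{i_n},X^{\otimes m}),$$
and I would track how $S_n$ permutes the blocks through this isomorphism, which identifies the right-hand side as an $S_m\wr S_n$-representation (the $S_m^{\times n}$ acting inside each $\Hom(P_{i_j},X^{\otimes m})$ factor, and $S_n$ permuting blocks while simultaneously permuting the $L_{i_j}$). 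Organising the sum by the orbit of the tuple $(i_1,\dots,i_n)$ under $S_n$ — equivalently by the multiplicities, which give a partition $\kappa$ of $n$ — turns the direct sum into an induction from $S_m\wr S_\kappa$, where inside each part of size $\kappa_j$ the factors $\Hom(P_{i},X^{\otimes m})$ are identical and get permuted, producing exactly a $\boxtimes_\wr \kappa_j$-power of some $M_j\in\mB^m_X$, tensored with the ``multiplicity space'' $\Hom(P,L_{i_1}\otimes\cdots)$ carrying the residual $S_{\kappa_j}$-action.

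\textbf{Key steps.} (1) Fix the block decomposition $S_{mn}\supset S_m\wr S_n$ and set up \eqref{EqNat} at $t=n$ evaluated at the constant tuple, being careful that the sections $s_{i_1,\dots,i_n}$ in Construction~\ref{Construction} can be chosen $S_n$-equivariantly with respect to simultaneous permutation of factors and indices (this is possible by averaging, or by choosing sections only on orbit representatives and transporting). (2) Use Frobenius exactness, via Theorem~\ref{ThmFrEx}(4) (or Corollary~\ref{CorFrEx4}), to replace each $\Hom(P_i,X^{\otimes m})$-type factor — and more importantly the ``glue'' factor $\Hom(P,L_{i_1}\otimes\cdots\otimes L_{i_n})$ with its residual symmetric-group action — by something recognisably of the form $\mB^{\kappa_j}[\cC]$; here the point is that the residual $S_{\kappa_j}$-representation on $\Hom(P,L_{i}\otimes\cdots\otimes L_{i})$ (that is, $\Hom(P,L_i^{\otimes \kappa_j}\otimes\cdots)$) is, by \Cref{RemKappa} applied to $Z_j=L_i$, a sum of the required inflated $M_j'\in\mB^{\kappa_j}[\cC]$. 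Actually I would phrase this more cleanly by applying \eqref{EqNat} a second time, now at $t=l$ with the tuple $(L_{i^{(1)}}^{\otimes\kappa_1},\dots,L_{i^{(l)}}^{\otimes\kappa_l})$, to split off the glue. (3) Reassemble: the $S_n$-orbit structure packages the blocks of constant index into the induction $\Ind^{S_m\wr S_n}_{S_m\wr S_\kappa}$ of the exterior tensor product $\bigboxtimes_j (M_j^{\boxtimes_\wr\kappa_j}\otimes M_j')$, and passing to direct summands of the original module in $\mB^{mn}_X$ only enlarges the class of summands allowed, which is exactly the statement.

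\textbf{Main obstacle.} The delicate point is the equivariance bookkeeping in step (1)–(2): the wreath product mixes the ``internal'' $S_m$-actions (which land inside the $\mB^m_X$-factors) with the ``external'' $S_n$-action (which both permutes blocks and permutes simple objects $L_i$), and one must verify that after grouping by $\kappa$ the external action genuinely restricts to the permutation action defining $\boxtimes_\wr$ on the repeated $M_j$ together with an honest $S_{\kappa_j}$-action on the multiplicity space $M_j'$, with no leftover ``twisting''. This requires choosing the sections in Construction~\ref{Construction} compatibly under the $S_n$-action on index tuples — which costs nothing since $\bk$ is a field and we may average, or work orbit-by-orbit — and then chasing \eqref{EqNat} carefully. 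The role of Frobenius exactness is essential and subtle here: without it the decomposition \eqref{EqNat} is not compatible with passing between $X$ and its subquotients, so the multiplicity spaces would not be expressible purely in terms of $\mB^{\bullet}[\cC]$; this is precisely what Theorem~\ref{ThmFrEx}(4) and Corollary~\ref{CorFrEx4} are engineered to supply. Everything else — Mackey's theorem for the induction, the identification $S_m\wr S_\kappa=\prod_j S_m\wr S_{\kappa_j}$, and the passage to summands — is routine.
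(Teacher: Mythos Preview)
Your overall strategy is exactly the paper's: apply \eqref{EqNat} at $t=n$ to the constant tuple $(X^{\otimes m},\dots,X^{\otimes m})$, track the $S_n$-action permuting blocks, group by $S_n$-orbits of index tuples (giving~$\kappa$), and identify the multiplicity spaces via \Cref{RemKappa}. The paper packages the first two steps as \Cref{LemYn}.

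However, you have misidentified where Frobenius exactness enters, and this is a genuine gap. You claim the $S_n$-equivariant choice of sections ``costs nothing since $\bk$ is a field and we may average, or work orbit-by-orbit''. Averaging fails in characteristic $p$ when $p\mid |S_\lambda|$. Working orbit-by-orbit is the right idea, but transporting a section from an orbit representative $(i_1\le\cdots\le i_n)$ to the rest of the orbit only produces a collection on which $S_n$ acts by the clean formula of \Cref{LemYn} \emph{if the section at the representative is itself $S_\lambda$-equivariant} (otherwise elements of the stabiliser~$S_\lambda$ act with correction terms). The existence of such an $S_\lambda$-equivariant section of $\Hom(P,P_{i_1}\otimes\cdots\otimes P_{i_n})\tto\Hom(P,L_{i_1}\otimes\cdots\otimes L_{i_n})$ is \emph{exactly} the content of \Cref{CorFrEx}(6), and is where Frobenius exactness is used.

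By contrast, the places where you invoke Frobenius exactness are not where it is needed: the identification of the glue factor $\Hom(P,L_{i^{(1)}}^{\otimes\kappa_1}\otimes\cdots\otimes L_{i^{(l)}}^{\otimes\kappa_l})$ as a sum of $M_1'\boxtimes\cdots\boxtimes M_l'$ with $M_j'\in\mB^{\kappa_j}[\cC]$ is \Cref{RemKappa}, which holds in any tensor category; and no passage between $X$ and its subquotients occurs in the argument. Once you relocate the hypothesis to the section-choosing step, your proof is correct and coincides with the paper's.
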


We start the proof with the following lemma.

\begin{lemma}\label{LemYn}
If $\cC$ is Frobenius-exact and has projective objects, we can choose, for all $P\in\Proj\cC$ and $Y\in\cC$, an isomorphism of $\End(Y)^{\otimes n}$-modules
$$\bigoplus_{i_1,\cdots, i_n}\Hom(P,L_{i_1}\otimes \cdots\otimes L_{i_n})\otimes_{\bk}\Hom(P_{i_1},Y)\otimes_{\bk}\cdots\otimes_{\bk}\Hom(P_{i_n},Y)\;\xrightarrow{\sim}\; \Hom(P, Y^{\otimes n}), $$
such that the canonical action of $\sigma\in S_n$ on the right-hand side is exchanged with the action
$$f\otimes h_1\otimes \cdots\otimes h_n\;\mapsto\;(\sigma_{L_{i_1},\cdots, L_{i_n}}\circ f)\otimes (h_{\sigma^{-1}(1)}\otimes\cdots \otimes h_{\sigma^{-1}(n)})
$$
on the left-hand side.
\end{lemma}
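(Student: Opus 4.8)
The plan is to promote the natural isomorphism \eqref{EqNat} (for $t=n$, evaluated at the constant tuple $(Y,Y,\dots,Y)$) from a statement about vector spaces to an $S_n$-equivariant statement, where $S_n$ acts on the right-hand side $\Hom(P,Y^{\otimes n})$ through $\beta^n_Y$ and on the target object $Y^{\otimes n}$. The obstruction is that \eqref{EqNat} was built using arbitrarily chosen sections $s_{i_1,\dots,i_t}$ of the surjections $\Hom(P,P_{i_1}\otimes\cdots\otimes P_{i_t})\twoheadrightarrow\Hom(P,L_{i_1}\otimes\cdots\otimes L_{i_t})$, which need not be compatible with the braiding $\sigma_{\bullet}$ permuting the factors; so the naive induced $S_n$-action on the left-hand side will not be the displayed one. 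The point of assuming $\cC$ is Frobenius exact is precisely to remedy this.

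First I would recall that by Theorem~\ref{ThmFrEx}(1)$\Leftrightarrow$(3) (or directly \cite[Theorem~3.2.2]{Tann}), Frobenius exactness gives a non-zero injective commutative algebra $A$ in $\Ind\cC$ such that $A\otimes-$ sends short exact sequences in $\cC$ to split exact sequences of $A$-modules; equivalently, via $\Hom(\unit,-)$ and adjunction, the functors $\Hom(W\otimes-,I)$ carry epimorphisms to split epimorphisms, $S_\lambda$-equivariantly (Theorem~\ref{ThmFrEx}(3)--(4)). Now the key device: work with the projective objects $P_{i_1}\otimes\cdots\otimes P_{i_n}$ and $L_{i_1}\otimes\cdots\otimes L_{i_n}$ \emph{simultaneously for all orderings} of a fixed multiset $\{i_1,\dots,i_n\}$. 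Concretely, over a $S_n$-orbit of tuples, the braiding isomorphisms $\sigma_{P_{i_1},\dots,P_{i_n}}$ and $\sigma_{L_{i_1},\dots,L_{i_n}}$ assemble $\bigoplus_{\text{orbit}}\Hom(P,P_{i_1}\otimes\cdots\otimes P_{i_n})$ and $\bigoplus_{\text{orbit}}\Hom(P,L_{i_1}\otimes\cdots\otimes L_{i_n})$ into modules over the stabiliser, and the surjection between them is equivariant. By Theorem~\ref{ThmFrEx}(3) (applied with $Z_i\hookrightarrow Y_i$ taken to be the inclusions of socle-type summands, or more cleanly: since all these objects are projective = injective in the relevant sense, use that $\Hom(P,-)$ of the projective cover map $P_{i_1}\otimes\cdots\otimes P_{i_n}\twoheadrightarrow L_{i_1}\otimes\cdots\otimes L_{i_n}$ splits equivariantly — this is exactly the Frobenius-exact input), we may choose the sections $s_{i_1,\dots,i_n}$ over each orbit to be $\mathrm{Stab}$-equivariant. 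Transporting by the full symmetric group then makes the whole family $\{s_{i_1,\dots,i_n}\}$ compatible with the braiding permutations.

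With such a braiding-compatible choice of sections in hand, I would re-examine the natural transformation \eqref{EqNat} evaluated at $(Y,\dots,Y)$: the formula $g\otimes f_1\otimes\cdots\otimes f_n\mapsto (f_1\otimes\cdots\otimes f_n)\circ g$ together with equivariance of the sections shows directly that precomposing $g$ with $\sigma_{L_{i_1},\dots,L_{i_n}}$ and permuting $(h_1,\dots,h_n)$ by $\sigma^{-1}$ on the left matches the action of $\beta^n_Y(\sigma)$ (i.e. postcomposition of $f_1\otimes\cdots\otimes f_n$ with $\sigma_{Y,\dots,Y}$) on $\Hom(P,Y^{\otimes n})$ on the right — this is the naturality of the braiding in all the tensor factors. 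That is precisely the displayed formula, so the isomorphism is $\End(Y)^{\otimes n}$-linear and $S_n$-equivariant as claimed. The main obstacle, and the only place Frobenius exactness enters, is the equivariant choice of sections in the previous paragraph; everything after that is a bookkeeping check of the braiding's naturality, and the $\End(Y)^{\otimes n}$-module structure is already visible in \eqref{EqNat}.
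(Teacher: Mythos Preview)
Your proposal is correct and follows essentially the same approach as the paper: both arguments hinge on choosing the sections $s_{i_1,\dots,i_n}$ of Construction~\ref{Construction} to be compatible with the braiding, by first picking an $S_\lambda$-equivariant section on a representative of each $S_n$-orbit of tuples (where $S_\lambda$ is the stabiliser, i.e.\ the Young subgroup recording multiplicities) and then transporting to the remaining tuples via the braid action. The only cosmetic difference is that the paper phrases this in terms of a total order on $\Irr\cC$ and shortest coset representatives in $S_n/S_\lambda$, and invokes the projective/epimorphism formulation of Frobenius exactness (Theorem~\ref{CorFrEx}(6)) rather than the dual injective/monomorphism version you cite.
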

\begin{proof}
We choose sections $s_{i_1,\cdots, i_n}$ of
$$\Hom(P,P_{i_1}\otimes \cdots\otimes P_{i_n})\tto \Hom(P,L_{i_1}\otimes \cdots\otimes L_{i_n})$$
in the following way. We fix a total order $\le$ on $\Irr\cC$.
For any
$i_1\le i_2\le \cdots\le i_n,$
denote by $\lambda\vDash n$ the composition governing equalities among the $i_l$, so
$$i_1=i_2=\cdots=i_{\lambda_1}<i_{\lambda_1+1}=\cdots=i_{\lambda_2}<\cdots.$$
We then take an $S_\lambda$-equivariant section $s_{i_1,\cdots, i_n}$, which exists by \Cref{CorFrEx}.
For all shufflings of $(i_1,\cdots, i_n)$, naturally labelled by the coset $ S_n/S_\lambda$, we define the section by demanding it forms a commutative diagram with $s_{i_1,\cdots, i_n}$ and the braid action of the corresponding shortest (or due to the set-up, any) representative in $S_n$ of the element in $ S_n/S_\lambda$.

If we choose such sections for the construction \ref{Construction}, the inherited $S_n$-action on the left-hand side can now be computed directly. The $\End(Y)^{\otimes n}$-equivariance follows from the naturality of \eqref{EqNat}.
\end{proof}

\begin{proof}[Proof of Proposition~\ref{PropWR}]
By Lemma~\ref{LemYn}, we have an isomorphism of $\bk S_m^{\times n}$-modules, 
$$\bigoplus_{i_1,\cdots, i_n}\Hom(P,L_{i_1}\otimes \cdots\otimes L_{i_n})\otimes_{\bk}\Hom(P_{i_1},X^{\otimes m})\otimes_{\bk}\cdots\otimes_{\bk}\Hom(P_{i_n},X^{\otimes m})\;\xrightarrow{\sim}\; \Hom(P, X^{\otimes mn}), $$
together with a recipe for the inherited $S_n$-action on the left-hand side.

The proposed description then follows by choosing a decomposition into indecomposable summands of each $S_m$-representation $\Hom(P_{i},X^{\otimes m})$, and applying \Cref{RemKappa}.
\end{proof}

\begin{remark}
As the proof shows, we can improve the formulation in Proposition~\ref{PropWR} to specify that $M_i'\in\mB^{\kappa_i}_{L}$ for some simple $L\in\cC$.
\end{remark}

\subsection{Applications}

\begin{theorem}\label{Thmp2}
Let $\cC$ be a Frobenius-exact tensor category with
$\mB^p[\cC]\subset p\Perm^p,$
then
$$\mB[\cC]\;\subset\;p\Perm.$$
%Moreover,
%$$\mB^i[\cC]=\Young^i,\quad\mbox{for $i\le 4$}.$$
\end{theorem}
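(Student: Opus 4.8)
The plan is to bootstrap from the hypothesis at degree $p$ to all degrees using the wreath-product description of Proposition~\ref{PropWR}, together with the structural characterisation of $p\Perm$ as the inductive system generated by the trivial representations of \emph{all} subgroups (Example~\ref{ExIndSys2}(5), characterisation (b)). First I would reduce the claim to a single object: by Theorem~\ref{ThmTCIS}(3), $\mB[\cC]=\sum_{X\in\cC}\mB_X[\cC]$, and $p\Perm$ is itself an inductive system closed under sums, so it suffices to show $\mB_X[\cC]\subset p\Perm$ for every $X\in\cC$; equivalently, $\mB^n_X\subset p\Perm^n$ for all $n$. I would argue by strong induction on $n$. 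The base cases $n<p$ are trivial since $p\Perm^n=\Rep S_n$ there (Example~\ref{ExIndSys2}(3)), and $n=p$ is the hypothesis.

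For the inductive step, write $n=mp+r$ with $0\le r<p$; when $r>0$ one can descend to $n-1$ via Remark~\ref{RemCIS} (which applies to $p\Perm$ since it satisfies \ref{properties}(iv)), so the real work is the case $p\mid n$, say $n=mp$. Apply Proposition~\ref{PropWR} with $X$ in the role of the object and with the `$m$' of that proposition taken to be $p$ and its `$n$' taken to be $m$: any $V\in\mB^{mp}_X$, restricted to $S_p\wr S_m$, is a summand of a sum of modules
$$\Ind^{S_p\wr S_m}_{S_p\wr S_\kappa}\Bigl((M_1^{\boxtimes_\wr\kappa_1}\otimes M_1')\boxtimes\cdots\boxtimes(M_l^{\boxtimes_\wr\kappa_l}\otimes M_l')\Bigr)$$
with $\kappa\vdash m$, $M_i\in\mB^p_X\subset p\Perm^p$ and $M_i'\in\mB^{\kappa_i}[\cC]$. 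Since $\kappa_i\le m<n$, the inductive hypothesis gives $M_i'\in p\Perm^{\kappa_i}$ as well. The key point is then that $p$-permutation modules are closed under the operations appearing here: each $M_i$ has a basis permuted by some $p$-subgroup of $S_p$, hence $M_i^{\boxtimes_\wr\kappa_i}$ has a basis permuted by a $p$-subgroup of $S_p\wr S_{\kappa_i}$ (using that $S_{\kappa_i}$ here permutes the $\kappa_i$ tensor slots and a $p$-Sylow of it, together with the slotwise $p$-subgroups, is a $p$-group); tensoring with the $p$-permutation module $M_i'$ (inflated, hence also $p$-permutation over an appropriate $p$-subgroup) preserves the property, as does the exterior product over $i$ and finally induction up to $S_p\wr S_m$. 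Thus $\mathrm{Res}^{S_n}_{S_p\wr S_m}V$ is a $p$-permutation module for $S_p\wr S_m$.

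It remains to upgrade this to: $V$ itself is a $p$-permutation module for $S_n=S_{mp}$. Here I would use that $S_p\wr S_m$ contains a Sylow $p$-subgroup of $S_{mp}$ (indeed, iterating, the iterated wreath products $S_p\wr\cdots\wr S_p$ give the Sylow $p$-subgroups of symmetric groups of $p$-power degree, and $S_p\wr S_m\supset P$ with $P$ a Sylow $p$-subgroup of $S_{mp}$ provided we also know it for $S_m$ — so a secondary nested induction on $m$, or one simply invokes $S_p\wr S_m \ge \mathrm{Syl}_p(S_{mp})$ directly). A module for a finite group $G$ is a $p$-permutation module precisely when its restriction to a Sylow $p$-subgroup is a permutation module; so restricting our $V$ further from $S_p\wr S_m$ down to $P=\mathrm{Syl}_p(S_n)$ and using that $p$-permutation modules restrict to $p$-permutation (hence, over a $p$-group, permutation) modules, we conclude $V$ is a $p$-permutation $S_n$-module, i.e. $V\in p\Perm^n$. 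This closes the induction and gives $\mB[\cC]\subset p\Perm$.

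\textbf{Main obstacle.} The delicate point is the bookkeeping in the inductive step: verifying carefully that each of the operations — the wreath power $M\mapsto M^{\boxtimes_\wr k}$, inflation of an $S_k$-module through $S_m\wr S_k\tto S_k$, internal tensor product, exterior product, and induction — sends $p$-permutation modules to $p$-permutation modules, and in particular pinning down explicit $p$-subgroups with respect to which the relevant bases are permuted so that the final restriction to a Sylow $p$-subgroup of $S_n$ goes through. One must also be slightly careful that the $M_i'$ in Proposition~\ref{PropWR} really do lie in $\mathbf{B}^{\kappa_i}[\cC]$ with $\kappa_i<n$ (so the induction hypothesis is genuinely available) — which is exactly what the statement of that proposition provides, with $\kappa\vdash n$ of length $l$ forcing each $\kappa_i\le m<n$.
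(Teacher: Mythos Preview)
Your proof is correct and rests on the same engine as the paper's (Proposition~\ref{PropWR} together with closure of $p$-permutation modules under wreath powers, inflation, tensor, induction, and summands, plus a Sylow containment), but the organisation differs in two noteworthy ways.

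First, the paper avoids your case split on $p\mid n$ versus $p\nmid n$: it observes that since $\mB[\cC]$ is an inductive system and restrictions of $p$-permutation modules remain $p$-permutation, it suffices to prove $\mB^{p^n}[\cC]\subset p\Perm^{p^n}$ for all~$n$, and then runs the induction only over $p$-powers. Your Ind--Res argument for $p\nmid n$ is valid (the fact that $V$ is a summand of $\Ind^{S_n}_{S_{n-1}}\Res^{S_n}_{S_{n-1}}V$ when $p\nmid n$ holds for any $V$, as you implicitly use), but the paper's reduction is more economical. Second, you apply Proposition~\ref{PropWR} with the roles swapped: you take the proposition's $m$ to be $p$ and look at $S_p\wr S_m< S_{mp}$, whereas the paper takes the proposition's $m$ to be $p^{n-1}$ and works with $S_{p^{n-1}}\wr S_p< S_{p^n}$. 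Both wreath products contain a Sylow $p$-subgroup (a direct Legendre-formula count gives $\nu_p(|S_p\wr S_m|)=m+\nu_p(m!)=\nu_p((mp)!)$, so no nested induction is needed in your variant either), and both lead to the same closure verifications; the paper packages the latter via the identities
\[
M^{\boxtimes_\wr i}\otimes\Ind^{S_i}_H\unit\;\simeq\;\Ind^{S_m\wr S_i}_{S_m\wr H}M^{\boxtimes_\wr i}
\quad\text{and}\quad
(\Ind^{S_m}_K\unit)^{\boxtimes_\wr i}\;\simeq\;\Ind^{S_m\wr H}_{K\wr H}\unit,
\]
which make the $p$-permutation property transparent in one stroke.
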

\begin{proof}
As restrictions of $p$-permutation representations to subgroups remain $p$-permutation representations, it is sufficient to prove
$$\mB^{p^n}[\cC]\;\subset\; p\Perm^{p^n},\quad\mbox{for $n\in\mZ_{>0}$}.$$
We prove this inclusion by induction on $n$, using the base case $n=1$.

By definition of $p$-permutation modules and the fact that $S_{p^{n-1}}\wr S_p<S_{p^n}$ contains a Sylow $p$-subgroup it suffices to consider the restriction to $S_{p^{n-1}}\wr S_p$ of representations in $\mB^{p^n}[\cC]$.
Assuming the inclusion is satisfied for $n-1$, it then follows from an application of Proposition~\ref{PropWR}, using
$$M^{\boxtimes_{\wr}i}\otimes\Ind^{S_i}_H\unit\;\simeq\;\Ind^{S_m\wr S_i}_{S_m\wr H}M^{\boxtimes_{\wr}i}\quad\mbox{and }\quad (\Ind^{S_m}_K\unit)^{\boxtimes_{\wr}i}\;\simeq\; \Ind^{S_m\wr H}_{K\wr H}\unit$$
in $\Rep S_m\wr S_i$ and $\Rep S_m\wr H$, for $H<S_i$ and $K<S_m$, that the same is true for $n$.
\end{proof}

\begin{remark}
\begin{enumerate}
\item If $\mathrm{char}(\bk)=2$, then the assumption $\mB^2[\cC]\subset2\Perm^2$ is automatic.
\item If $p=3$, then the condition $\mB^3[\cC]\subset3\Perm^3=\SYoung^3$ is equivalent with $\cC$ being of Frobenius type $\Vecc$, see \cite[Question~7.3]{CEO}.
\end{enumerate}
\end{remark}

Recall the notation $\Indec$ from \S\ref{sec:not}.

\begin{prop}
Let $\cC$ be a Frobenius-exact tensor category, then
$\Indec(\mB^n[\cC])$
is a finite set, for each $n\in\mN$.
\end{prop}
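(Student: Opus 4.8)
The plan is to reduce the general statement to the case of a single object and then, for a single object, reduce to a wreath-product analysis that is already under control. First I would note that $\mB^n[\cC]=\sum_{X\in\cC}\mB^n_X[\cC]$, but since $\mB^n_X=\mB^n_{\gr X}$ in a Frobenius exact category by \Cref{CorFrEx4}, and every object is a subquotient of a direct sum of indecomposables, and moreover $\mB^n_{X\oplus Y}$ is generated by induction products of the pieces $\mB^{l}_X\otimes\mB^{n-l}_Y$ (the argument in the proof of \Cref{ThmTCIS}(2)(d) via \Cref{RemKappa}), it suffices to show that for a single object $X\in\cC$ the set $\Indec(\mB^n_X[\cC])$ is finite. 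Here one must be slightly careful: a priori there could be infinitely many non-isomorphic indecomposable objects $X$ contributing, so the real content is a uniform bound independent of $X$.

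For a single $X$, the key step is to run the argument of \Cref{Thmp2} / \Cref{PropWR} more crudely. By \Cref{PropWR} (or directly by \Cref{LemYn}), restricted along $S_m\wr S_n\hookrightarrow S_{mn}$, every representation in $\mB^{mn}_X$ is a summand of an induced module built from the $\mB^m_X$-pieces and the inflated $\mB^{\kappa_i}[\cC]$-pieces. I would then take $m=1$: the point is that $\mB^1_X[\cC]$ is a \emph{finite} category — it is the pseudo-abelian subcategory of $\Rep S_1=\Vecc_\bk$ generated by $\Omega(X)$, which has exactly one indecomposable (the trivial $S_1$-module), so it contributes nothing. More usefully, iterate: every indecomposable in $\mB^n_X[\cC]$ restricts, along a Sylow-containing subgroup of the form $S_{n_1}\wr S_{n_2}\wr\cdots$, to a summand of a module induced from tensor/box products of the indecomposables of $\mB^{n_i}_X$ for \emph{smaller} $n_i$, together with inflated representations from $\mB^{\le n}[\cC]$. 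Running an induction on $n$, the base case $n=1$ being trivial, one gets that each $M\in\mB^n_X[\cC]$ is a direct summand of a module whose isomorphism type lies in a fixed finite list depending only on $n$ (and not on $X$): the list of $\Ind^{S_n}_{S_m\wr S_\kappa}$ of box-products of indecomposables drawn from the finitely many indecomposables of $\mB^m_X$ for $m<n$, intersected with the inductive-system constraint. Since a finite direct sum of modules has only finitely many indecomposable summands, $\Indec(\mB^n_X[\cC])$ is finite, with a bound independent of $X$.

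Concretely I would organize the induction as follows. For $n=1$ the claim is clear. For $n>1$: if $n$ is not a power of $p$ we use \Cref{RemCIS} — $\mB^n_X$ is determined by $\mB^{n-1}_X$ via restriction and induction along $S_{n-1}<S_n$, and induction/restriction send a finite set of indecomposables to a finite set, so $|\Indec(\mB^n_X)|$ is bounded in terms of $|\Indec(\mB^{n-1}_X)|$; actually this reduces us to $n=p^k$. For $n=p^k$, since $S_{p^{k-1}}\wr S_p$ contains a Sylow $p$-subgroup of $S_{p^k}$, a module in $\mB^{p^k}_X$ is determined up to finitely many possibilities by its restriction to $S_{p^{k-1}}\wr S_p$ (a module is determined up to a finite ambiguity by restriction to a subgroup containing a Sylow $p$-subgroup, since there are only finitely many $\bk S_{p^k}$-modules with a given restriction — one can invoke that the induced module $\Ind(\Res M)$ contains $M$ as a summand and decompose), and \Cref{PropWR} bounds that restriction in terms of $\Indec(\mB^{p^{k-1}}_X)$ and $\Indec(\mB^{\kappa_i}[\cC])$ for $\kappa_i\le p^{k-1}<p^k$, both finite by the inductive hypothesis (the latter summed over all objects, handled by the first paragraph's reduction). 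The main obstacle is exactly this last point — making precise the claim that a $\bk S_{p^k}$-module is pinned down, up to a finite set of possibilities controlled purely by $k$, by its restriction to $S_{p^{k-1}}\wr S_p$; this is a standard fact about $p$-permutation-type bookkeeping and Brauer correspondence / Krull–Schmidt, but it needs to be stated carefully to get a bound uniform in $X$. Everything else is a routine combination of Mackey's theorem, \Cref{PropWR}, and induction on $n$.
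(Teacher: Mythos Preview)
Your overall strategy---reduce to prime-power degrees and then induct via \Cref{PropWR} and the fact that $S_{p^{k-1}}\wr S_p$ contains a Sylow $p$-subgroup of $S_{p^k}$---matches the paper's proof. However, there are two genuine gaps, and one unnecessary complication.

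\textbf{The base case is missing.} Your induction starts at $n=1$, but when you reach $n=p$ (the first nontrivial prime power) and apply \Cref{PropWR} with $m=p^{0}=1$ and exponent $p$, the statement becomes vacuous: since $S_1\wr S_p=S_p$, the proposition just tells you that every $M\in\mB^p_X$ is a summand of something of the form $\Ind^{S_p}_{S_\kappa}(\cdots\otimes M')$ with $M'\in\mB^{\kappa_i}[\cC]$ and $\kappa\vdash p$; for $\kappa=(p)$ this is exactly $M'\in\mB^p[\cC]$, which is what you are trying to bound. Your appeal to the inductive hypothesis here is circular. The paper's fix is simply that $\bk S_p$ has \emph{finite representation type} (its Sylow $p$-subgroup is cyclic), so $\Indec(\Rep S_p)$---and a fortiori $\Indec(\mB^p[\cC])$---is already finite. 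That is the genuine base case, and it is the one nontrivial input.

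\textbf{The reduction to $n=p^k$ via \Cref{RemCIS} is incorrect as stated.} \Cref{RemCIS} only applies when $p\nmid n$; iterating it reduces you from $n$ to the largest multiple of $p$ not exceeding $n$, not to a power of $p$. The correct reduction (implicit in the paper) is that for arbitrary $n=\sum_i a_ip^i$, the Young subgroup $\prod_i S_{p^i}^{a_i}$ contains a Sylow $p$-subgroup of $S_n$; hence every $M\in\mB^n[\cC]$ is a summand of $\Ind\Res M$, and by \Cref{RemKappa} the restriction is a direct sum of box products drawn from the $\mB^{p^i}[\cC]$.

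Finally, the detour through a single object $X$ and a ``uniform bound independent of $X$'' is unnecessary and obscures the argument: \Cref{PropWR} (summed over all $X$) already gives the inductive step directly for $\mB^{p^k}[\cC]$, and the paper works with $\mB[\cC]$ throughout. Your worry at the end---that a module is determined up to finitely many choices by its restriction to a subgroup containing a Sylow---is a non-issue: $M$ is always a summand of $\Ind^G_H\Res^G_H M$ whenever $H$ contains a Sylow $p$-subgroup, by the standard relative trace argument.
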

\begin{proof}
The proof is similar to that of Theorem~\ref{Thmp2}. Again it is sufficient to prove the claim for $\mB^{p^n}[\cC]$. This can be done by induction on $n$, via Proposition~\ref{PropWR}. The base case $n=1$ follows from representation finiteness of $\bk S_p$.
\end{proof}

\begin{conjecture}
Let $\cC$ be a Frobenius-exact tensor category with
$\mB^p[\cC]\subset \SYoung^p,$
then
$$\mB[\cC]\;\subset\;\SYoung.$$
\end{conjecture}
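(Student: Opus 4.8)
The plan is to follow the exact template of the proof of \Cref{Thmp2}, substituting the inductive system $\SYoung$ for $p\Perm$ throughout and checking that the closure properties of $\SYoung$ needed in the argument actually hold. Concretely, since $\SYoung^n$ is stable under restriction to Young subgroups (indeed under restriction to any $S_\kappa$, by the Mackey/transitivity properties of modules induced from one-dimensional representations of Young subgroups, cf.\ the definition in \Cref{ExIndSys2}(4)), it again suffices to establish $\mB^{p^n}[\cC]\subset\SYoung^{p^n}$ for all $n\in\mZ_{>0}$, and to do this by induction on $n$ with the base case $n=1$ being the hypothesis. The reduction step is identical: $S_{p^{n-1}}\wr S_p < S_{p^n}$ contains a Sylow $p$-subgroup, and signed Young modules of $S_{p^n}$ are detected on $p$-permutation-type data, so it is enough to control the restriction to $S_{p^{n-1}}\wr S_p$ of any representation in $\mB^{p^n}[\cC]$.

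Then I would apply \Cref{PropWR} (in the sharpened form of the Remark after it, so that $M_i'\in\mB^{\kappa_i}_L$ for a simple $L\in\cC$): the restriction to $S_m\wr S_p$ (with $m=p^{n-1}$) of a representation in $\mB^{p^n}[\cC]$ is a summand of a sum of modules
$$\Ind^{S_m\wr S_p}_{S_m\wr S_\kappa}\left((M_1^{\boxtimes_\wr \kappa_1}\otimes M_1')\boxtimes\cdots\boxtimes (M_l^{\boxtimes_\wr\kappa_l}\otimes M_l')\right),\qquad \kappa\vdash p,$$
with $M_i\in\mB^m_X\subset\SYoung^m$ (inductive hypothesis) and $M_i'\in\mB^{\kappa_i}[\cC]$. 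Here $\kappa_i<p$ whenever $\kappa$ has length $l\ge 2$, so $\mB^{\kappa_i}[\cC]=\Rep S_{\kappa_i}=\SYoung^{\kappa_i}$ by \Cref{ExIndSys2}(3); and when $\kappa=(p)$ we have $M_1'\in\mB^p[\cC]\subset\SYoung^p$ by hypothesis. So in all cases both $M_i$ and $M_i'$ lie in signed-Young systems. The task is then to show that inducing an external tensor product of terms of the shape $M^{\boxtimes_\wr i}\otimes M'$ with $M\in\SYoung^m$ and $M'\in\SYoung^i$ up from $S_m\wr S_\kappa$ to $S_m\wr S_p$ and then to $S_{p^n}$ lands in $\SYoung^{p^n}$.

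The main obstacle — and the real difference from \Cref{Thmp2} — is the closure computation for $\SYoung$, because $\SYoung$ is \emph{not} closed under arbitrary induction from subgroups, only induction from Young subgroups of one-dimensional modules. In \Cref{Thmp2} one used the clean identities $M^{\boxtimes_\wr i}\otimes\Ind^{S_i}_H\unit\simeq\Ind^{S_m\wr S_i}_{S_m\wr H}M^{\boxtimes_\wr i}$ and $(\Ind^{S_m}_K\unit)^{\boxtimes_\wr i}\simeq\Ind^{S_m\wr H}_{K\wr H}\unit$, valid because $p$-permutation modules are exactly summands of $\Ind_H\unit$ for arbitrary $H$. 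For signed Young modules one must instead work with one-dimensional modules $\varepsilon$ of Young subgroups $S_\mu<S_m$ (tensor products of trivial and sign), and track how $(\Ind^{S_m}_{S_\mu}\varepsilon)^{\boxtimes_\wr i}$ and $\varepsilon^{\boxtimes_\wr i}$ behave. The key facts needed are: (a) for a one-dimensional $\varepsilon$ of a Young subgroup $S_\mu<S_m$, the wreath power $\varepsilon^{\boxtimes_\wr i}$, \emph{restricted to} $S_\mu\wr S_i$, is one-dimensional on $S_\mu^{\times i}$ but on the top $S_i$ it is either trivial or sign according to the parity of $\varepsilon$; since $S_\mu\wr S_i\le S_{mi}$ is itself a Young subgroup only when $\varepsilon$ is trivial, one handles the sign twist by noting $S_i$ acts by a one-dimensional character, so $\varepsilon^{\boxtimes_\wr i}$ is a one-dimensional module of the group $S_\mu\wr S_i$, whose further induction to $S_{mi}$ is a signed Young module provided one can replace $S_\mu\wr S_i$ by an ambient Young subgroup — this uses that $\Ind^{S_{mi}}_{S_\mu\wr S_i}(\text{1-diml})$ is a summand of $\Ind^{S_{mi}}_{S_{\mu^{\times i}}}(\text{1-diml})$, which \emph{is} in $\SYoung$; and (b) the projection-formula identity $M^{\boxtimes_\wr i}\otimes M'\simeq$ (appropriate induction) when $M'$ is inflated from $S_i$, combined with (a). So the plan is: prove a self-contained lemma that $\SYoung$ is closed under the operation $M,M'\mapsto \Ind^{S_{mi}}_{S_m\wr S_i}(M^{\boxtimes_\wr i}\otimes M')$ for $M\in\SYoung^m$, $M'\in\SYoung^i$ (by reducing each to summands of modules induced from one-dimensional characters of Young subgroups and using transitivity of induction plus the fact that a wreath product of Young subgroups sits inside a Young subgroup up to the sign-character twist), then feed this lemma into the induction exactly as in \Cref{Thmp2}. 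I expect step (a)/(b) — correctly bookkeeping the sign characters through the wreath and the induction, so that the result stays genuinely a \emph{signed} Young module rather than only a $p$-permutation module — to be the delicate point; everything else is a verbatim adaptation of \Cref{Thmp2}.
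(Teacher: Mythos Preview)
This statement is stated as a \emph{conjecture} in the paper; no proof is given, and it is left open. Your proposal correctly identifies the template of \Cref{Thmp2} and the point where the argument must diverge, but the closure lemma you isolate as the crux is in fact false.

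Take $p=2$, $m=i=2$, and $M=M'=\unit$. Then $M^{\boxtimes_\wr 2}\otimes M'$ is the trivial $S_2\wr S_2$-module, and your lemma would require $\Ind^{S_4}_{S_2\wr S_2}\unit\in\SYoung^4=\Young^4$. But $S_2\wr S_2$ is a Sylow $2$-subgroup of $S_4$, and (as used in the proof of \Cref{PropInclusions}) $\Ind^{S_4}_{S_2\wr S_2}\unit\simeq\unit\oplus D^{(3,1)}$ with $D^{(3,1)}\notin\Young^4$. Your step~(a) breaks down exactly here: for the one-dimensional module $\chi$ of $S_\mu\wr S_k$ to be a summand of $\Ind^{S_\mu\wr S_k}_{S_\mu^{\times k}}(\chi|_{S_\mu^{\times k}})$, one needs the one-dimensional $S_k$-character $\chi|_{S_k}$ to split off $\bk S_k$, which fails once $k\ge p$ --- precisely the unavoidable case $\kappa=(p)$. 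The outer logic (that $N$ is a summand of $\Ind\Res N$ since the index of $S_m\wr S_p$ in $S_{p^n}$ is prime to $p$) is sound, but since that induced module can already fall outside $\SYoung$, no conclusion about $N$ follows. A proof of the conjecture would have to exploit more than the shape of $\Res N$; this is presumably why the author records it as open. (A small side correction: for one-dimensional $\varepsilon$, the top $S_i$ always acts trivially on $\varepsilon^{\boxtimes_\wr i}$, never by sign.)
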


\subsubsection{}\label{SecTriv} We can apply Proposition~\ref{PropWR} to give a less technical proof of the `Key Lemma' \cite[Lemma~5.1]{CEO}. This can be reformulated as Lemma~\ref{LemKey} below, see also \Cref{ExFrg}(4). For this, for a finite group $G$, we denote by
$$\Triv_G:\Rep G\to\Vecc$$
the $\bk$-linear functor that assignts to a representation its maximal trivial direct summand, see~\cite[\S 2]{Tann}. If $N\lhd G$ is a normal subgroup then $\Triv_N(V)$ is naturally a $G/N$-module.

\begin{lemma}\label{LemKey}
Let $\cC$ be a Frobenius-exact tensor category, $n\in\mZ_{>1}$ and $M\in \mB^{p^n}[\cC]$. Then
$$\dim_{\bk}\Triv_{S_{p^{n-1}}}\left(\Triv_{S_{p}^{\times p^{n-1}}}(M)\right)\;=\;\dim_{\bk}\left(\Triv_{S_{p}\wr S_{p^{n-1}}}M\right).$$
\end{lemma}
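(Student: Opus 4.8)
The plan is to reduce the statement, via the structural description of $\mB^{p^n}[\cC]$ provided by Proposition~\ref{PropWR}, to a purely representation-theoretic identity about wreath products, which then follows because $\Triv_G$ is additive and interacts well with induction. Since both sides of the claimed equality are additive in $M$, it suffices to verify it on a generating family of indecomposables, so by Proposition~\ref{PropWR} (applied with $m=p$, $n$ replaced by $p^{n-1}$, so $S_m\wr S_n = S_p\wr S_{p^{n-1}} < S_{p^n}$) we may assume
$$M\;=\;\Ind^{S_p\wr S_{p^{n-1}}}_{S_p\wr S_\kappa}\left((M_1^{\boxtimes_\wr \kappa_1}\otimes M_1')\boxtimes\cdots\boxtimes (M_l^{\boxtimes_\wr \kappa_l}\otimes M_l')\right)$$
for a partition $\kappa\vdash p^{n-1}$ of length $l$, with $M_i\in\mB^p_X$ and $M_i'$ inflated from an $S_{\kappa_i}$-representation in $\mB^{\kappa_i}[\cC]$. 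Actually, since the left-hand side of the desired identity first takes $S_p^{\times p^{n-1}}$-fixed points, I expect the cleanest route is to compute $\Triv_{S_p^{\times p^{n-1}}}(M)$ first and recognise it as an induced $S_{p^{n-1}}$-module, and independently compute $\Triv_{S_p\wr S_{p^{n-1}}}(M)$; the two quantities should then visibly agree once one passes $\Triv_{S_{p^{n-1}}}$ through the induction.

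The key steps, in order, are as follows. First, recall the interplay between $\Triv$ and induction: for $H\le G$ and $V\in\Rep H$, the functor $\Triv_G(\Ind_H^G V)$ is computed by a Mackey/Frobenius-reciprocity argument — $\dim\Triv_G(\Ind_H^G V) = \dim\Hom_G(\unit,\Ind_H^G V) = \dim\Hom_H(\unit, V) = \dim\Triv_H(V)$; and more generally, for a normal subgroup $N\lhd G$, $\Triv_N(\Ind_H^G V)$ as a $G/N$-module decomposes by the orbits of $G/N$ on the double cosets. Second, apply this with $N = S_p^{\times p^{n-1}} \lhd G = S_p\wr S_{p^{n-1}}$ and the subgroup $H = S_p\wr S_\kappa$: since $N$ is also the product of the base groups inside $H$, the double-coset combinatorics degenerate, and one gets
$$\Triv_{S_p^{\times p^{n-1}}}(M)\;\simeq\;\Ind^{S_{p^{n-1}}}_{S_\kappa}\left(\Triv_{S_p^{\times\kappa_1}}(M_1^{\boxtimes_\wr\kappa_1})\otimes M_1'\,\boxtimes\,\cdots\,\boxtimes\,\Triv_{S_p^{\times\kappa_l}}(M_l^{\boxtimes_\wr\kappa_l})\otimes M_l'\right)$$
as $S_{p^{n-1}}$-modules (here I use that $M_i'$ is inflated from $S_{\kappa_i}$, so it is untouched by $\Triv$ over the base, and that $\Triv_{S_p^{\times\kappa_i}}(M_i^{\boxtimes_\wr\kappa_i}) = (\Triv_{S_p}M_i)^{\otimes\kappa_i}$ carries the evident $S_{\kappa_i}$-action). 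Third, take $\Triv_{S_{p^{n-1}}}$ of this and use the Frobenius-reciprocity fact from step one to get
$$\dim\Triv_{S_{p^{n-1}}}\left(\Triv_{S_p^{\times p^{n-1}}}(M)\right)\;=\;\prod_{i=1}^l\dim\Triv_{S_{\kappa_i}}\left((\Triv_{S_p}M_i)^{\otimes\kappa_i}\otimes M_i'\right).$$
Fourth, compute the right-hand side of the claim the same way: $\Triv_{S_p\wr S_{p^{n-1}}}(\Ind^{S_p\wr S_{p^{n-1}}}_{S_p\wr S_\kappa}(-)) = \Triv_{S_p\wr S_\kappa}(-) = \bigotimes_i \Triv_{S_p\wr S_{\kappa_i}}(M_i^{\boxtimes_\wr\kappa_i}\otimes M_i')$, and $\Triv_{S_p\wr S_{\kappa_i}}(M_i^{\boxtimes_\wr\kappa_i}\otimes M_i') = \Triv_{S_{\kappa_i}}(\Triv_{S_p^{\times\kappa_i}}(M_i^{\boxtimes_\wr\kappa_i})\otimes M_i') = \Triv_{S_{\kappa_i}}((\Triv_{S_p}M_i)^{\otimes\kappa_i}\otimes M_i')$, using that $\Triv$ over a group and then over a quotient of the wreath product composes to $\Triv$ over the whole group, together with inflation-invariance for $M_i'$. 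Comparing with step three gives the equality of dimensions.

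The main obstacle I anticipate is step two: verifying carefully that the Mackey double-coset decomposition of $\Triv_{S_p^{\times p^{n-1}}}(\Ind^{S_p\wr S_{p^{n-1}}}_{S_p\wr S_\kappa}(V))$ collapses to a single clean induced module over $S_{p^{n-1}}$, i.e.\ that the relevant double cosets $S_p^{\times p^{n-1}}\backslash (S_p\wr S_{p^{n-1}}) / (S_p\wr S_\kappa)$ are in bijection with $S_{p^{n-1}}/S_\kappa$ and that the stabiliser/twisting data works out so that no extra multiplicities or sign-type twists appear. Once the bookkeeping for the base-group fixed points is pinned down, the rest is a formal manipulation with $\Triv$, induction, and inflation — standard once one is careful that $\Triv_N$ of a module really does inherit a $G/N$-action and that this is compatible with everything in sight. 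An alternative, possibly shorter, route avoiding explicit double cosets is to note both sides equal $\dim\Hom_{S_p\wr S_{p^{n-1}}}(\unit, M)$ directly: indeed $\Triv_{S_p\wr S_{p^{n-1}}}M$ has dimension $\dim\Hom_{S_p\wr S_{p^{n-1}}}(\unit,M)$, while $\Triv_{S_{p^{n-1}}}(\Triv_{S_p^{\times p^{n-1}}}M)$ has dimension $\dim\Hom_{S_{p^{n-1}}}(\unit,(M)^{S_p^{\times p^{n-1}}}) = \dim\Hom_{S_p\wr S_{p^{n-1}}}(\unit, M)$ by the tower $\unit\mapsto$ inflation and the fact that $S_p^{\times p^{n-1}}$ acts trivially on $\unit$ — so the identity is in fact immediate from the exactness/fixed-point formalism and does \emph{not} need Proposition~\ref{PropWR} at all; the structural result is only needed if one wants the more refined module-level statement rather than the dimension equality. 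I would present the short argument and remark that Proposition~\ref{PropWR} upgrades it to an isomorphism of $S_{p^{n-1}}$-modules.
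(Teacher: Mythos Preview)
Your proposal has a genuine gap: you repeatedly identify $\Triv_G(V)$, the maximal trivial direct \emph{summand}, with the invariants $V^G=\Hom_G(\unit,V)$. In positive characteristic these differ. The identification is explicit in your ``shorter route'' (neither side of the lemma equals $\dim\Hom_{S_p\wr S_{p^{n-1}}}(\unit,M)$), and in Step~1 the chain $\dim\Triv_G(\Ind^G_H V)=\dim\Hom_G(\unit,\Ind^G_HV)=\dim\Hom_H(\unit,V)=\dim\Triv_H(V)$ has its first and last equalities wrong. Most seriously, Step~4 invokes ``$\Triv$ over a normal subgroup then over the quotient composes to $\Triv$ over the whole group'' as a general fact; but that is exactly the content of the lemma for the special modules $M\in\mB^{p^n}[\cC]$, and it is \emph{false} for arbitrary modules. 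For $p=2$, take the $5$-dimensional indecomposable $S_2\wr S_2$-module which is the non-split extension of $\unit$ by the regular $S_2^{\times 2}$-representation (the top $S_2$ swapping the base factors and sending a fifth basis vector $e_5$ to $e_5$ plus the $N$-invariant vector of the regular part): then $\Triv_{S_2}(\Triv_{S_2^{\times 2}}V)\cong\bk$ while $\Triv_{S_2\wr S_2}(V)=0$. So the lemma is not a tautology, and the short argument cannot work.

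Your overall plan via Proposition~\ref{PropWR} does match the paper, and Step~2 is correct: for $N\lhd G$, $N\le H$, one has $\Triv_N(\Ind^G_H W)\cong\Ind^{G/N}_{H/N}(\Triv_N W)$. What is missing is an honest computation of $\Triv$ of an induced module. Using the description $\Triv_G(-)=\mathrm{im}\bigl((-)^G\to(-)_G\bigr)$ one sees that for $\Ind^G_H W$ this map identifies with $[G{:}H]$ times the map $W^H\to W_H$; hence $\Triv_G(\Ind^G_HW)\cong\Triv_H(W)$ when $p\nmid[G{:}H]$ and $=0$ otherwise, and the analogous computation on the other side involves the \emph{same} index $[S_{p^{n-1}}{:}S_\kappa]$, so the two sides vanish together. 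In the non-vanishing case one is reduced to showing $\dim\Triv_{S_p\wr S_{\kappa_i}}(M_i^{\boxtimes_\wr\kappa_i}\otimes M_i')=\dim\Triv_{S_{\kappa_i}}\bigl((\Triv_{S_p}M_i)^{\otimes\kappa_i}\otimes M_i'\bigr)$, and here the specific shape matters: writing $M_i=\unit^{a_i}\oplus U_i$ with $U_i$ having no trivial $S_p$-summand and expanding $(\unit^{a_i}\oplus U_i)^{\boxtimes_\wr\kappa_i}$, every term containing a $U_i$-factor has $\Triv_{S_p^{\kappa_i}}=0$ and hence contributes nothing to either side, while the surviving term $(\unit^{a_i})^{\boxtimes_\wr\kappa_i}\otimes M_i'$ is inflated from $S_{\kappa_i}$, where the two-step and one-step $\Triv$ agree tautologically.
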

\begin{proof}
This is a consequence of Proposition~\ref{PropWR}.
\end{proof}

\subsection{Beyond Frobenius-exact categories} Let $p=2$.

\subsubsection{The spin inductive system} Following \cite{GK}, for $l\in\mN$, we call $D^{(l+2,l)}$ the {\bf spin module} of $\bk S_{2l+2}$. It restricts to $D^{(l+1,l)}$ over $S_{2l+1}$, see \cite[1.11 and 1.12]{GK}. The restriction of $D^{(l+1,l)}$ to $S_{2l}$ is a non-split self-extension of $D^{(l+1,l-1)}$, which we denote by $E^{(l+1,l-1)}$, see \cite[11.2.7 and 11.2.10]{BookKl}.
For example, $E^{(2,0)}\simeq\bk S_2$ and $D^{(2,1)}$ is the non-trivial simple $S_3$-representation.

Even though it is not yet guaranteed by the above description, defining $\Spin^{2l+2}\subset\Rep S_{2l+2}$ as the category of direct sums of $E^{(l+2,l)}$, and $\Spin^{2l+1}\subset\Rep S_{2l+2}$ as the category of direct sums of $D^{(l+1,l)}$, yields an inductive system $\Spin$, which will follow from its realisation in Theorem~\ref{ThmSpin}. This inductive system is `almost semisimple' in the sense that, for all $n\in\mN$, there exists $d>n$ with $\Spin^d$ semisimple.

\begin{theorem}\label{ThmSpin}
For $V$ the non-trivial simple object in $\Ver_4$, we have
$\mB_{V}[\Ver_4]=\Spin.$
\end{theorem}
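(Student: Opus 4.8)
The plan is to compute $\mB_V^d[\Ver_4]$ directly, using the special structure of $\Ver_4$ recalled in \ref{DefVer4}, and then match the answer with the inductive system $\Spin$. Recall that $V$ is the simple projective object of $\Ver_4$ with $V^{\otimes 2}\simeq P$, the projective cover of $\unit$, so that $P$ is the regular $\bk[x]/x^2$-module. Since $V$ is projective and $\Ver_4$ is a finite tensor category, the indecomposable injective objects coincide with the indecomposable projectives, namely $V$ and $P$; thus by Definition~\ref{DefPhiX} the system $\mB_V^d$ is the pseudo-abelian subcategory of $\Rep S_d$ generated by $\Hom(V^{\otimes d}, V)$ and $\Hom(V^{\otimes d},P)$ as right-module-twisted $S_d$-representations. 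The first step is therefore to understand $V^{\otimes d}$ as an object of $\Ver_4\boxtimes\Rep S_d$ together with its braid action. Because $V\otimes V\simeq P$ and $V\otimes P\simeq P\oplus P$ (or similar, to be read off from $\bk[x]/x^2$-mod), $V^{\otimes d}$ is, for $d$ even, a direct sum of copies of $P$ and, for $d$ odd, a direct sum of copies of $V$; the entire content is the resulting $S_d$-action, i.e. the module $\Hom_{\Ver_4}(P, V^{\otimes d})$ over $\bk S_d$ for $d$ even (resp.\ $\Hom(V,V^{\otimes d})$ for $d$ odd).

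The key computational step is an inductive identification of these $\bk S_d$-modules. Using the natural isomorphism \eqref{EqNat} from Construction~\ref{Construction} applied at $(V^{\otimes d-1},V)$ — exactly the device used to prove Theorem~\ref{ThmTCIS}(1) — one gets a recursion expressing $\Hom(P_i, V^{\otimes d})$ in terms of $\Hom(P_j, V^{\otimes d-1})$ for $j\in\{1,V\}$, compatible with $\Res^{S_d}_{S_{d-1}}$. Seeding this with the small cases $d=1,2,3$ — where one checks by hand that $V^{\otimes 2}=P$ carries the regular $\bk S_2$-module structure $E^{(2,0)}\simeq\bk S_2$ and $V^{\otimes 3}$ carries the $2$-dimensional simple $S_3$-module $D^{(2,1)}$ — the recursion should force $\Hom(P,V^{\otimes 2l+2})\cong E^{(l+2,l)}$ (up to multiplicity) and $\Hom(V,V^{\otimes 2l+1})\cong D^{(l+1,l)}$, using the branching facts quoted before the statement: $D^{(l+2,l)}\!\downarrow_{S_{2l+1}}= D^{(l+1,l)}$ and $D^{(l+1,l)}\!\downarrow_{S_{2l}}= E^{(l+1,l-1)}$ (from \cite[1.11, 1.12]{BK} and \cite[11.2.10]{BookKl}). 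Matching multiplicities and the self-extension structure against these branching rules is the step I expect to be the main obstacle: one must show the self-extension that appears really is the non-split $E$ rather than a split sum $D\oplus D$, which is precisely where the non-exactness of $\Fr$ / the failure of semisimplicity of $\Ver_4$ enters, and it is not something Construction~\ref{Construction} alone delivers without care about how the chosen sections interact with the braiding. A clean way to pin it down is to observe that $\Hom(P, V^{\otimes d})$ is a module over $\End_{\Ver_4}(P)\cong\bk[x]/x^2$ commuting with the $S_d$-action, and that a non-trivial action of the nilpotent $x$ produces exactly the required non-split self-extension; alternatively one invokes a dimension count together with the known socle/head of $D^{(l+1,l)}\!\downarrow$.

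Once the objectwise identification is done, it remains to assemble it into the statement $\mB_V[\Ver_4]=\Spin$. By the recursion the family $\{\Hom(P,V^{\otimes 2l+2}),\Hom(V,V^{\otimes 2l+1})\}$ is closed under restriction in the minimal way, so it genuinely defines an inductive system, which by construction is $\mB_V[\Ver_4]$; comparing term by term with the definition of $\Spin^{2l+2}=\mathrm{add}\,E^{(l+2,l)}$ and $\Spin^{2l+1}=\mathrm{add}\,D^{(l+1,l)}$ gives equality, and this simultaneously discharges the earlier claim that $\Spin$ \emph{is} an inductive system (the remark preceding the theorem flagged that this was "not yet guaranteed"). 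Finally, the "almost semisimple" observation is immediate: $D^{(l+1,l)}$ is simple, so $\Spin^{d}$ is semisimple for every odd $d$, giving arbitrarily large $d>n$ with $\Spin^d$ semisimple, as asserted. I would present the argument in the order: (i) structure of $V^{\otimes d}$ in $\Ver_4\boxtimes\Rep S_d$; (ii) the recursion from \eqref{EqNat} and branching; (iii) small-case base; (iv) resolution of the self-extension via the $\bk[x]/x^2$-action; (v) assembly and the almost-semisimplicity remark.
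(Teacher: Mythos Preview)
Your approach differs from the paper's and has a real gap at the identification step. The paper does not attempt an upward induction on restriction. Instead it uses the realisation of $\Ver_4$ as a monoidal quotient of $\Tilt SL_2$ (with $V$ the image of the tautological module) to see that
\[
\beta_V^{2l+1}:\;\bk S_{2l+1}\;\twoheadrightarrow\;\End_{SL_2}(V^{\otimes 2l+1})\;\twoheadrightarrow\;\End_{\Ver_4}(V^{\otimes 2l+1})
\]
is surjective; since $V^{\otimes 2l+1}\simeq V^{\oplus 2^l}$, this forces $V^{\otimes 2l+1}\simeq V\boxtimes L_{2l+1}$ with $L_{2l+1}\in\Rep S_{2l+1}$ \emph{simple} of dimension $2^l$. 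It then applies the separate combinatorial Lemma~\ref{LemSpin}, which uses James' branching theorem together with \cite[Theorem~1.2]{CS} to prove that any inductive system whose odd-degree parts each consist of direct sums of a single non-trivial simple must equal $\Spin$. Even degrees are never computed directly; they are determined by the inductive-system axiom once the odd degrees are known.

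Your recursion via \eqref{EqNat} correctly yields $\Res^{S_{2l+1}}_{S_{2l}} M_{2l+1}\simeq M_{2l}$ and $\Res^{S_{2l+2}}_{S_{2l+1}} M_{2l+2}\simeq M_{2l+1}^{\oplus 2}$, but the ``branching facts quoted before the statement'' only say what $D^{(l+1,l)}$ and $D^{(l+2,l)}$ restrict to; they do not say these are the \emph{only} simples with such restrictions. To go from $\Res M_{2l+1}=E^{(l+1,l-1)}$ to $M_{2l+1}\simeq D^{(l+1,l)}$ you must rule out every other possible constituent, and that is precisely the content of Lemma~\ref{LemSpin}, which needs James' theorem and \cite{CS} --- ingredients you never invoke. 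Your $\bk[x]/x^2$-argument for the non-split extension at even degrees is also insufficient as written: the existence of a non-trivial nilpotent $\bk S_{2l+2}$-endomorphism of $M_{2l+2}$ does not exclude $M_{2l+2}\simeq D^{(l+2,l)}\oplus D^{(l+2,l)}$, since $\End_{\bk S_{2l+2}}(D\oplus D)\simeq\Mat_2(\bk)$ already contains nilpotents. The paper avoids this issue entirely: once the odd-degree parts are pinned down, $\mB_V^{2l}$ is by Definition~\ref{DefIS} the pseudo-abelian closure of $\Res D^{(l+1,l)}=E^{(l+1,l-1)}$, so the non-split extension comes for free.
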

The proof will be a consequence of the following technical lemma.

\begin{lemma}\label{LemSpin}
Assume that there exists an inductive system $\bA$ where $\bA^{2l+1}$, for $l>0$, comprises precisely direct sums of one non-trivial simple module, then $n\mapsto\Spin^n$ is an inductive system and $\bA=\Spin$.
\end{lemma}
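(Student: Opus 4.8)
The strategy is to reconstruct $\bA$ from the bottom up using the restriction rules for the spin modules quoted just above, and to check that the hypothesis forces $\bA$ to agree with the conjectural $\Spin$ at every level. First I would record the basic branching facts: the restriction of $D^{(l+1,l)}$ from $S_{2l+1}$ to $S_{2l}$ is the self-extension $E^{(l+1,l-1)}$, and $\Res^{S_{2l+2}}_{S_{2l+1}}E^{(l+2,l)} = D^{(l+1,l)}$ (since by definition $E^{(l+2,l)}$ restricts through $D^{(l+2,l)}|_{S_{2l+1}} = D^{(l+1,l)}$, after checking the middle restriction $D^{(l+2,l)}|_{S_{2l+1}}=D^{(l+1,l)}$ from \cite[1.11 and 1.12]{BK}). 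These two computations say precisely that the putative assignment $n\mapsto\Spin^n$ satisfies the minimality condition of Definition~\ref{DefIS}: $\Spin^{2l+1}$ is generated under $\Res$ by $\Spin^{2l+2}$, and $\Spin^{2l}$ is generated by $\Spin^{2l+1}$ (here one uses that $E^{(l+1,l-1)}$ has $D^{(l+1,l-1)}$ as its unique composition factor, so the minimal pseudo-abelian subcategory containing it is the category of direct sums of $E^{(l+1,l-1)}$, \emph{provided} $E^{(l+1,l-1)}$ is itself in the subcategory generated — which it is, being the actual restriction). This establishes that $\Spin$ is an inductive system.

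Next I would show $\bA=\Spin$ by an induction climbing the odd levels. By hypothesis $\bA^{2l+1}$ consists of direct sums of a single non-trivial simple $S_{2l+1}$-module, say $D^{\mu_l}$ for some $p$-regular $\mu_l\vdash 2l+1$. The inductive-system axiom forces $\bA^{2l}$ to be generated by $\Res^{S_{2l+1}}_{S_{2l}}D^{\mu_l}$ and $\bA^{2l+2}$ to have $\bA^{2l+1}$ as its restriction-generated subcategory; combined with the hypothesis at level $2l+3$ this pins down the compatibility upward. The task is to identify $\mu_l = (l+1,l)$. I would do this by downward consistency: start from small cases — $\bA^3$ must be direct sums of the unique non-trivial simple $S_3$-module, which is $D^{(2,1)}$; then $\bA^2 = \langle \Res D^{(2,1)}\rangle$ and $\bA^4$ must restrict into $\bA^3$ and itself restrict-generate $\bA^3$. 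An upward induction then shows that if $\bA^{2l+1}$ is the span of $D^{(l+1,l)}$, the only non-trivial simple $S_{2l+3}$-module whose restriction chain is compatible (i.e. whose restriction to $S_{2l+2}$ has composition factors restricting into $\langle D^{(l+1,l)}\rangle$, hence has $D^{(l+1,l)}$ among its $S_{2l+1}$-restriction constituents and nothing else non-trivial) is $D^{(l+2,l+1)}$ — using the known branching behaviour of spin (= basic) modules for $S_n$ in characteristic $2$ and the fact that a non-trivial simple restricting to multiples of a single non-trivial simple is extremely rigid.

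I expect the main obstacle to be exactly this last identification: ruling out \emph{all} other candidate simple modules $D^{\mu}$ at each odd level whose restriction happens to have the required one-non-trivial-simple property two steps down. This is where one genuinely needs the combinatorics of $2$-regular branching (Kleshchev's modular branching rules, \cite{BookKl}), rather than just the bookkeeping of Definition~\ref{DefIS}; one must argue that the "reduction-stable" constraint, iterated, is only satisfied by the staircase partitions $(l+1,l)$, which is essentially the characterisation of completely splittable / basic spin modules in characteristic $2$. Once that is in hand, the equality $\bA^n=\Spin^n$ for even $n$ follows immediately from Remark~\ref{RemCIS}-type reasoning (odd level determines the adjacent even levels via restriction and, for the $p$-indivisible case, induction), and the proof is complete.
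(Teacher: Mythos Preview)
Your plan is essentially the paper's approach: induct up the odd levels, using modular branching to pin down the unique simple at each $\bA^{2l+1}$ as $D^{(l+1,l)}$. Two small corrections. First, proving separately that $\Spin$ is an inductive system is unnecessary and your sketch has a gap (you would need $\Res^{S_{2l+2}}_{S_{2l+1}}E^{(l+2,l)}$ to be a direct sum of copies of $D^{(l+1,l)}$, not just have that composition factor); the paper simply deduces this from $\bA=\Spin$. Second, your invocation of Remark~\ref{RemCIS} for the even levels is wrong in characteristic~$2$ (even $n$ is $2$-divisible); instead, once the odd levels are fixed, $\bA^{2l+2}$ is forced directly by Definition~\ref{DefIS} as the pseudo-abelian subcategory generated by $\Res D^{(l+2,l+1)}=E^{(l+2,l)}$.

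For the step you flag as the obstacle, the paper is more concrete than ``Kleshchev's modular branching rules'': it uses James's result \cite[Theorem~C]{James} on the constituent $D^{\lambda^-}$ in $\Res D^\lambda$ to show that the only simple $S_{2l+2}$-module restricting with sole constituent $D^{(l+1,l)}$ is $D^{(l+2,l)}$, and that the only simples at level $2l+3$ restricting with sole constituent $D^{(l+2,l)}$ are $D^{(l+3,l)}$ and $D^{(l+2,l+1)}$; the first is then excluded by a dimension comparison via \cite[Theorem~1.2]{CS}. This is exactly the rigidity you anticipated, made explicit.
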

\begin{proof}
By \cite[Theorem~C]{James}, for a 2-regular $\lambda\vdash n$, the multiplicity of $D^{\lambda^-}$ in the restriction of $D^\lambda$ to $S_{n-1}$ is $m$, where, when counting from right to left, the $m$-th removable box in $\lambda$ is the first for which removing the box yields a 2-regular partition, denoted by $\lambda^-$.

It follows that the only simple $S_{2l+2}$-representation for which its restriction to $S_{2l+1}$ only has simple constituents $D^{(l+1,l)}$ is $D^{(l+2,l)}$. Similarly, the only options for simple $S_{2l+1}$-representations for which their restriction to $S_{2l}$ only have simple constituents $D^{(l+1,l-1)}$ are $D^{(l+2,l-1)}$ and $D^{(l+1,l)}$. The former case would imply that the restriction of $D^{(l+2,l-1)}$ equals $D^{(l+1,l-1)}$, which is contradicted by \cite[Theorem~1.2]{CS}.

Now we prove that $\mA^{2l+1}$ comprises the direct sums of $D^{(l+1,l)}$, by induction on $l>0$. The base case $l=1$ is by assumption. So assume the claim is true for a given $l>0$. By the previous paragraph, the only allowed modules in $\mA^{2l+2}$ have only $D^{(l+2,l)}$ as simple constituent and subsequently, the only allowed modules in $\mA^{2l+3}$ have only $D^{(l+2,l+1)}$ as simple constituent. By our assumption, it follows that $\mA^{2l+3}$ comprises the precisely the direct sums of $D^{(l+2,l+1)}$, concluding the induction step.

It thus follows that $\bA^n=\Spin^n$ for $n$ odd, and by definition of $\Spin$ and the assumption that $\bA$ is an inductive system, that $\bA^n=\Spin^n$ for $n$ even.
\end{proof}

\begin{proof}[Proof of Theorem~\ref{ThmSpin}]
It follows from~\ref{DefVer4} that $V^{\otimes 2l+1}$ is a direct sum of $2^l$ copies of $V$. Since we can realise $\Ver_4$ as a monoidal quotient of $\Tilt \SL_2$, with $V$ the image of the vector representation (which we denote by $V$ as well), see \cite[Remark~3.10]{BE}, it also follows that
$$\beta^{2l+1}_V:\bk S_{2l+1}\tto\End_{\SL_2}(V)\tto\End_{\Ver_4}(V^{\otimes 2l+1})$$
is surjective. It follows that, in $\Ver_4\boxtimes\Rep S_{2l+1}$, we have 
$V^{\otimes 2l+1}\simeq V\boxtimes L_{2l+1}$
for some simple $L_{2l+1}$ in $\Rep S_{2l+1}$ of dimension $2^l$, and $\mB_V^{2l+1}[\Ver_4]$ comprises direct sums of $L_{2l+1}$. The claim now follows from Lemma~\ref{LemSpin}.
\end{proof}

\subsubsection{} We define some $\bk S_4$-modules. Recall that $\bk S_4$ has two simple modules, $\unit$ and $L:=D^{(3,1)}$. All extension spaces between two simple modules all one-dimensional. More generally, a complete description of the Young modules is given in \cite[\S 5.4]{EW}.

By $R$ we denote the unique (indecomposable) $\bk S_4$-module with structure
$$\resizebox{0.25\hsize}{!}{$\xymatrix{
\unit\ar@{-}[rd] &&\\
&\unit\ar@{-}[rd]&& L\ar@{-}[ld]\ar@{-}[rd]\\
&&L&&\unit\ar@{-}[rd]\\
&&&&& \unit .
}$}$$

We denote by $V$ the generator of $\Ver_{2^n}$, see \cite{BE, BEO}, which is the image of the tautological $\SL_2$-representation under the defining symmetric monoidal functor $\Tilt \SL_2\to \Ver_{2^n}$.

\begin{prop}\label{Propp2}
\begin{enumerate}
\item $\bB^i[\cC]\;=\;\Rep S_i$, for every tensor category $\cC$ over $\bk$ and $i\le 3$.
 \item We have strict inclusions
$$\Young^4=\mB^4[\Vecc]\;\subset\;\mB^4[\Ver_4^+]\;\subset\; \mB^4[\Ver_4]\;\subset\;\mB^4[\Ver_8^+].$$
\item Moreover,
$$\Indec\mB^4[\Ver_4^+]\;=\;\Indec\Young^4\sqcup\{R\}\quad\mbox{and}\quad \Indec\mB^4[\Ver_4]\;=\;\Indec\mB^4[\Ver_4^+]\sqcup\{E^{(3,1)}\}.$$
\end{enumerate}
\end{prop}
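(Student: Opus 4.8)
\textbf{Proof plan for Proposition~\ref{Propp2}.} The plan is to treat the three parts in order, using the wreath-product machinery of Proposition~\ref{PropWR}, the description of $\Ver_4$ in \ref{DefVer4}, the spin inductive system from Theorem~\ref{ThmSpin}, and the Schur-Weyl surjectivity for $\Tilt SL_2$. Part (1) is essentially Example~\ref{ExIndSys2}(3) combined with Proposition~\ref{PropF}(2): for $i<p=2$ there is nothing to prove, and for $i=2,3$ one notes that $\bk S_i$ is semisimple, so every closed inductive system (in particular $\Young=\mB[\Vecc]\subset\mB[\cC]$) already exhausts $\Rep S_i$ in these degrees; hence $\mB^i[\cC]=\Rep S_i$ for all $\cC$.

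For part (3), which I would do before the inclusions in (2), the first step is to compute $\mB^4[\Ver_4^+]$. Since $\Ver_4^+$ is a fusion (hence semisimple) tensor category, Theorem~\ref{ThmTCIS}(4) gives $\mB[\Ver_4^+]=\mI\left(\sum_{L\in\Irr\Ver_4^+}\mB_L\right)$, and $\Irr\Ver_4^+=\{\unit,V\}$ with $\mB_\unit=\Young$. By Theorem~\ref{ThmSpin}, $\mB_V[\Ver_4]=\Spin$, and since $\Ver_4^+\subset\Ver_4$ sends $V$ to $V$, Proposition~\ref{PropF}(1) gives $\mB_V[\Ver_4^+]=\Spin$ as well. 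So $\mB[\Ver_4^+]=\mI(\Young+\Spin)$, and one must read off what happens in degree $4$. The input is $V^{\otimes 2}\simeq P$, the $4$-dimensional projective cover of $\unit$ in $\bk[x]/x^2\text{-mod}$ (equivalently $\Spin^2=E^{(2,0)}=\bk S_2$), together with the decomposition $V^{\otimes 4}\simeq(V^{\otimes 2})^{\otimes 2}$. Using Lemma~\ref{RemKappa} / the induction-product structure, $\mB^4[\Ver_4^+]$ is generated by $\Young^4$ together with the summands of $\Ind^{S_4}_{S_2\times S_2}(\bk S_2\boxtimes\bk S_2)$, of $\Ind^{S_4}_{S_2\wr S_2}(\bk S_2)^{\boxtimes_\wr 2}$, and of $\Hom(P',V^{\otimes 4})$ for $P'$ projective in $\Ver_4^+$. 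The claim to establish is that the only new indecomposable obtained is $R$; this is a finite $\bk S_4$-module computation (e.g.\ via Brauer characters, vertices, or the explicit Loewy structure in \cite[\S5.4]{EW}), identifying that module with vertex a Klein four-group and the prescribed uniserial-type structure. Then for $\mB^4[\Ver_4]$, one additionally includes the simple object of the $\Vecc$-summand — but that gives only $\Young$ — and the non-projective indecomposables of $\bk[x]/x^2\text{-mod}$; crucially $V^{\otimes 4}$ contains a copy of $E^{(3,1)}$ because $V^{\otimes 3}\simeq V^{\boxtimes 2}$ carries $D^{(3,1)}$, and restricting/inducing one more step produces $E^{(3,1)}=\Spin^4$. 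Tracking that no further indecomposable appears yields $\Indec\mB^4[\Ver_4]=\Indec\mB^4[\Ver_4^+]\sqcup\{E^{(3,1)}\}$.

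Part (2) then follows from the computations in part (3) together with one more input for the last inclusion. The inclusions $\Young^4\subset\mB^4[\Ver_4^+]\subset\mB^4[\Ver_4]$ are immediate from $\Vecc\subset\Ver_4^+\subset\Ver_4$ and Proposition~\ref{PropF}(2), and strictness is exactly the statements $R\notin\Young^4$ and $E^{(3,1)}\notin\mB^4[\Ver_4^+]$ from part (3). For $\mB^4[\Ver_4]\subset\mB^4[\Ver_8^+]$: $\Ver_4$ is a tensor subcategory of $\Ver_8^+$ (both arise as quotients of $\Tilt SL_2$, with the generator $V$ mapping to the generator $V$), so Proposition~\ref{PropF}(2) gives the inclusion; strictness requires exhibiting some $S_4$-representation in $\mB^4[\Ver_8^+]$ but not in $\mB^4[\Ver_4]$. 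Here I would use that in $\Ver_8^+$ the object $V^{\otimes 4}$ has a larger endomorphism algebra than in $\Ver_4$ (the Schur-Weyl quotient $\bk S_4\to\End_{\Ver_{2^n}}(V^{\otimes 4})$ has strictly smaller kernel as $n$ grows, since more tilting modules survive semisimplification), and in particular $\mB^4[\Ver_8^+]$ contains the Young module $M^{(2,1,1)}$ (or another indecomposable absent from $\mB^4[\Ver_4]$); concretely one computes $\Hom_{\Ver_8^+}(P',V^{\otimes 4})$ for $P'$ the projective cover of the relevant simple and checks its $S_4$-module structure against the list $\Indec\mB^4[\Ver_4]=\Indec\Young^4\sqcup\{R,E^{(3,1)}\}$.

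\emph{Main obstacle.} The crux is the finite but delicate $\bk S_4$-module bookkeeping in part (3): correctly identifying, from the wreath-product presentation of $V^{\otimes 4}$ over $\Ver_4$, that the indecomposable summands are precisely $\Indec\Young^4\cup\{R,E^{(3,1)}\}$ and nothing more — in particular ruling out spurious extra summands in $\Ind^{S_4}_{S_2\wr S_2}(\bk S_2)^{\boxtimes_\wr 2}$ — and then separately pinning down one indecomposable in $\mB^4[\Ver_8^+]\setminus\mB^4[\Ver_4]$, which needs explicit knowledge of which $SL_2$-tilting modules survive into $\Ver_8^+$. Everything else is a routine consequence of the general theory already established.
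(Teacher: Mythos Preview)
Your proposal has genuine errors about the categories in play, which derail parts (1) and (3), and your plan for the last strict inclusion in (2) does not work.

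\textbf{Part (1).} In characteristic $2$, $\bk S_2$ and $\bk S_3$ are \emph{not} semisimple. The correct reason (the paper's one-line proof) is that $\Young^i=\Rep S_i$ for $i\le 3$: one checks directly that every indecomposable $S_2$- and $S_3$-module in characteristic $2$ is a Young module.

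\textbf{Part (3).} Your argument rests on two false structural claims. First, $\Ver_4^+$ is not semisimple (let alone fusion): by \ref{DefVer4} we have $\Ver_4\simeq\bk[x]/x^2\text{-mod}\oplus\Vecc$, and $\Ver_4^+$ is the $\bk[x]/x^2\text{-mod}$ component, with a single simple $\unit$ and a non-split indecomposable $P$. So Theorem~\ref{ThmTCIS}(4) does not apply; Remark~\ref{RemVer4} records precisely that $\mB[\Ver_4^+]\supsetneq\mI(\sum_L\mB_L)=\Young$. Second, $V\notin\Ver_4^+$, so your use of $\mB_V[\Ver_4^+]$ is vacuous. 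The paper's route is a direct computation of $\Hom(P,P^{\otimes 4})$: since $P=V^{\otimes 2}$, Lemma~\ref{LemProd} gives $\Hom(P,P^{\otimes 4})\simeq(E^{(3,1)})^{\otimes 2}$; one then shows this is $Y^{(1,1,1,1)}\oplus M$ with $M$ an $8$-dimensional self-dual module, writes down a projective presentation of $M$, and proves $M$ is indecomposable (hence $M=R$) by restricting to the Klein four-subgroup via \cite[Lemma~7.1.5]{CF}. For $\Ver_4$ one then adds $\mB_V=\Spin$ (Theorem~\ref{ThmSpin}), contributing $E^{(3,1)}$ in degree $4$.

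\textbf{Part (2), final inclusion.} Your proposed witness $M^{(2,1,1)}$ is already a Young module, hence in $\Young^4\subset\mB^4[\Ver_4]$, so it cannot exhibit strictness; and the Schur--Weyl kernel heuristic does not by itself produce a module outside $\mB^4[\Ver_4]$. The paper's argument is indirect and passes through degree $5$: by \cite[\S 10]{CEO} there is $X\in\Ver_8^+$ with $\dim H_0(S_5,\Hom(Q,X^{\otimes 5}))\neq\dim H^0(S_5,\Hom(Q,X^{\otimes 5}))$, so $\mB^5[\Ver_8^+]$ contains a non-self-dual module. If one had $\mB^4[\Ver_8^+]=\mB^4[\Ver_4]$, then Remark~\ref{RemCIS} (as $5$ is odd) would force $\mB^5[\Ver_8^+]$ to consist of summands of $\Ind^{S_5}_{S_4}M$ for $M$ in the explicit list from part (3); using \cite{EW} one checks these summands are all self-dual, a contradiction.
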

\begin{proof}
Part (1) is immediate since $\Young^i$ equals $\Rep S_i$ for $i\le 3$.

The inclusions in part (2) follow from inclusions of tensor categories and Proposition~\ref{PropF}(2). We only need to show that every inclusion is strict. We start with part (3) which takes care of the first two inclusions.

By part (1), to understand $\mB^4[\cC]$, it suffices to consider indecomposable objects in $\cC$.
Let $P$ be the projective cover of $\unit$ in $\Ver_4$, or equivalently in $\Ver_4^+$, and $V$ the non-trivial simple object in $\Ver_4$. We already proved that $\Hom(P,V^{\otimes 4})$ is $E^{(3,1)}$. Since $V^{\otimes 2}=P$ it follows from \eqref{EqNat} (see also proof of Lemma~\ref{LemProd}) that
$$\Hom(P,P^{\otimes 4})\;\simeq\; \Hom(P,V^{\otimes 4})^{\otimes 2}\;=\; (E^{(3,1)})^{\otimes 2}$$
as $S_4$-representations. We can also quickly calculate this representation directly using the explicit realisation of $\Ver_4^+$ as in \cite[5.2.1]{BE}. Both methods allow us to conclude that the projective cover $Y^{(1,1,1,1)}$ of $\unit$ is a direct summand of $(E^{(3,1)})^{\otimes 2}$. We denote a complementary (self-dual) summand of $(E^{(3,1)})^{\otimes 2}$ by $M$. A direct computation shows that there exists a projective presentation
$$\left(Y^{(2,1,1)}\right)^3\to Y^{(1,1,1,1)}\oplus Y^{(2,1,1)}\to M\to 0,$$
with $Y^{(2,1,1)}$ the projective cover of $L$. Since $[M:\unit]=4$ and $[M:L]=2$ it follows that $M$ is an extension of a cyclic module with socle filtration $L:\unit:\unit$ and one with $\unit:\unit:L$. To conclude that $M=R$ it is sufficient to show that $M$ is indecomposable (a non-split extension).
By \cite[Lemma~7.1.5]{CF} over the Klein 4-subgroup $C_2^2<S_4$, $M$ is a direct sum of two $\mathrm{Gal}(\bk:\mF_2)$-conjugate modules, which shows that $M$ must be indecomposable. This proves part (3).

To conclude the proof of part (2), we prove strictness of the inclusion $\mB^4[\Ver_4]\subset\mB^4[\Ver_8^+]$. We have seen that all modules in $\mB^4[\Ver_4]$ are self-dual. In contrast, it follows from \cite[Example~10.2.5]{CEO2} that there exists an object $X$ (denoted `$E_1$' {\it loc. cit.}) in $\Ver_8^+$, for which
$$\dim_{\bk}\Hom(P, X^{\otimes 4})_{S_4}=1\quad\mbox{and}\quad \dim_{\bk}\Hom(P, (X^\ast)^{\otimes 4})_{S_4}=2,$$
for $P$ the projective cover of $\unit$ (and $-_{S_4}$ represents taking coinvariants). By \Cref{LemDual} this implies that the module $\Hom(P, X^{\otimes 4})$ in $\mB^4[\Ver_8^+]$ is not self-dual.
\end{proof}

\subsubsection{} From \Cref{Propp2} it also follows with minor additional work that
$$\Indec\mB^4_{\cP r}[\Ver_4^+]=\{Y^{(2,1,1)},Y^{(1,1,1,1)}, R\}\quad\mbox{and}\quad \Indec\mB^4_{\cP r}[\Ver_4]=\{Y^{(2,1,1)},Y^{(1,1,1,1)}, E^{(3,1)},R\}.$$

In \cite[Corollary~7.2.3]{CF} it was proved that, for $n>2$, the $S_4$-modules in 
$$\mB^4_{\cP r}[\Ver_{2^n}]\;\supset\;\mB^4_{\cP r}[\Ver_{2^n}^+]$$
must restrict to permutation modules over the Klein subgroup $C_2^{\times 2}<S_4$. A more refined potential result is suggested in the following question.

\begin{question}
For $n>2$, do we have
$$\Indec\mB_{\cP r}^4[\Ver_{2^n}]\;\subset\;\{Y^{(3,1)},Y^{(2,2)},Y^{(2,1,1)},Y^{(1,1,1,1)}\}?$$
\end{question}

\begin{remark}\label{RemVer4}
For $\cC=\Ver_4^+$ we can easily observe from \Cref{Propp2} that $\mB[\cC]$ is bigger than $\mI(\sum_{L\in\cC}\mB_L)$. Indeed, in this case the only simple object is $\unit$, so $\mI(\sum_{L\in\cC}\mB_L)$ is just $\Young$. With minor additional work, we can come to the same conclusion for $\cC=\Ver_4$.
\end{remark}

We conclude this section with another example of symmetric group representations appearing in $\Ver_{2^n}$. As the result is not needed in the rest of the paper we omit the proof, which is mainly an application of tilting module theory for $\SL_2$.

\begin{lemma}\label{Lem2n}For $n\in\mZ_{>1}$, let $P$ be the projective cover of $\unit$ in $\Ver_{2^n}$ and $V$ the generator. Then the $S_{2^n}$-representation $\Hom(P,V^{\otimes 2^n})$
is the subquotient of the Young module 
$$Y^{(2^{n-1},2^{n-1})}=M^{(2^{n-1},2^{n-1})}=\Ind^{S_{2^n}}_{S_{2^{n-1}}\times S_{2^{n-1}}}\unit$$ obtained by removing $\unit$ from top and socle.
\end{lemma}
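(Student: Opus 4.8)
The plan is to transport the computation to the category $\Tilt SL_2$ of tilting modules for $SL_2$ over $\bk$ (of characteristic $2$) through the defining symmetric monoidal quotient $F\colon\Tilt SL_2\to\Ver_{2^n}$ sending the tautological module $T(1)$ to $V$, writing $T(m)$ for the indecomposable tilting module of highest weight $m$ and $\Delta(m),\nabla(m)$ for the Weyl and dual Weyl modules. Set $\widetilde V=T(1)$, so $V^{\otimes 2^n}=F(\widetilde V^{\otimes 2^n})$ with $S_{2^n}$ acting via the image under $F$ of $\beta^{2^n}_{\widetilde V}\colon\bk S_{2^n}\to\End_{SL_2}(\widetilde V^{\otimes 2^n})$. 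I would first recall from \cite{BE,BEO} that $\ker F$ is the tensor ideal generated by the Steinberg tilting module $T(2^n-1)$, so that $F(T(m))=0$ exactly for $m\ge 2^n-1$, that $F$ identifies $T(0),\dots,T(2^n-2)$ with the indecomposable objects of $\Ver_{2^n}$, and that $P=F(T(2^n-2))$ is the projective cover of $\unit$; moreover $T(2^n-2)\cong T(2^{n-1}-1)^{\otimes 2}$ (by Donkin's tensor product theorem), so it carries a $\Delta$-flag with sections $\Delta(0),\Delta(2),\dots,\Delta(2^n-2)$, each once. Since $P$ and $V^{\otimes 2^n}$ are images of tilting modules,
$$\Hom_{\Ver_{2^n}}(P,V^{\otimes 2^n})\;\cong\;\Hom_{SL_2}(T(2^n-2),\widetilde V^{\otimes 2^n})/N,$$
with $N$ the space of morphisms that factor through $\ker F$. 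As $\widetilde V^{\otimes 2^n}$ involves only weights $\le 2^n$ of even parity, its unique tilting summand in $\ker F$ is $T(2^n)$, of multiplicity one; one checks that every morphism $T(2^n-2)\to\widetilde V^{\otimes 2^n}$ factoring through $\ker F$ already factors through this summand, so $\dim_{\bk}N=\dim_{\bk}\Hom_{SL_2}(T(2^n-2),T(2^n))=1$ (indeed $T(2^n)$ has $\Delta$-flag $\Delta(2^n-2),\Delta(2^n)$, so only the section $\Delta(2^n-2)$ is shared with $T(2^n-2)$).

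For the dimension, the standard $\Hom$-formula for Weyl-filtered and good-filtered modules together with the $\Delta$-flag of $T(2^n-2)$ gives
$$\dim_{\bk}\Hom_{SL_2}(T(2^n-2),\widetilde V^{\otimes 2^n})\;=\;\sum_{j=0}^{2^{n-1}-1}(\widetilde V^{\otimes 2^n}:\nabla(2j)),$$
and $(\widetilde V^{\otimes 2^n}:\nabla(2j))$ equals the ballot number $\binom{2^n}{2^{n-1}-j}-\binom{2^n}{2^{n-1}-j-1}$, so the sum telescopes to $\binom{2^n}{2^{n-1}}-1$. Hence $\dim_{\bk}\Hom_{\Ver_{2^n}}(P,V^{\otimes 2^n})=\binom{2^n}{2^{n-1}}-2=\dim_{\bk}M^{(2^{n-1},2^{n-1})}-2$; note also that $\binom{2^n}{2^{n-1}}$ is even by Kummer's theorem, so the all-ones line lies inside the augmentation kernel of $M^{(2^{n-1},2^{n-1})}$ and ``removing $\unit$ from top and socle'' is unambiguous.

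For the $S_{2^n}$-module structure, put $2^n=2k$. By \Cref{LemDual} and rigidity (using $V^\vee\cong V$ and $P\cong I_\unit$, the injective hull of $\unit$, as $\Ver_{2^n}$ is unimodular) the $S_{2k}$-module $\Hom_{\Ver_{2^n}}(P,V^{\otimes 2^n})$ is self-dual; and so is $M^{(k,k)}=\Ind^{S_{2k}}_{S_k\times S_k}\unit$. Using Frobenius reciprocity $\Hom_{S_{2k}}(M^{(k,k)},-)=(-)^{S_k\times S_k}$, I would construct a nonzero $S_{2k}$-equivariant map $M^{(k,k)}\to\Hom_{\Ver_{2^n}}(P,V^{\otimes 2^n})$ out of a canonical $S_k\times S_k$-invariant element --- produced from the unique largest tilting summand $T(2^{n-1}-1)$ of $\widetilde V^{\otimes(2^{n-1}-1)}$ and the Schur--Weyl description of $\End_{SL_2}(\widetilde V^{\otimes d})$ as a Temperley--Lieb quotient (loop parameter $2\equiv 0$) --- together with its dual. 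Comparing dimensions squeezes $\Hom_{\Ver_{2^n}}(P,V^{\otimes 2^n})$ simultaneously between a quotient and a subobject of $M^{(k,k)}$ of codimension $2$; and since for $k$ a power of $2$ the permutation module $M^{(k,k)}$ is indecomposable with simple head and socle both $\unit$ (so $Y^{(k,k)}=M^{(k,k)}$), the only self-dual such subquotient is $\ker(\mathrm{aug})/\soc$, that is, $M^{(k,k)}$ with $\unit$ removed from top and socle.

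The genuinely delicate step is the last one: because $\bk S_{2^n}$ is not semisimple in characteristic $2$, the $S_{2^n}$-module is not pinned down by composition multiplicities, so constructing the equivariant comparison map with $M^{(k,k)}$ and controlling its kernel and cokernel draws on the fine structure of $SL_2$-tilting modules in characteristic $2$: their Weyl filtrations, the $2$-adic shape of the tilting characters, and the cell-module structure of the relevant Temperley--Lieb quotient. Everything else is routine tilting-module bookkeeping.
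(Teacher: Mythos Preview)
The paper omits the proof, remarking only that it is ``mainly an application of tilting module theory for $SL_2$''. Your approach via the quotient $\Tilt SL_2\to\Ver_{2^n}$ is therefore exactly the intended one, and your dimension count --- the telescoping of ballot numbers together with the identification $\dim N=1$ --- correctly yields $\dim_{\bk}\Hom_{\Ver_{2^n}}(P,V^{\otimes 2^n})=\binom{2^n}{2^{n-1}}-2$.

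The genuine gap is in your final step. First, the claim that $M^{(k,k)}$ has \emph{simple} head and socle $\unit$ when $k=2^{n-1}$ is false: already for $n=2$ one computes via Frobenius reciprocity that $\Hom_{S_4}(M^{(2,2)},D^{(3,1)})\cong (D^{(3,1)})^{S_2\times S_2}$ is one-dimensional, so $D^{(3,1)}$ lies in the head (and, by self-duality, in the socle) of $M^{(2,2)}$ as well. In fact $M^{(2,2)}$ has Loewy length two with head and socle both isomorphic to $\unit\oplus D^{(3,1)}$. Consequently your squeezing argument --- that a nonzero map $M^{(k,k)}\to\Hom(P,V^{\otimes 2^n})$ together with its dual and the dimension forces the target to be the unique self-dual codimension-two subquotient --- does not go through: merely knowing that nonzero maps exist in both directions does not by itself distinguish the non-split self-extension $E^{(3,1)}$ from $D^{(3,1)}\oplus D^{(3,1)}$, both of which are self-dual four-dimensional subquotients of $M^{(2,2)}$. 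Second, your sketch for producing the $S_k\times S_k$-invariant element invokes the summand $T(2^{n-1}-1)$ of $\widetilde V^{\otimes(2^{n-1}-1)}$, but this lives in tensor degree $2^{n-1}-1$, not $2^{n-1}$, so it is unclear how it yields an $S_{2^{n-1}}\times S_{2^{n-1}}$-invariant in $\Hom(P,V^{\otimes 2^n})$; the construction needs to be made precise. One does need finer control of the Temperley--Lieb cell structure and of the $\Delta$-flag of $T(2^n-2)$ (as you anticipate), but the argument as written does not yet pin down the module structure.
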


\section{Ideals in the finitary symmetric group algebra}\label{sec:ideals}
We let $\bk$ again be of arbitrary characteristic (and algebraically closed). One of the main motivations in \cite{Za} of Zalesskii's notion of inductive systems was the connection with ideals in $\bk S_\infty$, for $S_\infty:=\cup_n S_n$. Here we show how Definitions~\ref{DefIS} and~\ref{DefPhiX} relate to such ideals.

\subsection{Definitions}

\subsubsection{}
There is a canonical bijection between the set of two-sided ideals $I<\bk S_\infty$ and the set of collections of ideals $\{I_n<\bk S_n\}$ such that $I_{n-1}=I_n\cap \bk S_{n-1}$. It follows that for an inductive system $\bA$, we obtain an ideal $\cJ(\bA)<\bk S_\infty$ with
$$\cJ(\bA)\cap\bk S_n\:=\:\bigcap_{M\in\bA^n} \Ann_{\bk S_n}M.$$

\begin{example}\label{ExBK}
In \cite{BK}, Baranov and Kleshchev proved that, if $\mathrm{char}(\bk)=p>2$, the {\em maximal} ideals in $\bk S_\infty$ are given precisely by
$$\cJ^{(i)}\,:=\, \cJ(\mathbf{C}\mathbf{S}(i)),\quad 1\le i< p.$$
\end{example}

For the inductive system of an object $X$ in a tensor category $\cC$, we abbreviate 
$$\Ann(X)\;:=\;\cJ(\mB_X[\cC])\;<\;\bk S_\infty.$$
We can define this ideal more directly:

\begin{lemma}
The ideal $\Ann(X)$ equals the kernel of the algebra morphism
$$\varinjlim\beta^n_X:\;\bk S_\infty=\varinjlim \bk S_n\;\to\; \End_{\cC}(X^{\otimes \infty}):=\varinjlim \End_{\cC}(X^{\otimes n}).$$
\end{lemma}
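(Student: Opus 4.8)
The plan is to unwind both sides of the claimed equality through the defining bijection between two-sided ideals of $\bk S_\infty$ and compatible systems $\{I_n \lhd \bk S_n\}$ with $I_{n-1} = I_n \cap \bk S_{n-1}$. Write $K$ for the kernel of $\varinjlim \beta^n_X$ and recall $\Ann(X) = \cJ(\mB_X[\cC])$ by definition. Since colimits along the inclusions $\bk S_n \hookrightarrow \bk S_{n+1}$ are exact (everything is a directed union), $K \cap \bk S_n = \ker(\beta^n_X)$, so it suffices to prove the equality degree by degree, i.e.
$$\ker\bigl(\beta^n_X : \bk S_n \to \End_{\cC}(X^{\otimes n})\bigr) \;=\; \bigcap_{M \in \mB^n_X[\cC]} \Ann_{\bk S_n} M.$$

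First I would recall, from the discussion preceding Definition~\ref{DefPhiX} and diagram~\eqref{commsq}, that (passing to duals if convenient) the category $\mB^n_X[\cC]$ is generated as a pseudo-abelian subcategory by the $\bk S_n$-modules $\Hom((X^\vee)^{\otimes n}, I)$ for $I \in \Inj\cC$; equivalently, since these are obtained by applying the faithful exact functor $\Hom((X^\vee)^{\otimes n}, -)$ to injective objects, an element $\sigma \in \bk S_n$ kills every module in $\mB^n_X[\cC]$ if and only if it kills $\Hom((X^\vee)^{\otimes n}, I)$ for every injective $I$. (Duality does not affect the annihilator statement: by Lemma~\ref{LemDual}, $\mB^n_{X^\vee}$ consists of the linear duals of modules in $\mB^n_X$, and $\sigma$ annihilates a module iff $\sigma$ — transported via $g \mapsto g^{-1}$ — annihilates its dual; one simply works on whichever side is notationally cleanest.) So the right-hand side is the set of $a \in \bk S_n$ such that $\Hom((X^\vee)^{\otimes n}, I) \cdot a = 0$ in every injective $I$ of $\Ind\cC$, where the action is the right $\End((X^\vee)^{\otimes n})$-action composed with $\beta^n_{X^\vee}$.

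Next I would identify the left-hand side in the same terms: $a \in \ker \beta^n_X$ means $\beta^n_X(a) = 0$ as an endomorphism of $X^{\otimes n}$, equivalently $\beta^n_{X^\vee}(a') = 0$ where $a' $ is the image of $a$ under $g \mapsto g^{-1}$, by the commutative square~\eqref{commsq}. So the two sides match provided one shows: an endomorphism $\phi \in \End_{\cC}((X^\vee)^{\otimes n})$ is zero if and only if post-composition with $\phi$ induces the zero map $\Hom((X^\vee)^{\otimes n}, I) \to \Hom((X^\vee)^{\otimes n}, I)$ for every injective $I \in \Ind\cC$. The ``only if'' is trivial. For the ``if'' direction: $\Ind\cC$ has enough injectives, so choose a monomorphism $(X^\vee)^{\otimes n} \hookrightarrow I_0$ into an injective; then $\Hom((X^\vee)^{\otimes n}, I_0) \to \Hom((X^\vee)^{\otimes n}, I_0)$ sending $\mathrm{id}$ to the composite $(X^\vee)^{\otimes n} \xrightarrow{\phi} (X^\vee)^{\otimes n} \hookrightarrow I_0$ being zero forces the monomorphism-precomposed-with-$\phi$ to vanish, hence $\phi = 0$. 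This is the only genuinely non-formal point, and it is short; I expect the main bookkeeping obstacle to be keeping the dualization $g \mapsto g^{-1}$ consistent between \eqref{commsq}, the definition of $\mB^n_X$, and Lemma~\ref{LemDual}, rather than any real difficulty. Assembling these degreewise identifications and taking the colimit over $n$ yields $K = \cJ(\mB_X[\cC]) = \Ann(X)$, completing the proof.
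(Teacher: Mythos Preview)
Your proof is correct and follows essentially the same approach as the paper: reduce to showing $\ker\beta^n_X = \bigcap_{M\in\mB^n_X}\Ann_{\bk S_n}M$ for each $n$, then use faithfulness of the family of functors $\Hom(-,I)$ over injectives (the paper phrases it dually via $\oplus_i\Hom(P_i,-)$ over projectives, noting this is just for notational simplicity). One small point to tighten: your reduction ``$K\cap\bk S_n=\ker\beta^n_X$'' implicitly uses that the transition maps $\End_{\cC}(X^{\otimes n})\to\End_{\cC}(X^{\otimes n+1})$, $f\mapsto f\otimes X$, are injective (which the paper states explicitly); this is what guarantees the colimit on the target side is a genuine union.
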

\begin{proof}
Since the morphisms $f\mapsto f\otimes X$ from $\End_{\cC}(X^{\otimes n})$ to $\End_{\cC}(X^{\otimes n+1})$ are injective, it suffices to prove that, for an arbitrary $n\in\mZ_{>0}$, 
$\ker(\beta_X^n)$ equals the intersection of all $\Ann_{\bk S_n}M$, for $M\in\bB_X^n.$
For simplicity of notation we assume that $\cC$ has projective objects. Then $\ker(\beta_X^n)$ is equal to the kernel of the associated morphism
$$\bk S_n\;\to\;\prod_{i\in \Irr \cC} \End_{\bk}(\Hom_{\cC}(P_i,X^{\otimes n})),$$
by faithfulness of $\oplus_i\Hom_{\cC}(P_i,-)$, from which the conclusion follows.
\end{proof}

\begin{remark}
\begin{enumerate}
\item The ideal $\Ann(X)$ contains the same information as the kernel of the symmetric monoidal functor $\mathcal{O}\mathcal{B}(\dim X)\to \cC$ from the oriented Brauer category (the universal $\bk$-linear symmetric rigid monoidal category on an object of categorical dimension $\dim X$ see, \cite{Del07, Selecta}), but $\Ann(X)$ allows one to compare (for instance compare which object has the largest annihilator) objects of different dimension.
\item In \cite[Corollary~4.11]{CEO} it was proved that a tensor category $\cC$ is of moderate growth if and only if $\Ann(X)\not=0$ for each $X\in\cC$.

\end{enumerate}

\end{remark}

Now assume $\mathrm{char}(\bk)=p>0$. We denote the symmetriser in $\bk S_i\subset \bk S_\infty$ by $\mathfrak{s}_i$ and the skew symmetriser in $\bk S_i$ by $\mathfrak{a}_i$. We denote the idempotent in $\bk S_i$ corresponding to the trivial module by $\mathfrak{e}^s_i$, for example $\mathfrak{e}^s_i=\mathfrak{s}_i/i!$ when $i<p$. The maximal ideals $\cJ^{(1)}$ and $\cJ^{(p-1)}$ are generated by $\mathfrak{a}_2$ and $\mathfrak{s}_2$ respectively.

\begin{prop}\label{PropGenId}

\begin{enumerate}
\item For $\cC=\Ver_p$ and $1< i<p-1$, the ideal $\Ann(L_i)<\bk S_\infty$ is generated by $\mathfrak{a}_{i+1}$ and $\mathfrak{s}_{p+1-i}$.
\item For $n\in\mZ_{>0}$ and $V\in \Ver_{p^n}$ the generator, the ideal $\Ann(V)<\bk S_\infty$ is generated by $\mathfrak{a}_{3}$ and $\mathfrak{e}^s_{p^n-1}$.
\end{enumerate}
\end{prop}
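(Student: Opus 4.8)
\textbf{Proof strategy for Proposition~\ref{PropGenId}.}
The plan is to identify, for each of the two cases, the ideal $\cJ(\bA)$ attached to the relevant inductive system $\bA$ with the ideal $\cI$ generated by the prescribed (skew)symmetrisers, by proving two inclusions at the level of each layer $\bk S_n$. The first, easy, inclusion $\cI\subseteq\cJ(\bA)$ amounts to checking that the listed symmetriser and skew symmetriser annihilate every module in $\bA^{i+1}$, respectively $\bA^{p+1-i}$ (and similarly in case (2)). For this I would use Theorem~\ref{ThmVerp}(1): $\mB_{L_i}[\Ver_p]=\bC\mS(i)$, and then read off from Theorem~\ref{ThmKl}(5)+(4) that $\Sym^{p-i+1}L_i=0$ forces the symmetriser in $\bk S_{p+1-i}$ to kill everything in $\mB_{L_i}^{p+1-i}$, while $\wedge^{i+1}L_i=0$ forces the skew symmetriser in $\bk S_{i+1}$ to kill everything in $\mB_{L_i}^{i+1}$; since an ideal of $\bk S_\infty$ is determined by its intersections with the $\bk S_n$ and each $\mB_{L_i}^n$ is obtained from lower layers by the induction product and restriction, these two generators propagate into all of $\cJ(\mB_{L_i})\cap\bk S_n$ for every $n$. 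The case (2) generators are handled the same way, using that for $V\in\Ver_{p^n}$ one has $\wedge^3 V=0$ (categorical dimension reasons / the realisation of $\Ver_{p^n}$ as a quotient of $\Tilt SL_2$, giving $V$ categorical dimension... ) so $\Sym$-side and $\wedge$-side vanish at degrees $p^n-1$ and $3$.

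The harder, reverse inclusion $\cJ(\bA)\subseteq\cI$ is where the real content lies. Here I would exploit that $\bk S_\infty$ has very few ideals that can sit above $\cI$. In case (1), by Baranov--Kleshchev (Example~\ref{ExBK}), the maximal ideals of $\bk S_\infty$ for $p>2$ are exactly the $\cJ^{(j)}=\cJ(\bC\mS(j))$; since $\cJ(\mB_{L_i})=\cJ(\bC\mS(i))=\cJ^{(i)}$ is already \emph{maximal}, the reverse inclusion follows as soon as I check that $\cI$ is a proper ideal not contained in any other $\cJ^{(j)}$, $j\neq i$ — equivalently that the symmetriser and skew symmetriser in question do not both lie in $\cJ^{(j)}$, which one sees from Theorem~\ref{ThmKl}(5),(4),(6) by noting that for $j<i$ the symmetriser degree $p+1-i$ is too small to be killed in $\bC\mS(j)$ (since $j+(p+1-i)\le p$ fails only for ... ) while for $j>i$ the skew symmetriser degree $i+1$ is analogously not killed. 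A cleaner route, which I would prefer to present, is: let $\cI'=\cJ(\mB_{L_i})$; we have $\cI\subseteq\cI'$ and $\cI'$ is maximal, so either $\cI=\cI'$ or $\cI=\bk S_\infty$; the latter is excluded because the images of the two generators in $\End_{\Ver_p}(L_i^{\otimes\bullet})$ are zero yet $\beta_{L_i}$ is surjective with non-zero target, so $\cI\neq\bk S_\infty$. Case (2) is subtler because for $p=2$ the full classification of maximal ideals is not available; but one still has $\mB_V[\Ver_{p^n}]$ semisimple when $p>2$ (by the $SL_2$ Schur--Weyl argument in the proof of Theorem~\ref{ThmVerp}), and for $p=2$, $n=2$ the system is $\Spin$ by Theorem~\ref{ThmSpin}, which is almost semisimple, so $\cJ(\mB_V)$ is again one of the $\cJ^{(j)}$-type or spin ideals identified in \Cref{LemSpinIdeal}; in each case the two generators suffice by a direct branching computation.

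The main obstacle I anticipate is the \emph{generation} statement proper: showing that the ideal generated by just those two idempotents already has the prescribed codimension pattern in every $\bk S_n$, i.e.\ that no further relations are needed beyond degrees $i+1$ and $p+1-i$. Abstractly this is the assertion that $\bk S_\infty/\cI$, cut out by one symmetriser relation and one skew relation, has the same layers as $\bk S_\infty/\cJ(\bC\mS(i))$; concretely I would verify it by the inductive-system formalism of Section~2: the ideal $\cI$ also defines an inductive system $\bA'$ (via $\bA'^n=\{M\mid \cI\cap\bk S_n\subseteq\Ann M\}$), one checks $\bA'$ satisfies \ref{properties}(iv) so that $\bA'^{n}$ is determined from $\bA'^{n-1}$ for $n$ not $p$-divisible (Remark~\ref{RemCIS}), and then an induction on $n$ — base cases $n\le p$ done by hand using the block theory of $\bk S_n$ — shows $\bA'=\bC\mS(i)$, whence $\cI=\cJ(\bA')=\cJ(\bC\mS(i))=\Ann(L_i)$. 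The bookkeeping for the $p$-divisible layers, where restriction alone does not pin down the category, is the delicate point; there I would fall back on the explicit description of $\bC\mS^n(i)$ from \cite{CS} (its indecomposables are the completely splittable simples, no projectives) together with the observation that $\cI$, being generated in degrees $<p$, cannot introduce projective summands, to conclude equality.
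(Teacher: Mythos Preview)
Your argument for the hard inclusion $\cJ(\bA)\subseteq\cI$ does not work. In your ``cleaner route'' you write: ``$\cI\subseteq\cI'$ and $\cI'$ is maximal, so either $\cI=\cI'$ or $\cI=\bk S_\infty$.'' This is a misuse of maximality: $\cI'$ being maximal means nothing lies strictly between $\cI'$ and $\bk S_\infty$, but says nothing about ideals \emph{below} $\cI'$. The same error underlies your first approach: showing that $\cI$ is contained in no $\cJ^{(j)}$ with $j\neq i$ only reproves $\cI\subseteq\cJ^{(i)}$, not equality. Your third, inductive-system approach is a sketch at best: you do not justify why the $\bA'$ attached to $\cI$ satisfies~\ref{properties}(iv), the ``base cases $n\le p$'' and the $p$-divisible layers are not actually carried out, and the claim that $\cI$ ``cannot introduce projective summands'' is neither precise nor obviously sufficient. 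For part~(2) your assertion that $\mB_V[\Ver_{p^n}]$ is semisimple for $p>2$ is false once $n>1$ (the category $\Ver_{p^n}$ is not semisimple, so Lemma~\ref{LemSurj} does not apply), and $\Ann(V)$ is not known to be maximal, so no classification of maximal ideals helps.

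The paper's proof is entirely different and avoids all of this. One factors the braid morphism as
\[
\bk S_\infty \;\twoheadrightarrow\; \End_{SL_i}(U^{\otimes\infty}) \;\twoheadrightarrow\; \End_{\Ver_p(SL_i)}(\overline{U}^{\,\otimes\infty})\;\simeq\;\End_{\Ver_p}(L_i^{\otimes\infty}),
\]
with $U$ the tautological $SL_i$-representation. The kernel of the first map is generated by the skew symmetriser in $\bk S_{i+1}$ by classical Schur--Weyl duality. The second map is the semisimplification quotient $\Tilt SL_i\to\Ver_p(SL_i)$, whose defining tensor ideal is known to be generated by a single morphism, lifted by the symmetriser in $\bk S_{p+1-i}$. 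Part~(2) is the special case $i=2$ with the intermediate algebra being Temperley--Lieb and the second generator coming from the Jones--Wenzl idempotent of rank $p^n-1$. The generation statement is thus imported from known structural results about $\Tilt SL_i$ rather than deduced from the ideal lattice of $\bk S_\infty$; indeed, in the paper the logic runs in the opposite direction, with Proposition~\ref{PropGenId} used to \emph{derive} the description of the maximal ideals in Proposition~\ref{PropNewGen}.
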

\begin{proof}
We start with case (2), so we need to describe the kernel of
$$\bk S_\infty \;\tto\; \End_{\SL_2}(V^{\otimes \infty})\;\tto\; \End_{\Ver_{p^n}}(V^{\otimes \infty}).$$
By classical invariant theory, the kernel of the first surjection is generated by $\mathfrak{a}_3$. We can interpret the middle algebra as an infinite rank Temperley-Lieb algebra, and by construction in \cite{BEO, AbEnv}, see also \cite[\S 5]{Selecta}, the kernel of the second surjection is generated by the Jones-Wenzl idempotent of rank $p^n-1$. This can be lifted to $\mathfrak{e}^s_{p^n-1}\in\bk S_{p^n-1}$, for instance because this Jones-Wenzl idempotent cuts out the Steinberg direct summand $\Sym^{p^n-1}V=\Gamma^{p^n-1}V$ in the $\SL_2$-representation $V^{\otimes p^n-1}$.

Part (1) is proved similarly. Indeed, with $U$ the tautological $\SL_i$-representation and using the proof of Theorem~\ref{ThmVerp}(1), we need to describe the kernel of
$$\bk S_\infty \;\tto\; \End_{\SL_i}(U^{\otimes \infty})\;\tto\; \End_{\Ver_p(\SL_i)}(U^{\otimes \infty}).$$
It is well-known that the maximal tensor ideal in $\Tilt \SL_i$, to be quotiented out to create $\Ver_p(\SL_i)$, is generated by the symmetriser in $\bk S_{p+1-i}$, see for instance the proof of \cite[Lemma~4.1.1]{CEN}. Thus an $\SL_i$-endomorphism of $U^{\otimes m}$ that vanishes in the Verlinde category can be written as a composition
\begin{equation}\label{eqm}U^{\otimes m}\to U^{\otimes p+1-i}\otimes U^{\otimes l}\xrightarrow{s\otimes U^{\otimes l}}  U^{\otimes p+1-i}\otimes U^{\otimes l}\to U^{\otimes m}\end{equation}
for $s=s_{p+1-i}$ and $l\in\mN$ where the unlabelled arrows are arbitrary $\SL_i$-morphisms. By Schur-Weyl duality we know that arbitrary morphisms between tensor powers of $U$ of the same degree~$d$ come from $\bk S_d$. Furthermore, by considering the centre $\mu_i<\SL_i$, we know that there can only be morphisms between tensor powers of degree differing by a multiple of $i$, and any such morphism can be interpreted (say if the degree of the target is the largest)  as 
$$U^{\otimes d}=U^{\otimes d}\otimes (\Lambda^iU)^{\otimes r}\hookrightarrow U^{\otimes d+ir}\to U^{\otimes d+ir},$$
where the inclusion is a direct summand and the final arrow must thus come from $\bk S_{d+ir}$. This principle and the fact (as follows from highest weight consideration and simplicity of the tilting modules in the fundamental alcove) show that $m$ must be at least $p+1-i$ in order to have non-zero endomorphisms that vanish in the Verlinde category, one can then show that the composition \eqref{eqm} must be in the algebra ideal generated by $s_{p+1-i}\otimes U^{\otimes m+i-1-p}$.
\end{proof}

\begin{problem}
A natural open problem is the description of the ideals $\Ann(L)$ for $L$ varying over all simple objects in $\Ver_{p^\infty}$, as classified in \cite{BE, BEO}.
\end{problem}

\subsection{Maximal ideals}
Let $\bk$ be an algebraically closed field of characteristic $p>0$.

\subsubsection{}
First we consider the case $p>2$, in which case we have the classification of Example~\ref{ExBK}.
The maximal ideals $\cJ^{(1)}$ and $\cJ^{(p-1)}$ in $\bk S_\infty$ are generated by the skew symmetriser and symmetriser of $\bk S_2$ respectively.

\begin{prop}\label{PropNewGen}
The maximal ideal $\cJ^{(i)}<\bk S_\infty$, for $1<i<p-1$, is generated by the skew symmetriser in $\bk S_{i+1}$ and the symmetriser in $\bk S_{p+1-i}$.
\end{prop}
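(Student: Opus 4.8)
The plan is to identify $\cJ^{(i)}$ with the ideal $\Ann(L_i)$ computed in Proposition~\ref{PropGenId}(1), and then transport the generating set through that identification. First I would recall from Theorem~\ref{ThmVerp}(1) that $\mB_{L_i}[\Ver_p]$ is the semisimple inductive system $\bC\mS(i)$, so by the very definition of $\cJ(-)$ and the abbreviation $\Ann(L_i)=\cJ(\mB_{L_i}[\Ver_p])$ we have
$$\cJ^{(i)}\;=\;\cJ(\bC\mS(i))\;=\;\cJ(\mB_{L_i}[\Ver_p])\;=\;\Ann(L_i),$$
where the first equality is the definition of $\cJ^{(i)}$ in Example~\ref{ExBK}. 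Now Proposition~\ref{PropGenId}(1) asserts precisely that $\Ann(L_i)$ is generated (as a two-sided ideal of $\bk S_\infty$) by the skew symmetriser in $\bk S_{i+1}$ and the symmetriser in $\bk S_{p+1-i}$, which is exactly the claim. So at the level of logical content the proposition is a one-line corollary of the two results just cited.

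The one point that deserves a sentence of care is that Proposition~\ref{PropGenId}(1) was stated for $1<i<p-1$, which is the same hypothesis as here, so no boundary cases need to be handled separately; the cases $i=1$ and $i=p-1$ are already dealt with in the preceding paragraph of the paper (they correspond to the skew symmetriser and symmetriser of $\bk S_2$). I would also note, for the reader's orientation, that the two ideals being equal uses that $\bC\mS(i)$ — the Kleshchev inductive system — and $\mB_{L_i}[\Ver_p]$ — the inductive system of $S_n$-representations arising from the braiding on tensor powers of $L_i\in\Ver_p$ — really are the same assignment $n\mapsto (\text{subcategory of }\Rep S_n)$, which is the substance of Theorem~\ref{ThmVerp}(1), not a triviality.

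Since the real work has been front-loaded into Proposition~\ref{PropGenId} and Theorem~\ref{ThmVerp}, there is no genuine obstacle remaining here: the proof is just the composition of those two inputs. If anything, the ``hard part'' is purely expository — making sure the chain of identifications $\cJ^{(i)}=\cJ(\bC\mS(i))=\Ann(L_i)$ is spelled out so the reader sees that the new, clean description of the Baranov--Kleshchev maximal ideals falls out of the tensor-categorical picture. I would therefore keep the proof to two or three lines: invoke Example~\ref{ExBK} and Theorem~\ref{ThmVerp}(1) to get $\cJ^{(i)}=\Ann(L_i)$, then quote Proposition~\ref{PropGenId}(1).
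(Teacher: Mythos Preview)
Your proof is correct and follows exactly the paper's approach: the paper's own proof is the single sentence ``This follows from Proposition~\ref{PropGenId} and \Cref{ThmVerp}(1),'' which is precisely the chain of identifications $\cJ^{(i)}=\cJ(\bC\mS(i))=\Ann(L_i)$ followed by an appeal to Proposition~\ref{PropGenId}(1) that you spell out.
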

\begin{proof}
This follows from Proposition~\ref{PropGenId} and \Cref{ThmVerp}(1).
\end{proof}

\begin{remark}
To each object $X$ in a tensor category $\cC$, we can now associate a non-empty subset of $\{1,\ldots ,p-1\}$ of those $i$ for which $\Ann(X)\subset \Ann(L_i)=\cJ^{(i)}$. For example, for $X=\unit^n$, this set is $\{1,\ldots, \min(p-1,n)\}$.
\end{remark}

\subsubsection{} If $p=2$, it is no longer true that the only maximal ideal in $\bk S_\infty$ correspond to semisimple inductive systems. In other words, there is at least one maximal ideal other than the augmentation ideal $\cJ^{(1)}=\Ann(\unit)$, see \cite[Remark(5) on p597]{BK}.  However, all known maximal ideals connect again to tensor categories:

\begin{lemma}\label{LemSpinIdeal}
With $V\in \Ver_4$, we have the maximal ideal 
$$\Ann(V)\;=\;\cJ(\Spin)\;<\;\bk S_\infty.$$
It is generated by $\mathfrak{e}^s_3$.
\end{lemma}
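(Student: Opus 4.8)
The plan is to identify $\Ann(V)=\cJ(\mB_V[\Ver_4])$ using the explicit description of the inductive system $\mB_V[\Ver_4]=\Spin$ from Theorem~\ref{ThmSpin}, and then to check that $\cJ(\Spin)$ is a maximal ideal of $\bk S_\infty$ by comparing with the known classification of maximal ideals in characteristic $2$. First I would recall that, by definition, $\cJ(\Spin)\cap\bk S_n=\bigcap_{M\in\Spin^n}\Ann_{\bk S_n}M$, and that, by the description of $\Spin$, for $n=2l+1$ this is simply $\Ann_{\bk S_{2l+1}}D^{(l+1,l)}$ (a single simple module), while for $n=2l+2$ it is $\Ann_{\bk S_{2l+2}}E^{(l+2,l)}$; since $E^{(l+2,l)}$ is a (non-split) self-extension of $D^{(l+2,l)}$, its annihilator coincides with $\Ann_{\bk S_{2l+2}}D^{(l+2,l)}$. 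So $\cJ(\Spin)$ is the ideal cut out by the family of simple spin modules $\{D^{(l+2,l)}\}$, one in each even degree (with their odd-degree restrictions).

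The maximality is the substantive point. By \cite[Remark(5) on p597]{BK} (and the surrounding discussion), for $p=2$ there is, besides the augmentation ideal $\cJ^{(1)}=\Ann(\unit)$, exactly one further known maximal ideal of $\bk S_\infty$, and it is precisely the one associated with the inductive system of spin modules in Zalesskii's sense — i.e.\ the ideal whose intersection with each $\bk S_n$ is the annihilator of the appropriate $D^{(l+i,l)}$. I would therefore match the ideal $\cJ(\Spin)$ just computed against this ideal from \cite{BK}. Concretely: two ideals $I,I'<\bk S_\infty$ coincide iff $I\cap\bk S_n=I'\cap\bk S_n$ for all $n$, and $\Ann_{\bk S_n}M=\Ann_{\bk S_n}M'$ for semisimple (or self-annihilator-equivalent) modules iff they have the same simple constituents; hence it suffices to observe that the simple constituents of the modules in $\Spin^n$ agree, for every $n$, with those of the modules in Baranov--Kleshchev's spin inductive system. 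This is immediate once one unwinds both descriptions via James's branching rule (already invoked in the proof of Lemma~\ref{LemSpin}), which pins down $D^{(l+2,l)}$ (resp.\ $D^{(l+1,l)}$) as the unique simple module in degree $2l+2$ (resp.\ $2l+1$) with the prescribed restriction behaviour.

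The main obstacle I anticipate is purely bookkeeping: reconciling the labelling and normalisation conventions of \cite{BK} (which works with inductive systems of simple modules in Zalesskii's sense and may index the spin system differently, e.g.\ shifted partitions or a different residue parameter) with the conventions used here for $\Spin$, and making sure the self-extension $E^{(l+2,l)}$ genuinely contributes the same annihilator as $D^{(l+2,l)}$ — the latter is clear since $\Ann M$ depends only on $\mathrm{supp}(M)\subset\Irr(\bk S_n)$ when the relevant simple has $\bk$ as endomorphism ring, which holds here. Once the dictionary between the two spin inductive systems is set up, maximality of $\Ann(V)=\cJ(\Spin)$ follows directly from the corresponding assertion in \cite{BK}, and no further representation-theoretic input is needed.
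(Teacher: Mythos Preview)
Your approach is genuinely different from the paper's and, as written, has a gap. You try to prove maximality by matching $\cJ(\Spin)$ with a maximal ideal already established in \cite{BK}, whereas the paper gives a direct, self-contained argument: from the proof of Theorem~\ref{ThmSpin} one knows that $\beta_V^{2l+1}$ is surjective and $V^{\otimes 2l+1}\simeq V^{2^l}$, so
\[
\bk S_\infty/\cJ(\Spin)\;\simeq\;\End_{\Ver_4}(V^{\otimes\infty})\;\simeq\;\varinjlim_l\End_{\Ver_4}(V^{2^l})\;\simeq\;\varinjlim_l\Mat_{2^l}(\bk),
\]
a direct limit of simple matrix algebras, hence simple. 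Maximality is then immediate, with no appeal to \cite{BK} needed.

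Your route, by contrast, relies on two unjustified steps. First, the assertion that $\Ann_{\bk S_{2l+2}}E^{(l+2,l)}=\Ann_{\bk S_{2l+2}}D^{(l+2,l)}$ because ``$\Ann M$ depends only on $\mathrm{supp}(M)$'' is false in general: for a non-semisimple module with a single composition factor the annihilator is typically \emph{strictly smaller} than that of the simple (e.g.\ the regular module of $\bk[x]/x^2$). This can be circumvented, since agreement of the two ideals at all odd degrees already forces agreement everywhere by coherence, but your stated reason is wrong. Second, and more seriously, you outsource maximality itself to \cite{BK}; but for $p=2$ the cited source is a remark rather than a theorem, and the paper explicitly frames the $p=2$ case as incomplete in the literature. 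The tensor-categorical argument above bypasses both issues in one line.
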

\begin{proof}
It follows from the proof of \Cref{ThmSpin} that
$$\bk S_\infty/\cJ(\Spin)\;\simeq\; \End_{\Ver_4}(V^{\otimes \infty})\;\simeq\;\varinjlim \End_{\Ver_4}(V^{\otimes 2l+1})\;\simeq\;\varinjlim \End_{\Ver_4}(V^{2^l}).$$
Hence the quotient is a simple algebra, as a direct limit of simple (matrix) algebras. That the ideal is generated by $\mathfrak{e}^s_3$ follow by \Cref{PropGenId}(2) and the fact that $\mathfrak{s}_3=\mathfrak{a}_3$ is in the ideal of $\bk S_3$ generated by $\mathfrak{e}^s_3$.
\end{proof}

%%%%%%%%%%%%%%%%%%%%%%%%%%%%%%%%%%%%%%%%%%%%%%%%%%%%%%%%%%%%%%%%%%%%%%%%%%%%%%%%%%%%%%%%%%%%%%%%%%%%%%%%%%%%%%%%%%%%%%%%%%%%%%%%%%%%%%%%%%%%%%%%%%%%%%%%%%%%%%%%%%%%%%%%%%%%%%%%%%%%%%%%%%%%%%%%%%%%%%%%%%%%%%%%%%%%%%%%%%%%%%%%%%%%%%%%%%%%%%%%%%%%%%%%%%%%%%%%%%%%%%%%%%%%%%%%%%%%%%%%%%%%%%%%%%%%%%%%%%%%%%%%%%%%%%%%%%%%%%%%%%%%%%%%%%%%%%%%%%%%%%%%%%%%%%%%%%%%%%%%%%%%%%%%%%%%%%%%%%%%%%%%%%%%%%%%%%%%%%%%%%%%%%%%%%%%%%%%%%%%%%%%%%%%%%%%%%%%%%%%%%%%%%%%%%%%%%%%%%%%%%%%%%%%%%%%%%%%%%%%%%%%%%%%%%%%%%%%%%%%%%%%%%%%%%%%%%%%%%%%%%%%%%%%%%%%%%%%%%%%%%%%%%%%%%%%%%%%%%%%%%%%%%%%%%%%%%%%%%%%%%%%%%%%%%%%%%%%%%%%%%%%%%%%%%%%%%%%%%%%%%%%%%%%%%%%%%%%%%%%%%%%%%%%%%%%%%%%%%%%%%%%%%%%%%%%%%%%%%%%%%%%%%%%%%%%%%%%%%%%%%%%%%%%%%%%%%%%%%%%%%%%%%%%%%%%%%%%%%%%%%%%%%%%%%%%%%%%%%%%%%%%%%%%%%%%%%%%%%%%%%%%%%%%%%%%%%%%%%%%%%%%%%%%%%%%%%%%%%%%%%%%%%%%%%%%%%%%%%%%%%%%%%%%%%%%%%%%%%%%%%%%%%%%%%%%%%%%%%%%%%%%%%%%%%%%%%%%%%%%%%%%%%%%%%%%%%%%%%%%%%%%%%%%%%%%%%%%%%%%%%%%%%%%%%%%%%%%%%%%%%%%%%%%%%%%%%%%%%%%%%%%%%%%%%%%%%%%%%%%%%%%%%%%%%%%%%%%%%%%%%%%%%%%%%%%%%%%%%%%%%%%%%%%%%%%%%%%%%%%%%%%%%%%%%%%%%%%%%%%%%%%%%%%%%%%%%%%%%%%%%%%%%%%%%%%%%%%%%%%%%%%%%%%%%%%%%%%%%%%%%%%%%%%%%%%%%%%%%%%%%%%%%%%%%%%%%%%%%%%%%%%%%%%%%%%%%%%%%%%%%%%%%%%%%%%%%%%%%%%%%%%%%%%%%%%%%%%%%%%%%%%%%%%%%%%%%%%%%%%%%%%%%%%%%%%%%%%%%%%%%%%%%%%%%%%%%%%%%%%%%%%%%%%%%%%%%%%%%%%%%%%%%%%%%%%%%%%%%%%%%%%%%%%%%%%%%%%%%%%%%%%%%%%%%%%%%%%%%%%%%%%%%%%%%%%%%%%%%%%%%%%%%%%%%%%%%%%%%%%%%%%%%%%%%%%%%%%%%%%%%%%%%%%%%%%%%%%%%%%%%%%%%%%%%%%%%%%%%%%%%%%%%%%%%%%%%%%%%%%%%%%%%%%%%%%

\section{Background on polynomial representations and functor categories}
\label{sec:back}

\subsection{Polynomial representations of general linear groups}
Fix a tensor category $\cC$ over $\bk$ and an object $X\in\cC$. We refer to \cite[\S 7]{ComAlg} for a summary of the basic theory of affine group schemes internal to tensor categories and their representations. We also fix $d\in\mN$.

\subsubsection{}\label{DefSchur} We define the Schur algebra $\cS(X,d)$, which is an algebra in $\cC$, as the algebra of $S_d$-invariants in the internal endomorphism algebra of $X^{\otimes d}$
$$\cS(X,d):=\underline{\End}(X^{\otimes d})^{S_d}\;\simeq\; \Gamma^d(X^\ast\otimes X).$$

\subsubsection{}  The affine group scheme~$\GL_X$ sends any ind-algebra $A$ in $\cC$ to the automorphisms of the $A$-module $A\otimes X$. One verifies that $\GL_X$ is represented by the quotient $\cO(\GL_X)$ of the algebra
\begin{equation}\label{eqSymSym}\Sym(X^\ast\otimes X\,\oplus\, X^\ast\otimes X)\;\simeq\; \Sym(X^\ast\otimes X)\otimes \Sym(X^\ast\otimes X)\end{equation}
by the ideal generated by the images of
$$X^\ast\otimes X\;\xrightarrow{(\ev_X, X^\ast\otimes \co_X\otimes X)}\;\unit\;\oplus X^\ast\otimes X\otimes X^\ast\otimes X$$
and a similar morphism
$$X\otimes X^{\ast}\;\to\;\unit\;\oplus X^\ast\otimes X\otimes X^\ast\otimes X.$$

\subsubsection{} For the obvious bi-grading of \eqref{eqSymSym}, the ideal defining $\cO(\GL_X)$ is thus generated by two subobjects of the direct sum of the components of degree $0,0$ and $1,1$. Using this grading, it follows easily that the defining morphism from the left factor in (the right-hand side of) equation~\eqref{eqSymSym} yields a monomorphism
\begin{equation}\label{eqSymO}
\Sym(X^\ast\otimes X)\;\hookrightarrow \; \cO(\GL_X).
\end{equation}
This induces, for any commutative algebra $A$ in $\Ind\cC$
$$\GL_X(A)=\Aut_A(A\otimes X)\;\hookrightarrow\;\End_A(A\otimes X).$$

\subsubsection{} Denote by $\Rep_{\cC}\GL_X$ the representation category of $\GL_X$, that is the category of $\cO(\GL_X)$-comodules in $\cC$. It contains
the $d$-th tensor power $X^{\otimes d}$ of the defining representation. We consider the category of polynomial representations of degree $d$ 
$$\Rep^d_{\cC}\GL_X\;\subset\; \Rep_{\cC}\GL_X$$
which is the topologising subcategory of $\Rep_{\cC}\GL_X$ generated by the $\GL_X$-representations $X^{\otimes d}\otimes Z$, with $Z\in\cC$.

\begin{lemma}\label{lem:SchurPoly}
Restricting \eqref{eqSymO} to $\Sym^d(X^\ast\otimes X)$ yields a sub-coalgebra of $\cO(\GL_X)$ which has as dual algebra $\cS(X,d)$ from \S\ref{DefSchur}. The resulting functor
 from $\Mod_{\cC}\cS(X,d)$ to $ \Rep_{\cC}\GL_X$
yields an equivalence 
$$\Mod_{\cC}\cS(X,d)\;\xrightarrow{\sim}\;\Rep^d_{\cC}\GL_X. $$

\end{lemma}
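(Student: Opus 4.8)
The plan is to prove this in two stages: first identify the sub-coalgebra and its dual algebra, then establish the equivalence of module/comodule categories via a standard degree-grading argument.

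\textbf{Identifying the Schur coalgebra.}
First I would observe that \eqref{eqSymO} realises $\Sym(X^\vee\otimes X)$ as a subobject of $\cO(GL_X)$ compatibly with the relevant coalgebra structures: the comultiplication on $\cO(GL_X)$ restricted to $\Sym^d(X^\vee\otimes X)$ lands in $\Sym^d(X^\vee\otimes X)\otimes\Sym^d(X^\vee\otimes X)$, by the grading argument already invoked for the monomorphism \eqref{eqSymO}. Hence each $\Sym^d(X^\vee\otimes X)$ is a sub-coalgebra. Dualising inside $\cC$ (using rigidity), the dual algebra of $\Sym^d(X^\vee\otimes X)$ is $\Gamma^d\big((X^\vee\otimes X)^\vee\big)\simeq\Gamma^d(X^\vee\otimes X)$, and by \ref{DefSchur} this is exactly $\cS(X,d)$. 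Here one uses the self-duality $(X^\vee\otimes X)^\vee\simeq X^\vee\otimes X^{\vee\vee}\simeq X^\vee\otimes X$ coming from the rigid structure; the identification $\underline{\End}(X^{\otimes d})^{S_d}\simeq\Gamma^d(X^\vee\otimes X)$ is already recorded in \ref{DefSchur}. So comodules over the sub-coalgebra $\Sym^d(X^\vee\otimes X)$ are the same as modules over $\cS(X,d)$, giving a fully faithful functor $\Mod_{\cC}\cS(X,d)\to\Rep_{\cC}GL_X$ whose essential image consists of those $\cO(GL_X)$-comodules whose coaction factors through $\Sym^d(X^\vee\otimes X)\subset\cO(GL_X)$.

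\textbf{Matching the image with polynomial representations of degree $d$.}
Next I would identify this essential image with $\Rep^d_{\cC}GL_X$. Using the bi-grading on \eqref{eqSymSym}, $\cO(GL_X)$ inherits a $\mZ$-grading (the difference of the two gradings is preserved by the defining relations), and $\Sym^d(X^\vee\otimes X)$ is precisely the part of internal degree $d$ living in the `polynomial' half \eqref{eqSymO}; more precisely, comodules whose coaction lands in the degree-$d$ part of $\Sym(X^\vee\otimes X)\hookrightarrow\cO(GL_X)$ are exactly the ``homogeneous polynomial representations of degree $d$''. The defining generator $X^{\otimes d}$ of $\Rep^d_{\cC}GL_X$ is such a comodule: its coaction $X^{\otimes d}\to X^{\otimes d}\otimes\cO(GL_X)$ factors through $X^{\otimes d}\otimes\Sym^d(X^\vee\otimes X)$, essentially because $X^{\otimes d}$ is an $\cS(X,d)=\underline{\End}(X^{\otimes d})^{S_d}$-module tautologically. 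Therefore $X^{\otimes d}\otimes Z\in\Mod_{\cC}\cS(X,d)$ for every $Z\in\cC$ (tensoring the module structure with $\mathrm{id}_Z$), so $\Rep^d_{\cC}GL_X$, being the topologising subcategory generated by these objects, is contained in the essential image. Conversely, $\Mod_{\cC}\cS(X,d)$ is generated as a topologising category by the regular module $\cS(X,d)$, which is a summand (as a module) of $X^{\otimes d}\otimes (X^\vee)^{\otimes d}$ via the evaluation/coevaluation of rigidity; hence every object of $\Mod_{\cC}\cS(X,d)$ is a subquotient of something built from the $X^{\otimes d}\otimes Z$, placing it in $\Rep^d_{\cC}GL_X$. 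Since both categories are topologising subcategories of $\Rep_{\cC}GL_X$ with the same generators, they coincide, and the fully faithful functor from the first stage becomes an equivalence.

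\textbf{Expected main obstacle.}
The routine parts — the grading bookkeeping on \eqref{eqSymSym}, the duality identifications inside $\cC$ — I would not belabour. The genuinely delicate point is making precise the claim that ``comodules whose coaction factors through $\Sym^d(X^\vee\otimes X)$'' is the same as the topologising subcategory generated by the $X^{\otimes d}\otimes Z$, i.e.\ that the category $\Mod_{\cC}\cS(X,d)$ really is closed under subquotients inside $\Rep_{\cC}GL_X$ and is not a priori larger or smaller. This requires knowing that $\Sym^d(X^\vee\otimes X)$ is not merely a sub-coalgebra but a direct summand of $\cO(GL_X)$ as a coalgebra (equivalently, that the grading gives a coalgebra decomposition), so that comodules over it form a Serre subcategory; the bi-grading argument should supply this, but it is the step that must be stated carefully. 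An alternative, perhaps cleaner route for this step is to invoke the general equivalence $\Mod_{\cC}R\simeq\Fun_{\cC}(R,\uC)$ from the module-category formalism of \S\ref{ModEnr} together with \Cref{LemModtoEnr}, reducing everything to a statement about enriched functors; I would use whichever is shorter in the final write-up.
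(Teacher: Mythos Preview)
Your approach is correct and matches the paper's, which is much terser: it simply says both claims follow as in the classical case $\cC=\Vecc$ and records as the one key observation that the coaction map $Y\hookrightarrow\Sym^d(X^\vee\otimes X)\otimes Y$ exhibits any $\cS(X,d)$-module $Y$ as a subquotient of $X^{\otimes d}\otimes\omega((X^\vee)^{\otimes d}\otimes Y)$ --- essentially the dual picture to your free-module presentation.

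Two minor corrections are worth making. First, $\cS(X,d)=\Gamma^d(X^\vee\otimes X)$ is in general only a \emph{subobject} of $X^{\otimes d}\otimes(X^\vee)^{\otimes d}$ as an $\cS(X,d)$-module, not a summand (invariants need not split off in positive characteristic); but a subobject is all your argument actually uses. Second, your stated ``main obstacle'' is not one: comodules over any sub-coalgebra $C'\subset C$ in $\cC$ automatically form a Serre subcategory of $C$-comodules, since for a $C$-subcomodule $N\subset M$ of a $C'$-comodule $M$ the coaction $N\to C\otimes N$ lands in the pullback $(C'\otimes M)\times_{C\otimes M}(C\otimes N)=C'\otimes N$ by exactness of $\otimes$, and dually for quotients. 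No direct-summand decomposition of $\cO(GL_X)$ is required.
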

\begin{proof}
Both claims follow precisely as in the classical case $\cC=\Vecc$. For example, if a $\GL_X$-representation $Y$ is such that the coaction takes values in $\Sym^d(X^\ast\otimes X)\subset \cO(\GL_X)$, then the coaction
$$Y\;\hookrightarrow\; \Sym^d(X^\ast\otimes X)\otimes Y$$
realises $Y$ as a subquotient of $X^{\otimes d}\otimes \omega((X^\ast)^{\otimes d}\otimes Y)$, where we use $\omega$ for the forgetful functor from $\Rep_{\cC}\GL_X$ to $\cC$.
\end{proof}

\subsubsection{} 
Recall from \cite[\S 8]{Del90} the fundamental group $\pi(\cC)$ of $\cC$. It is an affine group scheme internal to $\cC$, that sends a commutative algebra $A$ in $\Ind\cC$ to the automorphism group of the monoidal functor $A\otimes-$ from $\cC$ to the module category of $A$ in $\Ind\cC$. 
Then we have the standard homomorphism of affine group schemes in $\cC$
\begin{equation}
\label{EqPhi} \phi:\; \pi(\cC)\;\to\; \GL_X,
\end{equation}
For example, on $\bk$-points it is given by evaluation at $X$
$$\Aut^{\otimes}(\Id_{\cC})\;\to\; \Aut_{\cC}(X).$$
Following \cite[\S 8]{Del90} we consider the topologising subcategory $\Rep_{\cC}(\GL_X,\phi)$ of $\Rep_{\cC}\GL_X$ comprising $\GL_X$-representations $Y$ on which the canonical action of $\pi(\cC)$ on $Y$ coincides with the restriction via \eqref{EqPhi}.

Denote by $\Rep^d_{\cC}(\GL_X,\phi)$ the topologising subcategory of $\Rep_{\cC}(\GL_X,\phi)$ of representations that are in $\Rep^d_{\cC}\GL_X$ as well as in $\Rep_{\cC}(\GL_X,\phi)$.

%We have a diagram of inclusions of full subcategories
%$$\xymatrix{
%\Rep^d_{\cC}\GL_X\ar@{^{(}->}[rr]&&  \Rep_{\cC}\GL_X\\
%\Rep^d_{\cC}(\GL_X,\phi)\ar@{^{(}->}[rr]\ar@{^{(}->}[u]&&  \Rep_{\cC}(\GL_X,\phi).%\ar@{^{(}->}[u]
%}$$

\begin{lemma}
\begin{enumerate}
\item The equivalence
\begin{equation}\label{EqDTRep}
\Rep_{\cC}(\GL_X,\phi)\boxtimes \cC\;\xrightarrow{\sim}\; \Rep_{\cC}\GL_X
\end{equation}
from \cite[Lemma~4.2.3]{CEO2} restricts to an equivalence
$$\Rep^d_{\cC}(\GL_X,\phi)\boxtimes \cC\;\xrightarrow{\sim}\; \Rep^d_{\cC}\GL_X.$$
\item $\Rep^d_{\cC}(\GL_X,\phi)$ is the topologising subcategory of $\Rep_{\cC}\GL_X$ generated by $X^{\otimes d}$.
\end{enumerate}
\end{lemma}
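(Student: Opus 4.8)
The plan is to deduce both parts from the Deligne-tensor-product equivalence \eqref{EqDTRep} together with the description of $\Rep^d_{\cC}GL_X$ as modules over the Schur algebra $\cS(X,d)$. First I would recall that \eqref{EqDTRep} sends, for $Y\in\Rep_{\cC}(GL_X,\phi)$ and $Z\in\cC$, the object $Y\boxtimes Z$ to $Y\otimes Z$ (with the $\pi(\cC)$-action on $Z$ built in via $\phi$); this is how the equivalence of \cite[Lemma~4.2.3]{CEO2} is normalised. Under this equivalence the generators $X^{\otimes d}\otimes Z$ of $\Rep^d_{\cC}GL_X$ correspond to $X^{\otimes d}\boxtimes Z$ (using that $X^{\otimes d}$ itself lies in $\Rep_{\cC}(GL_X,\phi)$, since the $\pi(\cC)$-action on the defining representation $X$ is $\phi$ by construction, and $\Rep_{\cC}(GL_X,\phi)$ is monoidal). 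Hence the topologising subcategory they generate on the left-hand side is exactly the Deligne product of $\cC$ with the topologising subcategory of $\Rep_{\cC}(GL_X,\phi)$ generated by $X^{\otimes d}$.

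For part (2), write $\cA\subset\Rep_{\cC}(GL_X,\phi)$ for the topologising subcategory generated by $X^{\otimes d}$. By the previous paragraph, $\cA\boxtimes\cC$ maps, under \eqref{EqDTRep}, into $\Rep^d_{\cC}GL_X$, and it contains all the generators $X^{\otimes d}\otimes Z$; since a topologising subcategory of $\cA\boxtimes\cC$ containing all $X^{\otimes d}\boxtimes Z$ is all of $\cA\boxtimes\cC$ (the objects $Y\boxtimes Z$ with $Y$ a subquotient of a sum of $X^{\otimes d}$'s generate), we get $\cA\boxtimes\cC\xrightarrow{\sim}\Rep^d_{\cC}GL_X$. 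On the other hand $\cA\subset\Rep^d_{\cC}(GL_X,\phi)$ is clear, and $\Rep^d_{\cC}(GL_X,\phi)\boxtimes\cC$ also maps to $\Rep^d_{\cC}GL_X$ by part (1); comparing ranks (or rather: since both $\cA\boxtimes\cC$ and $\Rep^d_{\cC}(GL_X,\phi)\boxtimes\cC$ are identified with the same subcategory $\Rep^d_{\cC}GL_X$, and $-\boxtimes\cC$ is faithful and reflects inclusions because $\cC$ contains $\unit$) we conclude $\cA=\Rep^d_{\cC}(GL_X,\phi)$. This gives (2), and then (1) follows: restricting \eqref{EqDTRep} to $\Rep^d_{\cC}(GL_X,\phi)\boxtimes\cC=\cA\boxtimes\cC$ lands in and exhausts $\Rep^d_{\cC}GL_X$.

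Alternatively, and perhaps more cleanly, I would prove (1) first and derive (2). For (1): \eqref{EqDTRep} is an equivalence of tensor categories, and it is compatible with the forgetful functors to $\cC$; a $GL_X$-representation lies in $\Rep^d_{\cC}GL_X$ iff its $\cS(X,d)$-module structure is defined through the degree-$d$ part, a condition which under \eqref{EqDTRep} only involves the first tensor factor. More precisely, $\Rep^d_{\cC}GL_X=\Mod_{\cC}\cS(X,d)$ is a Serre subcategory cut out by the central idempotent-type condition "coaction factors through $\Sym^d(X^\vee\otimes X)$", and \eqref{EqDTRep} identifies this with the analogous condition on $\Rep_{\cC}(GL_X,\phi)$ tensored with all of $\cC$. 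Then (2) follows because, after applying $-\boxtimes\cC$, the generators $X^{\otimes d}$ of the putative subcategory become the generators $X^{\otimes d}\otimes Z$ of $\Rep^d_{\cC}GL_X$, and $-\boxtimes\cC$ reflects the property of generating a topologising subcategory.

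**Main obstacle.** The delicate point is bookkeeping the $\pi(\cC)$-equivariance: one must check that $X^{\otimes d}$ (and its subquotients appearing in $\Rep^d_{\cC}GL_X$) genuinely lie in $\Rep_{\cC}(GL_X,\phi)$, i.e. that the intrinsic $\pi(\cC)$-action agrees with the one pulled back along $\phi$, and that \eqref{EqDTRep} is normalised so that $Y\boxtimes Z\mapsto Y\otimes Z$ with the correct twist. Granting the setup of \cite[\S7]{Del90} and \cite[Lemma~4.2.3]{CEO2} this is essentially formal, but it is where all the content sits; the rest is a routine manipulation of topologising subcategories under an exact monoidal equivalence and under $-\boxtimes\cC$.
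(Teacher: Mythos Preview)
Your proposal is correct and follows essentially the same route as the paper: define $\cA\subset\Rep_{\cC}(GL_X,\phi)$ as the topologising subcategory generated by $X^{\otimes d}$, observe that restricting \eqref{EqDTRep} gives an equivalence $\cA\boxtimes\cC\xrightarrow{\sim}\Rep^d_{\cC}GL_X$, then use $\cA\subset\Rep^d_{\cC}(GL_X,\phi)$ to upgrade this to $\Rep^d_{\cC}(GL_X,\phi)\boxtimes\cC\xrightarrow{\sim}\Rep^d_{\cC}GL_X$, and finally conclude $\cA=\Rep^d_{\cC}(GL_X,\phi)$. The only difference is that for this last step the paper invokes \cite[Corollary~3.2.8]{CF} (the statement that $-\boxtimes\cC$ reflects equality of topologising subcategories), whereas you argue it informally via ``$-\boxtimes\cC$ is faithful and reflects inclusions''; you should cite that result or at least state it precisely, since it is not entirely trivial. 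Also, in your first version you invoke ``by part (1)'' before having proved it --- this is unnecessary (the inclusion $\Rep^d_{\cC}(GL_X,\phi)\boxtimes\cC\to\Rep^d_{\cC}GL_X$ follows directly from the definitions), so just drop that phrase.
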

\begin{proof}
Denote by 
$\cA\subset\Rep^d_{\cC}(\GL_X,\phi)$ the topologising subcategory of $\Rep_{\cC}\GL_X$ generated by $X^{\otimes d}$. Restriction of the equivalence \eqref{EqDTRep} yields a functor
$$\cA\boxtimes\cC\;\to\; \Rep_{\cC}^d\GL_X,$$
by construction fully faithful and essentially surjective, and hence an equivalence.
This implies that
$$\Rep^d(\GL_X,d)\boxtimes\cC\;\to\; \Rep_{\cC}\GL_X,$$
is also essentially surjective and hence an equivalence, proving part (1). Part (2) then follows from \cite[Corollary~3.2.8]{CF}.
\end{proof}

\subsection{Modules of categories}We refer to \cite[\S 4.8 and \S 4.10]{Be} for a textbook treatise of the main concepts of this section.

\subsubsection{}\label{defAF} We fix the following data. Let $\cA$ be a $\bk$-linear pseudo-abelian category with a fixed {\em faithful} $\bk$-linear functor
$$\omega:\cA\to\Vecc,$$
where, as before, $\Vecc$ is the category of finite dimensional vector spaces. It follows that $\cA$ is a Krull-Schmidt category, and by finite dimensionality, for each $M\in\Indec\cA$, there is a unique $\bk$-algebra morphism
\begin{equation}\label{alphaM}
\alpha_M:\; \End_{\cA}(M)\,\to\, \bk.
\end{equation}

We define $\mo^\omega(\cA)$, or simply $\mo(\cA)$ when $\omega$ is clear from context, or when $\mo^\omega(\cA)$ is independent of $\omega$ as in \eqref{eq:modFun} below, as the topologising subcategory, generated by $\omega,$ of the abelian category
$\Fun_{\bk}(\cA,\Vecc)$
of $\bk$-linear functors $\cA\to\Vecc$. Note that the condition for a functor $\cA\to\Vecc$ to be a subquotient of some $\omega^n$, $n\in\mN$, is only a `finiteness' condition. Indeed, by faithfulness of $\omega$, every functor $\cA\to\Vecc$ is (inside the category of functors $\cA\to\Vecc^\infty$) a subquotient of $\omega^\kappa$ for some cardinality $\kappa$. Moreover, if $\Indec\cA$ is finite, then
\begin{equation}\label{eq:modFun}
\mo(\cA)\;=\;\mo^\omega(\cA)\;=\;\Fun_{\bk}(\cA,\Vecc).
\end{equation}

%Even though objects in $\Fun_{\bk}(\cA,\Vecc)$ can have infinite length, Jordan-H\"older multiplicities are finite, and every simple object has a p

\begin{example}
We are interested in cases where $\cA$ is a pseudo-abelian subcategory of $\Rep G$, for a finite group $G$, and $\omega=\Frg$ is the forgetful functor. Note that $\mo^\omega(\Rep G)$ contains all finitely generated objects, ({\it i.e.} quotients of representable functors), but in general more. Similarly, $\mo^\omega(\Rep G)$ is usually a proper subcategory of $\Fun(\Rep G,\Vecc)$.
\end{example}

\begin{lemma}\label{LemFiltFrg}Keep notation and assumptions from \S\ref{defAF}.
\begin{enumerate}
\item There is a bijection 
$$\Indec \cA\;\stackrel{1:1}{\leftrightarrow}\;\Irr\,\mo^\omega(\cA),\;\quad M\mapsto \Delta_M,$$
so that $\Delta_M(N)=0$ for all $M\not=N\in\Indec\cA$.
\item Inside $\mo^\omega(\cA)$, we have decomposition multiplicities
$$[F:\Delta_M]\;=\;\dim_{\bk}F(M),\quad\mbox{for all $M\in\Indec\cA$ and $F\in\mo^{\omega}(\cA)$}.$$
\end{enumerate}
\end{lemma}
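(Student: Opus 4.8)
The plan is to realise $\mo^\omega(\cA)$ as a topologising subcategory of the abelian functor category $\Fun_\bk(\cA,\Vecc)$ and to identify its simple objects through the representable functors. First I would record the ambient structure. Since $\omega$ is faithful, every morphism space of $\cA$ embeds into a finite-dimensional vector space, so $\cA$ is Krull--Schmidt with finite-dimensional $\Hom$-spaces, and for $M\in\Indec\cA$ the local ring $\End_\cA(M)$ has residue field $\bk$ (using $\bk=\bar\bk$), the quotient map being $\alpha_M$ and $\mathfrak m_M:=\ker\alpha_M$ its Jacobson radical, i.e.\ its unique maximal left ideal. In $\Fun_\bk(\cA,\Vecc)$ all limits and colimits are computed objectwise, so each evaluation functor $\ev_N\colon F\mapsto F(N)$ is exact; $\mo^\omega(\cA)$, being closed under subobjects and quotients, is abelian, so the symbols $\Irr$ and $[-:-]$ make sense. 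By the $\bk$-linear Yoneda lemma the representable $h_M:=\Hom_\cA(M,-)$ takes values in $\Vecc$, is projective (as $\Hom_{\Fun}(h_M,-)=\ev_M$ is exact) and is cyclic, generated by $1_M\in h_M(M)=\End_\cA(M)$.

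Next I would construct $\Delta_M$. If $V\subsetneq h_M$ is a proper subfunctor then $1_M\notin V(M)$ (else $V$ would contain the subfunctor generated by $1_M$, namely $h_M$), so $V(M)$ is a proper left ideal of $\End_\cA(M)$, hence $V(M)\subseteq\mathfrak m_M$. Consequently $\mathrm{rad}(h_M):=\sum_V V(M)$ summed over proper subfunctors is again proper and is the unique maximal subfunctor, so $\Delta_M:=h_M/\mathrm{rad}(h_M)$ is simple, with $\mathrm{rad}(h_M)(M)=\mathfrak m_M$ (the subfunctor generated by $\mathfrak m_M\subseteq h_M(M)$ is proper and has value $\mathfrak m_M$ at $M$), so $\Delta_M(M)=\End_\cA(M)/\mathfrak m_M=\bk$. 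For $N\in\Indec\cA$ with $N\not\cong M$ and any $g\colon M\to N$, the subfunctor of $h_M$ generated by $g$ has value $\Hom_\cA(N,M)\cdot g$ at $M$, which contains $1_M$ only if $g$ is a split monomorphism; that would make $M$ a direct summand of the indecomposable $N$, forcing $g$ to be an isomorphism — impossible. So this subfunctor is proper, whence $g\in\mathrm{rad}(h_M)(N)$ and $\Delta_M(N)=0$. In particular the $\Delta_M$ are pairwise non-isomorphic, and every simple $S\in\Fun_\bk(\cA,\Vecc)$ is some $\Delta_M$: picking $M\in\Indec\cA$ with $S(M)\neq0$, Yoneda gives a nonzero, hence surjective, map $h_M\to S$, so $S$ is the unique simple quotient $\Delta_M$ of $h_M$.

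For part (1), simples of the topologising subcategory $\mo^\omega(\cA)$ form a subset of $\{\Delta_M\mid M\in\Indec\cA\}$, and I must show each $\Delta_M$ occurs. By faithfulness $\omega(M)\neq0$, so $\Hom_{\Fun}(h_M,\omega)=\omega(M)\neq0$; the image of a nonzero map $h_M\to\omega$ is a subfunctor of $\omega$, hence lies in $\mo^\omega(\cA)$, and is a nonzero quotient of $h_M$, hence surjects onto the top $\Delta_M$, so $\Delta_M\in\mo^\omega(\cA)$ since $\mo^\omega(\cA)$ is closed under quotients. Thus $M\mapsto\Delta_M$ is the claimed bijection $\Indec\cA\to\Irr\,\mo^\omega(\cA)$, satisfying $\Delta_M(N)=0$ for $N\in\Indec\cA\setminus\{M\}$. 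For part (2), fix $M$ and $F\in\mo^\omega(\cA)$. The exact functor $\ev_M$ sends $\Delta_{M'}$ to $\Delta_{M'}(M)$, which is $\bk$ for $M'=M$ and $0$ otherwise. Since $\dim_\bk F(M)<\infty$, only finitely many composition factors of $F$ are $\cong\Delta_M$; writing $F$ as the directed union of its finite-length subobjects (along which $\ev_M$ commutes with the filtered colimit) and applying $\ev_M$ to a composition series of a sufficiently large such subquotient gives $\dim_\bk F(M)=\dim_\bk\ev_M(F)=[F:\Delta_M]$.

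The main obstacle I anticipate is the bookkeeping in part (2): objects of $\mo^\omega(\cA)$ — already $\omega$ itself once $\Indec\cA$ is infinite — need not have finite length, so one must be careful about the meaning of $[F:\Delta_M]$ and argue that $\ev_M$ still computes it via the local-finiteness/filtered-colimit argument above. The verification that $\Delta_M$ is simple with the stated one-point support, where Krull--Schmidt and $\bk=\bar\bk$ enter, is routine but needs care; and the only step genuinely using faithfulness of $\omega$ is the assertion that each $\Delta_M$ actually belongs to $\mo^\omega(\cA)$.
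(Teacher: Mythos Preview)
Your argument is correct and follows essentially the same route as the paper: both identify the simples of $\Fun_\bk(\cA,\Vecc)$ as the unique simple quotients $\Delta_M$ of the representables $h_M=\Hom_\cA(M,-)$, and both use faithfulness of $\omega$ to place $\Delta_M$ inside $\mo^\omega(\cA)$. For part~(1) you spell out what the paper simply calls ``well-known''; your device of passing to the image of a single nonzero map $h_M\to\omega$ is slightly more economical than the paper's embedding of all of $h_M$ into $\omega^{\dim\omega(M)}$, though the latter has the side benefit of showing $h_M\in\mo^\omega(\cA)$, which the paper then uses in part~(2).

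For part~(2) the paper observes that $h_M$ is the projective cover of $\Delta_M$ in $\mo^\omega(\cA)$ and invokes Yoneda: $[F:\Delta_M]=\dim\Hom(h_M,F)=\dim F(M)$. Your argument via exactness of $\ev_M$ is the same computation in disguise, since $\ev_M\cong\Hom(h_M,-)$. The one point to flag is your final justification: writing $F$ as the directed union of its finite-length subobjects is not valid in general, because the subfunctor of $F$ generated by a single element $v\in F(N)$ is a quotient of $h_N$ and need not be supported on finitely many indecomposables. The projective-cover formulation (equivalently, your own observation that $\ev_M$ is exact and has the right values on simples) already gives $[F:\Delta_M]=\dim F(M)$ directly, without any recourse to a finite-length exhaustion; so the fix is simply to drop that last clause rather than to repair it.
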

\begin{proof}
The bijection between simple objects in $\Fun_{\bk}(\cA,\Vecc)$ and $\Indec\cA$ is well-known. Concretely, for $M\in\Indec\cA$, we can define
$\Delta_M:\;\cA\to\Vecc,$
uniquely determined by $\Delta_M(N)=0$ for all indecomposables $N\not\simeq M$ and $\Delta_M(M)=\bk$, while the action of $\Delta_M$ on $\End_{\cA}(M)$ is given by
$\alpha_M$ in \eqref{alphaM}.

For part (1) it thus suffices to show that $\Delta_M$ is included in $\mo^\omega(\cA)$.
For this we can observe that $\Delta_M$ is a quotient of its projective cover in $\Fun_{\bk}(\cA,\Vecc)$
$$\Hom_{\cA}(M,-):\;\cA\to\Vecc,$$
which is in turn, by faithfulness of $\omega$, a subobject of $\omega^{\dim_{\bk}\omega(M)}$.

The previous paragraph shows that $\Hom_{\cA}(M,-)$ is the projective cover of $\Delta_M$ in $\mod(\cA)$ so that part (2) follows from the Yoneda lemma.
\end{proof}

\subsubsection{} Let $\cB\subset\cA$ be a pseudo-abelian subcategory. We denote the restriction of $\omega$ to $\cB$ again by $\omega$. Then we have an obvious restriction functor
\begin{equation}\label{FAB}
\mo^\omega(\cA)\to\mo^\omega(\cB),\quad \Delta\mapsto \Delta|_{\cB}
\end{equation}
which identifies the right-hand side with the Serre quotient of $\mo^\omega(\cA)$ with respect to the Serre subcategory of $F\in \mo^{\omega}(\cA)$ that satisfy $F(N)=0$ for all $N\in\cB$. The following lemma is now standard.

\begin{lemma}\label{LemSer}
\begin{enumerate}
\item Let $\cC$ be an abelian category.
Composition with~\eqref{FAB} yields an equivalence of categories between the category of exact functors $\mo(\cB)\to\cC$ and the category of exact functors $\mo(\cA)\to\cC$  which send $F:\cA\to\Vecc$ to zero when $F|_{\cB}=0$.
\item Consider a family of pseudo-abelian subcategories $\{\cB_\alpha\subset\cA\mid\alpha\in\Lambda\}$ such that $\sum_\alpha\cB_\alpha=\cA$. Then the functor
$$\mo(\cA)\;\to\; \prod_\alpha \mo(\cB_\alpha)$$
is faithful.
Assume furthermore that there is a function $\gamma:\Lambda^{(2)}\to\Lambda$, mapping every unordered pair $\{\alpha,\beta\}$ to $\gamma(\alpha,\beta)$ such that
$\cB_\alpha\subset\cB_{\gamma(\alpha,\beta)}\supset\cB_\beta$.
Then for any $\Delta, \nabla\in\mo(\cA)$ and a collection of morphisms $$\{f_\alpha:\Delta|_{\cB_\alpha}\Rightarrow \nabla|_{\cB_\alpha}\mid\alpha\in\Lambda\},$$
there is $f:\Delta\Rightarrow\nabla$ with $f|_{\cB_\alpha}=f_\alpha$ if and only if $f_{\gamma(\alpha,\beta)}|_{\cB_\alpha}=f_\alpha$ for all $\{\alpha,\beta\}\in \Lambda^{(2)}$.

\end{enumerate}

\end{lemma}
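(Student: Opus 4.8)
The plan is to treat the two parts separately: part~(1) is a formal consequence of the Serre-quotient description of the restriction functor recorded just above, while part~(2) is a gluing argument exploiting that $\cA$ is generated, as a pseudo-abelian category, by the $\cB_\alpha$.

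For part~(1), recall from the paragraph preceding the lemma that restriction along \eqref{FAB} is exact and identifies $\mo^\omega(\cB)$ with the Serre quotient $\mo^\omega(\cA)/\cS$, where $\cS\subseteq\mo^\omega(\cA)$ is its kernel, i.e.\ the Serre subcategory of those $F$ with $F|_{\cB}=0$; equivalently, by \Cref{LemFiltFrg}(2), of those $F$ whose composition factors are all of the form $\Delta_M$ with $M\in\Indec\cA\setminus\Indec\cB$. Part~(1) is then precisely the universal property of the Serre (Gabriel) quotient: for any abelian $\cC$, precomposition with the quotient functor $\mo^\omega(\cA)\to\mo^\omega(\cA)/\cS$ is an equivalence between exact functors $\mo^\omega(\cA)/\cS\to\cC$ and exact functors $\mo^\omega(\cA)\to\cC$ that annihilate $\cS$ --- which, unwinding the description of $\cS$, is exactly the condition of killing $F$ whenever $F|_{\cB}=0$.

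For part~(2), faithfulness is immediate: a morphism $f\colon\Delta\Rightarrow\nabla$ in $\mo^\omega(\cA)\subseteq\Fun_{\bk}(\cA,\Vecc)$ is a natural transformation, hence determined by its components, and since $\cA=\sum_\alpha\cB_\alpha$ every object of $\cA$ is a direct summand of a finite direct sum of objects of $\bigcup_\alpha\cB_\alpha$, while $\Delta,\nabla$ are additive; so the components of $f$ are determined by the family $(f|_{\cB_\alpha})_\alpha$, whence the claim. For the gluing statement necessity is trivial, so assume given $\{f_\alpha\}$ with $f_{\gamma(\alpha,\beta)}|_{\cB_\alpha}=f_\alpha$ for all unordered pairs. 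Put $g_N:=(f_\alpha)_N$ for an object $N$ of $\cB_\alpha$; this is well defined, since if $N$ lies also in $\cB_\beta$ then $N$ lies in $\cB_{\gamma(\alpha,\beta)}$ and $(f_\alpha)_N=(f_{\gamma(\alpha,\beta)})_N=(f_\beta)_N$. If $h\colon N\to N'$ is a morphism of $\cA$ with $N$ in $\cB_\alpha$ and $N'$ in $\cB_\beta$, then $h$, $N$ and $N'$ all lie in $\cB_{\gamma(\alpha,\beta)}$ (using the hypothesis on $\gamma$ and that $\cB_{\gamma(\alpha,\beta)}$ is pseudo-abelian), so naturality of $f_{\gamma(\alpha,\beta)}$ gives $\nabla(h)\circ g_N=g_{N'}\circ\Delta(h)$; thus $g$ is a natural transformation between the restrictions of $\Delta$ and $\nabla$ to the full subcategory $\cB'\subseteq\cA$ on $\bigcup_\alpha\cB_\alpha$. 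Since $\cA$ is the pseudo-abelian completion of $\cB'$ and $\Vecc$ is pseudo-abelian, $g$ extends uniquely to a natural transformation $f\colon\Delta\Rightarrow\nabla$ on $\cA$, and $f|_{\cB_\alpha}=f_\alpha$ by construction.

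The only point requiring any care is this last extension step: one must know that a $\bk$-linear natural transformation defined on $\cB'$ extends uniquely over $\cA$, which holds because $\cA$, being pseudo-abelian and generated by the full subcategory $\cB'$, is (equivalent to) the pseudo-abelian envelope of $\cB'$, and the universal property of that envelope applies since the target $\Vecc$ is pseudo-abelian. Everything else is routine bookkeeping, and part~(1) reduces entirely to the universal property of Serre quotients.
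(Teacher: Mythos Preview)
Your proof is correct. The paper gives no proof of this lemma, simply remarking that it is ``standard''; your argument is essentially the one implicit in that remark, unwinding the universal property of the Serre quotient for part~(1) and doing the gluing explicitly for part~(2). One small comment: in the extension step at the end of part~(2), it may be cleaner to bypass the language of pseudo-abelian envelopes and argue directly that in the Krull--Schmidt category~$\cA$ every object decomposes into indecomposables, each lying in some $\cB_\alpha$, so that any morphism in $\cA$ is a matrix of morphisms between indecomposables, each of which lies in a single $\cB_{\gamma(\alpha,\beta)}$ by fullness; this makes both well-definedness and naturality of the extension transparent without invoking an external universal property.
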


\begin{example}
We have pseudo-abelian subcategories $\mB_X^d[\cC]\subset\bT^d$, where $(\cC,X\in\cC)$ runs over all tensor categories and their objects. For $(\cD,Y\in\cD)$, Proposition~\ref{PropF}(1) implies
$$\mB^d_X[\cC]=\mB^d_{X}[\cC\boxtimes\cD]\subset \mB^d_{X\oplus Y}[\cC\boxtimes\cD]\supset \mB^d_{Y}[\cC\boxtimes\cD]=\mB^d_Y[\cD].$$ Hence
we are in the situation of Lemma~\ref{LemSer}(2).
\end{example}

\subsubsection{}\label{FunAC}
We fix a pseudo-abelian subcategory $\cA\subset\Rep S_d$ with $\Indec\cA$ finite.
Then
$$\mo(\cA)\boxtimes \cC=\Fun_{\bk}(\cA,\Vecc)\boxtimes \cC\;\simeq\; \Fun_{\bk}(\cA,\cC),$$
see \cite[Example~3.2.4(3)]{CF}.
By the Yoneda lemma 
\begin{eqnarray*}Q&\simeq &\int^{M\in \cA} Q(M)\otimes_{\bk} \Hom_{S_d}(M,-)\\
&\simeq &
 \int_{M\in \cA} Q(M)\otimes_{\bk} \Hom_{S_d}(-,M)^\ast,\qquad \mbox{for all $Q\in \Fun_{\bk}(\cA,\cC)$}.
  \end{eqnarray*}

For all $U,V\in\cC$, we define
\begin{equation}\label{eqRUV}R_{U;V}\in\,\Fun_{\bk}(\cA,\cC),\quad M\mapsto \left((M\otimes_{\bk}(U^{\ast})^{\otimes d})\right)^{S_d}\otimes V.\end{equation}

%%%%%%%%%%%%%%%%%%%%%%
%%%%%%%%%%%%%%%%%%%%%%%%%%%%%%%%%
%%%%%%%%%%%%%%%%%%%%%%%%%%%%%%%%%
%%%%%%%%%%%%%%%%%%%%%%%%%%%%%%%%%
%%%%%%%%%%%%%%%%%%%%%%%%%%%%%%%%%
%%%%%%%%%%%
%%%%%%%%%%%
%%%%%%%%%%%
%%%%%%%%%%%
%%%%%%%%%%%
%%%%%%%%%%%
%%%%%%%%%%%
%%%%%%%%%%%
%%%%%%%%%%%

\section{Two approaches to polynomial functors}\label{SecPoly}

Fix $d\in\mN$. 

\subsection{Strict polynomial functors}
Fix a tensor category $\cC$ over $\bk$.

\subsubsection{}
Recall the self-enrichement $\uC$  of $\cC$ from Example~\ref{SelfEnr}.
We can also consider $\uC$ as enriched in the category of affine schemes in $\Ind\cC$, and define strict polynomial functors in this fashion, see \cite{Ax} for the case $\cC=\sVec$. Instead, we follow the equivalent path to polynomial functors from \cite{Kr} and references therein. 

Let $\Gamma^d\uC$ be the $\cC$-enriched category with same objects as $\uC$, but 
$$\Gamma^d\uC(X,Y)\;:=\;\Gamma^d( \uC(X,Y))\;\simeq\; \underline{\Hom}(X^{\otimes d}, Y^{\otimes d})^{S_d}.$$
The composition arrows are obtained from the observation that evaluation at $Y^{\otimes d}$,
$$\uHom(Y^{\otimes d}, Z^{\otimes d})\otimes \uHom(X^{\otimes d}, Y^{\otimes d})\;\to\; \uHom(X^{\otimes d}, Z^{\otimes d}),$$
restricts to a map between the $\Gamma^d\uC$-morphism subobjects.

\begin{remark}\label{RemGamEnr}
 Let $\Gamma^d\cC$ be the underlying category of $\Gamma^d\uC$, with morphism spaces
$$\Hom_{\cC}(X^{\otimes d}, Y^{\otimes d})^{S_d}.$$
Then $\Gamma^d\cC$ is the full subcategory of $\cC\boxtimes\Rep S_d$ corresponding to the essential image of $(-)^{\otimes d}$, see Example~\ref{ExOm}(1).
As in Section~\ref{ModEnr} we consider the $\cC$-enrichment of  $\cC\boxtimes \Rep S_d$ as a $\cC$-module category. Then $\Gamma^d\uC$ is the full $\cC$-enriched subcategory of $\underline{\cC\boxtimes \Rep S_d}$ corresponding to the objects contained in $\Gamma^d\cC\subset \cC\boxtimes \Rep S_d$.

\end{remark}

\begin{definition}\label{DefSPol}
The category $\SPol^d\cC$ of {\bf strict polynomial $\cC$-functors of degree $d$} is the $\bk$-linear category of $\cC$-enriched functors $\Gamma^d\uC\to\uC$ and $\cC$-enriched natural transformations. 
\end{definition}

Concretely, a strict polynomial $\cC$-functor $\mT$ is an assignment
$$\Ob \cC\to\Ob\cC,\quad X\mapsto \mT(X),$$
and, for each pair of objects $X,Y$ in $\cC$, a morphism in $\cC$
\begin{equation}\label{EqMorPF}
\underline{\Hom}(X^{\otimes d},Y^{\otimes d})^{S_d}\to \uHom(\mT(X),\mT(Y))
\end{equation}
satisfying the usual associativity and identity relations.

A morphism from $\mT_1$ to $\mT_2$ comprises a morphism 
$\mT_1(X)\to \mT_2(X)$ in $\cC$ for every object $X\in \cC$ so that the two canonical morphisms
$$\uHom(X^{\otimes d},Y^{\otimes d})^{S_d}\otimes \mT_1(X)\;\rightrightarrows\; \mT_2(Y)$$
are equal for all $X,Y\in\cC$.

Using exactness of the bifunctor $\uHom$ it follows easily that $\SPol^d\cC$ is an abelian category. In particular, a chain $\mT_1\to \mT_2\to \mT_3$ in $\SPol^d\cC$ is a short exact sequence if and only if
$$0\to \mT_1(X)\to \mT_2(X)\to \mT_3(X)\to 0$$
is exact in $\cC$ for each $X\in\cC$.

\begin{example}\label{TYZ}
For $U,V\in\cC$, we define $\mT_{U;V}\in\SPol^d\cC$, by
$$\mT_{U;V}(X)\;:=\; U\otimes \uHom(V^{\otimes d},X^{\otimes d})^{S_d}\,=\,U\otimes \Gamma^d(V^\ast\otimes X).$$
The defining morphisms \eqref{EqMorPF}, for all $X,Y\in\cC$, can be obtained via adjunction from
$$U\otimes \Gamma^d(V^\ast\otimes X)\otimes \Gamma^d(X^\ast\otimes Y)\;\to\; U\otimes \Gamma^d(V^\ast\otimes Y).$$
\end{example}

\begin{example}\label{ExFullTensor}
For a fixed $Z\in\cC$, we define the assignment
$X\mapsto Z\otimes X^{\otimes d}$, with morphisms
$$\uHom(X^{\otimes d}, Y^{\otimes d})^{S_d}\;\hookrightarrow\;\uHom(X^{\otimes d}, Y^{\otimes d})\;\hookrightarrow\; \uHom(Z\otimes X^{\otimes d}, Z\otimes  Y^{\otimes d})$$
given by inclusion of invariants followed by whiskering of $\co_Z$. We denote the corresponding strict polynomial functor by $Z\otimes \mT^d$.
\end{example}

\begin{prop}\label{PropTUV}
\begin{enumerate}
\item For every $\mT\in \SPol^d\cC$ we have natural isomorphisms
$$\Hom_{\SPol^d\cC}(\mT_{U;V}, \mT)\;\simeq\; \Hom_{\cC}(U,\mT(V)).$$
\item Every object in $\Ind(\SPol^d\cC)$ is a quotient of a direct sum of objects of the form $\mT_{U;V}$.
\item Every object in $\Ind(\SPol^d\cC)$ is a subquotient of a direct sum of strict polynomial functors $ Z\otimes \mT^{ d}$ as in Example~\ref{ExFullTensor}.
\end{enumerate}
\end{prop}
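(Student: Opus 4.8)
The plan is to prove part (1) as an instance of the (co-)Yoneda lemma for $\cC$-enriched functors applied to the enriched category $\Gamma^d\uC$, and then to read off parts (2) and (3) from it. Recall (Example~\ref{TYZ}) that $\mT_{U;V}$ is exactly the co-representable $\cC$-enriched functor $\Gamma^d\uC(V,-)=\Gamma^d(V^\vee\otimes-)$ with $U$ tensored on. I would make the claimed bijection explicit. Given $\eta\colon\mT_{U;V}\to\mT$, send it to the composite $U=U\otimes\unit\xrightarrow{U\otimes u_V}U\otimes\Gamma^d(V^\vee\otimes V)=\mT_{U;V}(V)\xrightarrow{\eta_V}\mT(V)$, where $u_V\colon\unit\to\Gamma^d(V^\vee\otimes V)=\cS(V,d)$ is the algebra unit (the image of $\mathrm{id}_{V^{\otimes d}}$). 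Conversely, from $f\colon U\to\mT(V)$ define, for each $X$, the morphism $\eta_X\colon U\otimes\Gamma^d(V^\vee\otimes X)\xrightarrow{f\otimes 1}\mT(V)\otimes\Gamma^d(V^\vee\otimes X)\to\mT(X)$, the last arrow being the adjoint of the enriched structure morphism $\Gamma^d(V^\vee\otimes X)=\Gamma^d\uC(V,X)\to\uHom(\mT(V),\mT(X))$ of $\mT$. That $\eta$ is genuinely a morphism in $\SPol^d\cC$ (i.e.\ is compatible with \eqref{EqMorPF}), and that the two assignments are mutually inverse, are the standard Yoneda verifications: computing $\eta_V\circ(U\otimes u_V)$ for the $\eta$ built from $f$ reduces to the unit axiom of the enriched functor $\mT$ and gives back $f$; the opposite composite and the well-definedness use the associativity axiom of $\mT$ together with naturality. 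Naturality of the bijection in $\mT$ is immediate.

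For part (2), let $\mathcal{F}\in\Ind(\SPol^d\cC)$. Fixing a skeleton of $\cC$, form the morphism $\Theta\colon\bigoplus\mT_{Z;V}\to\mathcal{F}$ whose summands are indexed by triples $(V,Z,\phi)$ with $V,Z\in\cC$ and $\phi\in\Hom_{\Ind\cC}(Z,\mathcal{F}(V))$, the $(V,Z,\phi)$-summand being the morphism corresponding to $\phi$ under part (1). I claim $\Theta$ is an epimorphism. Since a morphism in $\Ind(\SPol^d\cC)$ is an epimorphism as soon as it becomes one after evaluation at every $W\in\cC$ (by the pointwise description of exact sequences recorded after Definition~\ref{DefSPol}), it suffices to check this at a fixed $W$. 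But by the construction of the bijection in part (1), for any $\phi\colon Z\to\mathcal{F}(W)$ the composite $Z\xrightarrow{Z\otimes u_W}Z\otimes\Gamma^d(W^\vee\otimes W)=\mT_{Z;W}(W)\to\mathcal{F}(W)$ equals $\phi$; hence every morphism from an object of $\cC$ into $\mathcal{F}(W)$ factors through $\bigl(\bigoplus\mT_{Z;V}\bigr)(W)\to\mathcal{F}(W)$, and since $\cC$ generates $\Ind\cC$ this evaluation of $\Theta$ is an epimorphism.

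For part (3), by part (2) it is enough to exhibit each $\mT_{U;V}$ as a subobject of some $Z\otimes\mT^d$. Write $\mT_{U;V}=U\otimes\mT_{\unit;V}$ (whiskering a strict polynomial functor by $U$, as in Example~\ref{ExFullTensor} with general $U$). The shuffle isomorphism $(V^\vee\otimes X)^{\otimes d}\xrightarrow{\sim}(V^\vee)^{\otimes d}\otimes X^{\otimes d}$, which is $S_d$-equivariant for the permutation action on the source and the diagonal action on the target, identifies $\Gamma^d(V^\vee\otimes X)=\uHom(V^{\otimes d},X^{\otimes d})^{S_d}$ with an $S_d$-invariant subobject of $(V^\vee)^{\otimes d}\otimes X^{\otimes d}$, giving a monomorphism $\mT_{\unit;V}(X)\hookrightarrow(V^\vee)^{\otimes d}\otimes X^{\otimes d}$ natural in $X$. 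Under this identification the enriched action of $\Gamma^d\uC(X,Y)$ on $\mT_{\unit;V}(X)$ — post-composition in $\Gamma^d\uC$ — becomes the restriction of $\mathrm{id}_{(V^\vee)^{\otimes d}}\otimes(-)$, which is precisely the action defining $(V^\vee)^{\otimes d}\otimes\mT^d$; hence the monomorphism is a morphism in $\SPol^d\cC$. Whiskering by $U$ yields $\mT_{U;V}\hookrightarrow Z\otimes\mT^d$ with $Z=U\otimes(V^\vee)^{\otimes d}$. Thus $\mathcal{F}$, being a quotient of $\bigoplus_i\mT_{U_i;V_i}\hookrightarrow\bigoplus_i Z_i\otimes\mT^d$, is a subquotient of a direct sum of functors $Z_i\otimes\mT^d$.

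The main obstacle I expect is the bookkeeping in part (1): verifying that the $\eta$ manufactured from $f$ really respects the enriched composition \eqref{EqMorPF} and that the two assignments are inverse to each other. Everything else is formal once (1) is available; in part (3) the only delicate point is tracking the braiding (shuffle) isomorphisms carefully enough that "post-composition in $\Gamma^d\uC$" matches on the nose the whiskering of $\co$ on the factor $(V^\vee)^{\otimes d}$.
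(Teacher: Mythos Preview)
Your proof is correct and follows essentially the same approach as the paper: part (1) is the enriched Yoneda lemma (which you spell out explicitly), part (2) uses that the $\mT_{U;V}$ form a generating family (the paper phrases this as faithfulness of $\prod_{U,V}\Hom(\mT_{U;V},-)$, you construct the canonical epimorphism directly), and part (3) uses the embedding $\mT_{U;V}\hookrightarrow U\otimes(V^\vee)^{\otimes d}\otimes\mT^d$. Your more detailed verification of the embedding in part (3) and of the Yoneda bijection in part (1) are exactly the checks the paper leaves implicit.
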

\begin{proof}
Part (1) is just a version of the Yoneda lemma. Part (1) implies that 
$$\prod_{U,V}\Hom_{\SPol^d\cC}(\mT_{U;V},-)\,:\; \SPol^d\cC\to\Vecc^\infty$$
is a faithful functor, from which part (2) follows.

Since $\mT_{U;V}$ is a subobject of $ U\otimes (V^{\ast})^{\otimes d}\otimes\mT^d$, part (3) follows from part (2).
\end{proof}

Motivated by Proposition~\ref{PropTUV} we define the following `finite' version of $\SPol^d\cC$.

\begin{definition}\label{DefSPolcirc}
The full (topololigising) subcategory $\SPol^d_{\circ}\cC$ of $\SPol^d\cC$ is the category of subquotients of the objects $Z\otimes \mT^d$, for $Z\in\cC$.
\end{definition}

\begin{example}
We have $\SPol^1\cC\simeq\cC\simeq \SPol^1_{\circ}\cC$, see Theorem~\ref{ThmDisc}.
\end{example}

\subsubsection{}Fix an object $X\in\cC$. 
We have an obvious $\bk$-linear exact functor
\begin{equation}\label{Eval2}
\SPol^d\cC\;\to\; \Mod_{\cC}\cS(X,d)\simeq \Rep^d \GL_X,\quad \mT\mapsto \mT(X).
\end{equation}
 Indeed, if we interpret the $\cC$-algebra $\cS(X,d)$ as a $\cC$-enriched category with one object, it is by construction a full subcategory of $\Gamma^d\uC$ and the above functor corresponds to restriction.

\subsection{Universal polynomial functors}

\subsubsection{} A {\bf universal functor} is the data of a (not necessarily additive) endofunctor $\Phi^\cC$ of each tensor category $\cC$ over $\bk$ together with a natural isomorphism
$$\eta^F\;:\; \Phi^{\cD}\circ F\;\stackrel{\sim}{\Rightarrow}\;F\circ \Phi^{\cC} $$
for each tensor functor $F:\cC\to \cD$ such that:
\begin{enumerate}
\item $\eta^{\Id_{\cC}}=\Id_{\Phi^{\cC}}$ and $\eta^{G\circ F}\;=\;G(\eta^F)\circ (\eta^G)_F,$ for all compositions $G\circ F$ of tensor functors.
\item For every natural transformation $f:F\Rightarrow G$ of tensor functors $F,G:\cC\to\cD$, we have 
$$f_{\Phi^{\cC}}\circ\eta^F= \eta^G\circ\Phi^{\cD}( f).$$
\end{enumerate}

A morphism between universal functors $\{\Phi^{\cC},\eta^F\}$ and $\{\Psi^{\cC},\xi^F\}$ is the assignment of a natural transformation $\alpha^{\cC}:\Phi^{\cC}\Rightarrow \Psi^{\cC}$ for each $\cC$, so that for each tensor functor $F:\cC\to\cD$ 
$$ F(\alpha^{\cC})\circ \eta^F\;=\; \xi^F\circ (\alpha^{\cD})_F.$$

The category $\UFun=\UFun_{\bk}$ of universal functors is $\bk$-linear and abelian. Indeed, finite limits and colimits can be computed at each object in each tensor category.

\begin{example}\label{ExTd}
The functors 
$$T_d^{\cC}:\,\cC\to\cC,\; X\mapsto X^{\otimes d}$$
define a universal functor $T_d$. Indeed, the monoidal structure of tensor functors provides the required natural isomorphisms 
$$\gamma^F_X:\;F(X)^{\otimes d}\;\xrightarrow{\sim}\; F(X^{\otimes d}).$$ All coherence conditions follow from the definition of monoidal functors.
\end{example}

\begin{definition}\label{DefLPol}
The category $\Pol^d_{\bk}$ of {\bf universal polynomial functors of degree $d$} is the topologising subcategory of $\UFun_{\bk}$ generated by $T_d$.
\end{definition}

\begin{example}
\begin{enumerate}
\item For $\mathrm{char}(\bk)=0$ or $d<\mathrm{char}(\bk)=p$ the Schur functors $\mS^\lambda$, see \cite{Del02}, for $\lambda\vdash d$ are universal polynomial functors of degree $d$.
\item For arbitrary characteristics, $\Sym^d$, $\Gamma^d$, $\bigwedge^d$, $\Lambda^d$ and $\mathrm{A}^d$, see \cite[\S 2.3]{CEO}, are universal polynomial functors.
\item For $\mathrm{char}(\bk)=p>0$, the enhanced Frobenius functor $\Fr^{en}:\cC\to\cC\boxtimes\overline{\Rep S_p}$ from \cite[\S 3.2]{CEO} decomposes into $(p-1)^2$ universal polynomial functors of degree $p$. In fact, with notation as in \S \ref{sec:D} below, and $M_i$ running over non-negligible indecomposable $S_p$-representations:
$$\Fr^{en}\;=\;\bigoplus_i \,D_{M_i}(-)\boxtimes \overline{M_i}.$$
\end{enumerate}

\end{example}

\subsubsection{}For any tensor category $\cD$ over $\bk$ with object $Y\in\cD$, there is an obvious $\bk$-linear and exact evaluation functor
$\Pol^d_{\bk}\to \cD,$ sending $\Phi$ to $\Phi^{\cD}(Y).$
Of particular interest is $\cD=\Rep_{\cC}(\GL_X,\phi)$ and $Y$ the $\GL_X$-representation $X$, for some $X\in\cC$. This yields a $\bk$-linear and exact evaluation functor
\begin{equation}\label{Eval3}
\Pol^d_{\bk}\;\to\; \Rep^d_{\cC}(\GL_X,\phi),\quad \Phi\mapsto \Phi^{\cC}(X).
\end{equation}

\begin{remark}\label{RemUniqueAct}
For ease of notation, we will often abbreviate $\Phi^{\Rep_{\cC}\GL_X}(X)$ to $\Phi^{\cC}(X)$ as in \eqref{Eval3}. Indeed, we can identify both under the forgetful functor $\Rep_{\cC}\GL_X\to\cC$ (by definition of universal functors). Moreover, the $\cO(\GL_X)$-coaction on $\Phi^{\cC}(X)$ can be recovered unambiguously; it is the unique coaction that  $\Phi^{\cC}(X)$ inherits as a subquotient of the $\GL_X$-representation $(X^{\otimes d})^n$, for $\Phi^{\cC}$ viewed as a subquotient of $(T_d)^n$.
\end{remark}

%%%%%%%%%%%%%%%%%%%%%%%%%%%%%%%%%%%%%%%%%%%%%%%%%%%%%%%%%%%%%%%%%%%%%%%%%%%%%%%%%%%%%%%%%%%%%%%%%%%%%%%%%%%%%%%%%%%%%%%%%%%%%%%%%%%%%%%%%%%%%%%%%%%%%%%%%%%%%%%%%%%%%%%%%%%%%%%%%%%%%%%%%%%%%%%%%%%%%%%%%%%%%%%%%%%%%%%%%%%%%%%%%%%%%%%%%%%%%%%%%%%%%%%%%%%%%%%%%%%%%%%%%%%%%%%%%%%%%%%%%%%%%%%%%%%%%%%%%%%%%%%%%%%%%%%%%%%%%%%%%%%%%%%%%%%%%%%%%%%%%%%%%%%%%%%%%%%%%%%%%%%%%%%%%%%%%%%%%%%%%%%%%%%%%%%%%%%%%%%%%%%%%%%%%%%%%%%%%%%%%%%%%%%%%%%%%%%%%%%%%%%%%%%%%%%%%%%%%%%%%%%%%%%%%%%%%%%%%%%%%%%%%%%%%%%%%%%%%%%%%%%%%%%%%%%%%%%%%%%%%%%%%%%%%%%%%%%%%%%%%%%%%%%%%%%%%%%%%%%%%%%%%%%%%%%%%%%%%%%%%%%%%%%%%%%%%%%%%%%%%%%%%%%%%%%%%%%%%%%%%%%%%%%%%%%%%%%%%%%%%%%%%%%%%%%%%%%%%%%%%%%%%%%%%%%%%%%%%%%%%%%%%%%%%%%%%%%%%%%%%%%%%%%%%%%%%%%%%%%%%%%%%%%%%%%%%%%%%%%%%%%%%%%%%%%%%%%%%%%%%%%%%%%%%%%%%%%%%%%%%%%%%%%%%%%%%%%%%%%%%%%%%%%%%%%%%%%%%%%%%%%%%%%%%%%%%%%%%%%%%%%%%%%%%%%%%%%%%%%%%%%%%%%%%%%%%%%%%%%%%%%%%%%%%%%%%%%%%%%%%%%%%%%%%%%%%%%%%%%%%%%%%%%%%%%%%%%%%%%%%%%%%%%%%%%%%%%%%%%%%%%%%%%%%%%%%%%%%%%%%%%%%%%%%%%%%%%%%%%%%%%%%%%%%%%%%%%%%%%%%%%%%%%%%%%%%%%%%%%%%%%%%%%%%%%%%%%%%%%%%%%%%%%%%%%%%%%%%%%%%%%%%%%%%%%%%%%%%%%%%%%%%%%%%%%%%%%%%%%%%%%%%%%%%%%%%%%%%%%%%%%%%%%%%%%%%%%%%%%%%%%%%%%%%%%%%%%%%%%%%%%%%%%%%%%%%%%%%%%%%%%%%%%%%%%%%%%%%%%%%%%%%%%%%%%%%%%%%%%%%%%%%%%%%%%%%%%%%%%%%%%%%%%%%%%%%%%%%%%%%%%%%%%%%%%%%%%%%%%%%%%%%%%%%%%%%%%%%%%%%%%%%%%%%%%%%%%%%%%%%%%%%%%%%%%%

\section{Three sides of the same coin}\label{sec:coin}

In this section we show how the study of symmetric group representations from tensor categories from the first half of the paper and both approaches to polynomial functors in Section~\ref{SecPoly} are actually different ways of looking at the same information.
Throughout we fix $d\in\mZ_{>0}$. Unless further specified, $\cC$ is an arbitrary tensor category over $\bk$.

\subsection{Main results}

Recall $\bT^d\subset\Rep S_d$ from Question~\ref{QBk}.
The results in the following theorem will be proved in Theorems~\ref{LemDVan}, \ref{ThmPol}, \ref{ThmSPol} and \ref{ThmDisc}, while (3) follows from (4).
\begin{theorem}\label{ThmMain}
Let $\cC$ be a tensor category over $\bk$ and $X\in\cC$.
\begin{enumerate}
\item Evaluation \eqref{Eval2} yields an equivalence
$$\SPol^d_{\circ}\cC\;\xrightarrow{\sim}\; \Rep^d_{\cC}\GL_X$$
 if and only if $\mB^d_X=\mB^d[\cC]$.
\item Evaluation \eqref{Eval3} yields an equivalence
$$\Pol^d_{\bk}\;\xrightarrow{\sim}\; \Rep^d_{\cC}(\GL_X,\phi)$$
if and only if $\mB^d_X=\bT^d$.
\item If $\mB^d[\cC]=\bT^d$, we have an equivalence
$$\SPol^d_{\circ}\cC\;\simeq\; \cC\boxtimes \Pol^d_{\bk}.$$
\item There are equivalences
$$\Pol^d_{\bk}\simeq \mo^{\Frg}(\bT^d),\;\, \SPol^d_\circ\cC\simeq\cC\boxtimes \mo^{\Frg}(\mB^d[\cC])\;\mbox{and}\;\, \Rep^d_{\cC}(\GL_X,\phi)\simeq \Fun_{\bk}(\mB_X^d,\Vecc).$$
\item Inside $\Rep_{\cC} \GL_X$, the representation $X^{\otimes d}$ has a Jordan-H\"older filtration with simples labelled by $\Indec\mB^d_X$, where the simple corresponding to $M\in\mB^d_X$ appears $\dim_{\bk}M$ times.
\end{enumerate}
\end{theorem}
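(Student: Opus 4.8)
The statement I want to establish is that inside $\Rep_{\cC}GL_X$, the object $X^{\otimes d}$ has a Jordan--Hölder filtration whose composition factors are indexed by $\Indec\mB^d_X$, with $M\in\Indec\mB^d_X$ occurring with multiplicity $\dim_{\bk}M$. The natural route is to pass through the equivalence $\Rep^d_{\cC}GL_X\simeq\Mod_{\cC}\cS(X,d)$ of the lemma in \S 7.1, together with the equivalence $\Rep^d_{\cC}(GL_X,\phi)\simeq\Fun_{\bk}(\mB^d_X,\Vecc)$ asserted in Theorem~\ref{ThmMain}(4), and then invoke the combinatorial bookkeeping of \Cref{LemFiltFrg}.

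First I would recall that, by the lemma in \S 7.1, $X^{\otimes d}$ corresponds under $\Rep^d_{\cC}GL_X\simeq\Mod_{\cC}\cS(X,d)$ to the free module $\cS(X,d)=\Gamma^d(X^\vee\otimes X)$ acting on itself (or rather to the regular right module); more precisely $X^{\otimes d}$ is the $GL_X$-representation whose image is the comodule $\Sym^d(X^\vee\otimes X)$ realized via $X^{\otimes d}\hookrightarrow \Sym^d(X^\vee\otimes X)\otimes \omega((X^\vee)^{\otimes d}\otimes X^{\otimes d})$ in the notation of that proof. Then, using the equivalence $\Rep^d_{\cC}(GL_X,\phi)\boxtimes\cC\simeq\Rep^d_{\cC}GL_X$ and $\Rep^d_{\cC}(GL_X,\phi)\simeq\Fun_{\bk}(\mB^d_X,\Vecc)=\mo^{\Frg}(\mB^d_X)$, I would transport $X^{\otimes d}$ to the functor side. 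Under this dictionary $X^{\otimes d}$ should correspond to the functor $M\mapsto \Hom_{S_d}(M, \bk S_d)\cong M^{\ast}$ on $\mB^d_X$ — equivalently, to $\Frg$ itself (the forgetful functor), since $\Frg(M)=M$ as a vector space and $\dim_{\bk}M=\dim_{\bk}M^{\ast}$. In any case, the key point is to identify the transported object precisely, and for that I would trace $X^{\otimes d}$ through the chain of equivalences in \Cref{ThmMain}(4), using that $X^{\otimes d}=\RT_d(X)$ generates $\Rep^d_{\cC}GL_X$ and maps to $\Omega^d(X)$ (the generating object of $\mB^d_X$) under $\Omega\boxtimes\Rep S_d$.

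With the identification in hand, the rest is a direct application of \Cref{LemFiltFrg}: for $F\in\mo^{\Frg}(\cA)$ with $\cA=\mB^d_X$, we have $[F:\Delta_M]=\dim_{\bk}F(M)$ for each $M\in\Indec\cA$, and $\Irr\mo^{\Frg}(\cA)$ is in bijection with $\Indec\cA$ via $M\mapsto\Delta_M$. Applying this with $F=\Frg$ gives $[\Frg:\Delta_M]=\dim_{\bk}\Frg(M)=\dim_{\bk}M$, which is exactly the claimed multiplicity. Finally I would note that the simple objects $\Delta_M$ of $\mo^{\Frg}(\mB^d_X)$ correspond, under $\mo^{\Frg}(\mB^d_X)\simeq\Rep^d_{\cC}(GL_X,\phi)\hookrightarrow\Rep^d_{\cC}GL_X$, to the simple polynomial $GL_X$-representations of degree $d$, giving the indexing of composition factors by $\Indec\mB^d_X$; a Jordan--Hölder filtration exists because $X^{\otimes d}$ has finite length in $\Rep^d_{\cC}GL_X$ (equivalently $\Frg$ has finite length in $\mo^{\Frg}(\mB^d_X)$, since $\mB^d_X$ has finitely many indecomposables and $\dim_{\bk}M<\infty$).

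\textbf{Main obstacle.} The essential difficulty is pinning down exactly which object of $\mo^{\Frg}(\mB^d_X)$ (equivalently which polynomial representation) corresponds to $X^{\otimes d}$ under the composite equivalence, and checking that this correspondence is compatible with the evaluation functors \eqref{Eval2}, \eqref{Eval3} used to build the equivalences in \Cref{ThmMain}(4). Once the transported object is correctly identified as ``$\Frg$ up to duality'', the multiplicity count is immediate from \Cref{LemFiltFrg}(2); so the real work is the bookkeeping of the equivalences, not any new computation. I would handle this by observing that all the equivalences in \Cref{ThmMain}(4) are built so that the distinguished generator on each side (namely $X^{\otimes d}$, respectively $\Omega^d(X)$, respectively $\Frg$) corresponds, which reduces the matching to the behaviour of a single generating object and its internal hom.
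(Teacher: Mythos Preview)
Your approach is essentially the same as the paper's: both reduce part~(5) to \Cref{LemFiltFrg}(2) applied to $F=\Frg$ on $\mB^d_X$, via the equivalence $H_X:\mo(\mB^d_X)\xrightarrow{\sim}\Rep^d_{\cC}(GL_X,\phi)$ of \Cref{LemDVan}(1). The only difference is packaging: the paper pushes the filtration of $\Frg$ forward through the exact functor $T$ (using \Cref{LemSq0}(1) and $T_{\Frg}\simeq T_d$ from \Cref{ExFrg}), whereas you transport $X^{\otimes d}$ back and read off multiplicities there. Your ``main obstacle'' is not really an obstacle: the identification of $X^{\otimes d}$ with $\Frg|_{\mB^d_X}$ under $H_X$ is immediate from the commutative square \eqref{PolComDia} together with $T_{\Frg}\simeq T_d$, so the hedging about ``$\Frg$ up to duality'' or $M\mapsto M^\ast$ is unnecessary.
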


Theorem~\ref{ThmMain} motivates the following terminology.
\begin{definition}
\begin{enumerate}
\item An object $X\in\cC$ is {\bf relatively $d$-discerning} if $\mB_X^d=\mB^d[\cC]$.
\item An object $X\in\cC$ is {\bf $d$-discerning} if $\mB_X^d=\bT^d$.
\item A tensor category $\cC$ is {\bf $d$-discerning} if $\mB^d[\cC]=\bT^d$.
\end{enumerate}
\end{definition}

\begin{lemma}\label{LemDiscFrEx}
Let $\cC$ be a finite Frobenius-exact tensor category with projective generator~$P\in\cC$. Then $P^n$ is relatively $d$-discerning for all $n\ge d$.
\end{lemma}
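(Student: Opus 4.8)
The plan is to establish the two inclusions $\mB^d_{P^{\oplus n}}\subseteq\mB^d[\cC]$ and $\mB^d[\cC]\subseteq\mB^d_{P^{\oplus n}}$, where I write $P^{\oplus n}$ for the object called $P^n$ in the statement (the $n$-fold direct sum). The first inclusion is immediate from $\mB^d[\cC]=\sum_{X\in\cC}\mB^d_X$. For the second, the essential input is that $P$ is a projective \emph{generator}: every $X\in\cC$ is a quotient --- in particular a subquotient --- of $P^{\oplus m}$ for some $m\ge1$, so $\mB^d_X\subseteq\mB^d_{P^{\oplus m}}$ by Corollary~\ref{CorFrEx2}(1), which is the only point at which Frobenius exactness enters. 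It therefore suffices to prove that $\mB^d_{P^{\oplus m}}\subseteq\mB^d_{P^{\oplus n}}$ for every $m\ge1$ and every $n\ge d$.

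When $m\le n$ this is clear: $P^{\oplus m}$ is a direct summand of $P^{\oplus n}$, and since $(-)^{\otimes d}\colon\cC\to\cC\boxtimes\Rep S_d$ of Example~\ref{ExOm}(2) is a functor it carries this split monomorphism to a split monomorphism, so $\Hom(Q,(P^{\oplus m})^{\otimes d})$ is an $S_d$-direct summand of $\Hom(Q,(P^{\oplus n})^{\otimes d})$ for all $Q\in\Proj\cC$. For $m>n$ --- in which case $m,n\ge d$ --- I will instead show that $\mB^d_{P^{\oplus k}}$ stabilises: $\mB^d_{P^{\oplus k}}=\mB^d_{P^{\oplus d}}$ for every $k\ge d$. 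The key is the decomposition of $(P^{\oplus k})^{\otimes d}$ in $\cC\boxtimes\Rep S_d$ into the orbits of the braid $S_d$-action on its $k^d$ summands $P^{\otimes d}$ (one for each tuple in $\{1,\dots,k\}^d$): writing $P^{\otimes d}_\lambda$ for $P^{\otimes d}$ equipped with the block-permutation action of the Young subgroup $S_\lambda$,
\[(P^{\oplus k})^{\otimes d}\;\simeq\;\bigoplus_{\lambda\vdash d,\ \ell(\lambda)\le k}c_{k,\lambda}\,\Ind^{S_d}_{S_\lambda}\!\big(P^{\otimes d}_\lambda\big),\qquad\text{all }c_{k,\lambda}\ge1.\]
As $\ell(\lambda)\le|\lambda|=d$, for every $k\ge d$ the constraint $\ell(\lambda)\le k$ is vacuous, so the summand types $\Ind^{S_d}_{S_\lambda}(P^{\otimes d}_\lambda)$ occurring are precisely those indexed by all partitions $\lambda\vdash d$, each with positive multiplicity. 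Hence for a fixed $Q\in\Proj\cC$ the $S_d$-representation $\Hom(Q,(P^{\oplus k})^{\otimes d})$ has, independently of $k\ge d$, the same set of indecomposable summands, namely the union over $\lambda\vdash d$ of the indecomposable summands of $\Hom(Q,\Ind^{S_d}_{S_\lambda}(P^{\otimes d}_\lambda))$. Varying $Q$ gives $\mB^d_{P^{\oplus k}}=\mB^d_{P^{\oplus d}}$ for all $k\ge d$, whence $\mB^d_{P^{\oplus m}}=\mB^d_{P^{\oplus d}}=\mB^d_{P^{\oplus n}}$; combined with the case $m\le n$ this proves the required inclusion and hence the lemma.

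I expect the main thing needing care to be the bookkeeping for the orbit decomposition --- verifying that the braid $S_d$-action identifies each orbit-block of $(P^{\oplus k})^{\otimes d}$ with an induced object $\Ind^{S_d}_{S_\lambda}(P^{\otimes d}_\lambda)$, and that $\Hom(Q,-)$ commutes with the finite direct sums and the inductions involved --- rather than any conceptual difficulty. An alternative, if one wishes to avoid this, is to iterate the isomorphism $\Hom(Q,(Y\oplus Z)^{\otimes d})\simeq\bigoplus_{l=0}^{d}\Ind^{S_d}_{S_l\times S_{d-l}}\Hom(Q,Y^{\otimes l}\otimes Z^{\otimes d-l})$ from the proof of Theorem~\ref{ThmTCIS}, together with Lemma~\ref{RemKappa}, which likewise shows that copies of $P$ beyond the $d$-th contribute no new indecomposable summands.
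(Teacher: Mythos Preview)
Your proof is correct and follows the same two-step structure as the paper: first use Corollary~\ref{CorFrEx2}(1) (the only place Frobenius exactness enters) to reduce to $\mB^d[\cC]=\bigcup_m\mB^d_{P^{\oplus m}}$, then show the stabilisation $\mB^d_{P^{\oplus k}}=\mB^d_{P^{\oplus d}}$ for $k\ge d$. The paper records the stabilisation as ``follows easily by induction on $d$'' without further detail, whereas you give an explicit orbit decomposition of $(P^{\oplus k})^{\otimes d}$; your alternative route via the isomorphism from the proof of Theorem~\ref{ThmTCIS} is presumably what the paper's inductive hint unpacks to.
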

\begin{proof}
Corollary~\ref{CorFrEx2}(1) implies that
$\mB^d[\cC]=\cup_n \mB^d_{P^n}.$
That, for any $X$, $\mB^d_{X^n}=\mB^d_{X^d}$ for all $n\ge d$ follows easily by induction on $d$.
\end{proof}

\begin{remark}Lemma~\ref{LemDiscFrEx} does not remain true without the condition of Frobenius-exactness. Indeed, it is not true for $\Ver_4$ or $\Ver_4^+$ by Example~\ref{Ex22}(2) below.
\end{remark}

\begin{example}\label{Ex22} We consider the case $d=2$ and $p=\mathrm{char}(\bk)=2$.
\begin{enumerate}
\item Every tensor category over $\bk$ is 2-discerning, since $\Young^2=\Rep S_2.$
\item An object $X$ is 2-discerning if and only if $\bigwedge^2 X\not=0$ ({\it i.e.} $X$ is not invertible) and $\Fr(X)=X^{(1)}\not=0$. For instance, in $\Vecc$, $\Ver_4^+$ and $\Ver_4$ every object is 2-discerning so long as it contains a direct summand $\unit$ and is not indecomposable.
\item For every tensor category $\cC$ and $n\in\mZ_{>1}$, we have
$$\Pol^2\;\simeq\; \SPol^2\cC\;\simeq\;\Rep^2 \GL_n\;\simeq\; \Fun_{\bk}(\Rep S_2,\Vecc).$$
%which corresponds to the category of finite dimensional modules over the path algebra of the quiver
%$$\xymatrix{
%e\ar@/^/[r]^a& s\ar@/^/[l]^b
%}$$with relation $b\circ a=0$.
\end{enumerate}
\end{example}

\begin{example}
If $d<p=\mathrm{char}(\bk)$ or $\mathrm{char}(\bk)=0$, then every tensor category $\cC$ over $\bk$ is $d$-discerning and
$$\SPol^d\cC\;\simeq\;\cC\boxtimes\Pol^d\;\simeq\;\cC\boxtimes \Rep^d \GL_d\;\simeq\;\cC\boxtimes \Rep S_d.$$
\end{example}

We conclude this section by observing that in tensor categories of moderate growth, no object can be (relatively) $d$-discerning for all $d\in\mN$.
\begin{lemma}\label{Lemneq}
If $\cC$ is of moderate growth, then $\mB_X\not=\mB[\cC]$ for all $X\in\cC$.
\end{lemma}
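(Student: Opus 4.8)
The plan is to show that if $X$ were $d$-discerning for all $d$—equivalently, if $\mB_X = \mB[\cC]$—then one could read off too much information about $\cC$ from the single object $X$, contradicting moderate growth. The natural strategy is to combine two facts established earlier: first, Theorem~\ref{ThmMain}(1) says $\mB_X^d = \mB^d[\cC]$ is equivalent to the evaluation functor $\SPol^d\cC \xrightarrow{\sim} \Rep^d_{\cC}GL_X$ being an equivalence; and second, the ideal-theoretic observation in \cite[Corollary~4.11]{CEO}, quoted in the remark after the $\Ann$ definition, that a tensor category $\cC$ has moderate growth if and only if $\Ann(Y) \neq 0$ for every $Y \in \cC$. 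In particular $\Ann(X) \neq 0$, so for some $n$ the braid morphism $\beta_X^n : \bk S_n \to \End_\cC(X^{\otimes n})$ has nonzero kernel. Fix such an $n$ and let $0 \neq z \in \ker(\beta_X^n)$.

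First I would pin down what $z \in \ker\beta_X^n$ means for the inductive system $\mB_X$. By definition $\mB_X^n \subset \Rep S_n$ is generated by the summands of $\Omega^n(X) = \omega(X^{\otimes n})$, on which $\bk S_n$ acts through $\beta_X^n$ composed with $\omega$ (more precisely via the modules $\Hom((X^\vee)^{\otimes n}, I)$, which are $\End((X^\vee)^{\otimes n})$-modules pulled back along $\beta^n_{X^\vee}$). Hence $z$—or rather its image $z^{-1}$ under the anti-automorphism, using the commutative square \eqref{commsq}—annihilates every module in $\mB_X^n$. Therefore the ideal $\cJ(\mB_X[\cC]) = \Ann(X)$ contains a nonzero element of $\bk S_n$. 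On the other hand, $\mB[\cC] \supsetneq$ the minimal closed system $\Young$ only when $\mathrm{char}(\bk) = p > 0$, but even for $\mathrm{char}(\bk)=0$ we have $\mB[\cC] = \Young$ with $\mathbf{C}\mathbf{S}(1)$ absorbing everything; in any case I need $\cJ(\mB[\cC])$ to \emph{not} contain this $z$. The cleanest route: exhibit, inside $\mB[\cC]$, a representation (for the same $n$) that $z$ does \emph{not} annihilate.

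The key step is thus: produce $Y \in \cC$ whose braid morphism $\beta_Y^n$ is injective, so that $\bk S_n \in \mB_Y^n \subseteq \mB^n[\cC]$ and hence $z \notin \Ann_{\bk S_n}(\bk S_n) = 0$, forcing $z \in \Ann(X) = \cJ(\mB_X[\cC])$ but $z \notin \cJ(\mB[\cC])$, so $\mB_X \neq \mB[\cC]$. For this I would take $Y = \unit^{\oplus n}$ (or any object containing $n$ copies of $\unit$): then $Y^{\otimes n}$ contains $\unit^{\otimes n} = \unit$ as a summand with the full regular $S_n$-action permuting the $n$ factors, so $\bk S_n$ embeds into $\omega(Y^{\otimes n})$ and $\beta_Y^n$ is injective—indeed $\mB^n_{\unit^{\oplus n}} = \Rep S_n$ since $\Hom(P, (\unit^{\oplus n})^{\otimes n})$ surjects onto the permutation bimodule, as in the computation after Proposition~\ref{PropNewGen} showing the set attached to $\unit^n$ is $\{1,\ldots,\min(p-1,n)\}$—wait, that only gives the $\mathbf{C}\mathbf{S}$-part; better to argue directly that $\Ind^{S_n}_{S_1} \unit = \bk S_n$ appears as a summand of $\Omega^n(\unit^{\oplus n})$ via Lemma~\ref{RemKappa}, so $\bk S_n \in \mB^n[\cC]$. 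Then $\cJ(\mB[\cC]) \cap \bk S_n \subseteq \Ann_{\bk S_n}(\bk S_n) = 0$, whereas $0 \neq z \in \cJ(\mB_X[\cC]) \cap \bk S_n$, giving the strict inclusion $\mB_X \subsetneq \mB[\cC]$ at degree $n$.

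The main obstacle I anticipate is the bookkeeping around which regular-representation bimodule actually appears in $\Omega^n$ of a trivial-containing object, and making sure the $S_n$-action there is the \emph{permutation} action (so that $\bk S_n$, not merely some faithful module, shows up)—this is exactly the content of Construction~\ref{Construction}/Lemma~\ref{RemKappa} applied with all $Z_i = \unit$, but it requires care that the induction-product structure on $\Hom(P,(\unit^{\oplus n})^{\otimes n}) \simeq \bigoplus_l \Ind^{S_n}_{S_l \times S_{n-l}}\Hom(P, \unit^{\otimes l})$ genuinely produces $\Ind^{S_n}_{S_1 \times \cdots \times S_1}\unit = \bk S_n$ in the multilinear component. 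Alternatively—and perhaps more robustly—one avoids this entirely by noting that $\mB^n[\cC] \supseteq \mB^n[\Vecc] = \Young^n$ always (Proposition~\ref{PropF}(2) applied to the unique tensor functor $\Vecc \to \cC$ composed backwards, i.e. using that $\Vecc$ embeds), and $\cJ(\Young) \cap \bk S_n = \bigcap_{\lambda \vdash n}\Ann_{\bk S_n}M^\lambda = 0$ since the permutation modules $M^\lambda$ are faithful collectively (their sum contains $M^{(1^n)} = \bk S_n$); this is the shortest path and I would present it that way, with the $\Ann(X) \neq 0$ input from \cite[Corollary~4.11]{CEO} doing the real work and the equality $\mB_X = \mB[\cC]$ contradicting $0 \neq z \in \cJ(\mB_X[\cC]) \subseteq \cJ(\mB[\cC]) \subseteq \cJ(\Young) = 0$.
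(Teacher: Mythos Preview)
Your argument is correct and is essentially the same as the paper's: both use moderate growth to find $n$ with $\ker\beta_X^n\neq 0$ (the paper cites \cite[Proposition~4.7(3)]{CEO}, you cite the equivalent \cite[Corollary~4.11]{CEO}), observe that this kernel annihilates every module in $\mB_X^n$, and then note that $\bk S_n\in\Young^n\subset\mB^n[\cC]$ is faithful. Your final paragraph's ``shortest path'' via $\Young\subset\mB[\cC]$ and $M^{(1^n)}=\bk S_n$ is exactly what the paper does (invoking Example~\ref{ExIndSys2}(2)); the earlier detours through $\unit^{\oplus n}$ and Lemma~\ref{RemKappa} are unnecessary but harmless.
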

\begin{proof}
By \cite[Proposition~4.7(3)]{CEO}, there is some $n\in\mN$ for which $$\beta_X^n:\;kS_n\;\to\; \End(X^{\otimes n})$$
is not injective. By construction, the kernel of $\beta^n_X$ acts trivially on all representations in $\mB_X^n$. Conversely, $\mB^n[\cC]$ contains the faithful regular representation $\bk S_n$, see Example~\ref{ExIndSys2}(2).
\end{proof}

\begin{remark}
For tensor categories not of moderate growth, Lemma~\ref{Lemneq} is not true. 
See Example~\ref{Ex1}(2), or $(\Rep GL)_t$, say for $\bk=\mC$ and $t\not\in\mZ$, see \cite[\S 10]{Del07}.
\end{remark}

\begin{remark}Since the categories $\bT^d$ and $\bB^d[\cC]$ are monoidal,  Theorem~\ref{ThmMain}(4) allows for the monoidal structure to be extended, via Day convolution as in \cite{Kr}, to categories of polynomial functors.
\end{remark}

\subsection{Representations of the symmetric group and universal functors}

\subsubsection{}\label{DeftoPol} We define a $\bk$-linear functor
\begin{equation}\label{Funmo0}
\Fun_{\bk}(\Rep S_d,\Vecc)\;\to\; \UFun_{\bk},\quad \Delta\mapsto T_\Delta,
\end{equation}
as follows. For any $\bk$-linear functor
$\Delta:\Rep S_d\to \Vecc,$
and tensor category $\cC$, we define the composite functor
$$T_{\Delta}^{\cC}:\;\cC\xrightarrow{-^{\otimes d}}\cC\boxtimes \Rep S_d\xrightarrow{\cC\boxtimes\Delta}\cC.$$
For every tensor functor $F:\cC\to\cD$, we have natural isomorphisms corresponding to the two commutative squares in 
$$
\xymatrix{
\cC\ar[rr]^-{\RT_d^{\cC}=-^{\otimes d}}\ar[d]^F&&\cC\boxtimes \Rep S_d\ar[rr]^-{\cC\boxtimes\Delta}\ar[d]^{F\boxtimes \Rep S_d}&&\cC\ar[d]^F\\
\cD\ar[rr]^-{\RT_d^{\cD}=-^{\otimes d}}&&\cD\boxtimes \Rep S_d\ar[rr]^-{\cD\boxtimes\Delta}&&\cD.
}
$$
Indeed, the first square is commutative via $\gamma$ in Example~\ref{ExTd}.
For the second square we can take the natural isomorphism 
$$a(F,\Delta):\; (\cD\boxtimes \Delta)\circ (F\boxtimes \Rep S_d)\;\stackrel{\sim}{\Rightarrow}\;F\circ (\cC\boxtimes\Delta)$$ from Lemma~\ref{LemNonsense}, which allows us to give $T_\Delta:=\{T^{\cC}_{\Delta}\}$ the structure of a universal functor, as follows from Lemma~\ref{LemNonsense}(1)-(3).

For a natural transformation $\beta:\Delta_1\Rightarrow\Delta_2$, we have the natural transformation
$$(\cC\boxtimes \beta)_{\RT_d^{\cC}}:\; T^{\cC}_{\Delta_1}\Rightarrow T^{\cC}_{\Delta_2}.$$
As an immediate application of Lemma~\ref{LemNonsense}(4) this produces a morphism of universal functors.

\begin{lemma}\label{LemSq0}
Fix $\Delta\in\Fun_{\bk}(\Rep S_d,\Vecc)$.
\begin{enumerate}
\item The functor \eqref{Funmo0} is exact.
\item For a tensor category $\cC$ and a faithful exact $\bk$-linear functor $\Omega:\cC\to\Vecc$, the following diagram of $\bk$-linear functors is commutative (with $\Omega^d$ from Example~\ref{ExOm}):
$$\xymatrix{
\cC\ar[rr]^{T_{\Delta}^{\cC}}\ar[d]_{\Omega^d}&&\cC\ar[d]^{\Omega}\\
\Rep S_d\ar[rr]^{\Delta}&&\Vecc.
}$$

\item We have $T^{\cC}_{\Delta}=0$ if and only if $\Delta|_{\mB^d[\cC]}=0$. For a given $X\in\cC$, we have $T^{\cC}_{\Delta}(X)=0$ if and only if $\Delta|_{\mB^d_X}=0$.
\end{enumerate}

\end{lemma}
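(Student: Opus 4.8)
The plan is to prove part (2) first — it is a bookkeeping statement about the Deligne product — and then obtain parts (1) and (3) as formal consequences, using that $\Omega$ is faithful and exact and that exactness in both $\Fun_{\bk}(\Rep S_d,\Vecc)$ and $\UFun_{\bk}$ is detected pointwise (for $\UFun_{\bk}$ because, as noted just after its definition, finite limits and colimits there are computed at each object of each tensor category).

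For part (2), recall from \S\ref{DeftoPol} that $T^{\cC}_{\Delta}=(\cC\boxtimes\Delta)\circ\RT^{\cC}_d$ and from Example~\ref{ExOm}(3) that $\Omega^d=(\Omega\boxtimes\Rep S_d)\circ\RT^{\cC}_d$; whiskering away the common factor $\RT^{\cC}_d$, the asserted commutativity reduces to the identity
\[\Omega\circ(\cC\boxtimes\Delta)\;=\;\Delta\circ(\Omega\boxtimes\Rep S_d)\]
of $\bk$-linear functors $\cC\boxtimes\Rep S_d\to\Vecc$. This is exactly the coherence datum $a(F,\Delta)$ of Lemma~\ref{LemNonsense} read with $F=\Omega\colon\cC\to\Vecc$ in $\Tak^{ex}$, $\cA=\Rep S_d$ and $\cB=\Vecc$: by the partial strictness set up in \S\ref{Defpf} — where $-\boxtimes\Vecc$ is the inclusion, so $\Omega\boxtimes\Vecc=\Omega$, together with $\Vecc\boxtimes\Rep S_d=\Rep S_d$ from Example~\ref{ExOm}(1) and the matching normalisation $\Vecc\boxtimes\Delta=\Delta$ — the transformation $a(\Omega,\Delta)$ becomes a natural isomorphism $\Delta\circ(\Omega\boxtimes\Rep S_d)\stackrel{\sim}{\Rightarrow}\Omega\circ(\cC\boxtimes\Delta)$, which the explicit comodule realisation of $\boxtimes$ identifies with the identity. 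I expect this matching of strictness conventions to be the only genuinely delicate point; everything else is formal, and the resulting square is automatically compatible with the universal-functor structure carried by $T_\Delta$, since that structure was built in \S\ref{DeftoPol} from the same isomorphisms $a(F,-)$ using Lemma~\ref{LemNonsense}(1)--(3).

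Granting (2), part (1) is immediate: exactness of~\eqref{Funmo0} may be tested at each $X$ in each tensor category $\cC$, and on choosing (by Takeuchi's theorem) a faithful exact $\Omega\colon\cC\to\Vecc$, part (2) yields a natural isomorphism $\Omega\bigl(T^{\cC}_{\Delta}(X)\bigr)\cong\Delta\bigl(\Omega^d(X)\bigr)$ in $\Delta$; thus exactness of $\Delta\mapsto T^{\cC}_{\Delta}(X)$ follows from faithful exactness of $\Omega$ together with exactness of evaluation at the fixed object $\Omega^d(X)$ on $\Fun_{\bk}(\Rep S_d,\Vecc)$. For part (3), the same natural isomorphism shows $T^{\cC}_{\Delta}(X)=0$ iff $\Delta(\Omega^d(X))=0$; since $\mB^d_X$ is by Definition~\ref{DefPhiX} the pseudo-abelian subcategory of $\Rep S_d$ generated by $\Omega^d(X)$, and $\Delta$ is $\bk$-linear (hence annihilates every direct summand of anything it annihilates), this is in turn equivalent to $\Delta|_{\mB^d_X}=0$ — the second assertion of (3). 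Finally $T^{\cC}_{\Delta}=0$ iff $T^{\cC}_{\Delta}(X)=0$ for all $X\in\cC$ iff $\Delta|_{\mB^d_X}=0$ for all $X$ iff $\Delta|_{\mB^d[\cC]}=0$, the last equivalence using $\mB^d[\cC]=\sum_{X\in\cC}\mB^d_X$ from Theorem~\ref{ThmTCIS}(3) and once more additivity of $\Delta$.
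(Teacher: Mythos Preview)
Your proof is correct and follows essentially the same route as the paper: prove part~(2) by factoring out the common $\RT^{\cC}_d$ and invoking the coherence isomorphism $a(\Omega,\Delta)$ from Lemma~\ref{LemNonsense}, then deduce parts~(1) and~(3) by testing at each $(X,\cC)$ via a faithful exact $\Omega$. Your write-up is more detailed than the paper's (which for part~(3) just says ``follows from part~(2) and faithfulness of $\Omega$''); the only superfluous claim is that $a(\Omega,\Delta)$ is literally the identity in the chosen strict model---commutativity up to natural isomorphism is all that is needed and all that Lemma~\ref{LemNonsense} provides.
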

\begin{proof}
Part (2) is a direct application of the commutative diagram 
$$\xymatrix{
\cC\ar[rr]^-{-^{\otimes d}}&&\cC\boxtimes\Rep S_d\ar[rr]^{\cC\boxtimes\Delta}\ar[d]_{\Omega\boxtimes \Rep S_d}&&\cC\ar[d]^{\Omega}\\
&&\Rep S_d\ar[rr]^{\Delta}&&\Vecc,
}$$
which is an example of Lemma~\ref{LemNonsense}.

For part (1), observe that exactness of \eqref{Funmo0} is equivalent to exactness of the composite 
$$\Fun(\Rep S_d,\Vecc)\xrightarrow{\Delta\mapsto T_{\Delta}}\UFun\xrightarrow{\Phi\mapsto\Phi^{\cC}(X) } \cC\xrightarrow{\Omega}\Vecc$$
for every tensor category $\cC$ and $X\in\cC$, and some choice of $\Omega$ as in (2). By part (2), this composite functor is simply $\Delta\mapsto \Delta(\Omega^d(X))$, which is indeed exact, proving part (1).

Part (3) follows from part (2) and faithfulness of $\Omega$.
\end{proof}

\begin{example}\label{ExFrg}
The universal functor $T_{\Frg}$ is isomorphic to $\{T_d^{\cC}\}$ from Example~\ref{ExTd}.
\end{example}

\subsection{Representations of the symmetric group and universal polynomial functors}
\label{sec:D}

 By Example~\ref{ExFrg} and Lemma~\ref{LemSq0}(1), if $\Delta$ is a subquotient of $\Frg^n$ for some $n\in\mN$, then $T_\Delta$ is a universal polynomial functor. 
Hence \eqref{Funmo0} restricts to a $\bk$-linear exact functor
\begin{equation}\label{Funmo1}
T:\,\mo^{\Frg}(\Rep S_d)\;\to\; \Pol^d_{\bk},\quad \Delta\mapsto T_\Delta.
\end{equation}

\begin{example}\label{ExFrg2}
\begin{enumerate}
\item By Lemma~\ref{LemFiltFrg}, the simple objects $\Delta_M$ in $\mo(\Rep S_d)$ are in bijection with the indecomposable $S_d$-representations $M\in\Indec\Rep S_d$. We abbreviate
$$\{D_M^{\cC}\mid \cC\}\;=\;D_M\;:=\; T_{\Delta_M},\quad\mbox{for}\quad M\in\Indec\Rep S_d.$$
For a tensor category $\cC$ with object $X\in\cC$, we will typically further abbreviate $D_M^{\cC}(X)$ to $D_M(X)$.
\item Set $d=2$ and assume $p=2$. Then $\bigwedge^2X$ is defined as the image of $1+\sigma:X^{\otimes 2}\to X^{\otimes 2}$ (equivalently, the quotient $X^{\otimes 2}/\Gamma^2 X$), and the quotient $X^{(1)}$ of $\Gamma^2X$ by $\bigwedge^2 X$ is isomorphic to $\Fr(X)$, see \cite{BE}. We thus have a length three filtration of $X^{\otimes 2}$, with successive quotients given by $\bigwedge^2X$, $X^{(1)}$ and $\bigwedge^2 X$. By comparing the above with the projective presentations of $\Delta_M$, for the two indecomposable $M\in\Rep S_2$, or alternatively just by observing that $X^{(1)}$ and $\bigwedge^2X$ are evaluations of universal polynomial functors and applying Theorem~\ref{LemDVan}, we find
$$D_{\unit}(X)\simeq\Fr(X)= X^{(1)}\qquad \mbox{and}\qquad D_{\bk C_2}(X)\simeq\bigwedge^2(X).$$
By Theorem~\ref{LemDVan} below, $X^{(1)}$ and $\bigwedge^2X$ are simple (or zero) $\GL_X$-representations. Furthermore, by Theorem~\ref{ThmPol}, the only indecomposable universal polynomial functors of degree $2$ are $\Fr$, $\bigwedge^2$, $\Sym^2$, $\Gamma^2$ and $T_2$.
\item For $d<\mathrm{char}(\bk)$ or $\mathrm{char}(\bk)=0$, the decomposition in terms of Schur functors
$$X^{\otimes d}\;\simeq\; \bigoplus_{\lambda\vdash d}\mS_\lambda(X)$$
from \cite[\S 1]{Del02} satisfies $\mS_\lambda= D_{S^\lambda}$, for $S^\lambda$ the Specht module corresponding to $\lambda$. By Theorem~\ref{LemDVan} below $\mS_\lambda X$ is a simple (or zero) $\GL_X$-representation.
\item For $p=\mathrm{char}(\bk)>0$ and $d=p^j$, consider the functor $\Delta_{\unit}=\Triv_{S_{p^j}}$ from \S\ref{SecTriv}. Then
$$D_{\unit}=T_{\Delta_{\unit}}\;\simeq\;\Fr_+^{(j)},$$ as in \cite[\S 4.1]{Tann}. As proved in \cite[Lemma~5.1]{CEO}, with a new interpretation in Lemma~\ref{LemKey}, we have $\Fr_+^{(j)}\simeq(\Fr_+)^j$ on Frobenius-exact categories.
\end{enumerate}

\end{example}

\subsubsection{}
By Lemma~\ref{LemSq0} and Lemma~\ref{LemSer}(1) applied to the pseudo-abelian subcategories $\mB^d_X\subset\bT^d\subset\Rep S_d$, the functors~\eqref{Eval3} and~\eqref{Funmo1} yield a commutative square, defining an exact faithful functor $H_X$:
\begin{equation}\label{PolComDia}
\xymatrix{\mo^{\Frg}(\Rep S_d)\ar[r]&\mo^{\Frg}(\bT^d)\ar[rr]\ar[d]&& \Pol_{\bk}^d\ar[d]^{\eqref{Eval3}}\\
&\mo(\mB_{X}^d)\ar[rr]^-{H_X}&& \Rep_{\cC}^d(\GL_X,\phi).
}
\end{equation}

\begin{theorem}\label{LemDVan}
Consider a tensor category $\cC$ with $X\in\cC$ and $M\in\Indec\Rep S_d$.
\begin{enumerate}
\item  The functor $H_X:\mo(\mB_{X}^d)\to \Rep^d_{\cC}(\GL_X,\phi)$ is an equivalence.
\item The $\GL_X$-representation $D_M(X)$ is simple if $M\in\mB^d_X$, and zero if $M\not\in\mB^d_X$.
\item Multiplicities of the simple constituents of $X^{\otimes d}$ in $\Rep_{\cC}\GL_X$ are given by
$$[X^{\otimes d}: D_M(X)]\;=\; \dim_{\bk}M.$$
\end{enumerate}
\end{theorem}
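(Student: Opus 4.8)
The plan is to deduce parts (2) and (3) from part (1), recognising $H_X$ as a Morita-type equivalence between module categories. Granting that $H_X$ is an equivalence, part (2) follows at once: for $M\in\Indec\mB^d_X$ we have $D_M(X)=H_X\big(\Delta_M|_{\mB^d_X}\big)$ by construction of $H_X$ and Example~\ref{ExFrg}, and $\Delta_M|_{\mB^d_X}$ is a simple object of $\mo(\mB^d_X)$ by Lemma~\ref{LemFiltFrg}; for $M\notin\mB^d_X$ one has $D_M(X)=0$ by Lemma~\ref{LemSq0}(3). For part (3), $X^{\otimes d}=T^{\cC}_{\Frg}(X)=H_X(\omega)$ with $\omega=\Frg|_{\mB^d_X}$ (Example~\ref{ExFrg}), and Lemma~\ref{LemFiltFrg}(2) gives $[\omega:\Delta_M]=\dim_{\bk}M$ inside $\mo(\mB^d_X)$; transporting this through the equivalence $H_X$ and using that $\Rep^d_{\cC}(GL_X,\phi)$ is closed under subquotients in $\Rep_{\cC}GL_X$ (so Jordan--H\"older multiplicities are the same in both), we obtain $[X^{\otimes d}:D_M(X)]=\dim_{\bk}M$ and that these exhaust the composition factors.

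For part (1), recall that $H_X$ is exact and faithful by construction. I would produce a projective generator of $\Rep^d_{\cC}(GL_X,\phi)$ lying in the essential image of $H_X$ and on which $H_X$ is full and faithful; the resulting isomorphism of endomorphism rings of projective generators then identifies $H_X$ with an equivalence $\mo(\mB^d_X)\simeq\Rep^d_{\cC}(GL_X,\phi)$, necessarily sending $\Delta_M$ to $D_M(X)$. The candidate is $\bigoplus_M W_M$, with $W_M:=\big(M^\ast\otimes_{\bk}X^{\otimes d}\big)^{S_d}=H_X\big(\Hom_{S_d}(M,-)\big)$, where $M$ ranges over $\Indec\mB^d_X$ and the $\Hom_{S_d}(M,-)$, restricted to $\mB^d_X$, are the indecomposable projectives of $\mo(\mB^d_X)$. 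Two things need to be checked. First, that $\bigoplus_M W_M$ is a projective generator of $\Rep^d_{\cC}(GL_X,\phi)$: via $\Rep^d_{\cC}GL_X\simeq\Mod_{\cC}\cS(X,d)$ with the Schur algebra $\cS(X,d)=\uEnd(X^{\otimes d})^{S_d}$, the $W_M$ should be cut out of the free module by idempotents, exactly as the projective modules over the classical Schur algebra, with projectivity descending to the Serre subcategory $\Rep^d_{\cC}(GL_X,\phi)$. Second, the Hom-space identity
$$\Hom_{GL_X}(W_M,W_N)\;\simeq\;\Hom_{S_d}(N,M)\qquad(M,N\in\mB^d_X),$$
compatible with $H_X$, which supplies the required isomorphism of endomorphism rings.

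The Hom-space identity is the crux, and I would derive it from Lemma~\ref{YonLem}. Rewriting $\Hom_{GL_X}(W_M,W_N)$ through internal homs, $S_d$-(co)invariants inside $\cC$, and $GL_X$-invariants ought to reduce it to evaluating the coend description of $\Gamma^d(Z_1\otimes Z_2)$ on suitable $Z_i$ built from $X$; the delicate point is that this coend is indexed by all of $\mB^d[\cC]$, so I must check -- using Lemma~\ref{LemDual} together with the fact that $\Hom_{\cC}(P_i,X^{\otimes d})\in\mB^d_X$ -- that the summands indexed by indecomposables not in $\mB^d_X$ do not contribute to the matrix coefficients involved. A second, more bookkeeping-heavy obstacle is the projectivity claim: pinning down the idempotents that exhibit $W_M$ as a summand of the free $\cS(X,d)$-module when $\cC\neq\Vecc$, since $M$ is only a summand of some $\Hom_{\cC}(P_i,X^{\otimes d})$ rather than literally of $X^{\otimes d}$ inside $\cC\boxtimes\Rep S_d$; here I would again lean on Construction~\ref{Construction}.
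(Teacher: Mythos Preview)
Your deduction of parts (2) and (3) from part (1) is correct and matches the paper's argument. The substantive difference lies in your strategy for (1).

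You propose a direct Morita argument in $\Rep^d_{\cC}(GL_X,\phi)$: show that the $W_M=\big(M^\ast\otimes_{\bk}X^{\otimes d}\big)^{S_d}$ form a projective generator there, compute $\Hom_{GL_X}(W_M,W_N)\simeq\Hom_{S_d}(N,M)$, and conclude. The paper does \emph{not} attempt this. Instead it passes to the $\cC$-enlarged functor
\[
\cC\boxtimes H_X:\;\Fun_{\bk}(\mB^d_X,\cC)\;\longrightarrow\;\Mod_{\cC}\cS(X,d),
\]
uses the free Schur modules $\cS(X,d)\otimes Y$ (which tautologically give presentations of every $\cS(X,d)$-module), identifies them as images of the functors $R_{X;Y}$ via Lemma~\ref{YonLem}, and checks condition~(3) of Lemma~\ref{Lem4Equiv} by an end computation. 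Then $H_X$ is an equivalence because $\cC\boxtimes H_X$ is, via \cite[Corollary~3.2.8]{CF}.

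Your approach has a genuine gap at the projectivity step. The claim that the $W_M$ are ``cut out of the free module by idempotents, exactly as the projective modules over the classical Schur algebra'' does not survive in positive characteristic for a general tensor category: the idempotent you would need in $\End_{\cC\boxtimes\Rep S_d}(X^{\otimes d})$ or in $\bk S_d$ picking out $M$ does not exist unless $M$ is a direct summand of $\bk S_d$ (i.e.\ projective as an $S_d$-module), which it typically is not. Even granting projectivity in $\Mod_{\cC}\cS(X,d)$, this would not descend to $\Rep^d_{\cC}(GL_X,\phi)$: the latter sits inside the former via $-\boxtimes\unit$ through the decomposition $\Rep^d_{\cC}GL_X\simeq\Rep^d_{\cC}(GL_X,\phi)\boxtimes\cC$, and $W_M\boxtimes\unit$ is projective there only if $\unit$ is projective in $\cC$. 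Similarly, your proposed Hom computation via Lemma~\ref{YonLem} runs into the problem that the coend there is over $\mB^d[\cC]$ and is taken in $\cC$; applying it to objects with $\bk$-coefficients (your $W_M$) rather than $\cC$-coefficients (the paper's $R_{X;Y}$) would require exactly the kind of splitting that fails. The paper's passage to $\cC\boxtimes-$ is not a cosmetic simplification but the device that replaces the unavailable projectives $W_M$ by the always-available free modules $\cS(X,d)\otimes Y$.
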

\begin{proof}
We will prove, as an application of the straightforward Lemma~\ref{Lem4Equiv} below, that the composite functor
$$F:\;\Fun_{\bk}(\mB^d_X,\cC)\;\simeq\;\cC\boxtimes\mo(\mB^d_X)\;\xrightarrow{\cC\boxtimes H_X}\; \cC\boxtimes \Rep^d_{\cC}(\GL_X,\phi)\simeq \Rep^d_{\cC}\GL_X\;\simeq \; \Mod_{\cC}\cS(X,d)$$
is an equivalence. This implies that also $H_X$ is an equivalence, see \cite[Corollary~3.2.8]{CF}.
Firstly, $F$ is exact and faithful since $H_X$ is so, see \ref{Defpf}.

From the expressions in \ref{FunAC}, we find for $Q:\mB^d_X\to\cC$,
   \begin{eqnarray}\label{FQ1}F(Q)&\simeq &\int^{M\in \mB^d_X} Q(M)\otimes \left(M^\ast\otimes_{\bk}X^{\otimes d}\right)^{S_d}\\
  &\simeq &\int_{M\in \mB^d_X} Q(M)\otimes (M^\ast\otimes_{\bk} X^{\otimes d})_{S_d}.\label{FQ2}
  \end{eqnarray}
  The $\GL_X$-representation structures of the latter two objects are the unique ones coming from quotients or subobjects of copies of $X^{\otimes d}$, see Remark~\ref{RemUniqueAct}.
  
 As collection of objects $E_{\cB}$ in $\cB:=\Mod_{\cC}\cS(X,d)$, we take the free modules
 $$E_{\cB}\;:=\; \{\cS(X,d)\otimes Y\mid Y\in\cC\}.$$
Now, for $Y\in\cC$, consider the functor $R_{X;Y}$ from \eqref{eqRUV}, for $\cA=\mB^d_X$.
It follows from \eqref{FQ1} and Lemma~\ref{YonLem}
that 
\begin{equation}\label{FRY}
F(R_{X;Y})\;\simeq\; \cS(X,d)\otimes Y.
\end{equation}
Moreover, for any $Q:\mB^d_X\to\cC$, we have
\begin{eqnarray*}
\Nat(R_{X;Y},Q)&=&\int_{M\in\mB^d_X}\Hom_{\cC}(R_{X;Y}(M),Q(M))\\
&\simeq &\Hom_{\cC}\left(Y,\int_{M\in\mB_X^d}(M^\ast\otimes_{\bk} X^{\otimes d})_{S_d}\otimes Q(M) \right)\\
&\simeq &\Hom_{\Mod_{\cC}\cS(X,d)}(\cS(X,d)\otimes Y, F(Q))\\
&\simeq &\Hom_{\Mod_{\cC}\cS(X,d)}(F(R_{X;Y}), F(Q)).
\end{eqnarray*}
 The first step is by definition, the second is adjunction, the third follows from~\eqref{FQ2} and the fourth from~\eqref{FRY}. Since $F$ is faithful and the morphism spaces in $\Rep_{\cC}\GL_X$ are finite dimensional over~$\bk$, it follows that all conditions in Lemma~\ref{Lem4Equiv} are indeed satisfied.

Part (2) follows immediately from part (1), as the equivalence must send simple objects to simple objects. An alternative more direct proof goes as follows. By \cite[Example~7.10.2]{EGNO}, any object $Y\in\cC$ is simple as an $\uEnd(Y)$-module. So if $Y$ has the structure of an $A$-module, for a $\cC$-algebra $A$, and the defining algebra morphism
$A\to\uEnd(Y)$
 is an epimorphism in~$\cC$, then $Y$ is a simple $A$-module. We need to show that $D_M(X)$ is simple as an $\cS(X,d)$-module, so it is sufficient to demonstrate that the algebra morphism
\begin{equation}\label{EndDM}
\uEnd(X^{\otimes d})^{S_d}\;\to\; \uEnd(D_M(X))
\end{equation}
is an epimorphism in $\cC$. This can be demonstrated with a tedious argument.

That $X^{\otimes d}$, as a $\GL_X$-representation, has a filtration where the subquotients are given by $\{D_M(X)\mid M\}$, with $D_M(X)$ appearing $\dim_{\bk}M$ times, is a consequence of Lemma~\ref{LemFiltFrg}(2), for $F=\Frg$, and Lemma~\ref{LemSq0}(1). Thus part (3) follows from part (2).
\end{proof}

\begin{lemma}\label{Lem4Equiv}
Let $\cA,\cB$ be abelian categories. Assume that there exists a collection of objects $E_{\cB}\subset \Ob\cB$ such that every object $Y$ in $\cB$ has a presentation 
$$Y_1\to Y_0\to Y\to 0$$ 
by objects $Y_0,Y_1\in E_{\cB}$. Then a functor $F:\cA\to\cB$ is an equivalence if and only if
\begin{enumerate}
\item $F$ is exact; and
\item $F$ is faithful; and
%\item $F$ sends simple objects to simple objects; and
\item for every $Z\in E_{\cB}$, there is $Z'\in \cA$ with $F(Z')\simeq Z$ and for which
$$\Hom_{\cA}(Z',X)\xrightarrow{F}\Hom(F(Z'),F(X))$$
is an isomorphism for all $X\in\cA$.
\end{enumerate}
\end{lemma}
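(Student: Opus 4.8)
The plan is to prove Lemma~\ref{Lem4Equiv} by the standard reconstruction argument: use the presentations by objects of $E_{\cB}$ together with the three hypotheses to first show $F$ is full, then fully faithful, and finally essentially surjective.

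\textbf{Fully faithfulness.} First I would fix $Z, W \in E_{\cB}$ and $Z', W' \in \cA$ with $F(Z') \simeq Z$, $F(W') \simeq W$ as in hypothesis (3). Applying (3) with $X = W'$ gives that $\Hom_{\cA}(Z', W') \to \Hom_{\cB}(F(Z'), F(W'))$ is an isomorphism; so $F$ is fully faithful on the full subcategory of $\cA$ spanned by preimages of objects of $E_{\cB}$. Now take arbitrary $X, Y \in \cA$. Choose a presentation $F(Y_1) \to F(Y_0) \to F(Y) \to 0$ with $Y_0, Y_1 \in E_{\cB}$, and pick preimages $Y_0', Y_1' \in \cA$; by the previous sentence the map $F(Y_1') \to F(Y_0')$ is $F(g)$ for a unique $g : Y_1' \to Y_0'$, and by exactness of $F$ (hypothesis (1)) together with faithfulness, $\coker(g)$ is sent by $F$ to $F(Y)$. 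Writing $Y_0' \to Y$ for a chosen map inducing the identity on the $F$-side (which exists and is unique because $F$ is full and faithful on $E_{\cB}$-preimages, applied to $\Hom(Y_0', Y) \to \Hom(F Y_0', F Y)$ — here I would need to first handle this $\Hom$ by the same colimit/presentation trick, so the argument is really an induction in two steps). Then the exact sequence $\Hom_{\cA}(Y_0', X) \leftarrow \Hom_{\cA}(\coker g, X)$ (left exactness of $\Hom$) maps to the corresponding one downstairs; a diagram chase using that $F$ is iso on $\Hom$ out of $Y_0', Y_1'$ shows $F : \Hom_{\cA}(\coker g, X) \to \Hom_{\cB}(F(\coker g), F(X))$ is injective, i.e. $F$ is faithful (already assumed) and the map is iso after one more application of right-exactness on the source presentation and the five lemma. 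I would organize this as: (a) $F$ iso on $\Hom$ between $E_{\cB}$-preimages; (b) $F$ iso on $\Hom(E_{\cB}\text{-preimage}, -)$ for arbitrary target, using a presentation of the target and the five lemma on the left-exact $\Hom$-sequences; (c) $F$ iso on $\Hom(-,-)$ in general, using a presentation of the source and again the five lemma.

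\textbf{Essential surjectivity.} Given $Y \in \cB$, take a presentation $Y_1 \to Y_0 \to Y \to 0$ with $Y_i \in E_{\cB}$ and preimages $Y_i' \in \cA$ (hypothesis (3)). By fullness just established, the map $F(Y_1) \to F(Y_0)$ lifts to $g : Y_1' \to Y_0'$ in $\cA$; since $F$ is exact, $F(\coker g) \simeq \coker(F g) \simeq \coker(F(Y_1) \to F(Y_0)) \simeq Y$. Hence $Y$ is in the essential image, so $F$ is an equivalence. Conversely, an equivalence is automatically exact and faithful, and condition (3) holds trivially (take $Z' $ a preimage of $Z$), which gives the "only if" direction.

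\textbf{Main obstacle.} The delicate point is step (b)/(c) of the fully-faithfulness argument: one must be careful that the dévissage using presentations genuinely upgrades "iso on $\Hom$ between $E_{\cB}$-objects" to "iso on all $\Hom$", and in particular that left-exactness of $\Hom$ in each variable, combined with right-exactness of the presentations and exactness of $F$, feeds correctly into the five lemma in both variables successively. Everything else is formal; no deeper input (no finiteness, no specifics of $\cC$ or $S_d$) is needed, which is why the lemma is phrased at this level of generality.
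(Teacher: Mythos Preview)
The paper does not actually give a proof of this lemma; it merely refers to it as ``the straightforward Lemma~\ref{Lem4Equiv}'' and states it without proof. Your overall strategy---lift presentations from $\cB$ to $\cA$, then use left-exactness of $\Hom$ and the five lemma---is correct and is the intended argument.

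However, you have misread hypothesis (3). It does not only assert that $F$ is fully faithful between $E_{\cB}$-preimages; it asserts that for each $Z\in E_{\cB}$ there is a preimage $Z'$ with $\Hom_{\cA}(Z',X)\xrightarrow{\sim}\Hom_{\cB}(FZ',FX)$ for \emph{every} $X\in\cA$. Thus your step (b) is given for free by hypothesis, and the ``main obstacle'' you flag is not one. In fact your proposed argument for (b)---presenting the target and using the five lemma---would not work as stated: presentations are right-exact sequences, while $\Hom(Z',-)$ is only left exact, so applying it to a presentation of the target yields nothing useful. The circularity you half-noticed (needing to lift $Z_0\to FX$ to a map $Z_0'\to X$ in order to build a presentation of $X$ in $\cA$) is real under your weaker reading of (3), and is dissolved precisely by the actual (3).

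With (3) read correctly, the argument collapses to: (i) for any $X\in\cA$, lift a presentation of $FX$ to a presentation $Z_1'\to Z_0'\to X\to 0$ in $\cA$ (the map $Z_0\to FX$ lifts by (3), the composite $Z_1'\to Z_0'\to X$ vanishes by faithfulness, and $F(\coker g)\cong FX$ implies $\coker g\cong X$ by exactness and faithfulness of $F$); (ii) apply $\Hom(-,Y)$ and compare with $\Hom(-,FY)$ downstairs, using (3) for the two outer terms and the five lemma; (iii) essential surjectivity is then exactly your final paragraph.
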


\begin{lemma}\label{LemProject}\label{CorGLFX}
\begin{enumerate}
\item For a tensor functor $F:\cC\to\cC'$ and $X\in\cC$, the tensor functor $\Rep_{\cC}\GL_X\to\Rep_{\cC'}\GL_{FX}$ lifting $F$ restricts to an equivalence that fits into a commutative diagram
$$\xymatrix{\mo(\mB^d_X)\ar[rr]^-{\sim}_-{H_X}\ar[rrd]^-{\sim}_-{H_{FX}}&&\Rep_{\cC}^d(\GL_X,\phi)\ar[d]^\sim&\Pol^d_{\bk}\ar[l]\ar[ld]\\
&&\Rep_{\cC'}^d(\GL_{FX},\phi).}$$
\item For a tensor category $\cD$ and $Y,Z\in\cD$, there exists a commutative diagram
$$\xymatrix{\mo(\mB^d_{Y\oplus Z})\ar[rr]^-{\sim}_-{H_{Y\oplus Z}}\ar[d]_{\Delta\mapsto \Delta|_{\mB^d_Y}}&&\Rep_{\cD}^d(\GL_{Y\oplus Z},\phi)\ar[d]&&\Pol^d_{\bk}\ar[ll]\ar[lld]\\
\mo(\mB^d_{Y})\ar[rr]^-{\sim}_-{H_Y}&&\Rep_{\cD}^d(\GL_{Y},\phi).}$$

\end{enumerate}
\end{lemma}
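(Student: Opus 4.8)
The plan is to assemble both diagrams from four ingredients: the equivalences $H_{(-)}$ of \Cref{LemDVan}(1); the identity $\mB^d_{FX}[\cC']=\mB^d_X[\cC]$ of \Cref{PropF}(1); the inclusion $\mB^d_Y\subset\mB^d_{Y\oplus Z}$ together with transitivity of the restriction functors \eqref{FAB} for nested pseudo-abelian subcategories of $\Rep S_d$; and the defining compatibility $\eta^F$ of universal functors with tensor functors (together with \Cref{RemUniqueAct}).

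For part~(1), the tensor functor $G\colon\Rep_{\cC}GL_X\to\Rep_{\cC'}GL_{FX}$ lifting $F$ sends the $GL_X$-representation $X^{\otimes d}$ to the $GL_{FX}$-representation $FX^{\otimes d}$ (as $F$ is monoidal), and being exact it carries the topologising subcategory $\Rep^d_\cC(GL_X,\phi)$ generated by $X^{\otimes d}$ into the one generated by $FX^{\otimes d}$, namely $\Rep^d_{\cC'}(GL_{FX},\phi)$. Using \Cref{PropF}(1) to identify the common source $\mo(\mB^d_X)=\mo(\mB^d_{FX})$ of $H_X$ and $H_{FX}$, I would check $G\circ H_X\simeq H_{FX}$ on representatives of the simple objects, i.e.\ on the $\Delta_M$ with $M\in\mB^d_X$: there $H_X(\Delta_M)=D^{\cC}_M(X)$ and $H_{FX}(\Delta_M)=D^{\cC'}_M(FX)$ by \eqref{PolComDia}, for $D_M=T_{\Delta_M}$, and $\eta^F$ supplies a natural isomorphism $D^{\cC'}_M(FX)\xrightarrow{\sim}F(D^{\cC}_M(X))$ which is $GL_{FX}$-equivariant by \Cref{RemUniqueAct} (both sides carry the unique coaction inherited from a subquotient presentation inside powers of $FX^{\otimes d}$, and $\eta^F$, being a morphism of universal functors, respects such presentations). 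Since $H_X$ and $H_{FX}$ are equivalences, $G$ restricted to $\Rep^d_\cC(GL_X,\phi)$ is an equivalence and the left triangle commutes; the triangle with $\Pol^d_\bk$ is the same computation for an arbitrary $\Phi\in\Pol^d_\bk$, using $G(\Phi^{\cC}(X))\simeq\Phi^{\cC'}(FX)$ via $\eta^F_X$, naturally and $GL_{FX}$-equivariantly.

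For part~(2), note $\mB^d_Y\subset\mB^d_{Y\oplus Z}$ because $Y^{\otimes d}$ is a direct summand of $(Y\oplus Z)^{\otimes d}$ in $\cD\boxtimes\Rep S_d$, so the left vertical arrow $\Delta\mapsto\Delta|_{\mB^d_Y}$ is the restriction functor \eqref{FAB} for this inclusion. I would define the right vertical functor as $G':=H_Y\circ(-|_{\mB^d_Y})\circ H_{Y\oplus Z}^{-1}$ (legitimate since $H_{Y\oplus Z}$ is an equivalence by \Cref{LemDVan}(1)); then the left square commutes by construction. For the triangle with $\Pol^d_\bk$: precomposing with the exact functor \eqref{Funmo1}, $\mo^{\Frg}(\mB^d_\bk)\to\Pol^d_\bk$ — essentially surjective since $\Pol^d_\bk$ is the topologising subcategory generated by $T_d=T_{\Frg}$ — the evaluation $\Pol^d_\bk\to\Rep^d_\cD(GL_W,\phi)$ becomes, by \eqref{PolComDia}, the composite $H_W\circ(\mo^{\Frg}(\mB^d_\bk)\to\mo(\mB^d_W))$ of $H_W$ with restriction, for $W\in\{Y,Y\oplus Z\}$; hence after this precomposition $G'\circ(\text{eval at }Y\oplus Z)$ equals $H_Y\circ(-|_{\mB^d_Y})\circ(\mo^{\Frg}(\mB^d_\bk)\to\mo(\mB^d_{Y\oplus Z}))$, which by transitivity of restriction along $\mB^d_Y\subset\mB^d_{Y\oplus Z}\subset\mB^d_\bk$ equals $H_Y\circ(\mo^{\Frg}(\mB^d_\bk)\to\mo(\mB^d_Y))=(\text{eval at }Y)$, as required.

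The structural steps are routine diagram-chasing; the one genuinely delicate point is the verification in part~(1) that the comparison isomorphisms $\eta^F$ are $GL$-equivariant and assemble into honest natural transformations of functors (equivalently, that $G$ restricts to the $\phi$-subcategories compatibly with $H_X,H_{FX}$). I expect this to be the main obstacle, though a mild one: it is exactly what \Cref{RemUniqueAct} is set up to handle, since the coaction on any subquotient of a power of $X^{\otimes d}$ is unique and the universal-functor axioms for $\eta^F$ guarantee that subquotient presentations are preserved. The remaining naturality of $H_{(-)}$ and of the evaluation functors is already built into the construction of \eqref{PolComDia}.
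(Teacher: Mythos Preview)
Your treatment of part~(1) is essentially the paper's argument, run in the opposite order: the paper first observes that the right triangle commutes ``by definition of universal functors'' (precisely your $\eta^F$ together with \Cref{RemUniqueAct}), and then derives the left triangle from it via \eqref{PolComDia} and \Cref{LemSer}(1); you instead verify the left triangle on simples and treat the right triangle afterwards. Both are fine.

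For part~(2) there is a genuine gap. You define the middle arrow as $G':=H_Y\circ(-|_{\mB^d_Y})\circ H_{Y\oplus Z}^{-1}$, which forces the left square to commute, and then attempt to deduce the right triangle by precomposing both paths with $T\colon\mo^{\Frg}(\mB^d_\bk)\to\Pol^d_\bk$ and invoking essential surjectivity of $T$. But essential surjectivity is not enough: to conclude that two exact functors out of $\Pol^d_\bk$ agree from their agreement after precomposition with $T$, you need $T$ to be full as well, i.e.\ an equivalence. That is exactly \Cref{ThmPol}(1), whose proof \emph{uses} the present lemma (through diagram~\eqref{Diabox}, where the right triangle of part~(2) is needed). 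So the argument is circular as written.

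The paper avoids this by defining the middle arrow concretely rather than abstractly: restrict a $GL_{Y\oplus Z}$-representation along $GL_Y<GL_{Y\oplus Z}$, decompose under the weight grading for $\mG_m=GL_{\unit}<GL_Y$, and project onto degree $d$. With this description one can verify the right triangle directly on $\Pol^d_\bk$ itself---the degree-$d$ piece of $T_d(Y\oplus Z)=(Y\oplus Z)^{\otimes d}$ over $GL_Y$ is naturally $T_d(Y)=Y^{\otimes d}$, and this identification passes to subquotients of $(T_d)^n$---and the left square then follows from the right triangle by the same \eqref{PolComDia}/\Cref{LemSer}(1) mechanism as in part~(1). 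Your abstract $G'$ does in fact agree with the paper's functor (both make the left square commute and $H_{Y\oplus Z}$ is an equivalence), but establishing the right triangle for it seems to require the concrete description anyway.
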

\begin{proof}
The right triangle in (1) is commutative by definition of universal functors. Since the left triangle is actually derived from the right triangle via \eqref{PolComDia}, it is also commutative. That two (and hence all three) arrows in the left triangle are equivalences
follows from Theorem~\ref{LemDVan}(1).

Now we prove part (2).
There is an obvious restriction functor along $\GL_Y< \GL_{Y\oplus Z}$
$$\Rep^d_{\cD}(\GL_{Y\oplus Z},\phi)\;\to\; \Rep_{\cD}\GL_Y\;\simeq\; \bigoplus_{t\in\mZ}\Rep_{\cD}^{(t)}\GL_Y$$
where the decomposition of the right-hand side is based on the action of $\mG_m=\GL_{\unit}<\GL_Y$. Even though $\Rep^d_{\cD} \GL_Y\subset \Rep^{(d)}_{\cD}\GL_Y$ is a strict inclusion, the fact that a polynomial representation of $\GL_{Y\oplus Z}$ restricts to a polynomial representation of $\GL_Y$ shows that the above functor can be interpreted as
$$\Rep^d_{\cD}(\GL_{Y\oplus Z},\phi)\;\to\; \Rep_{\cD}^d(\GL_Y,\phi)\oplus \bigoplus_{0\le t< d}\Rep_{\cD}^{t}\GL_Y.$$
We define the functor in the middle of the diagram in (2) as the projection onto the relevant summand of the above functor.

As in part (1), it suffices to prove that the right triangle is commutative.
This commutativity follows from the following observation. The degree $d$ part decomposition of $T_d(Y\oplus Z)=(Y\oplus Z)^{\otimes d}$ in the above grading of $\Rep_{\cD}\GL_Y$ is naturally $T_d(Y)$. This principle is inherited for subquotients of $(T_d)^n$, by exploiting the fact that $\Phi(X)$ is canonically a direct summand of $\Phi(X\oplus Y)$, for any universal functor $\Phi$, leading to the desired commutativity.
\end{proof}

\begin{theorem}\label{ThmPol}
\begin{enumerate}
\item The functor $\mo^{\Frg}(\bT^d)\to \Pol^d_{\bk}$ in \eqref{PolComDia} is an equivalence.
\item Evaluation \eqref{Eval3} is an equivalence
$\Pol^d_{\bk}\xrightarrow{\sim} \Rep^d_{\cC}(\GL_X,\phi)$
if and only if $\mB^d_X=\bT^d$.
\end{enumerate}
\end{theorem}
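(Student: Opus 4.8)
The plan is to deduce Theorem~\ref{ThmPol} from the already-established Theorem~\ref{LemDVan}, together with the structural results on $\mo$ and Deligne products.

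\textbf{Part (1).} First I would unwind what needs proving: the top horizontal arrow $\mo^{\Frg}(\mB^d_{\bk})\to\Pol^d_{\bk}$ of \eqref{PolComDia}, which by construction is $\Delta\mapsto T_\Delta$, is an equivalence. I would establish faithfulness, fullness, and essential surjectivity separately. For faithfulness: by Lemma~\ref{LemSer}(2) applied to the family $\{\mB^d_X[\cC]\}$ of pseudo-abelian subcategories of $\mB^d_{\bk}$ (indexed by pairs $(\cC,X)$, which satisfy the required amalgamation property via $X\oplus Y$ in $\cC\boxtimes\cD$ — this is exactly the Example following Lemma~\ref{LemSer}), a morphism in $\mo^{\Frg}(\mB^d_{\bk})$ is zero as soon as all its restrictions to the $\mB^d_X$ vanish; but $H_X$ is faithful (lower arrow of \eqref{PolComDia}) and the restriction $\mo^{\Frg}(\mB^d_{\bk})\to\mo(\mB^d_X)$ composed with $H_X$ computes the value of $T_\Delta^{\cC}$ at $X$ via the commuting square, so faithfulness of $T$ follows from Lemma~\ref{LemSq0}(3) (a universal functor is zero iff all its values are zero). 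For essential surjectivity: $\Pol^d_{\bk}$ is the topologising subcategory generated by $T_d=T_{\Frg}$ (Example~\ref{ExFrg}), and $\Frg\in\mo^{\Frg}(\Rep S_d)$ maps onto $\Frg|_{\mB^d_{\bk}}\in\mo^{\Frg}(\mB^d_{\bk})$; since $T$ is exact (restriction of \eqref{Funmo0}, exact by Lemma~\ref{LemSq0}(1)) it sends the topologising subcategory generated by $\Frg|_{\mB^d_{\bk}}$ — which is all of $\mo^{\Frg}(\mB^d_{\bk})$ — onto the topologising subcategory generated by $T_d$, i.e. onto $\Pol^d_{\bk}$. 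For fullness, given $T_\Delta,T_\nabla$ and a morphism $\psi$ between them in $\Pol^d_{\bk}$, I would use Lemma~\ref{LemSer}(2) again: for each $(\cC,X)$, the component of $\psi$ at $X$ is (via the equivalence $H_X$) a morphism $\Delta|_{\mB^d_X}\Rightarrow\nabla|_{\mB^d_X}$, these are compatible under the amalgamation $\mB^d_X,\mB^d_Y\subset\mB^d_{X\oplus Y}$ because $\psi$ itself is a morphism of universal functors (so the naturality square for $F:\cC\to\cC'$ forces compatibility — Lemma~\ref{LemProject}(1)(2) packages precisely this), so Lemma~\ref{LemSer}(2) produces a morphism $\tilde f:\Delta\Rightarrow\nabla$ in $\mo^{\Frg}(\mB^d_{\bk})$ restricting correctly, and then $T_{\tilde f}=\psi$ by faithfulness of $T$ together with the fact that they agree at every $X$.

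\textbf{Part (2).} With (1) in hand, evaluation \eqref{Eval3} factors as $\Pol^d_{\bk}\xrightarrow{\sim}\mo^{\Frg}(\mB^d_{\bk})\xrightarrow{\text{restrict}}\mo(\mB^d_X)\xrightarrow[\sim]{H_X}\Rep^d_{\cC}(GL_X,\phi)$, where the outer two functors are equivalences (part (1) and Theorem~\ref{LemDVan}(1) respectively) and this factorisation is the content of the commuting square \eqref{PolComDia}. So \eqref{Eval3} is an equivalence if and only if the middle restriction functor $\mo^{\Frg}(\mB^d_{\bk})\to\mo(\mB^d_X)$ is an equivalence. By the description of this functor as a Serre quotient (the paragraph around \eqref{FAB}), it is an equivalence precisely when the Serre subcategory of $F$ with $F(N)=0$ for all $N\in\mB^d_X$ is zero, i.e. precisely when $\Indec\mB^d_X=\Indec\mB^d_{\bk}$ by Lemma~\ref{LemFiltFrg}(1), i.e. when $\mB^d_X=\mB^d_{\bk}$ (both being pseudo-abelian subcategories of $\Rep S_d$, hence Krull--Schmidt and determined by their indecomposables). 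More concretely: if $\mB^d_X=\mB^d_{\bk}$ the restriction functor is the identity; conversely if the containment $\mB^d_X\subsetneq\mB^d_{\bk}$ is strict there is $M\in\Indec\mB^d_{\bk}\setminus\Indec\mB^d_X$, and $\Delta_M\in\mo^{\Frg}(\mB^d_{\bk})$ is a nonzero object sent to $0$, so the functor is not faithful.

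\textbf{Main obstacle.} The genuinely delicate point is fullness in part (1): one must check that a morphism of universal functors, which a priori lives only as a compatible family of values, descends to an honest morphism in $\mo^{\Frg}(\mB^d_{\bk})$. This rests on the gluing statement Lemma~\ref{LemSer}(2) and on verifying the amalgamation hypothesis for the family $\{\mB^d_X[\cC]\}$ via the Deligne product $\cC\boxtimes\cD$ and Proposition~\ref{PropF}(1); the coherence bookkeeping (that the isomorphisms $a(F,\Delta)$ of Lemma~\ref{LemNonsense} are compatible with the structure maps $\eta^F$ of the universal functors $T_\Delta$) is where the care is needed, though conceptually it is all forced. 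Everything else is a formal consequence of exactness, the topologising-generation of $\Pol^d_{\bk}$ by $T_d$, and Theorem~\ref{LemDVan}.
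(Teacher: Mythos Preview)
Your overall architecture matches the paper: faithfulness via Lemma~\ref{LemSer}(2) and the equivalences $H_X$, fullness via the same gluing (using Lemma~\ref{LemProject} to verify compatibility under $\mB^d_X\subset\mB^d_{X\oplus Y}$), and Part~(2) exactly as you wrote it. Those parts are correct and essentially identical to the paper.

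There is a genuine gap in your essential surjectivity argument. You claim that since $T$ is exact, it sends the topologising subcategory generated by $\Frg|_{\mB^d_{\bk}}$ \emph{onto} the topologising subcategory generated by $T_d$. Exactness only guarantees that $T$ sends subquotients of $(\Frg|_{\mB^d_{\bk}})^n$ to subquotients of $T_d^n$, i.e.\ that the image lands \emph{in} $\Pol^d_{\bk}$; it does not show that every subquotient of $T_d^n$ arises this way. (An exact faithful functor between abelian categories need not have essential image closed under subobjects in the target.) So this step is circular: it presupposes exactly what essential surjectivity asserts.

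The paper handles essential surjectivity by the same gluing mechanism you use for fullness. Given $\Phi\subset T_d^n$ in $\Pol^d_{\bk}$, for each pair $(\cC,X)$ one pulls back the subobject $\Phi^{\cC}(X)\subset (X^{\otimes d})^n$ through the equivalence $H_X$ to obtain $\Delta_X\subset \Frg^n|_{\mB^d_X}$; the compatibility diagram of Lemma~\ref{LemProject} shows $\Delta_X = \Delta_{X\oplus Y}|_{\mB^d_X}$, so the $\Delta_X$ glue (again via Lemma~\ref{LemSer}(2), or rather the obvious subobject analogue) to a single $\Delta\subset\Frg^n$ in $\mo^{\Frg}(\mB^d_{\bk})$ with $T_\Delta=\Phi$. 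In other words, essential surjectivity is not a formality here but requires the same local-to-global argument as fullness; your ``Main obstacle'' paragraph correctly identifies the delicate point but applies equally to essential surjectivity.
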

\begin{proof}
Denote the functor in part (1) again by $T$. We prove that it is an equivalence by proving the following three properties:

{\em (i) $T$ is faithful.} By Lemma~\ref{LemSer}(2), only zero morphisms in $\mo(\bT^{d})$ are sent to zero by every (downwards arrow) restriction functor in \eqref{PolComDia}.
Faithfulness thus follows from Theorem~\ref{LemDVan}(1) and commutativity of \eqref{PolComDia}.

{\em (ii) $T$ is full.} Consider $\Delta_i\in \mo(\bT^{d})$, for $i\in\{1,2\}$ and a morphism $\alpha: T_{\Delta_1}\to T_{\Delta_2}$ in $\Pol^d$, which consists of morphisms $\alpha^{\cC}_X:T^{\cC}_{\Delta_1}(X)\to T^{\cC}_{\Delta_2}(X)$ in $\cC$ for every tensor category $\cC$ and $X\in\cC$.  By Theorem~\ref{LemDVan}(1) this defines unique natural transformations $\gamma_X: \Delta_1|_{\mB^d_X}\Rightarrow \Delta_2|_{\mB^d_X}$, for every $(\cC,X)$.

We claim that for a second $(\cD,Y\in\cD)$, and $X\oplus Y$ interpreted within $\cC\boxtimes\cD$, we have 
\begin{equation}\label{eq:gamma}\gamma_{X\oplus Y}|_{\mB^d_{X}}\;=\;\gamma_X.\end{equation} It would then follow from Lemma~\ref{LemSer}(2) that the morphisms $\gamma_X$ combine to give a natural transformation $\gamma:\Delta_1\Rightarrow \Delta_2$. By construction and the fact that no morphism in $\Pol^d$ evaluates to zero on every $X\in\cC$, we find $T(\gamma)=\alpha$.

To prove the claim in \eqref{eq:gamma} we consider the diagram
\begin{equation}\label{Diabox}\xymatrix{
\mo(\mB^d_{X\oplus Y})\ar[drrr]^{\sim}\ar[dr]&\mo(\bT^d)\ar[d]\ar[l]\ar[r]^{T}&\Pol^d\ar[d]\ar[dr]&\\
&\mo(\mB^d_X)\ar[r]^\sim\ar[rd]^\sim&\Rep^d_{\cC}(\GL_X,\phi)\ar[d]^\sim&\Rep^d_{\cC\boxtimes\cD}(\GL_{X\oplus Y},\phi)\ar[dl]\\
&&\Rep^d_{\cC\boxtimes\cD}(\GL_X,\phi).
}\end{equation}
All functors come from \eqref{PolComDia} or Lemma~\ref{LemProject} and the diagram is therefore commutative. It thus follows indeed that $\gamma_X$ is the restriction of $\gamma_{X\oplus Y}$ along $\mB^d_X\subset \mB^d_{X\oplus Y}$.

{\em (iii) $T$ is essentially surjective.} By definition, every object in $\Pol^d$ is a subquotient of $(T_d)^n\simeq (T_{\Frg})^n$, for some $n\in\mN$. Up to replacing $T$ with an isomorphic functor, we can assume $(T_d)^n= (T_{\Frg})^n$. By exactness of $T$, see Lemma~\ref{LemSq0}(1), it suffices to show that every $\Phi\subset T_{d}^n$ in $\Pol^d$ is the image under $T$ of a subobject $\Delta\subset \Frg^n$ in $\mo(\bT^d)$.

Hence we fix $\Phi\subset T_{\Frg}^n$ in $\Pol^d$. For every $(\cC,X\in\cC)$, we have the corresponding $\Phi^{\cC}(X)\subset (X^{\otimes d})^n$ and, under the equivalence in part (1), a corresponding $\Delta_X\subset \Frg|_{\mB^d_X}$. One can observe that this (uniquely) defines a subobject $\Delta\subset \Frg^n$ in $\mo(\bT^d)$, with $\Delta|_{\mB^d_X}=\Delta_X$ for all $X$, because of the following consideration. For every $(\cD,Y\in\cD)$, from \eqref{Diabox} we can derive that $\Delta_{X}$ is the restriction of $\Delta_{X\oplus Y}\subset \Frg^n|_{\mB^d_{X\oplus Y}}$.
It now remains to show that $T_\Delta\subset (T_{d})^n$ and $\Phi\subset (T_d)^n$ are equal as subobjects. This is now straightforward as $T_\Delta^{\cC}(X)=\Phi^{\cC}(X)$ as subobjects of $(X^{\otimes d})^n$, for all $(\cC,X\in\cC)$.

Part (2) follows from part (1), Theorem~\ref{LemDVan}(1) and commutativity of \eqref{PolComDia}.
\end{proof}

\begin{remark}
We can use the theory of Auslander and Reiten on almost split sequences, see e.g. \cite[Proposition~4.12.6]{Be}. This shows that for any non-projective indecomposable $M\in\Rep S_d$, and any $X\in\cC$, we have an exact sequence in $\cC$: 
$$0\to (N^\ast\otimes_{\bk}X^{\otimes d})^{S_d}\to (E^\ast\otimes_{\bk}X^{\otimes d})^{S_d}\to (M^\ast\otimes_{\bk}X^{\otimes d})^{S_d}\to D_M(X)\to 0,$$
coming from a short exact sequence $M\hookrightarrow E\tto N$ in $\Rep S_d$. Note that the above is a projective resolution of the (non-zero provided $M\in\bB^d_X$) simple $D_M(X)$ in $\Rep^d\GL_X$ only if $E,N\in \bB^d_X$. For $p=2=d$ and $M=\bk$, we get
$$0\to \Gamma^2X\to X^{\otimes 2}\to \Gamma^2(X)\to X^{(1)}\to 0.$$
Similarly, if $M$ is projective in $\Rep S_d$, we have a short exact sequence
$$0\to (M_1^\ast\otimes_{\bk}X^{\otimes d})^{S_d}\to (M^\ast\otimes_{\bk}X^{\otimes d})^{S_d}\to D_M(X)\to 0,$$
where $M_1$ is the quotient of $M$ by its socle.
\end{remark}

\subsection{Representations of the symmetric group and strict polynomial functors}

\subsubsection{}\label{ComplicFun} Now we define a $\bk$-linear composite functor $\mT:\Delta\mapsto\mT_\Delta$,
$$\Fun_{\bk}(\Rep S_d,\Vecc)\;\to\; \Fun^{\cC}_{\bk}(\cC\boxtimes\Rep S_d,\cC)\;\to\;\Fun_{\cC}(\underline{\cC\boxtimes\Rep S_d},\uC)\;\to\;\Fun_{\cC}(\Gamma^d\uC,\uC)=\SPol^d\cC.$$
The first functor is as in \ref{MoreNon}, the second functor is an example of Lemma~\ref{LemModtoEnr}, and the third functor is simply the restriction to the subcategory $\Gamma^d\uC $ from Remark~\ref{RemGamEnr}.

We abbreviate again $\mT_{\Delta_M}$ to $\mD_M$, for $M\in\Indec\Rep S_d$.

\begin{lemma}\label{LemmT}
The functor $\mT$ is exact and, for every $X\in\cC$, admits a commutative diagram
\begin{equation}\label{pathmT}\xymatrix{\Fun_{\bk}(\Rep S_d,\Vecc)\ar[rr]^-{\mT}\ar[d]&&\SPol^d\cC\ar[dr]^{\eqref{Eval2}}\\
\Fun_{\bk}(\mB^d_X,\Vecc)=\mo(\mB^d_X)\ar[rr]^-{\sim}_-{H_X}&&\Rep^d_{\cC}(\GL_X,\phi)\ar@{}[r]|-*[@]{\subset}&\Rep^d_{\cC}\GL_X.
}\end{equation}
\end{lemma}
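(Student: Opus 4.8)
The plan is to unwind the three composite functors defining $\mT$ and track where a given $\Delta$ lands, then compare with the corresponding composite on the bottom row of \eqref{pathmT}. First I would recall from \ref{ComplicFun} that $\mT_\Delta$ is obtained by first forming $\cC\boxtimes\Delta:\cC\boxtimes\Rep S_d\to\cC$ (a $\cC$-module functor, via \ref{MoreNon}), then passing to the associated enriched functor $\underline{\cC\boxtimes\Rep S_d}\to\uC$ via Lemma~\ref{LemModtoEnr}, and finally restricting to the full enriched subcategory $\Gamma^d\uC$ from Remark~\ref{RemGamEnr}. In particular, on objects $\mT_\Delta(X)=(\cC\boxtimes\Delta)(X^{\otimes d})=T^{\cC}_\Delta(X)$ from \ref{DeftoPol}, so $\mT_\Delta$ evaluated at $X$ agrees with $T_\Delta^{\cC}(X)$, and by Lemma~\ref{LemSq0}(2) this computes $\Delta(\Omega^d(X))$ after applying a faithful $\Omega$. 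So on objects the triangle in \eqref{pathmT} commutes essentially by construction.

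For exactness of $\mT$: the functor $\Delta\mapsto\cC\boxtimes\Delta$ is exact (it is $-\boxtimes\cC$ applied to morphism categories, and $-\boxtimes\cC$ preserves exact sequences of functors since exactness is tested pointwise and $\boxtimes$ is exact in the relevant variable by \ref{Defpf}); the passage to enriched functors in Lemma~\ref{LemModtoEnr} and the restriction to $\Gamma^d\uC$ are both additive and preserve the pointwise exactness criterion recorded just after Definition~\ref{DefSPol} (a sequence in $\SPol^d\cC$ is exact iff it is so after evaluation at every $X\in\cC$). Composing, $\mT$ sends short exact sequences to sequences that are short exact after evaluation at every $X$, hence short exact. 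Alternatively, and perhaps more cleanly, I would argue exactness of $\mT$ by composing with the exact faithful evaluation \eqref{Eval2} at a single object $X$ for which $\mB^d_X$ detects enough — but since exactness of $\SPol^d\cC$ is pointwise we can just use \emph{all} $X$ simultaneously and invoke Lemma~\ref{LemSq0}(1) (which already gives exactness of $\Delta\mapsto T_\Delta$, and $T_\Delta(X)=\mT_\Delta(X)$).

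For commutativity of the diagram: I would factor the claim through \eqref{PolComDia} and Theorem~\ref{LemDVan}(1). The left vertical arrow $\Fun_{\bk}(\Rep S_d,\Vecc)\to\mo(\mB^d_X)$ is restriction of functors along $\mB^d_X\subset\Rep S_d$, which by Lemma~\ref{LemSer}(1) is the Serre quotient killing exactly those $\Delta$ with $\Delta|_{\mB^d_X}=0$ — and by Lemma~\ref{LemSq0}(3) these are exactly the $\Delta$ with $\mT_\Delta(X)=T^{\cC}_\Delta(X)=0$, i.e. the ones killed by \eqref{Eval2} composed with $\mT$. So both paths around the square factor through the same Serre quotient; it remains to identify the two resulting functors $\mo(\mB^d_X)\to\Rep^d_\cC GL_X$. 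Both are exact, and by Proposition~\ref{PropTUV}(1)/Lemma~\ref{LemFiltFrg} it suffices to check agreement on the projective generators $\Hom_{\mB^d_X}(M,-)$ (equivalently on representable functors), i.e. on the $\mT_{U;V}$ of Example~\ref{TYZ}: one computes $\mT_{\Delta_M}$-type representables map to the Schur-algebra modules $(M^\ast\otimes_\bk X^{\otimes d})^{S_d}$ on one side and to the same object via $H_X$ composed with the equivalence $\Rep^d_\cC(GL_X,\phi)\hookrightarrow\Rep^d_\cC GL_X$ on the other, using the formula \eqref{FQ1} from the proof of Theorem~\ref{LemDVan}. The main obstacle is the bookkeeping in Lemma~\ref{LemModtoEnr} — verifying that the enriched functor produced from the $\cC$-module functor $\cC\boxtimes\Delta$, when restricted to $\Gamma^d\uC$ and then evaluated via \eqref{Eval2} (which is restriction to the one-object enriched subcategory $\cS(X,d)$), yields precisely the $\cS(X,d)$-module structure on $T^{\cC}_\Delta(X)$ coming from its realization as a subquotient of tensor powers of $X^{\otimes d}$; this is exactly the "commutative triangle left as an exercise" in Lemma~\ref{LemModtoEnr} specialized here, and checking it amounts to unravelling that the internal-hom action maps are the whiskered versions of the module-functor structure maps $s_{X,M}$, which is routine but notation-heavy.
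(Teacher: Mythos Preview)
Your proposal is correct and follows essentially the same approach as the paper. For exactness, your ``alternative'' argument via Lemma~\ref{LemSq0}(1) and pointwise exactness in $\SPol^d\cC$ is exactly what the paper does (it makes this precise by building a commutative diagram through $\UFun_{\bk}$ and $\Mod_{\cC}\cS(X,d)$ using the triangle of Lemma~\ref{LemModtoEnr}). For commutativity, the paper also factors through the Serre quotient via Lemma~\ref{LemSer}(1), but then identifies the induced functor with $H_X$ more directly than your generator-checking detour: it simply observes that the composite $\mo(\Rep S_d)\to\Rep^d_{\cC}GL_X$ coincides with the one in \eqref{PolComDia} (which is how $H_X$ was \emph{defined}), invoking Lemma~\ref{LemModtoEnr} together with Remark~\ref{RemUniqueAct} (uniqueness of the $GL_X$-action on subquotients of $(X^{\otimes d})^n$) --- precisely the point you isolate in your final paragraph as ``the main obstacle''.
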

\begin{proof}
To prove exactness of $\mT$, by the discussion following Definition~\ref{DefSPol}, it is sufficient to show that for each $X\in\cC$, the upper path in \eqref{pathmT} (or equivalently, its composition with the forgetful functor $\Rep^d_{\cC}\GL_X\to \cC$) is exact. We consider therefore the commutative diagram
$$\xymatrix{
\Fun_{\bk}(\Rep S_d,\Vecc)\ar[r]\ar[d]_{\eqref{Funmo0}}&\Fun^{\cC}_{\bk}(\cC\boxtimes\Rep S_d,\cC)\ar[r]\ar[rd]&\Fun_{\cC}(\underline{\cC\boxtimes\Rep S_d},\uC)\ar[d]\\
\UFun^d\ar[r]^{\Phi\mapsto \Phi^{\cC}(X)}&\cC&\ar[l]\Mod_{\cC}\cS(X,d).
}$$
Here, the upper path from top left to bottom right is simply the upper path in~\eqref{pathmT}, up to the equivalence in Lemma~\ref{lem:SchurPoly}. The diagonal arrow with commutative triangle is an application of Lemma~\ref{LemModtoEnr}, and that lemma also shows that the left-hand side of the diagram is commutative. In conclusion, exactness of $\mT$ follows from Lemma~\ref{LemSq0}(1).

It now follows from Lemma~\ref{LemSer}(1) that the upper path in \eqref{pathmT} factors as
$$
\xymatrix{
\mo^{\Frg}(\Rep S_d)\ar@{^{(}->}[r]\ar[rd]&\Fun_{\bk}(\Rep S_d,\Vecc)\ar[rr]^-{\mT}\ar[d]&&\SPol^d\cC\ar[d]^{\eqref{Eval2}}\\
&\mo(\mB^d_X)=\Fun_{\bk}(\mB^d_X,\Vecc)\ar@{-->}[rr]&&\Rep^d_{\cC}\GL_X.
}$$
To prove that the dashed arrow is as in the lemma, it suffices to show that the above functor(s) from $\mo(\Rep S_d)$ to $\Rep^d_{\cC}\GL_X$ is/are isomorphic to the one(s) in \eqref{PolComDia}. Also this follows from Lemma~\ref{LemModtoEnr} and Remark~\ref{RemUniqueAct}.
\end{proof}

\begin{corollary}\label{Cor-Rev}
For indecomposable $S_d$-representation $M$ and $X\in\cC$, we have $D_M(X)=0$ if and only if $\mD_M(X)=0$. Consequently $\mT$ factors through and restricts to exact faithful functors
$$\Fun(\mB^d[\cC],\Vecc)\;\to\; \SPol^d\cC\quad\mbox{and}\quad\mo^{\Frg}(\mB^d[\cC])\;\to\; \SPol^d_{\circ}\cC,$$
\end{corollary}
\begin{proof}
Lemma~\ref{LemmT} implies the first claim, producing the first functor. This restricts to the second functor since $\mT_{\Frg}\simeq\mT^d$.
\end{proof}

\subsubsection{}\label{AnotherDiagram}
We can now consider the exact functor 
\begin{equation}\label{mTup}
\cC\boxtimes \mo^{\Frg}(\mB^d[\cC])\;\xrightarrow{\cC\boxtimes\mT}\;\cC\boxtimes\SPol^d_{\circ}\cC\;\to\; \SPol^d_{\circ}\cC,
\end{equation}
where the second functor corresponds to the canonical $\cC$-module structure $(Y,\mT)\mapsto Y\otimes\mT$ of $\SPol^d\cC$. By Lemma~\ref{LemmT}
we have a commutate diagram
$$\xymatrix{
\cC\boxtimes \mo^{\Frg}(\mB^d[\cC])\ar[rr]^-{\eqref{mTup}}\ar[d]&&\SPol^d_{\circ}\cC\ar[d]^{\eqref{Eval2}}\\
\cC\boxtimes \mo(\mB^d_X)\ar[rr]^{\sim}&&\Mod_{\cC}\cS(X,d)
}$$
where the lower horizontal arrow is the equivalence in (the proof of) Theorem~\ref{LemDVan}(1).

\begin{theorem}\label{ThmSPol}
\begin{enumerate}
\item The functor \eqref{mTup} yields an equivalence
$$\cC\boxtimes\mo^{\Frg}(\mB^d[\cC])\;\xrightarrow\; \SPol^d_{\circ}\cC.$$

\item If $\mB^d[\cC]=\bT^d$, then 
$\SPol^d_{\circ}\cC\simeq \Pol^d_{\bk}\boxtimes\cC.$
\end{enumerate}
\end{theorem}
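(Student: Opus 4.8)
The plan is to deduce part (2) formally from part (1), and to prove part (1) by the same three–step template (faithful, full, essentially surjective) used for \Cref{ThmPol}(1), now carried out relatively over $\cC$.

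First I would record that \eqref{mTup} factors through the Serre quotient $\cC\boxtimes\mo^{\Frg}(\mB^d[\cC])$ of $\cC\boxtimes\mo^{\Frg}(\Rep S_d)$ and produces an exact functor $G\colon\cC\boxtimes\mo^{\Frg}(\mB^d[\cC])\to\SPol^d_{\circ}\cC$. Indeed, the commutative diagram in \ref{AnotherDiagram} together with the equivalence from the proof of \Cref{LemDVan}(1) identifies, for each $X\in\cC$, the composite of \eqref{mTup} with evaluation $\ev_X\colon\SPol^d_{\circ}\cC\to\Mod_{\cC}\cS(X,d)$ as $\cC\boxtimes(\Delta\mapsto\Delta|_{\mB^d_X})$ followed by an equivalence; hence $\mT_{\Delta}=0$ iff $\Delta|_{\mB^d_X}=0$ for all $X$ iff $\Delta|_{\mB^d[\cC]}=0$, using $\mB^d[\cC]=\sum_X\mB^d_X$ and \Cref{LemSer}(2), and exactness is the one noted in \ref{AnotherDiagram}. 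For faithfulness of $G$: a morphism in its kernel is killed by every $\ev_X\circ G$, hence, transporting along the equivalences of \Cref{LemDVan}(1), by every restriction $\cC\boxtimes\mo^{\Frg}(\mB^d[\cC])\to\cC\boxtimes\mo(\mB^d_X)$; since $\Hom$-spaces in $\cC\boxtimes\mo^{\Frg}(\mB^d[\cC])$ are tensor products $\Hom_{\cC}(-,-)\otimes_{\bk}\Hom_{\mo^{\Frg}(\mB^d[\cC])}(-,-)$ with the first factor finite dimensional, joint faithfulness of these restrictions (\Cref{LemSer}(2)) survives the tensor, so the morphism vanishes.

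The substance is in fullness and essential surjectivity, which transcribe steps (ii)--(iii) of the proof of \Cref{ThmPol}(1); the one genuinely new input is that a morphism, respectively an object, of $\SPol^d\cC$ is by definition a compatible family of its evaluations at the objects of $\cC$. For fullness, given $\beta\colon G(A_1)\to G(A_2)$, I would transport each $\ev_X(\beta)$ through the equivalence of \Cref{LemDVan}(1) to a morphism $\gamma_X$ in $\cC\boxtimes\mo(\mB^d_X)$; \Cref{LemProject}(2), applied with $\cD=\cC$ and the splitting $X\oplus Y$, gives $\gamma_{X\oplus Y}|_{\mB^d_X}=\gamma_X$, so \Cref{LemSer}(2) with common refinement $\gamma(X,Y):=X\oplus Y$ glues the $\gamma_X$ to $\gamma\colon A_1\to A_2$, and $G(\gamma)=\beta$ because the two agree after every evaluation. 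For essential surjectivity, note that an exact fully faithful functor need not have essential image closed under subobjects, so one argues directly: every object of $\SPol^d_{\circ}\cC$ is a subquotient of some $Z\otimes\mT^d=G(Z\boxtimes\Frg)$, and for a subobject $\Phi\subseteq Z\otimes\mT^d$ the subobjects $\ev_X(\Phi)\subseteq Z\otimes X^{\otimes d}$ correspond under \Cref{LemDVan}(1) to subobjects $A_X$ of the image of $Z\boxtimes\Frg$ in $\cC\boxtimes\mo(\mB^d_X)$; these are compatible by \Cref{LemProject}(2), hence glue (pointwise on $\Indec\mB^d[\cC]$, as in the proof of \Cref{ThmPol}(iii)) to a subobject $A\subseteq Z\boxtimes\Frg$ with $G(A)=\Phi$, and general subquotients are handled by iterating.

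Part (2) is then immediate: if $\mB^d[\cC]=\mB^d_{\bk}$ then part (1) gives $\SPol^d_{\circ}\cC\simeq\cC\boxtimes\mo^{\Frg}(\mB^d_{\bk})$, and applying the pseudo-functor $\cC\boxtimes-$ to the equivalence $\mo^{\Frg}(\mB^d_{\bk})\xrightarrow{\sim}\Pol^d_{\bk}$ of \Cref{ThmPol}(1) and using symmetry of the Deligne product yields $\SPol^d_{\circ}\cC\simeq\cC\boxtimes\Pol^d_{\bk}\simeq\Pol^d_{\bk}\boxtimes\cC$. I expect the main obstacle to be the two gluings in the full and essentially surjective steps: verifying the compatibilities $\gamma_{X\oplus Y}|_{\mB^d_X}=\gamma_X$ and its object-level analogue carefully from \Cref{LemProject}(2), and checking that evaluations at objects of $\cC$ alone genuinely determine morphisms and subobjects of $\SPol^d\cC$, so that the glued data is recognised as lying in the image of $G$.
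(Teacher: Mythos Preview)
Your proposal is correct and follows exactly the approach the paper indicates: the paper's proof of part (1) simply reads ``can be proved using the same argument as in the proof of Theorem~\ref{ThmPol}(1)'', and you have carried out precisely that transcription (faithful, full, essentially surjective, with the gluing via \Cref{LemProject}(2) and \Cref{LemSer}(2)), while your derivation of part (2) from part (1) and \Cref{ThmPol}(1) matches the paper verbatim. One small wrinkle: your faithfulness step asserts that $\Hom$-spaces in $\cC\boxtimes\mo^{\Frg}(\mB^d[\cC])$ are literal tensor products, which is only true on pure tensors; the correct statement is that joint faithfulness of the exact restriction functors survives $\cC\boxtimes-$ (e.g.\ via \cite[Corollary~3.2.8]{CF}), but this does not affect the argument.
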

\begin{proof}
Part (1) can be proved using the same argument as in the proof of Theorem~\ref{ThmPol}(1).

Part (2) follows from the combination of part (1) and Theorem~\ref{ThmPol}(1).
\end{proof}

\begin{remark}
The equivalence in Theorem~\ref{ThmSPol}(1) can be extended to 
$$\cC\boxtimes\Fun_{\bk}(\mB^d[\cC],\Vecc)\;\xrightarrow\; \SPol^d\cC.$$
In particular, if $\Indec\mB^d[\cC]$ is finite ({\it i.e.} $\Indec\mB^d[\cC]=\mB^d_X$ for some $X\in\cC$), $\SPol^d\cC$ is equivalent to $\SPol^d_{\circ}\cC$.
\end{remark}

\subsection{Discerning objects and invariant theory}
Let $\cC$ be a tensor category with a fixed object $X$.

\begin{theorem}\label{ThmDisc}
The following conditions on $X\in\cC$ are equivalent:
\begin{enumerate}
\item We have $\mB^d_X=\mB^d[\cC]$.
\item For all $Y\in\cC$, the morphism in $\cC$
$$\Gamma^d(X^{\ast}\otimes Y)\,\otimes\, \Gamma^d(Y^\ast\otimes X)\;\to\; \uEnd(Y^{\otimes d})^{S_d}$$
coming from evaluation of $X^{\otimes d}$ yields an epimorphism.
\item The evaluation at $X$ in \eqref{Eval2} yields an equivalence
$$\SPol^d_{\circ}\cC\;\simeq\; \Mod_{\cC}\cS(X,d)=\Rep^d\GL_X,\quad \mT\mapsto \mT(X).$$
\item We have $D_M(X)\not=0$ for all $M\in\Indec\mB^d[\cC]$.
\end{enumerate}
\end{theorem}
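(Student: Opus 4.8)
The plan is to prove the cyclic chain of implications $(1)\Rightarrow(2)\Rightarrow(3)\Rightarrow(4)\Rightarrow(1)$, leaning on the machinery already built up, in particular Theorem~\ref{LemDVan}, Theorem~\ref{ThmSPol}, and Lemma~\ref{YonLem}.

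First, for $(1)\Rightarrow(2)$: under the hypothesis $\mB^d_X=\mB^d[\cC]$, I would apply Lemma~\ref{YonLem} with $Z_1=Y^\vee$ and $Z_2=Y$ (so that $\Gamma^n$ becomes $\Gamma^d$ of $Y^\vee\otimes Y=\uEnd(Y)$, whose $\Gamma^d$-invariant-description gives $\uEnd(Y^{\otimes d})^{S_d}$ after identifying $(M\otimes_\bk Z_i^{\otimes d})^{S_d}$ appropriately via Lemma~\ref{LemDual}). Concretely, Lemma~\ref{YonLem} says the coend $\int^{M\in\mB^d[\cC]}(M^\ast\otimes_\bk (Y^\vee)^{\otimes d})^{S_d}\otimes (M\otimes_\bk Y^{\otimes d})^{S_d}\to\Gamma^d(Y^\vee\otimes Y)=\uEnd(Y^{\otimes d})^{S_d}$ is an isomorphism; since $\mB^d_X=\mB^d[\cC]$, each $M$ occurring there is a summand of some $(X^{\otimes d})^{\oplus r}$, so each $(M\otimes_\bk (X^\vee)^{\otimes d})^{S_d}$-style term is a summand of $\Gamma^d(X^\vee\otimes X)$-related objects — more precisely, $M$ being a summand of $\Hom_{S_d}(-)$-data for $X^{\otimes d}$ lets one factor the coend contribution through $\Gamma^d(X^\vee\otimes Y)\otimes\Gamma^d(Y^\vee\otimes X)$. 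The surjectivity of the composite $\Gamma^d(X^\vee\otimes Y)\otimes\Gamma^d(Y^\vee\otimes X)\to\uEnd(Y^{\otimes d})^{S_d}$ then follows because the coend is already a quotient of the $M=$ (summand of $X^{\otimes d}$) piece. This is the step I expect to require the most care: matching the internal-hom bookkeeping with Lemma~\ref{YonLem}'s statement, and checking that "$\mB^d_X=\mB^d[\cC]$" really does let one replace the coend over all $M$ by the single generator $X^{\otimes d}$, i.e.\ that $\Gamma^d(X^\vee\otimes X)$ (equivalently $\cS(X,d)$) surjects onto the relevant coend piece.

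For $(2)\Rightarrow(3)$: condition (2) says exactly that the algebra morphism $\cS(X,d)=\uEnd(X^{\otimes d})^{S_d}\to\uEnd(Y^{\otimes d})^{S_d}$ (suitably composed) is "surjective enough" for every $Y$; I would interpret this as saying the canonical functor $\SPol^d\cC\to\Mod_\cC\cS(X,d)$ of \eqref{Eval2} is fully faithful — indeed a strict polynomial functor is determined by its value at $X$ together with the $\cS(X,d)$-action because the defining morphisms \eqref{EqMorPF} for arbitrary $Y$ are forced by (2). Essential surjectivity is immediate from the construction $\mT_{U;V}$ and Proposition~\ref{PropTUV}, or more cleanly: by Theorem~\ref{ThmSPol}(1) we have $\SPol^d_\circ\cC\simeq\cC\boxtimes\mo^{\Frg}(\mB^d[\cC])$, while Theorem~\ref{LemDVan}(1) gives $\Rep^d_\cC GL_X\simeq\Mod_\cC\cS(X,d)\simeq\cC\boxtimes\mo(\mB^d_X)$; so (3) for the "$\circ$"-version amounts to $\mo(\mB^d[\cC])\simeq\mo(\mB^d_X)$, i.e.\ $\mB^d[\cC]=\mB^d_X$, and one upgrades from $\SPol^d_\circ$ to all of $\SPol^d$ using that every object of $\Ind(\SPol^d\cC)$ is built from $Z\otimes\mT^d$'s (Proposition~\ref{PropTUV}(3)). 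Honestly the cleanest route may be to observe $(3)\Leftrightarrow(1)$ directly from Theorem~\ref{ThmMain}(1)/Theorem~\ref{ThmSPol} and Theorem~\ref{LemDVan}, treating (2) as the "invariant-theoretic" repackaging.

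For $(3)\Rightarrow(4)$ and $(4)\Rightarrow(1)$: if \eqref{Eval2} is an equivalence, then it sends the simple objects $\mD_M\in\SPol^d\cC$ (for $M\in\Indec\mB^d[\cC]$, using the $\SPol^d_\circ$-incarnation and Theorem~\ref{ThmSPol}(1)) to simple, hence nonzero, objects $\mD_M(X)=D_M(X)$ of $\Mod_\cC\cS(X,d)$; this gives (4). For $(4)\Rightarrow(1)$: by Theorem~\ref{LemDVan}(2), $D_M(X)\ne 0$ forces $M\in\mB^d_X$; so if $D_M(X)\ne 0$ for every $M\in\Indec\mB^d[\cC]$ we get $\Indec\mB^d[\cC]\subseteq\Indec\mB^d_X$, and since $\mB^d_X\subseteq\mB^d[\cC]$ always holds (by definition, $\mB^d_X[\cC]$ is one of the summands of $\mB^d[\cC]=\sum_{Z\in\cC}\mB^d_Z[\cC]$), we conclude $\mB^d_X=\mB^d[\cC]$. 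The main obstacle, as flagged, is establishing $(1)\Rightarrow(2)$ with the internal-hom/coend bookkeeping; the remaining implications are essentially immediate consequences of Theorems~\ref{LemDVan} and~\ref{ThmSPol} once the dictionary between $\SPol^d_\circ\cC$, $\Mod_\cC\cS(X,d)$, and the functor categories $\mo(\mB^d_?)$ is in place.
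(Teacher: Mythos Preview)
Your treatment of $(1)\Leftrightarrow(4)$, $(3)\Rightarrow(4)$, and $(1)\Rightarrow(3)$ is essentially the paper's argument: the paper also derives $(1)\Leftrightarrow(4)$ directly from Theorem~\ref{LemDVan}(2), obtains $(3)\Rightarrow(4)$ by observing that $\mD_M$ is nonzero in $\SPol^d\cC$ for $M\in\Indec\mB^d[\cC]$ (again via Theorem~\ref{LemDVan}(2)), and deduces $(1)\Rightarrow(3)$ from the commutative square in \ref{AnotherDiagram} together with Theorem~\ref{ThmSPol}(1). One caution: do not invoke Theorem~\ref{ThmMain}(1), since that is precisely the implication $(1)\Leftrightarrow(3)$ you are proving here.

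The genuine gap is in how you handle condition~(2). Your cyclic scheme requires $(2)\Rightarrow(3)$, but you abandon that step in favor of $(1)\Leftrightarrow(3)$, so nothing in your argument shows that $(2)$ implies any of the other conditions. Your tentative reading of $(2)$ as ``an algebra morphism $\cS(X,d)\to\uEnd(Y^{\otimes d})^{S_d}$ is surjective'' is also not right: the map in $(2)$ is the composition (evaluation) map $\uHom(X^{\otimes d},Y^{\otimes d})^{S_d}\otimes\uHom(Y^{\otimes d},X^{\otimes d})^{S_d}\to\uEnd(Y^{\otimes d})^{S_d}$, not an algebra homomorphism out of $\cS(X,d)$. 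The paper instead proves $(1)\Leftrightarrow(2)$ directly as a single computation: one rewrites $\mB^d_X=\mB^d[\cC]$ as the statement that for every $Y$ and all $j,l\in\Irr\cC$ the composition map
\[
\bigoplus_i \Hom_{S_d}\bigl(\Hom(P_i,X^{\otimes d}),\Hom(P_j,Y^{\otimes d})\bigr)\otimes\Hom_{S_d}\bigl(\Hom(P_l,Y^{\otimes d}),\Hom(P_i,X^{\otimes d})\bigr)\to\Hom_{S_d}(\ldots)
\]
is surjective, and then, using Lemma~\ref{LemDual} and repeated applications of the natural isomorphism~\eqref{EqNat}, identifies this family of surjections with $\Hom_\cC(P,-)$ applied to the map in~(2). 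This argument is symmetric and gives both directions at once, which your Lemma~\ref{YonLem} approach (as written) does not. Incidentally, in your $(1)\Rightarrow(2)$ sketch the sentence ``each $M$ occurring there is a summand of some $(X^{\otimes d})^{\oplus r}$'' conflates $\Rep S_d$ with $\cC$; what is true is that $M$ is a summand of a sum of the $S_d$-representations $\Hom(P_i,X^{\otimes d})$, and it is exactly this reformulation that the paper exploits.
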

\begin{proof}%[Proof of Theorem~\ref{ThmDisc}]
Equivalence of (1) and (4) follows from Theorem~\ref{LemDVan}(2).

That (3) implies (4) follows from Corollary~\ref{Cor-Rev}. %by considering $\mD_M$ in $\SPol^d\cC$. Clearly this is zero in $\SPol^d\cC$ if and only if $0=\mD_M(Y)=D_M(Y)$ for all $Y\in\cC$, while it is sent zero in $\cS(X,d)\mbox{{\rm-mod}}$ if and only if $D_M(X)=0$. Hence the conclusion follows again from Theorem~\ref{LemDVan}(2).

That (1) implies (3) follows from the diagram in \ref{AnotherDiagram} and Theorem~\ref{ThmSPol}(1).

Finally, we show equivalence between (1) and (2). To keep notation more transparent, we assume that $\cC$ has projective objects. We can rewrite property (1) as the property that, for each $Y\in\cC$ and $j,l\in\Irr\cC$, the composition morphism
\begin{eqnarray*}
&&\bigoplus_{i}\Hom_{S_d}(\Hom(P_i, X^{\otimes d}),\Hom(P_j,Y^{\otimes d}))\otimes \Hom_{S_d}(\Hom(P_l, Y^{\otimes d}),\Hom(P_i,X^{\otimes d}))\\
& &\to \;\Hom_{S_d}(\Hom(P_l, Y^{\otimes d}),\Hom(P_j,Y^{\otimes d}))
\end{eqnarray*}
is surjective. 

By using Lemma~\ref{LemDual}, and by denoting the projective cover of $L_i^\ast$ by $P_i'$, the above morphism can be rewritten as
\begin{eqnarray*}
&&\bigoplus_{i}\left(\Hom(P_i', (X^\ast)^{\otimes d})\otimes\Hom(P_j,Y^{\otimes d})\right)^{S_d}\otimes \left(\Hom(P_l', (Y^\ast)^{\otimes d})\otimes\Hom(P_i,X^{\otimes d})\right)^{S_d}\\
& &\to \;\left(\Hom(P_l', (Y^\ast)^{\otimes d})\otimes \Hom(P_j,Y^{\otimes d})\right)^{S_d}.
\end{eqnarray*}

Surjectivity of these maps for all $j,l$ can be written as a condition for every $P\in\Proj\cC$, by taking a direct sum of tensor products with the vector spaces $\Hom(P,L_l^\ast\otimes L_j)$. By \eqref{EqNat} we can then identify the target of the map as
$$\Hom(P, (Y^\ast\otimes Y)^{\otimes d})^{S_d}\;=\; \Hom(P,\uEnd(Y)^{S_d}).$$
Moreover, for the source, we can instead take a direct sum of tensor products with
$$\Hom(P, L_l^\ast\otimes L_i\otimes L_i^\ast\otimes L_j)$$
and adapt the map by using evaluation at $L_i$. Evaluation being an epimorphism, this does not change whether the overall map is surjective or not. Moreover, we can then use two different labels for the $i$ in $L_i$ and $L_i^\ast$, understanding that the map is zero on cases with two distinct labels. After all this, the source becomes, again via \eqref{EqNat},
$$\Hom(P, (X^\ast\otimes Y \otimes Y^\ast\otimes X))^{S_d\times S_d}\;\simeq\;\Hom(P,\Gamma^d(X^\ast\otimes Y)\otimes \Gamma^d(Y^\ast\otimes X)).$$
By construction, the map we now obtain is $\Hom(P,-)$ of the map in (2).
\end{proof}

\begin{question}
What are the (minimal) relatively $d$-discerning objects in $\sVec$ and $\Ver_p$? For $\mathrm{char}(\bk)=0$ the condition for $\bk^{m|n}\in\sVec$ to be relatively $d$-discerning is $d<(m+1)(n+1)$.
\end{question}

Below, faithfulness as a $\bk S_d$-module refers to the annihilator ideal in $\bk S_d$ being zero and is thus stronger than faithfulness of the $S_d$-action.

\begin{prop}\label{PropInvTh}
The following conditions on $X$ are equivalent:
\begin{enumerate}
\item $\beta_X^d:\bk S_d\to\End_{\GL_X}(X^{\otimes d})$ is an isomorphism;
\item $\beta^d_X:\bk S_d\to\End_{\cC}(X^{\otimes d})$ is injective;
\item $\mB^d_X$ contains a faithful $\bk S_d$-module;
\item $\bk S_d\in\mB^d_X$;
\item $D_{Y^\lambda}(X)\not=0$ for all $p$-restricted $\lambda\vdash d$.
\end{enumerate}
\end{prop}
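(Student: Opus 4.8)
The plan is to prove the cycle of implications $(1)\Rightarrow(2)\Rightarrow(4)\Rightarrow(3)\Rightarrow(1)$, with the bulk of the work in $(2)\Rightarrow(4)$ and $(3)\Rightarrow(1)$. The implication $(1)\Rightarrow(2)$ is trivial: the map in (1) is a restriction of the map in (2) (composed with the forgetful functor $\Rep_{\cC}GL_X\to\cC$, which is faithful), so injectivity of the latter is implied by the former being an isomorphism. The implication $(4)\Rightarrow(3)$ is also immediate, since the regular module $\bk S_d$ is faithful. For $(3)\Rightarrow(4)$ one can argue directly: if $M\in\mB^d_X$ is faithful, then $\bk S_d$ embeds into $M\oplus M^{\ast}\oplus\cdots$ — more precisely, $\bk S_d$ is a submodule of $M^{\oplus k}$ for suitable $k$ whenever $M$ is faithful and $\bk S_d$ is self-injective — so $\bk S_d$ is a direct summand of a direct sum of copies of $M$, hence lies in the pseudo-abelian subcategory $\mB^d_X$. (Here we use that $\bk S_d$ is a self-injective algebra, so a faithful module has the regular module as a direct summand of a suitable power, by the standard argument that an injective cogenerator contains every injective, in particular $\bk S_d$ itself.)

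For $(2)\Rightarrow(4)$: by Definition~\ref{DefPhiX}, $\mB^d_X$ is the pseudo-abelian subcategory of $\Rep S_d$ generated by $\Omega^d(X)$, equivalently by the modules $\Hom((X^\vee)^{\otimes d},I)$ for $I\in\Inj\cC$ (or $\Hom(P,X^{\otimes d})$ for $P\in\Proj\cC$ when $\cC$ has projectives). The kernel of $\beta^d_X:\bk S_d\to\End_{\cC}(X^{\otimes d})$ is, as recorded in the lemma preceding Example~\ref{ExBK} (and in the displayed computation there), exactly the intersection $\bigcap_{M\in\mB^d_X}\Ann_{\bk S_d}(M)$: indeed $\End_{\cC}(X^{\otimes d})$ injects into $\prod_{i}\End_{\bk}(\Hom(P_i,X^{\otimes d}))$ by faithfulness of $\oplus_i\Hom(P_i,-)$, and each $\Hom(P_i,X^{\otimes d})$ lies in $\mB^d_X$. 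So $\beta^d_X$ is injective precisely when the modules in $\mB^d_X$ have no common nonzero annihilator, i.e. when $\sum_{M\in\mB^d_X}(\bk S_d/\Ann M)$ — equivalently the module $\bigoplus_{M\in\Indec\mB^d_X}M$ — is a faithful $\bk S_d$-module. That is condition (3), which we have just shown is equivalent to (4). Thus $(2)\Leftrightarrow(3)\Leftrightarrow(4)$.

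Finally $(4)\Rightarrow(1)$: assume $\bk S_d\in\mB^d_X$. By Theorem~\ref{LemDVan}(1), $H_X:\mo(\mB^d_X)\xrightarrow{\sim}\Rep^d_{\cC}(GL_X,\phi)$ is an equivalence, and $\Rep^d_{\cC}GL_X\simeq\Rep^d_{\cC}(GL_X,\phi)\boxtimes\cC$; combined with the description in \ref{FunAC}, $\End_{GL_X}(X^{\otimes d})$ is computed as
$$\End_{GL_X}(X^{\otimes d})\;\simeq\;\End_{\Fun_{\bk}(\mB^d_X,\cC)}(R)\;\simeq\;\int_{M,N\in\mB^d_X}\Hom_{S_d}(M,N)\otimes\cdots,$$
but the cleaner route is: $\beta^d_X$ factors as $\bk S_d\twoheadrightarrow\bk S_d/\ker\beta^d_X\hookrightarrow\End_{GL_X}(X^{\otimes d})$, and (2) (which follows from (4)) already gives $\ker\beta^d_X=0$, so $\beta^d_X$ is injective. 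For surjectivity onto $\End_{GL_X}(X^{\otimes d})$, note that under the equivalence $H_X$ the object $X^{\otimes d}$ corresponds to the representable functor (the projective cover of its top), and $\bk S_d\in\mB^d_X$ means $\End_{S_d}(\bk S_d)=\bk S_d^{\op}\cong\bk S_d$ surjects onto (in fact equals) the endomorphisms computed in $\mo(\mB^d_X)$; more directly, when $\bk S_d$ is itself an object of $\mB^d_X$, the Schur-algebra-type double centralizer collapses and $\End_{GL_X}(X^{\otimes d})=\bk S_d$. I expect the main obstacle to be making this last surjectivity argument fully rigorous: one must track carefully, through the chain of equivalences $\Rep^d_{\cC}GL_X\simeq\Mod_{\cC}\cS(X,d)\simeq\cC\boxtimes\mo(\mB^d_X)$, that $\End_{GL_X}(X^{\otimes d})$ is the $S_d$-endomorphism ring of the corresponding functor, and that the presence of the regular representation in $\mB^d_X$ forces this ring to be no bigger than $\bk S_d$. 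An alternative, perhaps cleaner, approach to $(4)\Rightarrow(1)$: use Theorem~\ref{LemDVan}(3), which gives $[X^{\otimes d}:D_M(X)]=\dim_{\bk}M$ in $\Rep_{\cC}GL_X$; then $\dim_{\bk}\End_{GL_X}(X^{\otimes d})$ can be bounded above by $\sum_{M\in\Indec\mB^d_X}(\dim M)\cdot(\text{something})$ and below by $\dim\bk S_d=d!$, and when $\bk S_d\in\mB^d_X$ one checks the bound is exactly $d!$, forcing $\beta^d_X$ to be an isomorphism.
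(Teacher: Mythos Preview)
Your cycle $(1)\Rightarrow(2)\Rightarrow(3)\Leftrightarrow(4)\Rightarrow(1)$ is the same as the paper's, and the arguments for $(1)\Rightarrow(2)$, $(2)\Rightarrow(3)$, and $(3)\Leftrightarrow(4)$ are correct and match the paper (which simply cites \cite[Lemma~1.1.1(ii)]{CEKO} for $(3)\Rightarrow(4)$, exactly the self-injectivity argument you give).

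For $(4)\Rightarrow(1)$ your first approach is the right one and is what the paper does; you just need to state the key step crisply. Under the equivalence $H_X:\mo(\mB^d_X)\xrightarrow{\sim}\Rep^d_{\cC}(GL_X,\phi)$ of Theorem~\ref{LemDVan}(1), the representation $X^{\otimes d}$ corresponds to the forgetful functor $\Frg|_{\mB^d_X}$ (since $H_X(\Frg)=T_{\Frg}(X)\simeq X^{\otimes d}$, Example~\ref{ExFrg}). Condition (4) says precisely that $\Frg|_{\mB^d_X}=\Hom_{S_d}(\bk S_d,-)$ is representable, so Yoneda gives $\End(\Frg|_{\mB^d_X})\simeq\bk S_d$; fully faithfulness of $H_X$ then yields $\End_{GL_X}(X^{\otimes d})\simeq\bk S_d$, and one checks the composite is $\beta^d_X$. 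Your parenthetical ``(the projective cover of its top)'' is off---the functor in question is $\Frg$ itself, not a projective cover---and your dimension-counting alternative should be dropped: knowing composition multiplicities does not bound $\dim\End$ tightly enough to force the desired equality.
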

\begin{proof}
Obviously (1) implies (2).

To show that (2) implies (3) we assume for convenience that $\cC$ has projective objects. Let $P\tto X^{\otimes d}$ be a projective cover in $\cC$. Then $\Hom_{\cC}(P,X^{\otimes d})$ is a faithful $\End_{\cC}(X^{\otimes d})$-module. Hence under assumption (2), the corresponding representation in $\mB^d_X$ is faithful.

That (3) implies (4) is well-known, see for instance \cite[Lemma~1.1.1(ii)]{CEKO}.

We show that (4) implies (1). Fully faithfulness of the equivalence in Theorem~\ref{LemDVan}(1) yields an isomorphism
$$\End(\Frg)\;\xrightarrow{\sim}\; \End_{\GL_X}(X^{\otimes d})$$
for the endomorphism algebra of $\Frg:\mB^d_X\to\Vecc$. Under condition (4), $\Frg$ is representable by $\bk S_d$, so that the Yoneda lemma yields an isomorphism with $\bk S_d$. Tracing through the morphisms shows that the resulting composite isomorphism is $\beta^d_X$.

Equivalence between (4) and (5) follows from Theorem~\ref{LemDVan}(2), since these $Y^\lambda$ are precisely the indecomposable projective modules.
\end{proof}

\begin{remark}
\begin{enumerate}
\item Note that $\bk S_d\to\End_{\GL_X}(X^{\otimes d})$  is not always surjective when $\mathrm{char}(\bk)>0$, see for example \cite[Theorem~B]{CEKO}. For $\mathrm{char}(\bk)=0$ the morphism is always surjective by \cite[\S 10.11]{Del07}.
\item Together with \cite[Theorem~A]{CEKO}, the result in Proposition~\ref{PropInvTh} completely determines when the braid action yields an isomorphism for $\cC=\sVec$.
\item Relatively discernable objects satisfy \ref{PropInvTh}(5) and hence all conditions in \Cref{PropInvTh}.
\item The techniques in the proof of Proposition~\ref{PropInvTh} demonstrate that, for arbitrary $X$ and with $M:=\Omega^d(X)\in\Rep S_d$, we have an isomorphism and commutative triangle
$$\xymatrix{
\End_{\End_{S_d}(M)}(M)\ar[rr]^{\sim}&& \End_{\GL_X}(X^{\otimes d}).\\
\bk S_d\ar[u]\ar[rru]_{\beta^d_X}
}$$ 

\end{enumerate}

\end{remark}

\subsection*{Acknowledgement} The author thanks the referee for many useful comments. The author thanks Pavel Etingof, Johannes Flake, Alexander Kleshchev, Rapha\"el Rouquier and Andrew Snowden for stimulating discussions. The research was partly supported by ARC grants DP210100251 and FT220100125.


\begin{thebibliography}
	{EGNO}
	
	
	\bibitem[Ax]{Ax} J.~Axtell:
Spin polynomial functors and representations of Schur superalgebras.
Represent. Theory 17 (2013), 584--609.


%\bibitem[Be]{Be} D.J.~Benson:
%Representations of elementary abelian p-groups and vector bundles.
%Cambridge Tracts in Math., 208

\bibitem[BK]{BK} A.~Baranov, A.~Kleshchev:
Maximal ideals in modular group algebras of the finitary symmetric and alternating groups.
Trans. Amer. Math. Soc. 351 (1999), no.2, 595--617.

\bibitem[Be]{Be} D.J.~Benson:
Representations and cohomology. I.
Basic representation theory of finite groups and associative algebras. Second edition.
Cambridge Stud. Adv. Math., 30
Cambridge University Press, Cambridge, 1998.

\bibitem[BE]{BE}  D.~Benson, P.~Etingof: Symmetric tensor categories in characteristic 2. Adv. Math. 351 (2019), 967--999.


\bibitem[BEO]{BEO} D.~Benson, P.~Etingof, V.~Ostrik: New incompressible symmetric tensor categories in positive characteristic. Duke Math. J. 172 (2023), no. 1, 105--200.

	\bibitem[Br]{Br} M.~Broué:
On Scott modules and $p$-permutation modules: an approach through the Brauer morphism.
Proc. Amer. Math. Soc. 93 (1985), no.3, 401--408.

\bibitem[Co1]{Selecta} K.~Coulembier: Tensor ideals, Deligne categories and invariant theory. Selecta Math. (N.S.) 24 (2018), no. 5, 4659--4710.
	
	
\bibitem[Co2]{Tann} K. Coulembier: Tannakian categories in positive characteristic.
{Duke Math. Journal}, {169} (2020),
no. 16, p. 3167--3219.

\bibitem[Co3]{AbEnv}  K.~Coulembier: Monoidal abelian envelopes. Compos. Math. 157 (2021), no. 7, 1584--1609.


\bibitem[Co4]{ComAlg} K.~Coulembier: Commutative algebra in tensor categories. arXiv:2306.09727.

\bibitem[CEKO]{CEKO} K.~Coulembier, P.~Etingof, A.~Kleshchev, V.~Ostrik:
Super invariant theory in positive characteristic.
Eur. J. Math. 9 (2023), no.4, Paper No. 94, 39 pp.

\bibitem[CEN]{CEN} K.~Coulembier, P.~Etingof, J. Newton: Finite symmetric algebras in tensor categories and Verlinde categories of algebraic groups. Adv. Math. 483 (2025), Paper No. 110677.
	
	\bibitem[CEO1]{CEO} K.~Coulembier, P.~Etingof, V.~Ostrik:
On Frobenius-exact symmetric tensor categories.
With Appendix A by Alexander Kleshchev.
Ann. of Math. (2) 197 (2023), no.3, 1235--1279.

\bibitem[CEO2]{CEO2} K.~Coulembier, P.~Etingof, V.~Ostrik: Incompressible tensor categories. Adv. Math. 457 (2024), Paper No. 109935, 65 pp.

\bibitem[CF]{CF} K.~Coulembier, J.~Flake: Towards higher Frobenius functors for symmetric tensor categories. arXiv:2405.19506.




\bibitem[Cr]{Cr} D.A.~Craven:
Algebraic modules and the Auslander-Reiten quiver.
J. Pure Appl. Algebra 215 (2011), no.3, 221--231.


	\bibitem[De1]{Del90} P.~Deligne: Cat\'egories tannakiennes. The Grothendieck Festschrift, Vol. II, 111--195, Progr. Math., 87, Birkh\"auser Boston, Boston, MA, 1990. 
	
	\bibitem[De2]{Del02} P.~Deligne: Cat\'egories tensorielles. Mosc. Math. J. 2 (2002), no. 2, 227--248.
	
	\bibitem[De3]{Del07} P.~Deligne:
La cat\'egorie des repr\'esentations du groupe sym\'etrique $S_t$, lorsque t n'est pas un entier naturel. Algebraic groups and homogeneous spaces, 209--273,
Tata Inst. Fund. Res. Stud. Math., 19, Tata Inst. Fund. Res., Mumbai, 2007.


\bibitem[Do]{Donkin}
S. Donkin: Symmetric and exterior powers, linear source modules and representations of Schur superalgebras. { Proc. London Math. Soc. (3)} {83} (2001), 647--680.


\bibitem[Dr]{Dr}
C.~Drupieski:
Cohomological finite-generation for finite supergroup schemes.
Adv. Math. 288 (2016), 1360--1432.

\bibitem[EW]{EW}
M. Elkin, P. Webb: The structure of Young modules and Schur algebras in characteristic 2. arXiv:2011.12357.
	
	
\bibitem[EGNO]{EGNO}P.~Etingof, S.~Gelaki, D.~Nikshych, V.~Ostrik:
Tensor categories. 
Mathematical Surveys and Monographs, 205. American Mathematical Society, Providence, RI, 2015. 	

\bibitem[EO1]{EOconj}
P.~Etingof, V.~Ostrik:
Finite tensor categories.
Mosc. Math. J.4 (2004), no.3, 627--654, 782--783.
	
	\bibitem[EO2]{EOf} P. Etingof, V. Ostrik: On the Frobenius functor for symmetric tensor categories in positive characteristic.  J. Reine Angew. Math. 773 (2021), 165--198.
	
	
	\bibitem[EO3]{EOs} P. Etingof, V. Ostrik: On semisimplification of tensor categories. Representation theory and algebraic geometry, 3–35.
Trends Math.
Birkh\"auser/Springer, Cham, 2022.
	
	
	
\bibitem[Er]{Er} K.~Erdmann:
Young modules for symmetric groups.
Special issue on group theory
J. Aust. Math. Soc. 71 (2001), no. 2, 201--210.	
	
	
\bibitem[FS]{FS} E.M.~Friedlander, A.~Suslin:
Cohomology of finite group schemes over a field.
Invent. Math. 127 (1997), no.2, 209--270.	


\bibitem[GK]{GK} R.~Gow, A.~Kleshchev:
Connections between the representations of the symmetric group and the symplectic group in characteristic 2.
J. Algebra 221 (1999), no.1, 60--89.


\bibitem[HS]{HS} N.~Harman, A.~Snowden: Oligomorphic groups and tensor categories.
arXiv:2204.04526.

\bibitem[HSS]{HSS} N.~Harman, A.~Snowden, N.~Snyder: The Delannoy category.
Duke Math. J. 173 (2024), no. 16, 3219--3291.


	
%	\bibitem[He]{He} A.~Henke:
%On p-Kostka numbers and Young modules.
%European J. Combin. 26 (2005), no.6, 923--942.

\bibitem[Ja1]{James}
G.D.~James:
On the decomposition matrices of the symmetric groups. I.
J. Algebra 43 (1976), no.1, 42--44.

\bibitem[Ja2]{James-book}
 G.D.~James:
The representation theory of the symmetric groups.
Lecture Notes in Math., 682
Springer, Berlin, 1978.


	
	\bibitem[Kl1]{CS} A.~Kleshchev:
Completely splittable representations of symmetric groups.
J. Algebra 181 (1996), no.2, 584--592.


\bibitem[Kl2]{BookKl}  A.~Kleshchev:
Linear and projective representations of symmetric groups.
Cambridge Tracts in Math., 163
Cambridge University Press, Cambridge, 2005.


\bibitem[Kr]{Kr} H.~Krause:
Koszul, Ringel and Serre duality for strict polynomial functors.
Compos. Math. 149 (2013), no.6, 996--1018.

\bibitem[Os]{Os} V.~Ostrik:
On symmetric fusion categories in positive characteristic.
Selecta Math. (N.S.) 26 (2020), no.3, Paper No. 36, 19 pp.


\bibitem[Ve]{Ve} S.~Venkatesh: Representations of general linear groups in the Verlinde category. Transform. Groups 29 (2024), no. 2, 873--891.

	
\bibitem[Za]{Za} A.E.~Zalesskiĭ:
Group rings of simple locally finite groups. Finite and locally finite groups (Istanbul, 1994), 219–246.
NATO Adv. Sci. Inst. Ser. C: Math. Phys. Sci., 471
Kluwer Academic Publishers Group, Dordrecht, 1995.


 	\end{thebibliography}
\end{document}